\documentclass[a4paper, oneside,11pt]{amsart}

\usepackage[english]{babel}
\usepackage{mathrsfs,amssymb}
\usepackage{mathtools}
\usepackage[colorlinks, citecolor = blue]{hyperref}
\usepackage[shortlabels]{enumitem}
\usepackage{esint}
\usepackage[dvipsnames]{xcolor}

\usepackage{todonotes}
\usepackage{fullpage}

\usepackage{tikz-cd}
\usetikzlibrary{calc,arrows.meta}

\newtheorem{theorem}{Theorem}[section]
\newtheorem{lemma}[theorem]{Lemma}
\newtheorem{prop}[theorem]{Proposition}
\newtheorem{cor}[theorem]{Corollary}

\theoremstyle{definition}
\newtheorem{definition}[theorem]{Definition}

\theoremstyle{remark}
\newtheorem{remark}[theorem]{Remark}

\numberwithin{equation}{section}

\newcommand{\mE}{{\mathcal E}}
\newcommand{\ta}{{\scriptscriptstyle \parallel}}
\newcommand{\no}{{\scriptscriptstyle\perp}}
\newcommand{\scl}[2]{\langle #1,#2 \rangle}
\newcommand{\dom}{\textsf{D}}
\newcommand{\sgn}{\mathrm{sgn}}

\DeclareMathOperator*{\esssup}{ess\,sup}

\DeclareMathOperator{\ch}{ch}
\DeclareMathOperator{\diam}{diam}
\DeclareMathOperator{\supp}{supp}

\DeclareMathOperator{\mat}{Mat}
\DeclareMathOperator{\Div}{div}
\newcommand{\Cc}{\ensuremath{C^\infty_c}}
\DeclareMathOperator{\tr}{Tr}
\DeclareMathOperator{\id}{Id}
\usepackage{forloop}
\newcommand{\defcal}[1]{\expandafter\newcommand\csname 
	c#1\endcsname{{\mathcal{#1}}}}
\newcommand{\defbb}[1]{\expandafter\newcommand\csname 
	b#1\endcsname{{\mathds{#1}}}}
\newcommand{\defbf}[1]{\expandafter\newcommand\csname 
	bf#1\endcsname{{\mathbf{#1}}}}
\newcommand{\defrm}[1]{\expandafter\newcommand\csname 
    rm#1\endcsname{{\mathrm{#1}}}}
\newcommand{\deffr}[1]{\expandafter\newcommand\csname 
	frak#1\endcsname{{\mathfrak{#1}}}}
\newcommand{\defov}[1]{\expandafter\newcommand\csname 
	ov#1\endcsname{{\overline{#1}}}}
\newcommand{\deftil}[1]{\expandafter\newcommand\csname 
	til#1\endcsname{{\widetilde{#1}}}}
\newcommand{\defhat}[1]{\expandafter\newcommand\csname 
	hat#1\endcsname{{\widehat{#1}}}}
\newcommand{\defscr}[1]{\expandafter\newcommand\csname 
	scr#1\endcsname{{\mathscr{#1}}}}
\newcommand{\defdot}[1]{\expandafter\newcommand\csname 
	Dot#1\endcsname{{\Dot{#1}}}}
\newcommand{\defbold}[1]{\expandafter\newcommand\csname 
	bold#1\endcsname{{\boldsymbol{#1}}}}
\newcommand{\deful}[1]{\expandafter\newcommand\csname 
	ul#1\endcsname{{\underline{#1}}}}

\newcounter{calBbCounter}
\forLoop{1}{26}{calBbCounter}{
	\edef\letter{\Alph{calBbCounter}}
	\expandafter\defcal\letter
	\expandafter\defbb\letter
	\expandafter\defbf\letter
    \expandafter\defrm\letter
	\expandafter\deffr\letter
	\expandafter\defov\letter
	\expandafter\deftil\letter
	\expandafter\defhat\letter
	\expandafter\defscr\letter
    \expandafter\defdot\letter
    \expandafter\defbold\letter
    \expandafter\deful\letter
}
\forLoop{1}{26}{calBbCounter}{
	\edef\letter{\alph{calBbCounter}}
	\expandafter\defbf\letter
    \expandafter\defrm\letter
	\expandafter\deffr\letter
	\expandafter\defov\letter
	\expandafter\deftil\letter
	\expandafter\defhat\letter
	\expandafter\defscr\letter
    \expandafter\defdot\letter
    \expandafter\defbold\letter
    \expandafter\deful\letter
}

\newcommand{\boldtilF}{\ensuremath{\boldsymbol{\widetilde{F}}}}
\newcommand{\boldtilC}{\ensuremath{\boldsymbol{\widetilde{C}}}}

\newcommand{\Ltx}[2]{%
  L^{\vphantom{\smash[d]{#1}\smash[d]{#2}}\smash[d]{#1}}_{\vphantom{tx}t}%
  L^{\vphantom{\smash[d]{#1}\smash[d]{#2}}\smash[d]{#2}}_{\vphantom{tx}x}%
}
\newcommand{\Lx}[1]{L^{#1_{\vphantom{x}}}_{\vphantom{t}x}}

\newcommand{\N}{\ensuremath{\mathbb{N}}}
\newcommand{\Z}{\ensuremath{\mathbb{Z}}}
\newcommand{\Q}{\ensuremath{\mathbb{Q}}}
\newcommand{\R}{\ensuremath{\mathbb{R}}}
\newcommand{\C}{\ensuremath{\mathbb{C}}}

\newcommand{\mc}{\mathcal}
\newcommand{\ms}{\mathscr}

\newcommand{\mbs}{\boldsymbol}

\renewcommand\vec{\mbs}

\DeclarePairedDelimiter\abs{\lvert}{\rvert}
\DeclarePairedDelimiter\brac[]
\DeclarePairedDelimiter\cbrace\{\}
\DeclarePairedDelimiter\br()
\DeclarePairedDelimiter{\ip}\langle\rangle
\DeclarePairedDelimiter{\nrm}\lVert\rVert

\newcommand{\nrmb}[1]{\bigl\|#1\bigr\|}
\newcommand{\absb}[1]{\bigl|#1\bigr|}
\newcommand{\brb}[1]{\bigl(#1\bigr)}
\newcommand{\cbraceb}[1]{\bigl\{#1\bigr\}}
\newcommand{\ipb}[1]{\bigl\langle#1\bigr\rangle}
\newcommand{\bracb}[1]{\bigl[#1\bigr]}

\newcommand{\nrmB}[1]{\Bigl\|#1\Bigr\|}
\newcommand{\absB}[1]{\Bigl|#1\Bigr|}
\newcommand{\brB}[1]{\Bigl(#1\Bigr)}

\newcommand{\dd}{\hspace{2pt}\mathrm{d}}
\newcommand{\ddn}{\mathrm{d}}

\newcommand{\loc}{\mathrm{loc}}
\newcommand{\whit}{\mathrm{w}}
\newcommand{\ca}{\mathrm{ca}}

\newcommand{\Qwtilde}[1]{\widetilde{\mbs{#1}}\vphantom{\mbs{#1}}^{\whit}}
\newcommand{\Qcatilde}[1]{\widetilde{\mbs{#1}}\vphantom{\mbs{#1}}^{\ca}}
\newcommand{\Qw}[1]{\mbs{#1}^{\whit}}
\newcommand{\Qca}[1]{\mbs{#1}^{\ca}}

\DeclareMathOperator{\ind}{\mathbf{1}}

\allowdisplaybreaks

\author{Pascal Auscher}
\address{Pascal Auscher\\
Universit\'e Paris-Saclay\\ CNRS\\ Laboratoire de Math\'{e}matiques d'Orsay\\ 91405 Orsay\\ France}
\email{pascal.auscher@universite-paris-saclay.fr}

\author{Hedong Hou}
\address{Hedong Hou\\ Westlake Institute for Advanced Study\\ Westlake University\\ 310030 Hangzhou\\ China}
\email{houhedong@westlake.edu.cn}

\author[Lorist]{Emiel Lorist}
\address{Emiel Lorist \\
Delft Institute of Applied Mathematics \\
Delft University of Technology \\
P.O. Box 5031 \\
2600 GA Delft, The Netherlands}
\email{e.lorist@tudelft.nl}

\author{Andreas Ros\'en}
\address{Andreas Ros\'en\\ Mathematical Sciences\\ Chalmers University of Technology and University of Gothenburg\\
SE-412 96 G{\"o}teborg\\ Sweden}
\email{andreas.rosen@chalmers.se}

\begin{document}
\title{Form domination and tent space estimates for operators on the half-space}

\begin{abstract} 
We study operators acting on functions defined on the half-space, with methods inspired by 
sparse domination. Using the specific link between dyadic grids on the boundary and Whitney regions on the half-space, we obtain a  very precise form domination with model operators of Hardy type. 
We introduce mixed-norm off-diagonal estimates that measures  both tangential and transversal decay without requiring pointwise control in the transversal variable, substantially weakening the assumptions used in earlier tent space theories.
This allows us to prove optimal weighted tent space extrapolation from local boundedness plus off-diagonal decay. The resulting framework yields new bounds for operators arising from non-autonomous elliptic and parabolic PDEs with non-smooth coefficients.    
\end{abstract}

\keywords{Tent spaces, singular integral operators, form domination, off-diagonal estimates, extrapolation, perturbed Dirac operators, generalized Riesz transforms, Duhamel operators, maximal regularity}

\subjclass[2020]{Primary 42B35, 42B20; Secondary 42B37, 35J15, 35K15}


\thanks{This work was supported by CNRS through the International Emerging Action program. We thank our institutions for their hospitality and support, which enabled us to meet at various stages of the project. During the final stages of this work, Pascal Auscher was based at the France Australia Mathematical Sciences and Interactions International Research Lab, Australian National University--CNRS, Canberra, ACT 2601, Australia.
Andreas Rosén was supported by the Swedish Research Council (Grant 2022-03996).
Emiel Lorist was partially supported by the Dutch Research Council (NWO) through the project ``The sparse revolution for stochastic partial differential equations'' under project number \href{https://doi.org/10.61686/ZGRMR99948}{VI.Veni.242.057}.
Hedong Hou would like to thank Alexey Cheskidov for kindly providing the funding for his trip to Europe.}
\thanks{A CC-BY 4.0 \url{https://creativecommons.org/licenses/by/4.0/} public copyright license has been applied by the authors to the present document and will be applied to all subsequent versions up to the Author Accepted Manuscript arising from this submission.}

\maketitle

\section{Introduction}
In this work, we study the boundedness of operators on (weighted) tent spaces, motivated by applications to elliptic and parabolic PDEs. Our main objective is to derive a range of tent space estimates by combining a local boundedness estimate with suitable off-diagonal decay. Several earlier works have addressed related aspects of this problem. In \cite{AMP12}, maximal regularity operators were studied; subsequent articles \cite{AKMP12, AH25a} developed a singular integral operator (SIO) theory on weighted tent spaces for operators formally given by 
$$
Tf(t,x)= {\int_0^\infty} (K(t,s)f(s,\cdot))(x)\dd s,\qquad {(t,x)\in\R^{1+n}_+,}
$$
and applied it, in particular, to autonomous problems (including maximal regularity operators), see also \cite{AH25b}. Early applications of tent space estimates to stochastic PDEs can be found in \cite{AvNP14, PV19}. This SIO theory
does not cover operators whose operator-valued distributional kernel $K(t,s)$ is supported on the diagonal $t=s$, which we call \emph{slice operators}. An important class of examples consists of tensor extensions to $\R^{1+n}_+$ of operators on $\R^n$, for which tent space estimates were obtained in \cite{APr17,MP24} using Rubio de Francia extrapolation. The operators arising in the non-autonomous problems studied in \cite{AMP19,Z20,AP25,Hou25} likewise fall outside this SIO theory; the corresponding tent space estimates are obtained instead using PDE techniques. Further results on tent space boundedness are in \cite{Hua17,Hua18}. Hence, the panorama is diverse and lacks a systematic, unified approach.

Of particular relevance to our work is the Carleson measure estimate for anti-causal operators proved in \cite{HR23}, using sparse domination methods from Calder\'on--Zygmund theory. The result was motivated by Beurling-type operators and elliptic PDEs. Anti-causality (downward causality in the terminology of \cite{HR23}) means that if the input vanishes after a given time, then so does the output.
Causality (upward causality) is obtained by reversing time:
if the input vanishes before a given time, then so does the output. The proof in \cite{HR23} assumes pointwise kernel bounds, and the authors explicitly ask whether these can be replaced by off-diagonal estimates in view of potential applications.
  
In seeking to connect these works, our intention was to see whether there is a sparse domination method to prove tent space boundedness, improve on existing results proved by different techniques and obtain this unified framework. Sparse domination relies on the existence of a dyadic system. The half-space can be equipped with such a system, but it also has specific geometric features that can be exploited. As we shall see, this  ``trivializes'' the sparse domination in a sense. We will prove a form domination in the spirit of \cite{BFP16} for operators acting on functions defined on the half-space.  This yields the desired unified framework with many improvements on boundedness results, except for the slice operators mentioned above.

Concerning the geometric features of the half-space,    the first observation is that one can take
advantage of a chosen dyadic grid $\mc{D}$ on its boundary to create the Whitney regions and the Carleson boxes. We shall see that these are the only geometric objects  we need to deal with. For example, as exploited in \cite{HR23}, in elliptic scaling, the set of Carleson boxes $$\mbs{Q}^{\ca}:=(0,\ell(Q))\times Q,\qquad Q\in \mc{D}$$ is a sparse family, as they contain the mutually disjoint Whitney regions  $$\mbs{Q}^{\whit}:=(\ell(Q)/2,\ell(Q))\times Q,\qquad Q\in \mc{D}$$ and $|\mbs{Q}^{\whit}|= \tfrac 1 2 |\mbs{Q}^\ca|$.  Families of  Whitney regions themselves are disjoint and thus  trivially sparse. 
Next, sparse domination relies on the existence of simple model operators whose bounds are easily established. The definition of these model operators in our setting follows from the second observation that there are specific directions. In the tangential directions, we benefit from the  harmonic analysis tools available on $\R^n$ and, more generally, on any  space of homogeneous type, notably averaging operators. The transverse variable, instead, belongs to the semi-infinite interval $(0,\infty)$ and this makes a huge difference as the origin plays a special role. Therefore, the relevant averaging operators  will be extensions of  the ``vertical'' Hardy operator 
$$
Hf(t,x)=\frac 1 t \int_{0}^t f(s,x)\, \dd s
$$
 and its variant, not seeing the origin but with the same scaling, 
 $$
 \tilde Hf(t,x)= \frac 2 t \int_{t/2}^t f(s,x)\, \dd s.
 $$
 We combine these operators with an average in the $x$-variable at a scale related to $t$, which we denote by
  $H^{\ca}_{\vec{u},\delta}$ and $H^{\whit}_{\vec{u},\delta}$ respectively, as we integrate over a Carleson box for the first one and over a Whitney region for the second one. Here $\delta>0$ is a thickness parameter expressing how horizontally stretched the regions of integration are at the scale given by $t$ and  $\vec{u}=(u_{t},u_{x})\in [1,\infty]^2$ denotes a  pair of Lebesgue exponents  with which we consider mixed-norm  $L^{u_t}_tL^{u_x}_{\vphantom{t}x}$-averages instead of $L^1_{t,x}$-averages. 
     
 We are able to handle general subadditive operators $T$ and prove form  domination estimates of $\int_{{\mathbb R}^{1+n}_+}|Tf| |g|$ by a localized term and an error term, see Theorem~\ref{thm:maindominationform}. A special instance for  locally $L^r$-bounded linear operators of order $\kappa=0$ in the elliptic scaling is  
 \begin{align}\label{eq:domintro}
\absB{\int_{\R^{1+n}_+} Tf\cdot g - \sum_{Q \in \mc{D}} \int_{\mbs{Q}^{\whit}}T(f\ind_{\widetilde{\mbs{Q}}^{\whit}})\cdot g}
\lesssim \sum_{j\ge 0} 2^{-j\gamma_T} \mathcal{E}_j(f,g),\qquad\qquad
\intertext{where $\widetilde{\mbs{Q}}^{\whit}$ is an enlarged Whitney region, $\gamma_T>0$ depends on the assumed off-diagonal decay of $T$, measured in mixed Lebesgue norms, and}
\mathcal{E}_j(f,g):=
\int_{{\mathbb R}^{1+n}_+} \absb{H^{\whit}_{(r,u_x),2^j}f}|H^{\whit}_{(r',v_x'), 2^j}g|+  \absb{H^{\ca}_{\vec{u},2^j}f}|H^{\whit}_{\vec{v}', 2^j}g|+\absb{H^{\whit}_{\vec{u},2^j}f}|H^{\ca}_{\vec{v}', 2^j}g|. \notag
\end{align}
We call the sum in $j$ the decay term of the domination.  Operators that preserve support within the Whitney regions have no decay term. We shall see that for specific classes of operators,  the integral in the decay term simplifies. Namely,  for causal operators, the third integrand is absent; for anti-causal, the second is absent.  
It is  important in applications that the estimate regularizes in $t$, i.e., we need $u_t<v_t$, and this is why our domination applies to all situations but the slice operators,
where $u_t=v_t$. 

 The decomposition \eqref{eq:domintro} resembles an almost diagonalization and is reminiscent of the $T(1)$ decomposition for singular integrals, where we need a weak (local) boundedness property and control of the error (decay) terms. 
Indeed, heuristically, if we,  through a Calder\'on formula,  set up a continuous wavelet transform between functions $h:\R^n\to\R$ and
 $f:\R^{1+n}_+\to\R$, then the action of our $T$ on $f(t,x)$ would
 correspond to the action of a usual $\R^n$ singular integral $S$ 
 on $h(x)$. Thus, neglecting the local variations of $f$ on Whitney regions, $T$ describes the action of $S$ on the wavelet coefficients of $h$.
 Moreover, under such a correspondence, causality of $T$ would correspond to $S1=0$, i.e., the downward mapping paraproduct part of $S$ vanishes. And similarly $T$ being anti-causal would correspond to $S^*1=0$. However, for general $T$ we do not necessarily have such a relation to an operator $S$ on $\R^n$.

\medskip
 
Using \eqref{eq:domintro} we can perform extrapolation. If we take $f$ in a weighted tent space $T^{p,q}_{\beta}$ (and even its generalization with Whitney averages, denoted by $T^{p,q,r}_{\beta}$) and $g$ in its dual,  the localized term only requires local, uniform $L^r$-boundedness of $T$ on Whitney regions and control of the norm of Whitney sums $\sum_{Q \in \mc{D}} a_Q \ind_{{\mbs{Q}^{\whit}}}$ in weighted tent spaces, which is rather elementary.  For the decay term, we need boundedness of the operators $H^{\ca}$ and $H^{\whit}$  and growth control of their operator norms in order to find  exponents $\beta\in \R$, $p,q\in (0,\infty]$ for which the last sum converges. That is, for an explicit range of the parameters $\beta, p,q$, we  extrapolate from local $L^r$-bounds to global $T^{p,q,r}_{\beta}$ bounds. The detailed statement can be found in Theorem~\ref{thm:mainestimate}, which is the main result of this paper. The conclusion is optimal for the class of operators considered in the statement.
In particular when $p=q=r$, the space $T^{p,q,r}_{\beta}$ is a weighted $L^r$-space and   we obtain global weighted $L^r$-boundedness for a range of $\beta$ as a consequence. This method not only applies to   operators  of order zero but also to   operators of non-zero order.  Examples are the regularizing Duhamel operators arising from evolution equations.  As we are able to treat subadditive operators, we can also handle the quasi-Banach range $\min\{p,q\}<1$, avoiding the use of atomic theory, relying instead on power rules in tent spaces. 
For the main tent space operator class and parameter ranges of Theorem~\ref{thm:mainestimate}, we are not aware of prior results when \(q\neq2\) or \(r\neq2\). At \(q=r=2\) our results overlap with and improve specific ranges in \cite{AP25, Hou25}, and the earlier SIO literature when, in addition, $u_t=v_t'=1$.

When $q=r=2$, a key feature of the SIO method was to replace $R$-bounds by off-diagonal estimates for the operator-valued kernel $K(t,s)$. This assumed that these kernels  had, for each $s\ne t$, decay estimates depending on $t-s$, measured in the $L^{u_x}(E_x)$ to $L^{v_x}(F_x)$ operator norm with $u_x\le 2 \le v_x$, and $E_x,F_x$ being subsets of $\R^n$. These ``pointwise-in-time" estimates cannot be expected to hold in situations coming from non-autonomous problems.  Here, we go two steps further. First, we allow for $q,r$ different from 2, which forces us sometimes to reinject $R$-bounds for maximal regularity operators. Second, we define and  implement a new, weaker version of decay  estimates that do not require an operator-valued kernel representation and allow
 $L^{u_t}(E_t)$ to $L^{v_t}(F_t)$, $u_t,v_t \in [1,\infty]$, control of this decay rather than of pointwise control (corresponding to $u_t=v_t'=1$).  This  explains the use of the mixed-norm spaces above.
Moreover, the SIO theory was formulated for a chosen homogeneity parameter $m
$ where $t$ scales like $|x|^m$.  As we shall see, having chosen the homogeneity, this new form of measuring off-diagonal decay applies to the situation of  \cite{Hou25}  in parabolic homogeneity ($m=2$) and  the  situation in elliptic homogeneity  ($m=1$) proposed in \cite{HR23}.   

Under pointwise in time off-diagonal estimates, we improve the results of \cite[Section 3.2]{AH25a}  in several ways.  
\begin{enumerate}[(i)]
    \item We  obtain weighted $T^{p,q}_{\beta}$ estimates with $q\ne 2$ (and even $T^{p,q,r}_{\beta}$ estimates).
\item We do not assume global $L^r_{\beta}$ to $L^r_{\beta+\kappa}$ a priori boundedness for the given $\beta$: only uniform local, $\beta$-independent $L^r$-estimates on Whitney regions suffice. The above global bound becomes a particular consequence.
\item Our assumptions do not require any kernel representation.
\item We relax the order of  polynomial decay in the required off-diagonal estimates. 
\end{enumerate}   
The last point is a major improvement and greatly streamlines the ranges of exponents in applications. In parabolic applications,  the decay is often exponential. However, in elliptic applications, we face polynomial decay that can be very small.  Before (when $q=2$), a lower bound for the order of decay, roughly $\frac{n}{2m}$, 
was needed with the previous techniques for any positive result. By our new method, any positive order of decay leads to a non-empty range of applicability. See Section~\ref{sec:comparison}. 

\medskip

We  showcase our results with applications to problems that motivated this study. Prototypical examples are the non-autonomous elliptic and parabolic equations with bounded, measurable, possibly complex, uniformly elliptic coefficients $A(t,x)$, where no regularity or symmetry assumptions are further imposed.

Section~\ref{sec:ellipticPDES} contains tent space estimates for operators relevant for elliptic problems, derived from Theorem~\ref{thm:mainestimate}.
Theorem~\ref{thm:cz} establishes tent space estimates 
for classical Calder\'on--Zygmund operators, 
extending \cite{HR23}.
Theorem~\ref{thm:beurling} shows tent space estimates for maximal regularity operators for the
Poisson semigroup generated by $-\sqrt{L}$, for divergence form elliptic operators $L$, which extends
\cite{AKMP12}.
Moreover, we prove tent space estimates for maximal regularity operators associated with more general perturbed Dirac 
operators $DB$. In doing so, we prove new off-diagonal estimates for these non-injective $DB$ operators; the estimates extend the results in 
\cite{AS16}.
Finally, Theorem~\ref{thm:riesz} shows new tent space estimates for Riesz transforms associated with
divergence form elliptic operators in the half-space, with Dirichlet or Neumann boundary conditions.
These operators are non-causal and only have off-diagonal estimates with $u_t=u_x$ and $v_t=v_x$ and not pointwise-in-$t$, thus fully making use of our weaker off-diagonal hypothesis.
Again, we prove corresponding results for 
generalized Riesz transforms associated with $DB$.
We note that all these estimates concern operators $T$ acting in tent spaces on
$\R^{1+n}_+$, 
rather than operators $S$ acting in the corresponding Hardy type spaces on $\R^n$, as in for example \cite{FMcP18, AA18}, which is the more common setup in the elliptic PDE literature.

In Section~\ref{sec:parabolic}, we apply our new singular operator theory to study non-autonomous, complex, parabolic Cauchy problems, deriving the tent space estimates for solutions and their gradients from tent space bounds on the source terms and Triebel-Lizorkin bounds on the initial data. This requires estimates for the Duhamel and Lions operators, together with suitable trace theorems. Given an initial $L^2$ data, existence for the Cauchy problem of solutions  with $\nabla u \in L^2(\R^{1+n}_+; \C^n)$ was established in \cite{Lions1957}, while uniqueness in this class was proved in \cite{AMP19}. As a consequence, the Lions operator
\[ \nabla_x(\partial_t-\Div_x(A(t,x)\nabla_x))^{-1}\Div_x \]
is well-defined and bounded on $L^2(\R^{1+n}_+; \C^n)$. Well-posedness in non-Hilbertian settings require other bounds for this operator, which remained unknown for a long time for non-smooth (even real) coefficients $A$. The $L^2$ bound was recently extrapolated to $T^{p,2}_\beta$ bounds in \cite{Hou25}, allowing to prove well-posedness for initial data in Hardy-Sobolev spaces,  extending the well-posedness results in \cite{AP25} when $\beta=0$ with $L^p$-data building on the bounds in \cite{AMP19} and \cite{Z20}  for $p$ finite and $p$ infinite respectively. To extrapolate to $T^{p,q,r}_\beta$ bounds when $q\ne 2$ or $r\ne 2$, one needs mixed Lebesgue bounds for the Lions operator. For applications to related boundary value problems, mixed Lebesgue estimates in the full space $\R^{1+n}$ for in particular $\nabla_x(1+ \partial_t-\Div_x(A(t,x)\nabla_x))^{-1}\Div_x$ were obtained in \cite{AEN20} through a variational argument and Sneiberg extrapolation \cite{Sneiberg1974}. We adapt this approach to the Lions operator and combine it with local estimates for weak solutions to obtain off-diagonal decay and then  $T^{p,q,r}_\beta$ estimates for $q \ne2$ or $r\ne 2$ from Theorem~\ref{thm:mainestimate}, which we subsequently incorporate into the analysis of the Cauchy problem.

\medskip

We conclude in Section~\ref{sec:extensions} with adaptations of our results to weighted $Z$-spaces, a more recent scale of spaces  adapted to boundary value problems with Besov data introduced in \cite{BaM16}
and further developed in \cite{Ame18}. An up-to-date treatment and references can be found in \cite{ABH26}. We note that the requirements we obtain for boundedness on $Z$-spaces are a little simpler compared to those on tent spaces. This is due to the way the norms separate time scales, in contrast with the tent space norms,  analogously to Besov versus Triebel-Lizorkin norms. Related results not overlapping with ours are in \cite{AB26}. 

We also mention that with dyadic versions of our Hardy-type model operators, one can certainly develop our theory on $\R_+\times X$, where $X$ is a space of homogeneous type, as our arguments for the upper half-space rely only on the existence of  dyadic lattices on $\R^n$. Technical difficulties on tent spaces coming from such a setting are  addressed in \cite{Ame14}. Operator bounds based on the earlier SIO theory are in \cite{Hua18a}.

\medskip

We shall use throughout the following notation.
\begin{itemize}
\item   $\R_+:=(0,\infty)$ and $\R^n$, $n \in \N$, unless otherwise specified,  are equipped with Lebesgue measure and Euclidean distance.  We set $\R^{1+n}_+=(0,\infty)\times \R^n$.
We tacitly assume all subsets of $\R^n$, $\R_+$ or $\R^{1+n}_+$  to be Lebesgue measurable. For $E \subseteq \R^n$,  $\R_+$ or $\R^{1+n}_+$ with $0<\abs{E}<\infty$ and $f \in L^1(E)$ or $f$ non-negative, we write
\[
\fint_E f  := \frac{1}{\abs{E}} \int_E f 
\]
and for $p>0$ we define
\[
\ip{f}_{p,E}:= \brB{\fint_E\abs{f}^p}^{1/p}
\]
with the usual adaptations for $p=\infty$. 
\item A set \({\mbs{E}}\subseteq \R^{1+n}_+\) is called a \emph{rectangle} (or a ``cubic'' cylinder)  if it can be written as the Cartesian product of an interval in \(\R_+\) and a cube with sides parallel to the axes in \(\R^n\).
We reserve the letter \(t\) for the \(\R_+\)-variable and the letter \(x\) for the \(\R^n\)-variable, and use the same letters in subscripts to denote the corresponding components of product sets and integrability indices. Thus, if \({\mbs{E}}\subseteq \R^{1+n}_+\) is a rectangle, we write
\[
{\mbs{E}}=E_t\times E_x,
\]
where \(E_t\subseteq \R_+\) and \(E_x\subseteq \R^n\). We hope that this suggestive notation makes the subsequent discussion easier to follow. 
\item We shall encounter mixed-norm spaces $L^{p}(\R_+; L^q(\R^n))$  whose norms are written $\nrm{f}_{L^p_tL^q_{\vphantom{t}x}}$. When $p=q$, we write the space as $L^p(\R^{1+n}_+)$ and its norm as $\nrm{f}_{L^p_{t,x}}$. 
\item Throughout  we fix  a dyadic system $\mc{D}$ in $\R^n$ {such that for any bounded set $E\subseteq \R^n$ there is a $Q \in \mc{D}$ with $E\subseteq Q$. For example, construct the one-dimensional dyadic system from $[-\frac13,\frac23)$ and take Cartesian products to pass to higher dimensions.}
\item For $Q \in \mc{D}$, denote its side length by  $\ell(Q)$. Any ${Q} \in \mc{D}$ can be decomposed into $2^{n}$ dyadic subcubes of side length $\frac{1}{2}\ell(Q)$, which are called the \emph{dyadic children} of ${Q}$ and are denoted by $\ch({Q})$. Conversely, $\widehat{Q} \in \mc{D}$ denotes the unique \emph{dyadic parent} of $Q$. The collection of all  $P\in \mc{D}$ such that $P \subseteq Q$ is denoted by ${\mc{D}}({Q})$.
\item For $Q \in \mc{D}$, we denote by $\lambda Q\subseteq \R^n$ the cube with the same centre as $Q$ and side length $\lambda \ell(Q)$ for $\lambda>0$. Similar notations also apply to balls.
\item For $x \in \R^n$ and $r>0$, denote the ball around $x$ with radius $r$ by $B(x,r)$. For a ball $B$ in $\R^n$, write $r(B)$ for the radius of $B$. Similar notations apply to the cube $Q$ (ball with respect to $d_\infty$ distance).
\item For $p,q \in (0,\infty]$ we will write $[p,q] := \frac{1}{p}-\frac{1}{q}$. This notation should not be confused with that of closed intervals, and the context will make it clear whether we mean a closed interval or this number.
Note that if $\min\{p,q\}\ge 1$, we have $[p,q]=[q',p']$ where $p',q'$ are the H\"older conjugate exponents to $p,q$.
\item We use the standard Vinogradov convention $\eqsim$, $\lesssim$ for comparisons and inequalities with inessential implicit constants.
\end{itemize}

\section{Singular operators}\label{sec:singularoperators}
In this section we introduce our class of singular operators, prove some of its functional-analytic properties, and then show that it  encompasses earlier definitions.  Throughout, we fix a sufficiently small geometric constant \(c\in (0,1)\), depending only on the dimension \(n\). For our purposes it will be enough to take
\[
c \leq \frac{1}{16\sqrt n}.
\]

\subsection{Motivating examples}

As a model, and to build intuition,
consider a singular integral operator
\begin{equation*}
    Tf(t,x)= \int_0^\infty \int_{\R^n} k(t,x; s,y)\,f(s,y)\,\dd y\,\dd s,
\qquad (t,x)\in \R^{1+n}_+.
\end{equation*}
Assume that $T$ is bounded from $L^2(\R^{1+n}_+)$ to $L^2(\R_+^{1+n},t^{-2\kappa}\ddn t\dd x)$, with a
kernel having pointwise estimates
\begin{equation}\label{eq:pointwise}
      |k(t,x; s,y)|\lesssim 
\brb{|t-s|+|x-y|^m}^{-1-\frac nm+\kappa}\cdot
\brB{1+\frac{|x-y|^m}{|t-s|}}^{-M}.
\end{equation}
Here $\kappa$ denotes the order of the operator in  the scaling determined by the parameter $m>0$ and the parameter $M\geq 0$ quantifies the additional off-diagonal decay of the kernel in the $x$-direction. Operators arising from elliptic PDEs, corresponding to \(m=1\), typically exhibit only limited off-diagonal decay; for instance, the Poisson kernel satisfies \eqref{eq:pointwise} with \(\kappa=1\) and \(M=1\). By contrast, operators arising from parabolic PDEs, corresponding to \(m=2\), typically enjoy much stronger off-diagonal decay; for example, the heat kernel satisfies \eqref{eq:pointwise} with \(\kappa=1\) and arbitrarily large \(M\). Examples with \(\kappa=0\) include first-order derivatives of the Poisson kernel and second-order derivatives of the heat kernel. These arise, respectively, in connection with the single layer potential for the Laplace equation and with maximal regularity for the heat equation.

Pointwise kernel estimates can be relaxed to so-called off-diagonal estimates. Indeed, if two rectangles \({\mbs{E}},{\mbs{F}}\subseteq \R^{1+n}_+\) are separated in at least one of the variables, then for every \((t,x)\in {\mbs{F}}\) and \((s,y)\in {\mbs{E}}\) the quantities \(|t-s|\) and \(|x-y|\) are bounded from below by the corresponding separations $d(E_t,F_t)$ and $d(E_x,F_x)$, respectively. Hence for every $(t,x)\in {\mbs{F}}$ and  $(s,y)\in {\mbs{E}}$, we see that \eqref{eq:pointwise} implies
\[
|k(t,x;s,y)|
\lesssim
\brb{d(E_t,F_t)+d(E_x,F_x)^m}^{-1-\frac{n}{m}+\kappa}\cdot 
\brB{1+\frac{d(E_x,F_x)^m}{\diam(E_t\cup F_t)}}^{-M},
\]
for $\kappa\le 1+n/m$.
Integrating over \({\mbs{E}}\) immediately yields so-called \(L^1\)-\(L^\infty\) off-diagonal estimates. Namely, whenever \({\mbs{E}}\) and \({\mbs{F}}\) are separated, one has
\begin{align*}
   \|\ind_{\mbs{F}} T(f\ind_{\mbs{E}})\|_{L^\infty_{t,x}}
&\lesssim 
\brb{d(E_t,F_t)+d(E_x,F_x)^m}^{-1-\frac{n}{m}+\kappa}
\cdot\brB{1+\frac{d(E_x,F_x)^m}{\diam(E_t\cup F_t)}}^{-M}
\cdot \|f\ind_{\mbs{E}}\|_{L^1_{t,x}}.
\end{align*}
Conversely, under suitable kernel representation assumptions, by {shrinking the rectangles to points} and invoking the Lebesgue differentiation theorem, such off-diagonal estimates imply the kernel estimates in \eqref{eq:pointwise}.

Motivated by the preceding discussion, heuristically speaking, one can weaken the assumptions   by requiring  $T$ to be   (locally) bounded from $L^r(\R^{1+n}_+)$ to $L^r(\R_+^{1+n},t^{-\kappa r}\ddn t\dd x)$ for some $r \in [1,\infty]$, with the convention that one uses the  norm $\nrm{t^{-\kappa} f}_{\infty}$ when $r=\infty$,
and by assuming that, for separated rectangles  \({\mbs{E}},{\mbs{F}}\subseteq \R^{1+n}_+\), we have an estimate of the form
\begin{equation} \begin{aligned} \label{eq:odeprototypeinit}
\nrmb{\ind_{\mbs{F}}T(f\ind_{\mbs{E}})}_{L^{v_t}_tL^{v_{\vphantom{t}x}}_{\vphantom{t}x}}
\leq C\cdot \brb{d(E_t,F_t)^{\frac1m}&+d(E_x,F_x)}^{-m[u_t,v_t]-n[u_x,v_x]+m\kappa}\\
&\quad \cdot \brB{1+\frac{d(E_x,F_x)^m}{\diam(E_t\cup F_t)}}^{-M}
\cdot \nrmb{f\ind_{\mbs{E}}}_{L^{u_t}_tL^{u_{\vphantom{t}x}}_{\vphantom{t}x}},\end{aligned}
\end{equation}
for 
$
    1\leq u_t,u_x\leq   r\leq v_t,v_x\leq \infty
$ and $M\ge 0$. This can be viewed as the mixed-norm analogue of the \(L^1\)-\(L^\infty\) off-diagonal estimates discussed above. Here the first factor on the right-hand side measures the singular behaviour of the operator in terms of the separation of ${\mbs{E}}$ and ${\mbs{F}}$, while the second factor captures the additional decay when the \(x\)-separation dominates the \(t\)-scale. Our actual assumption, Definition \ref{def:ODE} below, is a refined version of the above prototypical inequality. We note the following about Definition \ref{def:ODE}.
\begin{enumerate}[(i)]
\item It takes advantage of working in the half-space $\R^{1+n}_+$.
   \item It only requires local $L^r$-boundedness on Whitney-type regions, where $t\eqsim \inf(E_t)$.
    \item It allows for negative off-diagonal decay parameters $M<0$, with potentially important applications to elliptic PDEs. However, in the examples considered in Section~\ref{sec:ellipticPDES}, $M\geq 0$ suffices.
    \item It only requires estimates for sets ${\mbs{E}}$ and ${\mbs{F}}$ with $\sup(E_t\cup F_t)^{\frac1m}$ bounded by $\diam(E_x\cup F_x)$, which will be important in Section \ref{sec:parabolic}.
    \item It minimizes the collection of sets ${\mbs{E}}$ and ${\mbs{F}}$ for which the off-diagonal estimates need to be checked in applications. 
\end{enumerate}

Finally, let us stress that this point of view is more flexible than the typical formulations of off-diagonal estimates that appear in the literature; see for instance \cite{AH25a} and the references therein. Indeed, in previous works  the \(t\)-variable is treated pointwise which, as discussed, corresponds to \(L^1\)-\(L^\infty\) off-diagonal estimates. By contrast, our assumptions allow \(L^{u_t}\)-\(L^{v_t}\) off-diagonal estimates in the $t$-variable with \(u_t>1\) and/or \(v_t<\infty\). This additional flexibility will be essential for the operators arising in our applications.

\subsection{Definition}

Our definition will only require bounds and decay on special configurations for sets ${\mbs{E}},{\mbs{F}} \subseteq \R^{1+n}_+$, depicted in Figure \ref{fig:EF-picturesdef}. They  all come with finite diameters, allowing us to use 
 $$\ell_x:= \diam(E_x\cup F_x)
  \  \text{and} \ \ell_t:= \sup (E_t\cup F_t)$$
    as scaling variables. Note that $\ell_t$ measures the maximum distance to $t=0$ and is well-adapted to taking Whitney decompositions. The inequality $$\diam(E_t\cup F_t)\le \sup(E_t\cup F_t)=\ell_t$$  for all $E_t,F_t \subseteq \R_+$ reflects the special role of the half-line $\R_+$.

\begin{figure}[ht]
\centering

\tikzset{ref/.style={black!100,dashed}, refLight/.style={black!35,dashed}}

\begin{minipage}[t]{0.45\textwidth}
\begin{tikzpicture}[x=0.45cm,y=0.45cm,>=Latex, font=\small]
\useasboundingbox (-10,-3) rectangle (6,10);
  \draw[->] (-5.5,0) -- (5.75,0) node[below] {$x$};
  \draw[->] (-5.5,0) -- (-5.5,9) node[left] {$t$};

  \path[fill=blue!12]   (-4,2) rectangle (4,8);      
  \draw[blue!80]   (-4,2) rectangle (4,8);


  \node at (0,{0.5*(2+8)}) {$E$};

  \foreach \y/\lab in {2/{\inf(E_t)},8/{\sup(E_t)}}{
    \draw (-5.4,\y) -- (-5.6,\y);
    \node[left] at (-5.6,\y) {$\lab$};
  }
  \draw[|-|] (-4,-0.30) -- (4,-0.30) node[midway,below=0pt] {$\ell_x$};
  \draw[|-|] (-9,0) -- (-9,8) node[midway,left=0pt] {$\ell_t$};
  \node at (0,-2.5) {(i): Whitney-type region};
\end{tikzpicture}
\end{minipage}
\hspace{0.05\textwidth}
\begin{minipage}[t]{0.45\textwidth}
\begin{tikzpicture}[x=0.45cm,y=0.45cm,>=Latex, font=\small]
\useasboundingbox (-10,-3) rectangle (6,10);
  \draw[->] (-5.5,0) -- (5.75,0) node[below] {$x$};
  \draw[->] (-5.5,0) -- (-5.5,9) node[left] {$t$};

  \path[fill=blue!12]        (-4,0) rectangle (4,3);
  \path[fill=orange!20] (-4,5) rectangle (4,8);       
  \draw[blue!65]   (-4,0) rectangle (4,3);
  \draw[orange!75] (-4,5) rectangle (4,8);

  \node at (0,{0.5*(5+8)}) {$F$};
  \node at (0,{0.5*(3)}) {$E$};

  \foreach \y/\lab in {3/{},5/{},8/{}}{
    \draw (-5.4,\y) -- (-5.6,\y);
    \node[left] at (-5.6,\y) {$\lab$};
  }
  \draw[|-|] (-4,-0.30) -- (4,-0.30) node[midway,below=0pt] {$\ell_x$};
  \draw[|-|] (-6,0) -- (-6,8) node[pos=0.8,left=0pt] {$\ell_t$};
  \draw[|-|] (-6.5,3) -- (-6.5,5) node[midway,left=0pt] {$d(E_t,F_t)$};

  \node at (0,-2.5) {(ii): $t$-separation};
\end{tikzpicture}
\end{minipage}
\begin{minipage}[t]{0.45\textwidth}

\begin{tikzpicture}[x=0.45cm,y=0.45cm,>=Latex, font=\small]
\useasboundingbox (-10,-3) rectangle (6,6);
  \draw[->] (-5.5,0) -- (5.75,0) node[below] {$x$};
  \draw[->] (-5.5,0) -- (-5.5,5) node[left] {$t$};

  \path[fill=blue!12]         (-4,0.5) rectangle (-1,1.5); 
  \path[fill=orange!20] (1,2) rectangle (4,3);     
  \draw[blue!65]   (-4,0.5) rectangle (-1,1.5);
  \draw[orange!75] (1,2) rectangle (4,3); 

  \node at (2.5,2.5) {$F$};
  \node at (-2.5,1) {$E$};

  \foreach \y/\lab in {0.5/{},1.5/{},2/{},3/{}}{
    \draw (-5.4,\y) -- (-5.6,\y);
    \node[left] at (-5.6,\y) {$\lab$};
  }
  \draw[|-|] (-4,-0.30) -- (4,-0.30) node[pos=0.2,below=0pt] {$\ell_x$};
  \draw[|-|] (-1,-0.80) -- (1,-0.80) node[midway,below=0pt] {$d(E_x,F_x)$};
  \draw[|-|] (-6,0) -- (-6,3) node[pos=0,left=0pt] {$\ell_t$};
  \draw[|-|] (-6.5,1.5) -- (-6.5,2) node[midway,left=0pt] {$d(E_t,F_t)$};

   \node at (0,-3) {(iii): $t$- and $x$-separation};
\end{tikzpicture}
\end{minipage}
\hspace{0.05\textwidth}
\begin{minipage}[t]{0.45\textwidth}
\begin{tikzpicture}[x=0.45cm,y=0.45cm,>=Latex, font=\small]
\useasboundingbox (-10,-3) rectangle (6,6);
  \draw[->] (-5.5,0) -- (5.75,0) node[below] {$x$};
  \draw[->] (-5.5,0) -- (-5.5,5) node[left] {$t$};

  \path[fill=blue!12]         (-4,0.5) rectangle (-1,1.5); 
  \path[fill=orange!20] (1,0.5) rectangle (4,1.5);     
  \draw[blue!65]   (-4,0.5) rectangle (-1,1.5);
  \draw[orange!75] (1,0.5) rectangle (4,1.5);

  \node at (2.5,1) {$F$};
  \node at (-2.5,1) {$E$};

  \foreach \y/\lab in {0.5/{},1.5/{}}{
    \draw (-5.4,\y) -- (-5.6,\y);
    \node[left] at (-5.6,\y) {$\lab$};
  }
  \draw[|-|] (-4,-0.30) -- (4,-0.30) node[pos=0.2,below=0pt] {$\ell_x$};
  \draw[|-|] (-1,-0.80) -- (1,-0.80) node[midway,below=0pt] {$d(E_x,F_x)$};
  \draw[|-|] (-6,0) -- (-6,1.5) node[midway,left=0pt] {$\ell_t$};

   \node at (0,-3) {(iv): $x$-separation};
\end{tikzpicture}
\end{minipage}
\caption{Examples of the sets ${\mbs{E}},{\mbs{F}} \subseteq \R^{1+n}_+$, with $n=1$, in configurations \ref{it:ODEDef1}-\ref{it:ODEDef4} of Definition \ref{def:ODE} for $m=1$.}
\label{fig:EF-picturesdef}
\end{figure}

Let us describe the various configurations for sets ${\mbs{E}},{\mbs{F}} \subseteq \R^{1+n}_+$.
\begin{enumerate}[(i)]
    \item $\mbs{E}=\mbs{F}$ is a Whitney-type region: the $t$- and $x$-scales are comparable, i.e., \(\ell_t^{1/m}\eqsim \ell_x\), and the set is separated from $t=0$, i.e., $\inf(E_t) \eqsim \ell_t$.
    \item Separation occurs in the $t$-variable with $d(E_{t},F_{t})\eqsim \ell_{t}$ and comparable $t$- and $x$-scales.  One of the sets $\mbs{E},\mbs{F}$ is contained in a Whitney-type region  and the other is contained in a  Carleson-type region. 
    \item Separation occurs in both variables  with  \(d(E_x,F_x)\eqsim \ell_x\) and $d(E_{t},F_{t})\eqsim \ell_{t}$, but   the $t$-scale may be  much smaller than the $x$-scale (\(\ell_t^{1/m}\lesssim \ell_x\)).
    \item Separation occurs in the $x$-variable  with  \(d(E_x,F_x)\eqsim \ell_x\) but  $d(E_{t},F_{t})\lesssim \ell_{t}$ and  the $t$-scale may be much smaller than the $x$-scale.
\end{enumerate}

Turning to our definition, we call an operator $T\colon L^\infty_c(\R^{1+n}_+) \to L^0(\R^{1+n}_+)$ \emph{subadditive} if for all $f,g \in L^\infty_c(\R^{1+n}_+)$ we have
$$
\abs{T(f+g)} \leq \abs{Tf}+\abs{Tg}
$$
almost everywhere. Note that the constant in this estimate is required to be $1$.

\begin{definition}[Singular operators] \label{def:ODE}
  Let $r\in[1,\infty]$, $\vec{u}= (u_t,u_x) \in [1,\infty]^2$, $\vec{v} = (v_t,v_x) \in [1,\infty]^2$, $\kappa, M \in \R$ and $m\in (0,\infty)$.
  We say that a {subadditive} operator $T\colon L^\infty_c(\R^{1+n}_+) \to L^0(\R^{1+n}_+)$ is  a
  \emph{{singular operator of type} $(\kappa,m,r,\vec u,\vec v, M)$} if there exists a constant {$C<\infty$} 
   such that the following assertions hold for all $f \in L^\infty_c(\R^{1+n}_+)$ and all rectangles ${\mbs{E}},{\mbs{F}}\subseteq \R^{1+n}_+$ in the following configurations. 
  \begin{enumerate}[(i)]
    \item\label{it:ODEDef1} {(Local boundedness on Whitney-type regions)}
      {If ${\mbs{F}}={\mbs{E}}$, $c\cdot  \ell_x \leq \ell_t^{\frac1m} \leq \ell_x$ and       
      $\inf(E_t) \geq c^m \cdot \ell_t$,} then
\begin{align*}
    \nrmb{ \ind_{{\mbs{E}}} T(f\ind_{{\mbs{E}}})}_{L^r_{t,x}} \leq C\cdot \inf(E_t)^{\kappa} \cdot \nrmb{ f\ind_{{\mbs{E}}}}_{L^r_{t,x}}.
\end{align*} 
      \item\label{it:ODEDef2}  ($t$-separation) 
      {If $c\cdot  \ell_x \leq \ell_t^{\frac1m} \leq \ell_x$ and $d(E_t,F_t)\ge c^m \cdot \ell_t$,} then
      \begin{align*}
  \nrmb{\ind_{\mbs{F}} T(f\ind_{\mbs{E}})}_{L^{v_t}_tL^{v_{\vphantom{t}x}}_{\vphantom{t}x}} 
  &
  \leq C\cdot {d(E_t,F_t)^{-[u_t,v_t]-\frac nm[u_x,v_x]+\kappa}}
\cdot \nrmb{f\ind_{\mbs{E}}}_{L^{u_t}_tL^{u_{\vphantom{t}x}}_{\vphantom{t}x}}.
\end{align*}
  \item\label{it:ODEDef3}  ($t$- and $x$-separation)
    {If $\ell_t^{\frac1m} \leq \ell_x$, $d(E_t,F_t)\ge c^m\cdot \ell_t$ and $d(E_x,F_x)\ge c\cdot  \ell_x$,}
    then
\begin{align*}
  \nrmb{\ind_{\mbs{F}} T(f\ind_{\mbs{E}})}_{L^{v_t}_tL^{v_{\vphantom{t}x}}_{\vphantom{t}x}} \leq C&\cdot   {d(E_x,F_x)^{-m[u_t,v_t]-{n}[u_x,v_x]+m\kappa}}
  \cdot  \brB{\frac{\diam(E_t\cup F_t)}{d(E_x,F_x)^{m}}}^M 
\cdot \nrmb{f\ind_{\mbs{E}}}_{L^{u_t}_tL^{u_{\vphantom{t}x}}_{\vphantom{t}x}}.
\end{align*}
    \item\label{it:ODEDef4}  ($x$-separation)
    {If $\ell_t^{\frac1m}\leq \ell_x$ and $d(E_x,F_x)\ge c \cdot \ell_x$,} then
\begin{align*}
  \nrmb{\ind_{\mbs{F}} T(f\ind_{\mbs{E}})}_{L^{r_{\vphantom{t}}}_{\vphantom{t}t}L^{v_{\vphantom{t}x}}_{\vphantom{t}x}} \leq C
  \cdot   {d(E_x,F_x)^{-{n}[u_x,v_x]+m\kappa}}
  \cdot  \brB{\frac{\diam(E_t\cup F_t)}{d(E_x,F_x)^{m}}}^{M+[u_t,v_t]} 
\cdot \nrmb{f\ind_{\mbs{E}}}_{L^{r_{\vphantom{t}}}_{\vphantom{t}t}L^{u_{\vphantom{t}x}}_{\vphantom{t}x}}.
\end{align*}
\end{enumerate}
\end{definition}

In Definition \ref {def:ODE} the parameter $\kappa$ is the order of the operator, usually non-negative in applications. The parameter $m$ is the scaling parameter, usually an integer in applications. The parameter $M$ measures decay and only appears in  \ref{it:ODEDef3} and \ref{it:ODEDef4}. In these configurations we have
 $$\frac{\diam(E_t\cup F_t)}{d(E_x,F_x)^m }\lesssim \frac{\ell_t}{\ell_x^m} \leq 1,$$
so larger $M$, with $M>0$ or  $M>-[u_t,v_t]$, is respectively, implies stronger decay of $T$.

Definition \ref {def:ODE} looks significantly more involved than the prototypical estimate given in \eqref{eq:odeprototypeinit}. In the next proposition, we show that \eqref{eq:odeprototypeinit} actually implies Definition~\ref{def:ODE}. We will see the benefits of the assumptions in 
Definition~\ref{def:ODE} in our applications in Sections \ref{sec:ellipticPDES} and~\ref{sec:parabolic}.

\begin{prop}
    \label{prop:SIO-simple}
 Let $r\in[1,\infty]$, $\vec{u},\vec{v} \in [1,\infty]^2$ with $u_t \le r \le v_t$, $\kappa, M \in \R$ and $m\in (0,\infty)$. Let $T$ be a bounded subadditive operator from $L^r(\R^{1+n}_+; t^{\gamma r}\dd t\dd x)$ to $L^r(\R_+^{1+n}; t^{(\gamma-\kappa) r}\dd t\dd x)$ for some $\gamma\in \R$.  If for any $f \in L^\infty_c(\R^{1+n}_+)$, and any  rectangles $\boldE,\boldF \subseteq \R^{1+n}_+$ satisfying one of the conditions
 \begin{itemize}
     \item  ($t$-separation) 
      {$c\cdot  \ell_x \leq \ell_t^{\frac1m} \leq \ell_x$ and $d(E_t,F_t)\ge c^m \cdot \ell_t$,}
      \item  ($x$-separation)
    $\ell_t^{\frac1m}\leq \ell_x$ and $d(E_x,F_x)\ge c \cdot \ell_x,$
 \end{itemize} we have the inequality
\begin{equation} \begin{aligned} \label{eq:odeprototype}
\nrmb{\ind_{\mbs{F}}T(f\ind_{\mbs{E}})}_{L^{v_t}_tL^{v_{\vphantom{t}x}}_{\vphantom{t}x}}
\leq C\cdot \brb{d(E_t,F_t)^{\frac1m}&+d(E_x,F_x)}^{-m[u_t,v_t]-n[u_x,v_x]+m\kappa}\\
&\quad \cdot \brB{1+\frac{d(E_x,F_x)^m}{\diam(E_t\cup F_t)}}^{-M}
\cdot \nrmb{f\ind_{\mbs{E}}}_{L^{u_t}_tL^{u_{\vphantom{t}x}}_{\vphantom{t}x}},\end{aligned}
\end{equation}    
    then $T$ is a singular operator of type $(\kappa,m,r,\vec{u},\vec{v},M)$.    
\end{prop}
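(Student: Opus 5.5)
We verify the four conditions of Definition~\ref{def:ODE} one at a time, using the weighted $L^r$ bound for \ref{it:ODEDef1} and the hypothesis \eqref{eq:odeprototype} for the others.

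\textbf{Condition \ref{it:ODEDef1}.} On a Whitney-type region $\mbs{E}$ we have $c^m\ell_t\le \inf(E_t)\le t\le \ell_t$ for all $t\in E_t$, so $t\eqsim \inf(E_t)$ with constants depending only on $c$ and $m$. Hence
\[
\nrmb{\ind_{\mbs{E}}T(f\ind_{\mbs{E}})}_{L^r_{t,x}}\eqsim \inf(E_t)^{\kappa-\gamma}\nrmb{t^{\gamma-\kappa}T(f\ind_{\mbs{E}})\ind_{\mbs{E}}}_{L^r}.
\]
By the assumed weighted boundedness this is at most $C\inf(E_t)^{\kappa-\gamma}\nrmb{t^\gamma f\ind_{\mbs{E}}}_{L^r}$. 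Using $t\eqsim\inf(E_t)$ once more gives $\lesssim \inf(E_t)^{\kappa}\nrm{f\ind_{\mbs{E}}}_{L^r}$. Neither $u_t\le r\le v_t$ nor the off-diagonal bound is needed here.

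\textbf{Condition \ref{it:ODEDef2}.} This configuration is exactly the $t$-separation case of the hypothesis. We have $d(E_t,F_t)\ge c^m\ell_t$ and $d(E_x,F_x)\le \ell_x\le c^{-1}\ell_t^{1/m}\lesssim d(E_t,F_t)^{1/m}$. Hence $d(E_t,F_t)^{1/m}+d(E_x,F_x)\eqsim d(E_t,F_t)^{1/m}$, and the first factor in \eqref{eq:odeprototype} becomes $d(E_t,F_t)^{-[u_t,v_t]-\frac nm[u_x,v_x]+\kappa}$. For the second factor, $\diam(E_t\cup F_t)\ge d(E_t,F_t)\gtrsim \ell_t\gtrsim d(E_x,F_x)^m$, so $1+d(E_x,F_x)^m/\diam(E_t\cup F_t)\eqsim 1$ and its $(-M)$-th power is harmless whatever the sign of $M$.

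\textbf{Condition \ref{it:ODEDef3}.} This is a subcase of the $x$-separation hypothesis. Now $d(E_t,F_t)\le\ell_t\le \ell_x^m\le c^{-m}d(E_x,F_x)^m$, so the first factor is $\eqsim d(E_x,F_x)^{-m[u_t,v_t]-n[u_x,v_x]+m\kappa}$. Also $\diam(E_t\cup F_t)\le \ell_t\lesssim d(E_x,F_x)^m$, so
\[
1+\frac{d(E_x,F_x)^m}{\diam(E_t\cup F_t)}\eqsim \frac{d(E_x,F_x)^m}{\diam(E_t\cup F_t)},
\]
and its $(-M)$-th power is exactly the required factor $\brb{\diam(E_t\cup F_t)/d(E_x,F_x)^m}^{M}$.

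\textbf{Condition \ref{it:ODEDef4}.} This is the only step needing an extra idea, because the norms are $L^r_t$ rather than $L^{v_t}_t$ and $L^{u_t}_t$. We apply \eqref{eq:odeprototype} in the $x$-separation configuration. As in \ref{it:ODEDef3}, the first factor is $\eqsim d(E_x,F_x)^{-m[u_t,v_t]-n[u_x,v_x]+m\kappa}$ and the second is $\eqsim (\diam(E_t\cup F_t)/d(E_x,F_x)^m)^M$. It remains to change the time exponents by H\"older's inequality in $t$, using $u_t\le r\le v_t$ and the fact that $F_t$ and $E_t$ lie in an interval of length $\delta:=\diam(E_t\cup F_t)$. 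On the output side, $\nrm{\cdot}_{L^r_t(F_t)}\le |F_t|^{[v_t,r]}\nrm{\cdot}_{L^{v_t}_t}\le \delta^{[v_t,r]}\nrm{\cdot}_{L^{v_t}_t}$. On the input side, $\nrm{f\ind_{\mbs{E}}}_{L^{u_t}_tL^{u_x}_x}\le \delta^{[u_t,r]}\nrm{f\ind_{\mbs{E}}}_{L^r_tL^{u_x}_x}$. The two gains multiply to $\delta^{[u_t,v_t]}$. Combining this with $d(E_x,F_x)^{-m[u_t,v_t]}$ produces exactly $\brb{\delta/d(E_x,F_x)^m}^{[u_t,v_t]}$, which raises the exponent $M$ to $M+[u_t,v_t]$ and leaves $d(E_x,F_x)^{-n[u_x,v_x]+m\kappa}$, as required.

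The main bookkeeping point is checking, in each configuration, that the hypotheses force the comparabilities used above. The key ones are $\diam(E_t\cup F_t)\le\ell_t$ and $\ell_x\ge d(E_x,F_x)\ge c\ell_x$. All implicit constants depend only on $c$, $m$, $n$ and the exponents.
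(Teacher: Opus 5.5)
Your proposal is correct and follows the same route as the paper: the weighted bound gives configuration (i) because $t\eqsim\inf(E_t)$ on Whitney-type regions, (ii) is exactly the $t$-separation hypothesis, (iii) comes from the $x$-separation hypothesis via $d(E_t,F_t)^{1/m}\lesssim d(E_x,F_x)$ and $\diam(E_t\cup F_t)\lesssim d(E_x,F_x)^m$, and (iv) follows by H\"older in $t$ on both sides. One slip: with the convention $[p,q]=\frac1p-\frac1q$, the output-side H\"older factor in (iv) is $|F_t|^{[r,v_t]}$, not $|F_t|^{[v_t,r]}$, and only with $[r,v_t]$ do the two gains combine to $\diam(E_t\cup F_t)^{[u_t,v_t]}$ as you claim.
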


\begin{proof}   
The assumed boundedness of $T$ implies Definition \ref{def:ODE}\ref{it:ODEDef1} since $\sup(E_t)\eqsim t \eqsim \inf(E_t)$ for all $t \in E_t$. Next, 
   in the configuration in \ref{it:ODEDef2} we have\begin{align*}
       d(E_x,F_x) &\le \ell_{x}\eqsim \ell_{t}^{1/m}\eqsim d(E_t,F_t)^{1/m}\\
       \diam(E_t\cup F_t)&\ge  d(E_t,F_t) \eqsim \ell_{t}\gtrsim d(E_x,F_x)^m
   \end{align*} and thus \eqref{eq:odeprototype} implies Definition \ref{def:ODE}\ref{it:ODEDef2}. Definition \ref{def:ODE}\ref{it:ODEDef3} follows similarly. Finally in the configuration in \ref{it:ODEDef4}, together with $u_t\leq r\leq v_t$ and H\"older's inequality, we have 
    \begin{align*}
        \nrmb{\ind_{\mbs{F}} T(f\ind_{\mbs{E}})}_{L^{r_{\vphantom{x}}}_tL^{v_x}_{\vphantom{t}x}} &\leq |F_t|^{[r,v_t]} \nrmb{\ind_{\mbs{F}} T(f\ind_{\mbs{E}})}_{L^{v_t}_t L^{v_x}_{\vphantom{t}x}} ,\\
        \nrmb{f\ind_{\mbs{E}}}_{L^{u_t}_tL^{u_{\vphantom{t}x}}_{\vphantom{t}x}} &\leq |E_t|^{[u_t,r]} \nrmb{f\ind_{\mbs{E}}}_{L^{r_{\vphantom{x}}}_tL^{u_{x}}_{\vphantom{t}x}}.
    \end{align*}
    Since $|F_t|,|E_{t}|\le \diam(E_t\cup F_t)$, $[u_t,r],[r,v_t] \ge 0$ and $[u_t,r]+[r,v_t]=[u_t,v_t]$, combining these two estimates with \eqref{eq:odeprototype}   yields  Definition \ref{def:ODE}\ref{it:ODEDef4}.
    \end{proof}

\subsection{Functional-analytic properties}

We now record a few elementary properties of singular operators as in
Definition~\ref{def:ODE}. The first one says that the class is stable under
contracting the admissible range of mixed-norm exponents, trading $t$-integrability for additional decay.

\begin{prop}[Contraction of parameters]
    \label{prop:ODE-contraction}  Let $r\in[1,\infty]$, $\vec{u},\vec{v} \in [1,\infty]^2$, $\kappa, M \in \R$ and $m\in (0,\infty)$.
    Let $T$ be a singular operator of type $(\kappa,m,r,\vec u,\vec v,M)$. If $\widetilde{\vec u},\widetilde{\vec v} \in [1,\infty]^2$ satisfy
    \[
        u_t\leq \widetilde u_t\leq \widetilde v_t\leq v_t,
        \qquad
        u_x\leq \widetilde u_x\leq \widetilde v_x\leq v_x,
    \]
    then $T$ is a singular operator of type
    $(\kappa,m,r,\widetilde{\vec u},\widetilde{\vec v},M+[u_t,v_t]-[\widetilde u_t,\widetilde v_t])$.
\end{prop}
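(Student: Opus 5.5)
The plan is to verify each of the four conditions of Definition~\ref{def:ODE} for the new exponents, using only H\"older's inequality on the rectangles ${\mbs{E}}$ and ${\mbs{F}}$ together with the geometric relations between $|E_t|,|F_t|,|E_x|,|F_x|$ and the separation quantities. Condition \ref{it:ODEDef1} involves neither $\vec u,\vec v$ nor $M$, so it holds unchanged. For the other three configurations we use the two H\"older estimates
\begin{align*}
\nrmb{\ind_{\mbs{F}} h}_{L^{\widetilde v_t}_tL^{\widetilde v_x}_{\vphantom{t}x}} &\le |F_t|^{[\widetilde v_t,v_t]}|F_x|^{[\widetilde v_x,v_x]}\nrmb{\ind_{\mbs{F}} h}_{L^{v_t}_tL^{v_x}_{\vphantom{t}x}},\\
\nrmb{f\ind_{\mbs{E}}}_{L^{u_t}_tL^{u_x}_{\vphantom{t}x}} &\le |E_t|^{[u_t,\widetilde u_t]}|E_x|^{[u_x,\widetilde u_x]}\nrmb{f\ind_{\mbs{E}}}_{L^{\widetilde u_t}_tL^{\widetilde u_x}_{\vphantom{t}x}}.
\end{align*}
All exponents appearing here are nonnegative. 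Moreover, $[u_t,\widetilde u_t]+[\widetilde v_t,v_t]=[u_t,v_t]-[\widetilde u_t,\widetilde v_t]$, and the same identity holds for the $x$-exponents. In condition \ref{it:ODEDef4} the $t$-exponent on both sides is $r$, so only the $x$-part of H\"older is used there.

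In configuration \ref{it:ODEDef2} we have $|E_t|,|F_t|\le \ell_t\lesssim d(E_t,F_t)$ and $|E_x|,|F_x|\le \ell_x^n\lesssim \ell_t^{n/m}\lesssim d(E_t,F_t)^{n/m}$. Hence the H\"older factors are bounded by $d(E_t,F_t)^{[u_t,v_t]-[\widetilde u_t,\widetilde v_t]+\frac nm([u_x,v_x]-[\widetilde u_x,\widetilde v_x])}$. Combined with the original bound, this gives exactly the exponent $-[\widetilde u_t,\widetilde v_t]-\frac nm[\widetilde u_x,\widetilde v_x]+\kappa$. This condition has no $M$, so it is consistent with the shift of $M$.

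In configuration \ref{it:ODEDef3} we use $|E_x|,|F_x|\le \ell_x^n\lesssim d(E_x,F_x)^n$, which absorbs the $x$-part into $d(E_x,F_x)^{-n[\widetilde u_x,\widetilde v_x]}$. For the $t$-part we write $|E_t|,|F_t|\le \diam(E_t\cup F_t)=\big(\diam(E_t\cup F_t)/d(E_x,F_x)^m\big)\,d(E_x,F_x)^m$. The powers $d(E_x,F_x)^{m([u_t,v_t]-[\widetilde u_t,\widetilde v_t])}$ then convert $-m[u_t,v_t]$ into $-m[\widetilde u_t,\widetilde v_t]$. The remaining ratio, raised to the power $[u_t,v_t]-[\widetilde u_t,\widetilde v_t]$, adds to $M$ and yields the new decay parameter. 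In configuration \ref{it:ODEDef4} only the $x$-H\"older is used, giving $d(E_x,F_x)^{-n[\widetilde u_x,\widetilde v_x]}$. The ratio exponent there is $M+[u_t,v_t]$, and we rewrite it as $\widetilde M+[\widetilde u_t,\widetilde v_t]$ with $\widetilde M=M+[u_t,v_t]-[\widetilde u_t,\widetilde v_t]$, so this case is merely a relabeling.

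The main point needing care is bookkeeping rather than a real obstacle. The $t$-measure must be bounded by the correct scale in each configuration: $d(E_t,F_t)$ in \ref{it:ODEDef2} and $\diam(E_t\cup F_t)$ in \ref{it:ODEDef3}. Likewise the $x$-measure must be bounded via $\ell_x\lesssim d(E_t,F_t)^{1/m}$ in \ref{it:ODEDef2} and $\ell_x\lesssim d(E_x,F_x)$ in \ref{it:ODEDef3}. These are exactly the constraints built into the configurations, and all implicit constants depend only on $c$, $n$ and $m$.
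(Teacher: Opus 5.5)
Your proposal is correct and follows the paper's proof essentially verbatim. You use the same two H\"older estimates on the rectangles, and in configuration \ref{it:ODEDef2} the same bounds $|E_t|,|F_t|\lesssim d(E_t,F_t)$ and $|E_x|,|F_x|\lesssim d(E_t,F_t)^{n/m}$. In \ref{it:ODEDef3} you use $|E_t|,|F_t|\le\diam(E_t\cup F_t)$ and $|E_x|,|F_x|\lesssim d(E_x,F_x)^n$, factoring $\diam(E_t\cup F_t)$ through $d(E_x,F_x)^m$ exactly as the paper does. In \ref{it:ODEDef4} you apply only spatial H\"older and relabel $M+[u_t,v_t]=\bigl(M+[u_t,v_t]-[\widetilde u_t,\widetilde v_t]\bigr)+[\widetilde u_t,\widetilde v_t]$, as in the paper.
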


\begin{proof}
    Let ${\mbs E}, {\mbs F}\subseteq \R_+^{1+n}$ be rectangles. By
    H\"older's inequality, we have for $f \in L^\infty_c\br{\R^{1+n}_+}$
    \begin{align*}
      \nrmb{\ind_{\mbs F} f}_{L^{\widetilde v_t}_tL^{\widetilde v_x}_x}
      &\leq
      |F_t|^{[\widetilde v_t,v_t]}|F_x|^{[\widetilde v_x,v_x]}
      \nrmb{\ind_{\mbs F} f}_{L^{v_t}_tL^{v_x}_x},\\
      \nrmb{f\ind_{\mbs E}}_{L^{u_t}_tL^{u_x}_x}
      &\leq
      |E_t|^{[u_t,\widetilde u_t]}|E_x|^{[u_x,\widetilde u_x]}
      \nrmb{f\ind_{\mbs E}}_{L^{\widetilde u_t}_tL^{\widetilde u_x}_x}.
    \end{align*}
We check each of the  configurations in Definition~\ref{def:ODE} separately.
\begin{enumerate}[(i)]
    \item The local Whitney estimate is
    unchanged, since it only involves the exponent $r$.
    \item We have
    $|E_t|,|F_t|\lesssim d(E_t,F_t)$ and
    $|E_x|,|F_x|\lesssim d(E_t,F_t)^{n/m}$. Combining these bounds with the
    estimate for the original exponents changes the decay powers exactly from
    $[u_t,v_t]$ and $[u_x,v_x]$ to
    $[\widetilde u_t,\widetilde v_t]$ and
    $[\widetilde u_x,\widetilde v_x]$.
    \item We similarly use
    $|E_x|,|F_x|\lesssim d(E_x,F_x)^n$ and
    $|E_t|,|F_t|\leq \diam(E_t\cup F_t)$. Since 
    \[
       \diam(E_t\cup F_t)^{[u_t,v_t]-[\widetilde u_t,\widetilde v_t]}  =  d(E_x,F_x)^{m( [u_t,v_t]-[\widetilde u_t,\widetilde v_t])}\Bigl(\frac{\diam(E_t\cup F_t)}{d(E_x,F_x)^m}\Bigr)^{[u_t,v_t]-[\widetilde u_t,\widetilde v_t]},
    \]
    we obtain the claimed increase in $M$.
    \item H\"older's inequality is only needed in the
    spatial variable, because the time exponent is fixed at $r$ on both sides of
    Definition~\ref{def:ODE}\ref{it:ODEDef4}. The spatial powers contract as above.
    Since
    \[
        M+[u_t,v_t]=\brb{M+[u_t,v_t]-[\widetilde u_t,\widetilde v_t]}+[\widetilde u_t,\widetilde v_t],
    \]
    we also obtain exactly the right decay parameter.
\end{enumerate}
 This proves the claim.
\end{proof}

Our next proposition shows that mixed-norm boundedness automatically yields a
singular operator.  Proposition \ref{prop:ODE-contraction} lets us first prove the endpoint
estimate and then pass
to all intermediate exponents.

\begin{prop}
\label{prop:boundedimplySOwithM=0}
Let $\vec p,\vec q\in [1,\infty]^2$ with $p_t\leq q_t$ and $p_x\leq q_x$, and let $m \in (0,\infty)$. Let
$T$ be a bounded subadditive operator from
$L^{p_t}(\R_+;L^{p_x}(\R^n))$ to $L^{q_t}(\R_+;L^{q_x}(\R^n))$. Set
\[
    \kappa:= [p_t,q_t]+\tfrac nm[p_x,q_x].
\]
Then $T$ is a singular operator of type
$(\kappa,m,r,\vec u,\vec v,[p_t,q_t]-[u_t,v_t])$ for all $r\in[1,\infty]$ and
$\vec u,\vec v\in[1,\infty]^2$ satisfying
\(
    p_t\leq u_t\leq r\leq v_t\leq q_t\), and \(
    p_x\leq u_x\leq r\leq v_x\leq q_x.
\)
\end{prop}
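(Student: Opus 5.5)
Following the hint before the statement, we first prove the endpoint case $\vec u=\vec p$, $\vec v=\vec q$ with decay parameter $M=0$. We then apply Proposition~\ref{prop:ODE-contraction} to reach all intermediate exponents. That proposition turns type $(\kappa,m,r,\vec p,\vec q,0)$ into type $(\kappa,m,r,\vec u,\vec v,[p_t,q_t]-[u_t,v_t])$ for $p_t\le u_t\le v_t\le q_t$ and $p_x\le u_x\le v_x\le q_x$, which is exactly the claim. The exponent $r$ only enters through configurations \ref{it:ODEDef1} and \ref{it:ODEDef4}, and Proposition~\ref{prop:ODE-contraction} leaves $r$ untouched. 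Hence the endpoint case must be verified for every fixed $r$ with $p_t\le r\le q_t$ and $p_x\le r\le q_x$.

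\emph{Configurations \ref{it:ODEDef2} and \ref{it:ODEDef3} at the endpoint.} The global bound gives
\[
\|\ind_{\mbs F}T(f\ind_{\mbs E})\|_{L^{q_t}_tL^{q_x}_x}\le \|T\|\,\|f\ind_{\mbs E}\|_{L^{p_t}_tL^{p_x}_x}.
\]
Since $\kappa=[p_t,q_t]+\frac nm[p_x,q_x]$, the required power in \ref{it:ODEDef2} is $d(E_t,F_t)^{-[p_t,q_t]-\frac nm[p_x,q_x]+\kappa}=1$. In \ref{it:ODEDef3} with $M=0$, the factor $d(E_x,F_x)^{-m[p_t,q_t]-n[p_x,q_x]+m\kappa}$ also equals $1$. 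So both follow with $C=\|T\|$.

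\emph{Configuration \ref{it:ODEDef4}, the mixed one.} The time exponent on both sides is $r$, and $M+[u_t,v_t]=0+[p_t,q_t]$. Subadditivity is not needed here, since $T$ is applied only once to $f\ind_{\mbs E}$. We combine the global bound with H\"older in $t$:
\[
\|\ind_{\mbs F}g\|_{L^r_tL^{q_x}_x}\le |F_t|^{[r,q_t]}\|\ind_{\mbs F}g\|_{L^{q_t}_tL^{q_x}_x},\qquad
\|f\ind_{\mbs E}\|_{L^{p_t}_tL^{p_x}_x}\le |E_t|^{[p_t,r]}\|f\ind_{\mbs E}\|_{L^r_tL^{p_x}_x}.
\]
Together with $|E_t|,|F_t|\le\diam(E_t\cup F_t)$, this gives the factor $\diam(E_t\cup F_t)^{[p_t,q_t]}$. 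This factor equals $d(E_x,F_x)^{m[p_t,q_t]}\bigl(\diam(E_t\cup F_t)/d(E_x,F_x)^m\bigr)^{[p_t,q_t]}$. Since $m\kappa-n[p_x,q_x]=m[p_t,q_t]$, the $d(E_x,F_x)$ power matches $d(E_x,F_x)^{-n[p_x,q_x]+m\kappa}$ exactly.

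\emph{Configuration \ref{it:ODEDef1}, local $L^r$ boundedness.} This is the step needing the most care. Starting from the $L^{p_t}L^{p_x}\to L^{q_t}L^{q_x}$ bound, we apply H\"older on both sides with $|E_t|\eqsim\ell_t\eqsim\inf(E_t)$ and $|E_x|\eqsim\ell_x^n\eqsim\inf(E_t)^{n/m}$:
\[
\|\ind_{\mbs E}T(f\ind_{\mbs E})\|_{L^r}\le |E_t|^{[r,q_t]}|E_x|^{[r,q_x]}\|T\|\,|E_t|^{[p_t,r]}|E_x|^{[p_x,r]}\|f\ind_{\mbs E}\|_{L^r}.
\]
Here we use $p_t\le r\le q_t$ and $p_x\le r\le q_x$. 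The total power is $\inf(E_t)^{[p_t,q_t]+\frac nm[p_x,q_x]}=\inf(E_t)^\kappa$, with constants depending only on $c$, $m$ and $n$, which gives \ref{it:ODEDef1}. The only point to double-check is that the constraints on $r$ in the statement are exactly what make both H\"older exponents nonnegative. Applying Proposition~\ref{prop:ODE-contraction} then concludes the proof.
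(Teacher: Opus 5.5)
Your proposal is correct and follows the paper's proof essentially step for step. You reduce to type $(\kappa,m,r,\vec p,\vec q,0)$ via Proposition~\ref{prop:ODE-contraction}, obtain configurations (ii) and (iii) directly from the global bound because the distance powers cancel, and obtain (i) and (iv) from H\"older's inequality using $p_t\le r\le q_t$ and $p_x\le r\le q_x$. One small imprecision: in configuration (i) you only have $|E_t|\lesssim\inf(E_t)$ and $|E_x|\lesssim\inf(E_t)^{n/m}$, not $\eqsim$, since $E_t$ may be much shorter than $\ell_t$; these upper bounds are all you need because the H\"older exponents are nonnegative.
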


\begin{proof}
By Proposition~\ref{prop:ODE-contraction} it suffices to prove that $T$ is a
singular operator of type $(\kappa,m,r,\vec p,\vec q,0)$. Fix rectangles
${\mbs E},{\mbs F}\subseteq\R^{1+n}_+$. Let us first consider the local Whitney configuration in
Definition~\ref{def:ODE}\ref{it:ODEDef1}. H\"older's
inequality and the boundedness of $T$ give
\begin{align*}
  \nrmb{\ind_{\mbs E}T(f\ind_{\mbs E})}_{L^r_{t,x}}
  &\lesssim
  |E_t|^{[r,q_t]}|E_x|^{[r,q_x]}
  \nrmb{T(f\ind_{\mbs E})}_{L^{q_t}_tL^{q_x}_x} \\
  &\lesssim
  |E_t|^{[r,q_t]}|E_x|^{[r,q_x]}
  |E_t|^{[p_t,r]}|E_x|^{[p_x,r]}
  \nrmb{f\ind_{\mbs E}}_{L^r_{t,x}} \\
  &=
  |E_t|^{[p_t,q_t]}|E_x|^{[p_x,q_x]}
  \nrmb{f\ind_{\mbs E}}_{L^r_{t,x}}.
\end{align*}
In this configuration we have $|E_t|\lesssim \inf(E_t)$ and
$|E_x|\lesssim \inf(E_t)^{n/m}$, so the final factor is bounded by
$\inf(E_t)^\kappa$, as required. For the configurations in
Definition~\ref{def:ODE}\ref{it:ODEDef2}-\ref{it:ODEDef3}, the boundedness
assumption immediately yields
\[
    \nrmb{\ind_{\mbs F}T(f\ind_{\mbs E})}_{L^{q_t}_tL^{q_x}_x}
    \lesssim
    \nrmb{f\ind_{\mbs E}}_{L^{p_t}_tL^{p_x}_x}.
\]
When $\vec u=\vec p$, $\vec v=\vec q$, and
$\kappa=[p_t,q_t]+\frac nm[p_x,q_x]$, the distance powers in
Definition~\ref{def:ODE}\ref{it:ODEDef2} and
Definition~\ref{def:ODE}\ref{it:ODEDef3} cancel exactly, while $M=0$. Hence these two
estimates follow.

It remains to check Definition~\ref{def:ODE}\ref{it:ODEDef4}. Write
$D_t:=\diam(E_t\cup F_t)$ and $d_x:=d(E_x,F_x)$. By H\"older's inequality in the
time variable and the boundedness assumption,
\begin{align*}
  \nrmb{\ind_{\mbs F}T(f\ind_{\mbs E})}_{L^r_tL^{q_x}_x}
  &\lesssim
  |F_t|^{[r,q_t]}
  \nrmb{T(f\ind_{\mbs E})}_{L^{q_t}_tL^{q_x}_x} \\
  &\lesssim
  |F_t|^{[r,q_t]}|E_t|^{[p_t,r]}
  \nrmb{f\ind_{\mbs E}}_{L^r_tL^{p_x}_x} \\
  &\lesssim
  D_t^{[p_t,q_t]}
  \nrmb{f\ind_{\mbs E}}_{L^r_tL^{p_x}_x}.
\end{align*}
Since
\[
  D_t^{[p_t,q_t]}
  =
  d_x^{-n[p_x,q_x]+m\kappa}
  \Bigl(\frac{D_t}{d_x^m}\Bigr)^{[p_t,q_t]},
\]
this is precisely Definition~\ref{def:ODE}\ref{it:ODEDef4} with
$\vec u=\vec p$, $\vec v=\vec q$ and $M=0$. 
\end{proof}

{\begin{remark}
   The converse to Proposition \ref{prop:boundedimplySOwithM=0} fails in general. For example, one can find  bounded singular operators on $L^r(\R^{1+n}_+)$ of type $(0,1, r, (u,u), (v,v), 0)$ which are not bounded from $L^u(\R^{1+n}_+)$ to $L^v(\R^{1+n}_+)$, see Section~\ref{sec:GRT}. 
\end{remark}}

Singular operator properties for linear operators are also stable under interpolation and transposition in a natural way. For later use, we only state the  duality result,  whose proof is immediate.

\begin{prop}[Duality]
    \label{prop:duality}
    Let  $T$ and $S$ be linear operators $L^\infty_c(\R^{1+n}_+)\to L^1_{\loc}(\R^{1+n}_+)$  such that for all $f,g\in L^\infty_c(\R^{1+n}_+)$,
    \[
    \int_{\R^{1+n}_+} Tf\cdot \overline g \ \dd x \dd t =\int_{\R^{1+n}_+} f\cdot \overline{Sg} \ \dd x \dd t.
    \]  
    Then $T$ is a singular operator of type $(\kappa,m,r,\vec u,\vec v, M)$ if and only if $S$ is a singular operator of type 
    $(\kappa,m,r',\vec v',\vec u', M)$, where the parameters are assumed to be in the ranges as in Definition~\ref{def:ODE}. 
    \end{prop}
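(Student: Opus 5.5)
The plan is to translate each of the four estimates in Definition~\ref{def:ODE} for $T$ into the corresponding estimate for $S$ via the norm duality of mixed Lebesgue spaces. The key identity follows from the assumed relation between $T$ and $S$: for rectangles $\mbs E,\mbs F$ and $f,g\in L^\infty_c(\R^{1+n}_+)$,
\[
\int \ind_{\mbs E}\, S(g\ind_{\mbs F})\cdot \overline{f}
=\overline{\int \ind_{\mbs F}\, T(f\ind_{\mbs E})\cdot \overline{g}} ,
\]
since $\int T(f\ind_{\mbs E})\overline{g\ind_{\mbs F}}=\int f\ind_{\mbs E}\overline{S(g\ind_{\mbs F})}$. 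By symmetry of the relation (taking complex conjugates shows that $S$ and $T$ play interchangeable roles), it suffices to prove one direction: if $T$ is of type $(\kappa,m,r,\vec u,\vec v,M)$, then $S$ is of type $(\kappa,m,r',\vec v',\vec u',M)$. Note also that the map $(r,\vec u,\vec v)\mapsto(r',\vec v',\vec u')$ is an involution.

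Fix a configuration, and note that each configuration in Definition~\ref{def:ODE} is symmetric under exchanging $\mbs E$ and $\mbs F$. Indeed, the conditions involve only $\ell_x,\ell_t$, $d(E_t,F_t)$, $d(E_x,F_x)$, $\diam(E_t\cup F_t)$ and, in \ref{it:ODEDef1}, $\mbs E=\mbs F$. So the estimate for $S$ with input on $\mbs F$ and output on $\mbs E$ is tested in the same configuration as the estimate for $T$ with input on $\mbs E$ and output on $\mbs F$. We then use
\[
\nrmb{\ind_{\mbs E} S(g\ind_{\mbs F})}_{L^{u_t'}_tL^{u_x'}_x}=\sup\Bigl\{\absB{\int \ind_{\mbs E}S(g\ind_{\mbs F})\overline f} : f\in L^\infty_c,\ \supp f\subseteq \mbs E,\ \nrm{f}_{L^{u_t}_tL^{u_x}_x}\le 1\Bigr\},
\]
which is valid for $\vec u\in[1,\infty]^2$. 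Here $S(g\ind_{\mbs F})\in L^1_{\loc}$ is restricted to the bounded set $\mbs E$, and bounded compactly supported functions are norming for the dual mixed norm even at the endpoints $u=1$ or $u=\infty$; when an $L^\infty$ norm appears, one can use $L^1$-type test functions via a standard truncation argument. Combining this with the identity and Hölder's inequality gives
\[
\absB{\int \ind_{\mbs F}T(f\ind_{\mbs E})\overline g}\le \nrmb{\ind_{\mbs F}T(f\ind_{\mbs E})}_{L^{v_t}_tL^{v_x}_x}\nrmb{g\ind_{\mbs F}}_{L^{v_t'}_tL^{v_x'}_x}.
\]
The constant from the $T$-estimate then transfers to $S$ unchanged.

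It remains to check the exponents. We have $[v',u']=[u,v]$, so the decay powers $-[u_t,v_t]-\frac nm[u_x,v_x]+\kappa$ and the factor $(\diam(E_t\cup F_t)/d(E_x,F_x)^m)^{M}$ are unchanged, and so is the exponent $M+[u_t,v_t]$ in \ref{it:ODEDef4}. Configuration \ref{it:ODEDef1} uses $L^r$ versus $L^{r'}$ with the same factor $\inf(E_t)^\kappa$. Configuration \ref{it:ODEDef4} uses $L^r_t$ on both sides, which dualizes to $L^{r'}_t$ on both sides. The ordering $1\le u\le r\le v\le\infty$ becomes $1\le v'\le r'\le u'\le \infty$, as required for the new type. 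The only slightly delicate point is the norming claim at the endpoints for mixed norms restricted to rectangles, which is standard.
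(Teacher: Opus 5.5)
Your proposal is correct and is exactly the immediate argument the paper has in mind; the paper gives no further proof. The conditions in each configuration are symmetric in $\mbs{E},\mbs{F}$, the adjoint identity together with the norming property of mixed Lebesgue norms (with truncation and monotone convergence near $t=0$ and at the endpoints) transfers each estimate with the same constant, and $[v',u']=[u,v]$ leaves all decay exponents unchanged. One minor point: Definition~\ref{def:ODE} imposes no ordering among $\vec u$, $r$, $\vec v$, so your remark that $u\le r\le v$ is preserved under conjugation is harmless but not needed.
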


\subsection{Comparison with earlier definitions}
Next, we compare Definition \ref{def:ODE} to the off-diagonal estimates assumed in \cite[Section 2]{AH25a} and its precursor  \cite{AKMP12}.
For $f \colon \R^{1+n}_+\to \C$, we write
$$
\pi(f):= \pi_1(\supp(f)) \times \pi_2(\supp(f)),
$$
where $\pi_1$ and $\pi_2$ denote the projections onto $\R_+$ and $\R^n$, respectively. We use the notation $f(s)$  to denote the slice restriction at level $s$ defined by $f(s)(x):=f(s,x)$. For sufficient conditions for the well-definedness of the kernel representation in the  following definition, we refer to \cite[Section 2]{AH25a}.

\begin{definition}[Singular integral operators] 
    \label{def:kernelODE}
 Let $r, u_x,v_x\in[1,\infty]$, $\kappa , M\in \R$ and $m\in (0,\infty)$. We say that
  a {linear} operator $T\colon L^\infty_c(\R^{1+n}_+) \to L^0(\R^{1+n}_+)$ is  a
  \emph{{singular integral operator of type} $(\kappa,m,r,u_x,v_x, M)$} if
  there is a strongly measurable $K \colon \{(t,s) \in \R_+^2:t \ne s\} \to \mc{L}(L^r(\R^n))$ such that for all $f \in L^\infty_c(\R^{1+n}_+)$ we have the kernel representation
  \begin{equation}
      \label{eq:representation} Tf(t,x) = \int_0^\infty \bracb{K(t,s)f(s)}(x)\dd s, \qquad (t,x) \notin \pi(f),
  \end{equation}
   and   there exists a constant $C<\infty$  such that the following assertions hold for all $f \in L^\infty_c(\R^{1+n}_+)$ and $E_x,F_x \subseteq \R^n$.
  \begin{enumerate}[(i)]
        \item\label{it:Lr} ($L^r$-boundedness) 
        There is a $\gamma\in \R$ such that $T$ is  bounded from $L^r(\R^{1+n}_+;t^{\gamma r}\ddn t\dd x)$ to {$L^r(\R_+^{1+n};t^{(\gamma -\kappa) r}\ddn t\dd x)$} 
        with norm $\le C$.
        
        \item \label{item:kernelODE-t-sep} {($t$-separation)} 
        If $t\ne s$, we have
        \[ \nrmb{ K(t,s)f(s)}_{L^{v_x}_{x}} \leq C \,{\abs{t-s}}^{-1-\frac{n}m[u_x,v_x]+\kappa} \cdot\nrmb{f(s)}_{L^{u_x}_{x}}. \]
    
        \item \label{item:kernelODE-x-sep} {($x$-separation)} 
        If {$d(E_x,F_x) \geq c\, |t-s|^{\frac1m}>0$}, we have
        \[ \nrmb{\ind_{F_x} K(t,s) ( f(s)\ind_{E_x} )}_{L^{v_x}_{x}} \leq C\, d(E_x,F_x)^{-m-n[u_x,v_x]+m\kappa} \left( \frac{\abs{t-s}}{d(E_x,F_x)^m} \right)^{M} \nrmb{f(s) \ind_{E_x}}_{L^{u_x}_{x}}. \]
   \end{enumerate}
\end{definition}

\begin{remark}Let us make a few comments on Definition \ref{def:kernelODE} and its relation to its precursor in \cite{AH25a, AKMP12}.\label{rem:KernelODE}
\begin{enumerate}[(i)]
    \item\label{it:kernelODEAH} Denote by 
    \[ M_{\text{AH}} := M+ 1-\kappa+\tfrac{n}{m}[u_x,v_x]. \]
    When $M_{\text{AH}}\ge 0$, the combination of the $t$- and $x$-separation conditions in Definition \ref{def:kernelODE} is equivalent to 
    \begin{align*}
        \nrmb{\ind_{F_x} & K(t,s)(f\ind_{E_x})}_{L^{v_x}_{x}}\le C \abs{t-s}^{-1-\frac{n}{m}[u_x,v_x]+\kappa} \left( 1+\frac{d(E_x,F_x)^m}{\abs{t-s}} \right)^{-M_{\text{AH}}} \nrmb{f \ind_{E_x}}_{L^{u_x}_{x}}.
    \end{align*}  
When $M_{\text{AH}}> 0$, this is exactly the $L^{u_x}(\R^n)$-$L^{v_x}(\R^n)$ off-diagonal decay of type $(\kappa, m, M_{\text{AH}})$ for the family of operators $K(t,s)$  appearing in \cite[Definition 1]{AH25a}. Hence, our definition of a singular integral operator in Definition \ref{def:kernelODE}  includes the classes SIO$^{\kappa \pm}_{m,q,M_{\text{AH}}}$
of \cite[Definitions 3 and 4]{AH25a} for which {either $q=u_x\le 2$ and $v_x=2$ or $u_x=2$ and $q=v_x\ge 2$} and the integral in the representation \eqref{eq:representation} is limited to either $(0,t)$ or $(t,\infty)$.
\item {When $M_{\text{AH}}>\frac n m $, one can prove the $L^r$-boundedness of $K(t,s)$ for any $r\in [u_x,v_x]$  from the assumed off-diagonal estimates 
{(see, e.g., \cite[Lemma 4.7]{Auscher-Egert2023-book})}. However, when $M_{\text{AH}}\le \frac n m$, this is unclear.}
\item The formulation in Definition \ref{def:kernelODE} involves an operator-valued kernel representation of $T$, hence the addition of ``integral'' in the terminology. In stark contrast, Definition \ref{def:ODE} is free of any representation and also accommodates subadditive operators.
\end{enumerate}
\end{remark}

We now show that every singular integral operator $T$ as in Definition \ref{def:kernelODE}, including the operators considered in \cite{AH25a},  is a singular operator as in Definition \ref{def:ODE} with $u_t=1$ and $v_t=\infty$. The proof crucially uses $M>-1= -[u_t,v_t]$.

\begin{prop}[Singular integral operators are singular operators] \label{prop:compareAH}
  Let $r, u_x,v_x\in[1,\infty]$, $\kappa \in \R$, $m\in (0,\infty)$ and $M> -1$. Let $T$ be a
  {{singular integral operator of type} $(\kappa,m,r,u_x,v_x, M)$}. Then $T$ is a singular operator of type $(\kappa,m,r,\vec{u},\vec{v}, M)$ with $\vec{u}:=(1,u_x)$ and $\vec{v}:=(\infty,v_x)$.
\end{prop}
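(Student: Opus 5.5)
We verify the four configurations of Definition~\ref{def:ODE} with $\vec u=(1,u_x)$, $\vec v=(\infty,v_x)$, so that $[u_t,v_t]=1$. Configuration \ref{it:ODEDef1} follows from the weighted $L^r$-boundedness in Definition~\ref{def:kernelODE}\ref{it:Lr}, exactly as in the proof of Proposition~\ref{prop:SIO-simple}: on a Whitney-type region $t\eqsim\inf(E_t)$, so the weights $t^{\gamma r}$ and $t^{(\gamma-\kappa)r}$ are comparable to constants.

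For the three separated configurations, $E_t\times E_x$ and $F_t\times F_x$ are disjoint. Hence $(t,x)\notin\pi(f\ind_{\mbs E})$ for $(t,x)\in\mbs F$, and the representation \eqref{eq:representation} gives
\[
\ind_{\mbs F}T(f\ind_{\mbs E})(t,x)=\ind_{F_t}(t)\int_{E_t}\bracb{\ind_{F_x}K(t,s)(f(s)\ind_{E_x})}(x)\dd s .
\]
In configurations \ref{it:ODEDef2}--\ref{it:ODEDef3}, where $E_t,F_t$ are separated, I would apply Minkowski's inequality, take the $L^{v_x}_x$ norm inside the $s$-integral, and then the supremum over $t\in F_t$. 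This reduces everything to bounding $\sup_{t\in F_t,\,s\in E_t}\nrm{\ind_{F_x}K(t,s)(\cdot\,\ind_{E_x})}_{L^{u_x}\to L^{v_x}}$, since $\int_{E_t}\nrm{f(s)\ind_{E_x}}_{L^{u_x}}\dd s=\nrm{f\ind_{\mbs E}}_{L^1_tL^{u_x}_x}$.
\begin{itemize}
\item In \ref{it:ODEDef2}, the $t$-separation bound of Definition~\ref{def:kernelODE} gives $|t-s|^{-1-\frac nm[u_x,v_x]+\kappa}$. Since $|t-s|\eqsim d(E_t,F_t)$, this is the required power, because $\ell_t\eqsim d(E_t,F_t)$ in this configuration.
\item In \ref{it:ODEDef3}, $|t-s|\le\diam(E_t\cup F_t)\le\ell_t\le\ell_x^m\lesssim d(E_x,F_x)^m$, so with $c$ small the $x$-separation hypothesis $d(E_x,F_x)\ge c|t-s|^{1/m}$ holds. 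It gives $d_x^{-m-n[u_x,v_x]+m\kappa}(|t-s|/d_x^m)^M$. If $M\ge0$, bound $|t-s|\le D_t:=\diam(E_t\cup F_t)$. If $M<0$, use $|t-s|\ge d(E_t,F_t)\ge c^m\ell_t\ge c^mD_t$. Either way we get the required estimate.
\end{itemize}

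The main obstacle is configuration \ref{it:ODEDef4}, where $E_t$ and $F_t$ may overlap, so $|t-s|$ can be arbitrarily small and we need an $L^r_t$ bound. Only the $x$-separation estimate is usable here. It is valid since $|t-s|\le D_t\le\ell_x^m$ and $d_x\ge c\ell_x$; if needed, the constant $c$ in Definition~\ref{def:kernelODE} is adjusted by a harmless covering argument. Writing $d_x:=d(E_x,F_x)$, it gives
\[
\nrmb{\ind_{\mbs F}T(f\ind_{\mbs E})(t)}_{L^{v_x}_x}\lesssim d_x^{-m-n[u_x,v_x]+m\kappa-mM}\int_{E_t}|t-s|^{M}\,\nrm{f(s)\ind_{E_x}}_{L^{u_x}}\dd s .
\]
Here $M>-1$ is exactly what makes $|t-s|^M\ind_{|t-s|\le D_t}$ integrable, with $\int_{-D_t}^{D_t}|\tau|^M\dd\tau\eqsim D_t^{M+1}$. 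Young's convolution inequality in $t$ (an $L^1$ kernel acting on $L^r$) then bounds the $L^r_tL^{v_x}_x$ norm by $d_x^{-m-n[u_x,v_x]+m\kappa-mM}D_t^{M+1}\nrm{f\ind_{\mbs E}}_{L^r_tL^{u_x}_x}$. This equals $d_x^{-n[u_x,v_x]+m\kappa}(D_t/d_x^m)^{M+1}$, which is precisely Definition~\ref{def:ODE}\ref{it:ODEDef4} with $M+[u_t,v_t]=M+1$.
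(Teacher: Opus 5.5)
Your proposal is correct and follows the paper's proof configuration by configuration, including the use of $d(E_t,F_t)\ge c^m\diam(E_t\cup F_t)$ to handle $M<0$ in configuration (iii) and of $M>-1$ for the integrability of $|t-s|^M$ in configuration (iv). The only difference is cosmetic: in (iv) you apply Young's convolution inequality with the truncated kernel $|\tau|^M$ on $|\tau|\le D_t$, whereas the paper proves the $L^\infty_t$ and $L^1_t$ endpoint bounds separately and concludes by Schur's lemma or interpolation.
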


\begin{proof}
Since $\sup(E_t) \eqsim t \eqsim \inf(E_t)$ on Whitney-type regions, it is clear that the weighted $L^r$-boundedness in Definition \ref{def:kernelODE}\ref{it:Lr} implies Definition \ref{def:ODE}\ref{it:ODEDef1}.
Fix $f \in L^\infty_c(\R^{1+n}_+)$ and let  ${\mbs{E}},{\mbs{F}} \subseteq \R^{1+n}_+$ be rectangles. We will consider  Definition \ref{def:ODE}\ref{it:ODEDef2}-\ref{it:ODEDef4} separately.
\begin{enumerate}[(i)]\setcounter{enumi}{1}
  \item {($t$-separation)} Assuming $d_{t}:=d(E_t,F_t)>0$, we have
\begin{align*}
   \nrmb{\ind_{\mbs{F}} T(f\ind_{\mbs{E}})}_{L^{\infty_{\vphantom{t}}}_tL^{v_{\vphantom{t}x}}_{\vphantom{t}x}} &= \esssup_{t \in F_t} \, \nrmB{x \mapsto \ind_{F_x}(x) \int_{E_t} \bracb{K(t,s)(f(s)\ind_{E_x})}(x)\dd s }_{L^{v_x}_{\vphantom{t}x}}\\
   &\leq \esssup_{t \in F_t} \,\int_{E_t} \nrmB{x \mapsto \ind_{F_x}(x)  \bracb{K(t,s)(f(s)\ind_{E_x})}(x) }_{L^{v_x}_{\vphantom{t}x}}\dd s\\
   &\lesssim  d_{t}^{-1-\frac{n}m[u_x,v_x]+\kappa} \int_{E_t} \nrm{f(s)\ind_{E_x}}_{L^{u_x}_{\vphantom{t}x}}\dd s\\
   &=  d_{t}^{-[1,\infty]-\frac{n}m[u_x,v_x]+\kappa} 
\cdot \nrmb{f\ind_{\mbs{E}}}_{L^{1_{\vphantom{t}}}_tL^{u_{\vphantom{t}x}}_{\vphantom{t}x}},
\end{align*}
as desired.

\item {($t$- and $x$-separation)} Assuming  $d_{t}:=d(E_t,F_t) \geq c^m \cdot D_{t} $ and $d_{x}:=d(E_x,F_x) \geq c\cdot D_{t}^{1/m}$, with $D_{t}:=\diam(E_t\cup F_t)$, we have
\begin{align*}
   \nrmb{\ind_{\mbs{F}} T(f\ind_{\mbs{E}})}_{L^{\infty_{\vphantom{t}}}_tL^{v_{\vphantom{t}x}}_{\vphantom{t}x}} &= \esssup_{t \in F_t} \, \nrmB{x \mapsto \ind_{F_x}(x) \int_{E_t} \bracb{K(t,s)(f(s)\ind_{E_x})}(x)\dd s }_{L^{v_x}_{\vphantom{t}x}}\\
   &\leq \esssup_{t \in F_t} \,\int_{E_t} \nrmB{x \mapsto \ind_{F_x}(x)  \bracb{K(t,s)(f(s)\ind_{E_x})}(x) }_{L^{v_x}_{\vphantom{t}x}}\dd s
   \\&
   \lesssim  d_{x}^{-m-{n}[u_x,v_x]+m\kappa-mM}  
   \cdot \esssup_{t \in F_t} \, \int_{E_t} {\abs{t-s}^M} \nrm{f(s)\ind_{E_x}}_{L^{u_x}_{\vphantom{t}x}}\dd s
   \\&
   \lesssim  d_{x}^{-m[1,\infty]-{n}[u_x,v_x]+m\kappa} 
   \cdot \brB{\frac{D_{t}}{d_{x}^{m}}}^M \cdot  \nrmb{f\ind_{\mbs{E}}}_{L^{1_{\vphantom{t}}}_tL^{u_{\vphantom{t}x}}_{\vphantom{t}x}}.
\end{align*}
The last inequality is always valid when $M\ge 0$, whereas  $d_{t}\geq c^m\cdot D_{t} $ is used when $M<0$.
\item {($x$-separation)} Assume $d_{x}:=d(E_x,F_x) \geq c \cdot D_{t}^{1/m}$, with $D_{t}:=\diam(E_t\cup F_t)$, but  $E_t$ and $F_t$ are not necessarily separated. Since $M>-1$, we have the estimate
$$
  \esssup_{t\in F_t}\int_{E_t} |t-s|^M \dd s\lesssim 
  D_{t}^{M+1},
$$
which yields 
\begin{align*}
\nrmb{\ind_{\mbs{F}} T(f\ind_{\mbs{E}})}_{L^{\infty_{\vphantom{t}}}_tL^{v_{\vphantom{t}x}}_{\vphantom{t}x}}
& \lesssim d_{x}^{-m[1,\infty]-{n}[u_x,v_x]+m\kappa-mM} 
\cdot D_{t}^{M+[1,\infty]}\cdot  
\nrmb{f\ind_{\mbs{E}}}_{L^{\infty_{\vphantom{t}}}_tL^{u_{\vphantom{t}x}}_{\vphantom{t}x}}.
\end{align*}
The same estimate holds with $L^{\infty}(\R_+)$ replaced by
$L^1(\R_+)$, using instead that 
$$\esssup_{s \in E_t}\int_{F_t} |t-s|^M \dd t\lesssim 
  D_{t}^{M+1}.
$$
The result for $L^r(\R_+)$ follows from Schur's lemma or interpolation.
\end{enumerate}
This verifies all assumptions of Definition \ref{def:ODE} and completes the proof.
\end{proof}

\begin{remark}
\label{rem:pointwiseimmliesnonpointwise} Let us comment on Proposition \ref{prop:compareAH}.
\begin{enumerate}
    
    \item In the proof  we obtained the required estimates for a larger class of rectangles \({\mbs{E}}, {\mbs{F}}\subseteq \R^{1+n}_+\) than those appearing in  Definition \ref{def:ODE}. Furthermore, the \(L^r\)-boundedness of \(K(t,s)\) was not used in the proof.

\item Even when restricting Definition \ref{def:kernelODE} to the same class of rectangles as in Definition \ref{def:ODE}, there is no reason to expect the converse implication in the statement to hold for linear operators.  
\begin{itemize}
    \item   Even if one could recover an operator-valued kernel representation in an appropriate sense, when the parameter \(M\) in the \(x\)-separation condition of Definition \ref{def:ODE} is small, it is not clear how to prove the \(L^r\)-boundedness of \(K(t,s)\) away from the diagonal \(s=t\) as mentioned in Remark~\ref{rem:KernelODE}. Such bounds enter quantitatively in the singular integral theory developed in \cite{AH25a} for $r=2$. 
\item  The local $L^r$-boundedness \ref{it:ODEDef1} in Definition \ref{def:ODE} is strictly weaker than the weighted $L^r$-boundedness assumption \ref{it:Lr} in Definition \ref{def:kernelODE}, and can be implied by other conditions.
\end{itemize}
\end{enumerate}
\end{remark}

\section{Form domination of singular operators }  
\label{sec:domination}
In this section we prove a domination principle for  singular operators introduced in Section~\ref{sec:singularoperators} in terms of suitable model operators. We will focus solely on singular operators as in Definition \ref{def:ODE}, noting that our results will also be applicable to singular \emph{integral} operators as in Definition \ref{def:kernelODE} by using Proposition \ref{prop:compareAH}.
The key idea is to decompose the action of \(T\) in bilinear form, i.e., by testing \(Tf\) against a function \(g\) for $f,g \in L^\infty_c(\R^{1+n}_+)$. Once written in this way, the first step is geometric: one has to organize the interaction between the supports of \(f\), \(Tf\), and \(g\). 
This will be done dyadically, using Carleson boxes and Whitney regions in the scaling imposed by $m>0$.
For a dyadic cube
$Q \in {\mc{D}}$ we  define its Carleson box and Whitney region as
\begin{align*}
  \mbs{{Q}}^{\ca} &:= (0,\ell(Q)^m)\times Q,\\
  \mbs{{Q}}^{\whit} &:= (2^{-m} \ell(Q)^m,\ell(Q)^m)\times Q.
\end{align*}
Furthermore, we define their enlargements by
\begin{align*}
  \Qcatilde{Q} &:= (0,2^m\ell(Q)^m)\times 3Q,\\
  \Qwtilde{Q} &:= (4^{-m}\ell(Q)^m,2^m\ell(Q)^m)\times 3Q,
\end{align*}
where we recall that $3 Q\subseteq \R^n$ denotes the cube with the same centre as $Q$ and side length $3 \ell(Q)$.

With this geometry in place, the bilinear form splits into a local contribution on Whitney regions and three families of  off-diagonal terms that are either $t$- or $x$-separated.

\begin{prop}\label{prop:decomposition} Let $T\colon L^\infty_c(\R^{1+n}_+) \to L^0(\R^{1+n}_+)$ be a subadditive operator.
Then, for any  $f,g \in L^\infty_c(\R^{1+n}_+)$, we have
\begin{align*}
\int_{{\mathbb R}^{1+n}_+}\abs{Tf}\abs{g}  \leq
\sum_{Q \in \mc{D}} \int_{\mbs{Q}^{\whit}} \absb{T(f\ind_{\Qwtilde{Q}})}\abs{g} &+ 
\sum_{k=1}^3 \sum_{Q \in \mc{D}} \int_{{\mbs{F}}_Q^k} \absb{T(f\ind_{{\mbs{E}}_Q^k})}\abs{g}
\end{align*}
where for $Q \in \mc{D}$ we denote its dyadic parent by $\widehat{Q}$ and set $($see Figure \ref{fig:EF-pictures}$\, )$
\begin{align*}
   {\mbs{E}}_Q^1 &:= (0,4^{-m}\ell(Q)^m)\times 3Q,  & {\mbs{F}}_Q^1&:= \mbs{Q}^{\whit},\\
    {\mbs{E}}_Q^2 &:= (\ell(Q)^m,2^m\ell(Q)^m)\times 3Q, & {\mbs{F}}_Q^2&:= (0,2^{-m} \ell(Q)^m)\times Q,\\
    {\mbs{E}}_Q^3 &:= (0,2^m\ell(Q)^m)\times ( 3\widehat{Q}\setminus 3{Q} ), & {\mbs{F}}_Q^3&:= \Qca{Q}.
\end{align*}
\end{prop}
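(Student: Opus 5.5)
The argument is purely geometric: split the half-space into Whitney regions, then telescope $f$ along dyadic ancestors, using nothing but subadditivity.

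\textbf{Step 1: tiling.} Since $f,g\in L^\infty_c$, the Whitney regions $\mbs{Q}^{\whit}$, $Q\in\mc{D}$, partition $\R^{1+n}_+$ up to a null set (for fixed $t$, exactly one generation $\ell(Q)^m\in[t,2^mt)$ qualifies, and those cubes tile $\R^n$). Hence
\[
\int_{\R^{1+n}_+}|Tf||g|=\sum_{Q\in\mc{D}}\int_{\mbs{Q}^{\whit}}|Tf||g|.
\]

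\textbf{Step 2: splitting on a Whitney region.} On $\mbs{Q}^{\whit}$ write
\[
f=f\ind_{\Qwtilde{Q}}+f\ind_{\mbs{E}^1_Q}+f\ind_{\mbs{E}^2_Q}+f\ind_{\R^{1+n}_+\setminus\Qcatilde{Q}} .
\]
This is valid because $\Qcatilde{Q}=(0,2^m\ell(Q)^m)\times 3Q$ is the disjoint union of $\mbs{E}^1_Q$, $\Qwtilde{Q}$ and $\mbs{E}^2_Q$. Note that $\mbs{E}^2_Q$ overlaps $\Qwtilde{Q}$ in $t$ on $(\ell(Q)^m,2^m\ell(Q)^m)$, so I would use the first splitting with the remainder taken as $f\ind_{\Qwtilde{Q}^c\cap\ldots}$. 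Subadditivity (constant $1$) gives the pointwise bound of $|Tf|$ by the sum of the $|T|$ of the pieces.

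\textbf{Step 3: where each piece goes.}
\begin{itemize}
\item The $\Qwtilde{Q}$ piece produces the local term.
\item The piece $f\ind_{\mbs{E}^1_Q}$ on $\mbs{F}^1_Q=\mbs{Q}^{\whit}$ gives the $k=1$ term.
\item The remaining piece is $f$ outside $\Qcatilde{Q}$, i.e.\ supported where $t\ge 2^m\ell(Q)^m$ or $x\notin 3Q$.
\end{itemize}

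\textbf{Step 4: telescoping the remaining piece.} Here I telescope along the ancestors $Q=Q_0\subset Q_1=\widehat{Q}\subset\cdots$. The complement of $\Qcatilde{Q_0}$ decomposes into disjoint annuli of the form
\[
\Qcatilde{Q_{j+1}}\setminus\Qcatilde{Q_j}=\mbs{E}^3_{Q_j}\cup\mbs{E}^2_{Q_{j+1}} .
\]
Using $3Q_j\subseteq 3Q_{j+1}$, each annulus splits into:
\begin{itemize}
\item the $x$-shell $(0,2^m\ell(Q_j)^m)\times(3Q_{j+1}\setminus 3Q_j)=\mbs{E}^3_{Q_j}$;
\item the $t$-top $(2^m\ell(Q_j)^m,2^m\ell(Q_{j+1})^m)\times 3Q_{j+1}$. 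Since $\ell(Q_{j+1})=2\ell(Q_j)$, this is $(\ell(Q_{j+1})^m,2^m\ell(Q_{j+1})^m)\times 3Q_{j+1}=\mbs{E}^2_{Q_{j+1}}$.
\end{itemize}
Because $f$ has compact support and every bounded set lies in some $Q_J$ (with $\Qcatilde{Q_J}$ eventually covering $\supp f$, since the ancestors' $3Q_J$ grow and the heights grow), the union is finite. Subadditivity then yields a finite sum. Moreover $\mbs{Q}^{\whit}\subseteq\mbs{Q_j}^{\ca}=\mbs{F}^3_{Q_j}$, and for $j\ge1$, $\mbs{Q}^{\whit}\subseteq(0,2^{-m}\ell(Q_{j})^m)\times Q_j=\mbs{F}^2_{Q_j}$.

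\textbf{Step 5: regrouping.} Summing over $Q$ and exchanging sums, I regroup by the ancestor $P=Q_j$:
\[
\sum_Q\sum_{j}\int_{\mbs{Q}^{\whit}}|T(f\ind_{\mbs{E}^3_{Q_j}})||g|\le\sum_P\int_{\mbs{F}^3_P}|T(f\ind_{\mbs{E}^3_P})||g|,
\]
since the Whitney regions of the descendants $Q\subseteq P$ (including $P$ itself) are disjoint and contained in $\mbs{P}^{\ca}$. Similarly, with the descendants restricted to $Q\subsetneq P$, the $\mbs{E}^2$ term is bounded by $\int_{\mbs{F}^2_P}$. All manipulations involve nonnegative quantities, so Tonelli justifies the interchanges.

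\textbf{Main obstacle.} The delicate point is the exact bookkeeping of the overlaps in Step 2 ($\mbs{E}^2_Q$ versus $\Qwtilde{Q}$ and $\mbs{E}^1_Q$). I would resolve it by using only the outside-of-$\Qcatilde{Q}$ piece for the telescoping, so that $\mbs{E}^2_Q$ at level $j=0$ never appears and the $j\ge1$ annuli are genuinely disjoint. The other check is the finiteness of the telescoping, which follows from the choice of dyadic system.
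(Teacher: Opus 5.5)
Your proof is correct, with one slip. In Step 2 the claim that $\Qcatilde{Q}$ is the disjoint union of $\mbs{E}^1_Q$, $\Qwtilde{Q}$ and $\mbs{E}^2_Q$ is false, because $\mbs{E}^2_Q\subseteq\Qwtilde{Q}$. The correct statement is that $\Qcatilde{Q}=\mbs{E}^1_Q\cup\Qwtilde{Q}$ up to a null set. So the right splitting is the three-piece one $f=f\ind_{\Qwtilde{Q}}+f\ind_{\mbs{E}^1_Q}+f\ind_{\R^{1+n}_+\setminus\Qcatilde{Q}}$, which is what you actually use from Step 3 on, as your last paragraph says. With that, the remaining steps all check out:
\begin{itemize}
\item the annulus identity $\Qcatilde{Q_{j+1}}\setminus\Qcatilde{Q_j}=\mbs{E}^3_{Q_j}\cup\mbs{E}^2_{Q_{j+1}}$, using $3Q_j\subseteq 3Q_{j+1}$;
\item the finiteness of the ancestor chain;
\item the regrouping via $\Qca{P}=\bigcup_{Q\in\mc{D}(P)}\Qw{Q}$ and $(0,2^{-m}\ell(P)^m)\times P=\bigcup_{Q\in\mc{D}(P),\,Q\neq P}\Qw{Q}$ (up to null sets).
\end{itemize}

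The paper uses the same geometry but runs it top-down. It starts from a single box $\Qca{Q_0}$ containing $\supp f\cup\supp g$. On each child box $\Qca{P}$ it applies subadditivity to $f\ind_{\Qcatilde{Q}}=f\ind_{\Qcatilde{Q}\setminus\Qcatilde{P}}+f\ind_{\Qcatilde{P}}$, and then iterates. The recursion terminates because $\supp g$ stays away from $t=0$. The sets $\mbs{F}^2_Q=\bigcup_{P\in\ch(Q)}\Qca{P}$ and $\mbs{F}^3_P=\Qca{P}$ then appear directly, with no regrouping needed. Unwinding that recursion at a point of $\Qw{Q}$ reproduces exactly your ancestor chain, truncated at $Q_0$.

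Your bottom-up version instead gives a pointwise bound for $|Tf|$ on each Whitney region, and its finiteness uses only the bounded support of $f$. It therefore integrates against any nonnegative measurable $|g|$, without needing $\supp g$ to be bounded or separated from the boundary. The cost is the Tonelli regrouping over pairs $Q\subseteq P$.
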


\begin{figure}[ht]
\centering

\def\q{2}    
\def\Q{6}    
\def\h{4}    
\def\yEone{1}
\def\yWh{2}  
\def\yTop{8} 

\tikzset{ref/.style={black!100,dashed}, refLight/.style={black!35,dashed}}

\begin{minipage}[t]{0.45\textwidth}
\begin{tikzpicture}[x=0.5cm,y=0.5cm,>=Latex, font=\small]
  \draw[->] (-6.5,0) -- (6.75,0) node[below] {$x$};
  \draw[->] (-6.5,0) -- (-6.5,\yTop+1) node[left] {$t$};

  \path[fill=blue!12]   (-\Q,0) rectangle (\Q,\yEone);      
  \path[fill=orange!20] (-\q,\yWh) rectangle (\q,\h);       
  \draw[blue!65]   (-\Q,0) rectangle (\Q,\yEone);
  \draw[orange!75] (-\q,\yWh) rectangle (\q,\h);

  \draw[ref] (-\q,0) rectangle (\q,\h);
  \draw[ref] (-\Q,0) rectangle (\Q,\yTop);

  \node at (0,{0.5*\yEone}) {$E_Q^1$};
  \node at (0,{0.5*(\yWh+\h)}) {$F_Q^1$};

  \foreach \y/\lab in {\yEone/{},\yWh/{2^{-m}\ell(Q)^m},\h/{\ell(Q)^m},\yTop/{2^{m}\ell(Q)^m}}{
    \draw (-6.4,\y) -- (-6.6,\y);
    \node[left] at (-6.6,\y) {$\lab$};
  }
  \draw[|-|] (-\q,-0.30) -- (\q,-0.30) node[midway,below=0pt] {$Q$};
  \draw[|-|] (-\Q,-1.7) -- (\Q,-1.7) node[midway,below=0pt] {$3Q$};
\end{tikzpicture}
\end{minipage}
\hspace{0.08\textwidth}
\begin{minipage}[t]{0.45\textwidth}
\begin{tikzpicture}[x=0.5cm,y=0.5cm,>=Latex, font=\small]
  \draw[->] (-6.5,0) -- (6.75,0) node[below] {$x$};
  \draw[->] (-6.5,0) -- (-6.5,\yTop+1) node[left] {$t$};

  \path[fill=blue!12]   (-\Q,\h) rectangle (\Q,\yTop);      
  \path[fill=orange!20] (-\q,0) rectangle (\q,\yWh);        
  \draw[blue!65]   (-\Q,\h) rectangle (\Q,\yTop);
  \draw[orange!75] (-\q,0) rectangle (\q,\yWh);

  \draw[ref] (-\q,0) rectangle (\q,\h);
  \draw[ref] (-\Q,0) rectangle (\Q,\yTop);

  \node at (0,{0.5*(\h+\yTop)}) {$E_Q^2$};
  \node at (0,{0.5*\yWh}) {$F_Q^2$};

  \foreach \y/\lab in {\yEone/{4^{-m}\ell(Q)^m},\yWh/{2^{-m}\ell(Q)^m},\h/{\ell(Q)^m},\yTop/{2^{m}\ell(Q)^m}}{
    \draw (-6.4,\y) -- (-6.6,\y);
  }
 \draw[|-|] (-\q,-0.30) -- (\q,-0.30) node[midway,below=0pt] {$Q$};
  \draw[|-|] (-\Q,-1.7) -- (\Q,-1.7) node[midway,below=0pt] {$3Q$};
\end{tikzpicture}
\end{minipage}

\begin{minipage}[t]{0.98\textwidth}
\begin{tikzpicture}[x=0.5cm,y=0.5cm,>=Latex, font=\small]
  \draw[->] (-13,0) -- (13.5,0) node[below] {$x$};
  \draw[->] (-13,0) -- (-13,\yTop+1) node[left] {$t$};

  \def\pL{-2}
  \def\pR{0}
  \def\tPL{-4}
  \def\tPR{2}

  \path[fill=blue!12] (2*-\Q,0) rectangle (2*\tPL,2*\h);
  \path[fill=blue!12] (2*\tPR,0) rectangle (2*\Q,2*\h);
  \draw[blue!65] (-2*\Q,0) rectangle (-2*\Q+4,2*\h);
  \draw[blue!65] (2*\Q-8,0) rectangle (2*\Q,2*\h);
  \draw[ref] (2*\tPL,0) rectangle (2*\tPR,2*\h);

  \path[fill=orange!20] (2*\pL,0) rectangle (2*\pR,2*\yWh);
  \draw[orange!75] (2*\pL,0) rectangle (2*\pR,2*\yWh);

  \draw[ref] (2*\pL,0) rectangle (2*\pR,2*\yWh);
  \draw[ref] (2*\tPL,0) rectangle (2*\tPR,2*\h);

  \node at (2*3.9,{\h}) {$E_Q^3$};
  \node at (-2,{\yWh}) {$F_Q^3$};

  \foreach \y/\lab in {\yEone/{},2*\yWh/{\ell(Q)^m},2*\h/{2^m\ell(Q)^m}}{
    \draw (-12.9,\y) -- (-13.1,\y);
    \node[left] at (-13.2,\y) {$\lab$};
  }
 \draw[|-|] (-2*\q,-0.30) -- (0,-0.30) node[midway,below=0pt] {$Q$};
   \draw[|-|] (-2*\q,-1.5) -- (2*\q,-1.5) node[midway,below=0pt] {$\widehat{Q}$};
  \draw[|-|] (-2*\Q,-3) -- (2*\Q,-3) node[midway,below=0pt] {$3\widehat{Q}$};
\end{tikzpicture}
\end{minipage}
\caption{The sets in the decomposition in Proposition \ref{prop:decomposition}}
\label{fig:EF-pictures}
\end{figure}

\begin{proof}
  Take ${Q} \in {\mc{D}}$. Since $\Qca{Q} = \Qw{Q}\cup\bigcup_{P\in \ch(Q)} \Qca{P}$ modulo null sets, we can decompose
\begin{align*}
  \int_{\Qca{Q}} \absb{T(f\ind_{\Qcatilde{Q}})}|g|&\leq \int_{\Qw{Q}} \absb{T(f\ind_{\Qcatilde{Q}})}|g| + \sum_{{P} \in \ch\br{{Q}}}\int_{\Qca{P}} \absb{T(f\ind_{\Qcatilde{Q}\setminus \Qcatilde{P}})}|g| \\&\hspace{1cm}+ \sum_{{P} \in \ch\br{{Q}}} \int_{\Qca{P}} \absb{T(f\ind_{\Qcatilde{P}})}|g|. 
\end{align*}
The terms in the final sum on the right-hand side are of the same form as the left-hand side with $Q$ replaced by its dyadic children, so we can iterate. {Since $g \in L^\infty_c(\R^{1+n}_+)$, the distance between the support of $g$ and the boundary $\partial \R^{1+n}_+$ is strictly positive, so the iteration in the final sum on the right-hand side will be identically zero after a finite number of iterations.}
Starting from a dyadic cube ${Q_0} \in {\mc{D}}$  such that $\Qca{Q}_0$ contains the supports of $f$ and $g$, we have 
$$ \int_{{\mathbb R}^{1+n}_+}|Tf||g| =  \int_{\Qca{Q}_0} \absb{T(f\ind_{\Qcatilde{Q}_0})}|g|$$
and thus the iteration procedure yields, with $\mc{D}(Q_0)$ the set of dyadic cubes contained in $Q_0$,  
\begin{align*}
  \int_{{\mathbb R}^{1+n}_+}|Tf||g|&\leq  \sum_{Q \in \mc{D}(Q_0)}\int_{\Qw{Q}} \absb{T(f\ind_{\Qcatilde{Q}})}|g| + \sum_{Q \in \mc{D}(Q_0)}\sum_{{P} \in \ch\br{{Q}}}\int_{{\Qca{P}}} \absb{T(f\ind_{\Qcatilde{Q}\setminus\Qcatilde{P}})}|g|\\
  &\leq \sum_{Q \in \mc{D}(Q_0)} \int_{\Qw{Q}} \absb{T(f\ind_{\Qwtilde{Q}})}|g|+\sum_{Q \in \mc{D}(Q_0)} \int_{\Qw{Q}} \absb{T(f\ind_{\Qcatilde{Q}\setminus \Qwtilde{Q}})}|g| \\&\hspace{1cm}+\sum_{Q \in \mc{D}(Q_0)} \int_{\bigcup_{P\in \ch(Q)} {\Qca{P}}} \absb{T(f\ind_{(\ell(Q)^m,2^m\ell(Q)^m)\times 3Q})}|g|
  \\&\hspace{1cm}+\sum_{Q \in \mc{D}(Q_0)} \sum_{{P} \in \ch\br{{Q}}}\int_{{\Qca{P}}} \absb{T(f\ind_{(0,\ell(Q)^m)\times (3Q \setminus 3P)})}|g|.
\end{align*}
Noting that the second, third and fourth terms are bounded precisely by the $k=1,2,3$ terms {(relabelling $P$ as $Q$ in the fourth term)} and dropping the restriction that cubes are subcubes of $Q_0$, this finishes the proof.
\end{proof}

\begin{remark}
Suppose  $T$ is linear and consider the associated bilinear form \(\int_{\R^{1+n}_+} Tf\cdot g\). In this case, {all inequalities become equalities} once the absolute values are omitted. Rearranging the terms, one can estimate
\[
\absB{\int_{\R^{1+n}_+} Tf\cdot g-{\sum_{Q \in \mc{D}(Q_0)}} \int_{\mbs{Q}^{\whit}} T(f\ind_{\Qwtilde{Q}})\cdot g}
\]
by the same last three error terms appearing in Proposition \ref{prop:decomposition}.
\end{remark}

Our next step is to estimate the three error terms in Proposition \ref{prop:decomposition}. Our strategy will be to treat each of these pieces by means of suitable positive model operators of Carleson or Whitney type.
These model operators are designed to capture the mixed-norm local averages that arise when the off-diagonal estimates  from Definition \ref{def:ODE}, in the scaling determined by $m$, are applied. 
  For measurable \(f\colon \R^{1+n}_+\to \C\), \(\vec{u}\in (0,\infty]^2\) and \(\delta>0\), we define
\begin{align*}
  H^{\ca}_{\vec{u},\delta} f(t,x)&:=  \brB{\fint_0^t \brB{\fint_{B(x,(\delta t)^{1/m})} \abs{f(s,y)}^{u_x} \dd y}^{\frac{u_t}{u_x}} \dd s}^{\frac{1}{u_t}}, \qquad &&(t,x) \in \R^{1+n}_+,\\
  H^{\whit}_{\vec{u}, \delta} f(t,x)&:=  \brB{\fint_{c^mt}^t\brB{\fint_{B(x,(\delta t)^{1/m})} \abs{f(s,y)}^{u_x} \dd y}^{\frac{u_t}{u_x}} \dd s}^{\frac{1}{u_t}}, \qquad &&(t,x) \in \R^{1+n}_+,
\end{align*}
where $c\in (0,1)$ is the constant fixed at the beginning of  Section~\ref{sec:singularoperators} and we do the usual adaptations when $u_t =\infty$ and/or $u_x = \infty$.
The integration regions of \(H^{\ca}_{\vec u,\delta}\) and \(H^{\whit}_{\vec u,\delta}\) are depicted in Figure \ref{fig:HcaHwhit}. Note that \(H^{\whit}_{\vec u,\delta}\) is naturally adapted to the conical geometry of tent spaces.  By contrast, as we shall see in Section \ref{sec:estontentspace}, the operator \(H^{\ca}_{\vec u,\delta}\) is more delicate to estimate, since it involves integration over a horizontal enlargement of the Carleson region.

\begin{figure}[ht]
\centering

\def\xc{0}      
\def\tH{6}      
\def\r{3}     
\def\tLow{2}  

\tikzset{ref/.style={black!100,dashed}, refLight/.style={black!35,dashed}}

\begin{minipage}[t]{0.45\textwidth}
\centering
\begin{tikzpicture}[x=0.5cm,y=0.5cm,>=Latex, font=\small]
  \useasboundingbox (-6.5,-3) rectangle (6.5,8);

  \draw[->] (-5.8,0) -- (5.8,0) node[below] {$y$};
  \draw[->] (-5.8,0) -- (-5.8,7.5) node[left] {$s$};

  \path[fill=blue!12] (\xc-\r,0) rectangle (\xc+\r,\tH);
  \draw[blue!65]      (\xc-\r,0) rectangle (\xc+\r,\tH);

  \draw[ref] (\xc,0) -- (\xc,\tH);
  \draw[ref] (-5.8,\tH) -- (\xc,\tH);

  \filldraw[black] (\xc,\tH) circle (1.2pt);
  \node[above right=-1pt] at (\xc,\tH) {$(t,x)$};


  \draw (-5.75,\tH) -- (-5.85,\tH);
  \node[left] at (-5.9,\tH) {$t$};

  \draw (\xc-\r,-0.1) -- (\xc-\r,0.1);
  \draw (\xc+\r,-0.1) -- (\xc+\r,0.1);
  \draw[|-|] (\xc,-0.7) -- (\xc+\r,-0.7)
    node[midway,below=0pt] {$(\delta t)^{1/m}$};
  \node at (0,-2.5) {Carleson region};
\end{tikzpicture}
\end{minipage}
\hspace{0.08\textwidth}
\begin{minipage}[t]{0.45\textwidth}
\centering
\begin{tikzpicture}[x=0.5cm,y=0.5cm,>=Latex, font=\small]
  \useasboundingbox (-6.5,-3) rectangle (6.5,8);

  \def\rTop{\r}         
  \def\rConeTop{9}     

  \begin{scope}
    \clip (-5.8,-2.5) rectangle (5.8,8);

    \path[fill=orange!20] (\xc,0) -- (\xc+\rConeTop,\tH) -- (\xc-\rConeTop,\tH) -- cycle;
    \draw[orange!75] (\xc,0) -- (\xc+\rConeTop,\tH);
    \draw[orange!75] (\xc,0) -- (\xc-\rConeTop,\tH);
    \draw[orange!75] (\xc-\rConeTop,\tH) -- (\xc+\rConeTop,\tH);

    \path[fill=blue!12] (\xc-\rTop,\tLow) rectangle (\xc+\rTop,\tH);
    \draw[blue!65]      (\xc-\rTop,\tLow) rectangle (\xc+\rTop,\tH);
  \end{scope}

  \draw[->] (-5.8,0) -- (5.8,0) node[below] {$y$};
  \draw[->] (-5.8,0) -- (-5.8,7.5) node[left] {$s$};

  \draw[ref] (\xc,0) -- (\xc,\tH);
  \draw[ref] (-5.8,\tH) -- (\xc,\tH);
  \draw[ref] (-5.8,\tLow) -- (\xc+\rTop,\tLow);

  \filldraw[black] (\xc,\tH) circle (1.2pt);
  \node[above right=-1pt] at (\xc,\tH) {$(t,x)$};


  \draw (-5.75,\tLow) -- (-5.85,\tLow);
  \draw (-5.75,\tH) -- (-5.85,\tH);
  \node[left] at (-5.9,\tLow) {$c^mt$};
  \node[left] at (-5.9,\tH) {$t$};

  \draw (\xc-\rTop,-0.1) -- (\xc-\rTop,0.1);
  \draw (\xc+\rTop,-0.1) -- (\xc+\rTop,0.1);
  \draw[|-|] (\xc,-0.7) -- (\xc+\rTop,-0.7)
    node[midway,below=0pt] {$(\delta t)^{1/m}$};

  \node at (0,-2.5) {Whitney region inside a cone};
\end{tikzpicture}
\end{minipage}

\caption{The integration regions defining the model operators \(H^{\ca}_{\vec u,\delta}\) and \(H^{\whit}_{\vec u,\delta}\) at a point \((t,x)\in \R^{1+n}_+\), shown in blue. In the Whitney case, we also display in orange a cone of aperture \({c^{-1} \delta^{1/m}}\) containing the integration region.}
\label{fig:HcaHwhit}
\end{figure}

We start with the \(t\)-separated contributions in Proposition \ref{prop:decomposition}, that is, the terms involving the pairs \({\mbs{E}}_Q^k\) and \({\mbs{F}}_Q^k\) for \(k=1,2\). These are the simplest off-diagonal terms in the decomposition, since the relevant separation occurs only in the \(t\)-variable and the spatial scales remain comparable. Accordingly, they can be treated directly using the \(t\)-separation part of Definition \ref{def:ODE}. 

\begin{lemma}\label{lem:esttseparated}
Let $r \in [1,\infty]$, $\vec{u},\vec{v} \in [1,\infty]^2$, $\kappa, M \in \R$, and $m\in [1,\infty)$. Let $T$ be a
singular  operator  of type $(\kappa,m,r,\vec u,\vec v, M)$. For $Q \in \mc{D}$, we recall from Proposition~\ref{prop:decomposition} that
\begin{align*}
    {\mbs{E}}_Q^1 &:= (0,4^{-m}\ell(Q)^m)\times 3Q,  & {\mbs{F}}_Q^1&:= \mbs{Q}^{\whit},\\
    {\mbs{E}}_Q^2 &:= (\ell(Q)^m,2^m\ell(Q)^m)\times 3Q, & {\mbs{F}}_Q^2&:= (0,2^{-m} \ell(Q)^m)\times Q.
\end{align*}
Then we have for any $f,g \in L^\infty_c(\R^{1+n}_+)$ 
\begin{align*}
\sum_{{Q} \in {\mc{D}}} \int_{{\mbs{F}}_Q^1} \absb{T(f\ind_{{\mbs{E}}_Q^1})}\abs{g} &\lesssim \int_{\R^{1+n}_+} t^\kappa\cdot H^{\ca}_{\vec{u},\br{2\sqrt{n}}^m}f(t,x)\cdot H^{\whit}_{\vec{v}',\br{\sqrt{n}}^m}g(t,x)\dd t \dd x,\\
\sum_{{Q} \in {\mc{D}}} \int_{{\mbs{F}}_Q^2} \absb{T(f\ind_{{\mbs{E}}_Q^2})}\abs{g} &\lesssim \int_{\R^{1+n}_+} t^\kappa\cdot H^{\whit}_{\vec{u},\br{2\sqrt{n}}^m}f(t,x)\cdot H^{\ca}_{\vec{v}',\br{\sqrt{n}}^m}g(t,x)\dd t \dd x.
\end{align*}
\end{lemma}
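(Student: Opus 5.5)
Fix $Q\in\mc{D}$ and treat the two sums the same way: apply Hölder on each pair $({\mbs{E}}_Q^k,{\mbs{F}}_Q^k)$, use the $t$-separation estimate of Definition~\ref{def:ODE}\ref{it:ODEDef2}, and then trade the resulting $Q$-localized mixed norms for pointwise averages that the model operators dominate on the Whitney region $\Qw{Q}$. Summing over $Q$ at the end uses that the Whitney regions are pairwise disjoint.

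\textbf{Step 1: the configuration.} For $k=1$ we have $E_t=(0,4^{-m}\ell(Q)^m)$ and $F_t=(2^{-m}\ell(Q)^m,\ell(Q)^m)$. Hence $\ell_t=\ell(Q)^m$, $\ell_x=\diam(3Q)=3\sqrt n\,\ell(Q)$ and $d(E_t,F_t)=(2^{-m}-4^{-m})\ell(Q)^m\ge 4^{-m}\ell(Q)^m$. Since $m\ge1$ and $c\le 1/(16\sqrt n)$, this gives $c\,\ell_x\le\ell_t^{1/m}\le\ell_x$ and $d(E_t,F_t)\ge c^m\ell_t$.

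For $k=2$ we have $\ell_t=2^m\ell(Q)^m$ and $d(E_t,F_t)=(1-2^{-m})\ell(Q)^m\ge \tfrac12\ell(Q)^m$, and the same checks go through. In both cases Definition~\ref{def:ODE}\ref{it:ODEDef2} yields
\[
\nrmb{\ind_{{\mbs{F}}_Q^k}T(f\ind_{{\mbs{E}}_Q^k})}_{L^{v_t}_tL^{v_x}_x}\lesssim \ell(Q)^{-m[u_t,v_t]-n[u_x,v_x]+m\kappa}\nrmb{f\ind_{{\mbs{E}}_Q^k}}_{L^{u_t}_tL^{u_x}_x}.
\]

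\textbf{Step 2: Hölder and normalization.} Apply Hölder in mixed norms: $\int_{{\mbs{F}}_Q^k}|T(f\ind_{{\mbs{E}}})||g|\le \nrm{\ind_{\mbs F}T(f\ind_{\mbs E})}_{L^{v_t}L^{v_x}}\nrm{g\ind_{\mbs F}}_{L^{v_t'}L^{v_x'}}$. Next write each norm as a normalized average times a measure factor, for example
\[
\nrm{f\ind_{{\mbs{E}}}}_{L^{u_t}L^{u_x}}=|E_t|^{1/u_t}|E_x|^{1/u_x}\Bigl(\fint_{E_t}\ip{f(s)}_{u_x,E_x}^{u_t}\dd s\Bigr)^{1/u_t}.
\]
With $|E_t|,|F_t|\eqsim\ell(Q)^m$ and $|E_x|,|F_x|\eqsim\ell(Q)^n$, the exponents combine to $\ell(Q)^{m+n+m\kappa}\eqsim|\Qw{Q}|\,t^\kappa$ for $(t,x)\in\Qw{Q}$. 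This uses $1/u-[u,v]+1/v'=1$ in each variable.

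\textbf{Step 3: pointwise domination on $\Qw{Q}$.} For $(t,x)\in\Qw{Q}$ (with $t\in(2^{-m}\ell(Q)^m,\ell(Q)^m)$), the ball $B(x,2\sqrt n\,t^{1/m})$ contains $3Q$, and its radius is $\eqsim \ell(Q)$, so averages over $3Q$ are dominated up to constants by averages over this ball. In time, $E^1_t\subseteq(0,t)$ with $|E^1_t|\eqsim t$. Therefore the $f$-average for $k=1$ is $\lesssim H^{\ca}_{\vec u,(2\sqrt n)^m}f(t,x)$.

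The $g$-average over ${\mbs{F}}_Q^1=\Qw{Q}$ is handled similarly. The time interval $(2^{-m}\ell(Q)^m,\ell(Q)^m)$ is not inside $(c^mt,t)$ for all such $t$, because it extends above $t$. To handle this, average over the $t$ lying in the top part of the Whitney region (say $t\in(2^{-m/2}\ell(Q)^m,\ell(Q)^m)$, or after sub-dividing), so that $F_t\subseteq(c^m t', t')$ for suitable $t'$ and $Q\subseteq B(x,\sqrt n\,t^{1/m})$.

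For $k=2$ the roles swap. $E^2_t=(\ell(Q)^m,2^m\ell(Q)^m)$ is a Whitney interval at height $t'\eqsim 2^m\ell(Q)^m$, giving $H^{\whit}$ for $f$, while $F^2_t=(0,2^{-m}\ell(Q)^m)\subseteq(0,t)$ gives $H^{\ca}$ for $g$. Here it is cleanest to integrate over $(t,x)$ in the Whitney region of the parent $\widehat Q$, or over a slab $t\in(\ell(Q)^m,2^m\ell(Q)^m)$, $x\in Q$. Such slabs are still boundedly overlapping in $Q$, and $3Q\subseteq B(x,2\sqrt n\,t^{1/m})$ with $E_t\subseteq(c^mt,t]$ up to a null set.

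\textbf{Step 4: summation.} Each term is now bounded by $\int_{\mbs{R}_Q}t^\kappa H f\cdot Hg$ over a region $\mbs R_Q$ comparable to a Whitney region. These regions have bounded overlap, so summing over $Q$ gives the claim.

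The main obstacle is the bookkeeping in Step 3: the time windows $(c^mt,t)$ and $(0,t)$ must contain $F_t$ and $E_t$ with comparable length for every $(t,x)$ in a chosen region of measure $\eqsim|\Qw{Q}|$. I would first try taking the region to be the upper portion of the relevant Whitney strip and verify the containments directly, using $c\le 1/(16\sqrt n)$ and $m\ge1$.
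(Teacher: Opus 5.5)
Your architecture is the paper's: H\"older, the $t$-separation bound of Definition~\ref{def:ODE}\ref{it:ODEDef2}, normalization to averages via $1/u-[u,v]+1/v'=1$, domination by the model operators, and summation over disjoint regions. Steps 1 and 2 are correct. The gap is in Step 3, which is the only real content of the lemma: the regions you propose do not give the containments you need.

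\emph{Time containment.} Both model operators at $(t,x)$ average only over times below $t$: over $(0,t)$ for $H^{\ca}$ and over $(c^mt,t)$ for $H^{\whit}$. So the evaluation point must lie at or above the top of the time interval being averaged.
\begin{itemize}
\item For $k=1$, the $g$-set has $F_t=(2^{-m}\ell(Q)^m,\ell(Q)^m)$, so you need $t\ge \ell(Q)^m$. No point of $\Qw{Q}$ qualifies, in particular none of its top portion, so ``suitable $t'$'' cannot be found there.
\item For $k=2$, the $f$-set has $E_t=(\ell(Q)^m,2^m\ell(Q)^m)$, so you need $t\ge 2^m\ell(Q)^m$. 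Your slab $t\in(\ell(Q)^m,2^m\ell(Q)^m)$, which is also the time range of $\Qw{\widehat Q}$, does not work: the claim $E_t\subseteq(c^mt,t]$ is false there.
\end{itemize}

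\emph{Spatial containment.} This also fails on $\Qw{Q}$ with the prescribed aperture. For $(t,x)\in\Qw{Q}$ the radius $2\sqrt n\,t^{1/m}$ ranges over $(\sqrt n\,\ell(Q),2\sqrt n\,\ell(Q))$. But $\sup_{x\in Q,\,y\in 3Q}|x-y|=2\sqrt n\,\ell(Q)$, so in general $3Q\not\subseteq B(x,2\sqrt n\,t^{1/m})$. Since $\delta=(2\sqrt n)^m$ is fixed in the statement, you cannot compensate by enlarging the ball.

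\emph{The fix.} Shift the evaluation point upward. For $k=1$ the paper takes $(t,x)\in\Qw{Q}$ and evaluates both model operators at $(2^mt,x)$. Equivalently, it works on the slab $(\ell(Q)^m,2^m\ell(Q)^m)\times Q$; your proposed $k=2$ slab is exactly the right one for $k=1$. For $k=2$ you must go one level higher, evaluating at $(4^mt,x)$, i.e.\ on $(2^m\ell(Q)^m,4^m\ell(Q)^m)\times Q$. On these slabs the containments all hold:
\begin{itemize}
\item $t^{1/m}>\ell(Q)$ gives $3Q\subseteq B(x,2\sqrt n\,t^{1/m})$ and $Q\subseteq B(x,\sqrt n\,t^{1/m})$.
\item The lower end of the Whitney window is harmless, because $c\le 1/4$ gives $c^m2^m\ell(Q)^m\le 2^{-m}\ell(Q)^m$ and $c^m4^m\ell(Q)^m\le \ell(Q)^m$.
\item All time windows have length comparable to $|E_t|$ and $|F_t|$.
\end{itemize}
To finish, bound the product of norms by $\ell(Q)^{m+n+m\kappa}$ times the infimum over $\Qw{Q}$ of the shifted model operators, and integrate over the pairwise disjoint $\Qw{Q}$. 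Then change variables $t\mapsto 2^{m}t$ (resp.\ $4^{m}t$), which only changes $t^\kappa$ by a constant factor. This completes your Step 4.
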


\begin{proof} 
For the first estimate, fix $Q \in \mc{D}$ and note that for $E_t :=  (0,4^{-m}\ell(Q)^m)$ and $F_t:= (2^{-m}\ell(Q)^m, \ell(Q)^m)$ we have $\ell_t = \ell(Q)^m$, $\ell_x = \diam(3Q) = 3\sqrt{n} \ell(Q)$ and
\begin{align*}
    d(E_t,F_t) &= (2^m-1) 4^{-m}\ell(Q)^m\\ 
    \sup(E_t\cup F_t)+ \diam(3Q)^m &\leq \ell(Q)^m+(3\sqrt{n}\cdot \ell(Q))^m \leq (4\sqrt{n}\cdot \ell(Q))^m
\end{align*}
Therefore, we obtain by H\"older's inequality and the $t$-separated case of Definition \ref{def:ODE} that
\begin{align} \begin{aligned}  \label{eq:toffest}
 \int_{\mbs{Q}^{\whit}} &\absb{T(f\ind_{(0,4^{-m}\ell(Q)^m)\times 3Q})}\abs{g}\\
 &\lesssim  \ell(Q)^{-m[u_t,v_t]-{n}[u_x,v_x]+m\kappa}
 \cdot\nrmb{f\ind_{(0,4^{-m}\ell(Q)^m)\times 3Q}}_{L^{\vphantom{{v'_{\vphantom{t}x}}}u_t}_tL^{\vphantom{{v'_{\vphantom{t}x}}}u_{\vphantom{t}x}}_{\vphantom{t}x}}  \cdot \nrmb{g\ind_{\mbs{Q}^{\whit}}}_{L^{{v'_t}}_tL^{{v'_{\vphantom{t}x}}}_{\vphantom{t}x}} \\
  &\lesssim \ell(Q)^{m+n+m\kappa} \inf_{(t,x) \in \mbs{Q}^{\whit}} H^{\ca}_{\vec{u},\br{2\sqrt{n}}^m}f(2^mt,x)\cdot H^{\whit}_{\vec{v}',\br{\sqrt{n}}^m}g(2^mt,x),
\end{aligned}
\end{align}
where in the last step we used that for any $(t,x) \in \Qw{Q}$ we have $2^mt \geq \ell(Q)^m$ and thus
\begin{align*}
    (0,4^{-m}\ell(Q)^m)\times 3Q &\subseteq (0,2^m t)\times B(x,{(\br{2\sqrt{n}}^m\cdot2^mt)^{1/m}}),\\
    \Qw{Q}&\subseteq (2^{-m}t,2^m t)\times B(x,{(\br{\sqrt{n}}^m\cdot2^mt)^{1/m}}).
\end{align*}
Hence, using  the disjointness of the  Whitney regions, $\abs{\mbs{Q}^{\whit}}\eqsim \ell(Q)^{m+n}$ and $t \eqsim \ell(Q)^m$ for all $(t,x) \in \Qw{Q}$, we deduce
\begin{align*}
  \sum_{{Q} \in {\mc{D}}} \int_{\mbs{Q}^{\whit}} &\absb{T(f\ind_{(0,4^{-m}\ell(Q)^m)\times 3Q})}\abs{g}\\ &\lesssim \sum_{{Q} \in {\mc{D}}}\int_{\mbs{Q}^{\whit}} t^\kappa\cdot H^{\ca}_{\vec{u},\br{2\sqrt{n}}^m}f(2^mt,x)\cdot H^{\whit}_{\vec{v}',\br{\sqrt{n}}^m}g(2^mt,x)\dd t \dd x\\
  &\lesssim \int_{\R^{1+n}_+} t^\kappa\cdot H^{\ca}_{\vec{u},\br{2\sqrt{n}}^m}f(t,x)\cdot H^{\whit}_{\vec{v}',\br{\sqrt{n}}^m}g(t,x)\dd t \dd x,
\end{align*}
using a change of variables in the last step. 
The second estimate is proven analogously.
\end{proof}

\begin{remark}
  We state Lemma~\ref{lem:esttseparated} and all results that follow only for \(m\geq 1\) to avoid technicalities. Indeed, as \(m\downarrow 0\), the factor \(2^m-1\) tends to \(0\), so the fixed geometric constant \(c>0\) would have to be chosen depending on \(m\). For each fixed \(m>0\), one could choose \(c>0\) sufficiently small instead. Since our applications only involve \(m=1,2\), we choose not to complicate the notation.
\end{remark}

For the $x$-separated term in our decomposition in Proposition \ref{prop:decomposition}, i.e., the term involving ${\mbs{E}}_Q^3$ and ${\mbs{F}}_Q^3$,  our domination principle can be strengthened if the operator $T$ is (anti-)causal, which we introduce next.

\begin{definition} Let $T\colon L^\infty_c(\R^{1+n}_+) \to L^0(\R^{1+n}_+)$ be an operator.
\begin{itemize}
    \item We call  $T$ \emph{causal} if for all $f \in L^\infty_c(\R^{1+n}_+)$ and ${\mbs{E}}=E_t\times E_x$  with  $E_t \subseteq \R_+$  and $E_x\subseteq \R^n$ we have
\begin{align*}
    T(f \ind_{\mbs{E}})(t,x) &= 0, & & \makebox[0pt][l]{\ensuremath{(t,x)\in \R^{1+n}_+:\ t<\inf E_t.}}\hphantom{(t,x) \in \R^{1+n}_+}
\end{align*}
\item We call  $T$ \emph{anti-causal} if for all $f \in L^\infty_c(\R^{1+n}_+)$ and ${\mbs{E}}=E_t\times E_x$  with  $E_t \subseteq \R_+$  and $E_x\subseteq \R^n$ we have
\begin{align*}
    T(f \ind_{\mbs{E}})(t,x) &= 0, & & \makebox[0pt][l]{\ensuremath{(t,x)\in \R^{1+n}_+:\ t>\sup E_t.}}\hphantom{(t,x) \in \R^{1+n}_+}
\end{align*}
\end{itemize}
\end{definition}

We are now in a position to state our main domination result. It combines the geometric decomposition in Proposition~\ref{prop:decomposition} with the boundedness of \(T\) on Whitney-type regions, the estimate for the \(t\)-separated terms in Lemma~\ref{lem:esttseparated}, and further estimates for the (\(t\)- and) \(x\)-separated term. In this way, the boundedness of \(T\) is reduced to the boundedness of an averaging operator on Whitney regions and the boundedness of the model operators \(H^{\ca}_{\vec u,\delta}\) and \(H^{\whit}_{\vec u,\delta}\).

\begin{theorem}\label{thm:maindominationform}
Let $r\in[1,\infty]$, $\vec{u},\vec{v} \in [1,\infty]^2$, $\kappa,M \in \R$ and $m\in [1,\infty)$. Let $T$ be a
singular  operator  of type $(\kappa,m,r,\vec u,\vec v, M)$.
Then we have for any $f,g \in L^\infty_c(\R^{1+n}_+)$ that
\begin{align*}
\int_{{\mathbb R}^{1+n}_+}&\abs{Tf}\abs{g} 
\lesssim 
\sum_{Q \in \mc{D}} \ell(Q)^{m\kappa} \cdot \ip{f}_{r,\Qwtilde{Q}}\cdot \ip{g}_{\vphantom{\Qwtilde{Q}}r',\Qw{Q}} \cdot\abs{\Qw{Q}}
&& \\
&+\sum_{j=0}^{\infty} 2^{-jm\br{M+[u_t,v_t]-\kappa}} \cdot \int_{\R^{1+n}_+}t^\kappa\cdot   H^{\whit}_{(r,u_x),2^{jm}}f(t,x)\cdot H^{\whit}_{(r',v_x'), 2^{jm}}g(t,x)\dd x\dd t  &&
\text{$(\mathrm{w}$-$\mathrm{w})$}\\
&+\sum_{j=0}^{\infty} 2^{-jm\br{M+[u_t,v_t]-\kappa}} \cdot \int_{\R^{1+n}_+}t^\kappa\cdot   H^{\ca}_{\vec{u},2^{jm}}f(t,x)\cdot H^{\whit}_{\vec{v}', 2^{jm}}g(t,x)\dd x\dd t &&\text{$(\mathrm{ca}$-$\mathrm{w})$}\\
&+\sum_{j=0}^{\infty} 2^{-jm\br{M+[u_t,v_t]-\kappa}} \cdot
\int_{\R^{1+n}_+}t^\kappa\cdot   H^{\whit}_{\vec{u},2^{jm}}f(t,x)\cdot H^{\ca}_{\vec{v}', 2^{jm}}g(t,x)\dd x\dd t. &&\text{$(\mathrm{w}$-$\mathrm{ca})$}.
\end{align*}
If $T$ is causal, the term {$(\mathrm{w}$-$\mathrm{ca})$} is omitted, and if $T$ is anti-causal, the term {$(\mathrm{ca}$-$\mathrm{w})$} is  omitted.
\end{theorem}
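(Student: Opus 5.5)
The proof starts from Proposition~\ref{prop:decomposition} and treats the four resulting sums one at a time.

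\emph{Local term.} For each $Q\in\mc{D}$ we bound $\int_{\Qw{Q}}|T(f\ind_{\Qwtilde{Q}})||g|$. The rectangle $\Qwtilde{Q}=(4^{-m}\ell(Q)^m,2^m\ell(Q)^m)\times 3Q$ is not exactly in configuration~\ref{it:ODEDef1}, since $\ell_t^{1/m}=2\ell(Q)$ and $\ell_x=3\sqrt n\,\ell(Q)$, so $\ell_t^{1/m}\le\ell_x$ fails only up to the constant $c$. We therefore cover $\Qwtilde{Q}$ by boundedly many sub-rectangles $\mbs{E}_i$, each satisfying $c\ell_x\le \ell_t^{1/m}\le \ell_x$ and $\inf(E_t)\ge c^m\ell_t$; for instance, split $3Q$ into small cubes and the time interval into pieces of length comparable to the cubes. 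Subadditivity gives $|T(f\ind_{\Qwtilde{Q}})|\le\sum_i|T(f\ind_{\mbs{E}_i})|$. The pairs with $\mbs{E}_i\ne\mbs{E}_k$ as target are handled by taking the smallest rectangle $\mbs{E}$ containing both, which can still be chosen to be Whitney-type after adjusting $c$. Applying~\ref{it:ODEDef1} and then H\"older gives the bound $\ell(Q)^{m\kappa}\ip{f}_{r,\Qwtilde{Q}}\ip{g}_{r',\Qw{Q}}|\Qw{Q}|$. The margin $c\le 1/(16\sqrt n)$ is presumably chosen exactly so that this covering works.

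\emph{$t$-separated terms.} The $k=1,2$ terms are given directly by Lemma~\ref{lem:esttseparated}. These are the $j=0$ summands of (ca-w) and (w-ca), up to the harmless change of dilation $\delta$; note that $H_{\vec u,\delta}$ is monotone in $\delta$ up to constants. If $T$ is causal, the $k=2$ term vanishes: $\mbs{F}^2_Q$ lies strictly below $\mbs{E}^2_Q$, so $T(f\ind_{\mbs{E}^2_Q})=0$ there. This is why (w-ca) is omitted for causal $T$. Symmetrically, the $k=1$ term vanishes for anti-causal $T$.

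\emph{$x$-separated term.} This is the main step. Fix $Q$ and write $\ell:=\ell(Q)$, with $\mbs{F}=\Qca{Q}$ and $\mbs{E}=(0,2^m\ell^m)\times(3\widehat Q\setminus3Q)$. Then $d(E_x,F_x)\ge\ell$, but $\ell_t^{1/m}\eqsim\ell_x$, so there is no gain yet. We decompose both time intervals dyadically: $F_t=\bigcup_{i\ge0}(2^{-(i+1)m}\ell^m,2^{-im}\ell^m)$, and similarly $E_t$. We then refine in $x$: at time scale $2^{-im}\ell^m$, cover $Q$ and $3\widehat Q\setminus 3Q$ by cubes $R$ of side $2^{-i}\ell$, so that the pieces are Whitney-type in their own scale while the spatial separation stays $\gtrsim\ell=2^{i}\cdot(\text{scale})$. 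Setting $j\approx i$, each pair of pieces falls into configuration~\ref{it:ODEDef3} if the time pieces are separated and into~\ref{it:ODEDef4} if they are adjacent. The decay factor is
\[
\bigl(\diam(E_t\cup F_t)/d_x^m\bigr)^{M}\quad\text{or}\quad\bigl(\diam(E_t\cup F_t)/d_x^m\bigr)^{M+[u_t,v_t]},
\]
which is $\lesssim 2^{-jm(M)}$ or $2^{-jm(M+[u_t,v_t])}$. Combined with the prefactor $d_x^{-m[u_t,v_t]-n[u_x,v_x]+m\kappa}$ and the normalisation by averages over regions of size $2^{-jm}\ell^m\times(2^{j}\cdot\text{scale})^n$, this produces $2^{-jm(M+[u_t,v_t]-\kappa)}$ times $t^\kappa$ times averages over balls of radius $(2^{jm}t)^{1/m}$, i.e.\ dilation $\delta=2^{jm}$.

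The three cases then match the three error terms:
\begin{itemize}
\item adjacent time pieces, via~\ref{it:ODEDef4} with $L^r_t$ norms, give (w-w) with exponents $(r,u_x)$ and $(r',v_x')$;
\item $E$-pieces below the $F$-piece give (ca-w), where the $f$-side is integrated over $(0,t)$;
\item $E$-pieces above the $F$-piece give (w-ca), after summing over the lower $F$-pieces.
\end{itemize}
Causality removes the configuration in which the target lies below the source, and anti-causality the reverse, exactly as in the $t$-separated step.

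The main obstacle is the bookkeeping in this last step: choosing the refinement so that every pair lands in an admissible configuration of Definition~\ref{def:ODE} with the correct $j$, and controlling the sum over $Q$ and over pieces through the bounded overlap of the enlarged regions at fixed $j$. The first thing to try is to index everything by the Whitney cube $P\subseteq Q$ carrying the $g$-piece and to sum over $Q\supseteq P$ with $\ell(Q)=2^{j}\ell(P)$. This is a single $Q$ per $j$, so the sum over $Q$ collapses and we obtain the desired integral after comparing the pointwise bound on $\Qw{P}$ with $H$-operators evaluated at $(2^mt,x)$ and changing variables, as in Lemma~\ref{lem:esttseparated}.
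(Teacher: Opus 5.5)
\paragraph{Verdict.} Your treatment of Proposition~\ref{prop:decomposition}, of the $t$-separated terms via Lemma~\ref{lem:esttseparated}, and of (anti-)causality agrees with the paper. The $x$-separated step, however, has a genuine gap: as you set it up, it does not give the stated decay.

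\paragraph{Why the spatial refinement fails.} The problem is the refinement in $x$. Configurations (iii) and (iv) of Definition~\ref{def:ODE} only require $\ell_t^{1/m}\le \ell_x$, not comparable scales. So they apply directly to the unrefined strips, e.g.\ $\mbs F=R_j\times Q$ and $\mbs E=\widetilde R_j\times(3\widehat Q\setminus 3Q)$ with $R_j=(2^{-jm}\ell(Q)^m,2^{-(j-1)m}\ell(Q)^m)$. This is what the paper does. The aperture $2^{jm}$ comes from the spatial extent $\ell(Q)\eqsim 2^j t^{1/m}$ of these strips, once you apply H\"older over the whole strip.

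If instead you cut $Q$ and $3\widehat Q\setminus 3Q$ into $N\eqsim 2^{jn}$ cubes of side $2^{-j}\ell(Q)$ and apply the estimate pair by pair, every pair receives the same constant. That constant depends only on $d_x\eqsim\ell(Q)$ and $D_t\eqsim 2^{-jm}\ell(Q)^m$, not on the size of the pieces. Re-summing the $N^2$ pairs then costs, already for $f,g$ constant on the strips,
\[
\sum_{R}\nrmb{f\ind_{I\times R}}_{L^r_tL^{u_x}_x}\cdot\sum_{P}\nrmb{g\ind_{R_j\times P}}_{L^{r'}_tL^{v_x'}_x}
= N^{1-[u_x,v_x]}\,\nrmb{f\ind_{I\times(3\widehat Q\setminus 3Q)}}_{L^r_tL^{u_x}_x}\,\nrmb{g\ind_{R_j\times Q}}_{L^{r'}_tL^{v_x'}_x}.
\]
So you would only reach the factor $2^{-jm(M+[u_t,v_t]-\kappa)+jn(1-[u_x,v_x])}$. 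This is strictly worse unless $u_x=1$ and $v_x=\infty$; for example, it is worse by $2^{jn}$ when $u_x=v_x=r$. Your own normalisation ``over regions of size $2^{-jm}\ell^m\times(2^j\cdot\text{scale})^n$'' is the unrefined one, so the two halves of your argument are inconsistent.

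\paragraph{The fix.} Drop the spatial refinement. For the same reason, keep the time grouping coarse:
\begin{itemize}
\item for (ca-w), all lower $E$-pieces form the single set $(0,2^{-(j+1)m}\ell(Q)^m)\times(3\widehat Q\setminus 3Q)$;
\item for (w-ca), all lower $F$-pieces form one set.
\end{itemize}
Summing pair estimates over individual lower pieces would lose in $L^{u_t}$, resp.\ $L^{v_t'}$, unless $u_t=1$, resp.\ $v_t=\infty$.

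With this, the bookkeeping you flag as the main obstacle is immediate. For fixed $j$, the strips $R_j\times Q$, $Q\in\mc D$, tile $\R^{1+n}_+$. Each local bound is $|R_j\times Q|$ times $t^\kappa$ times an infimum over the strip of the $H$-operators evaluated at $(2^mt,x)$ or $(4^mt,x)$. Summing over $Q$, changing variables and shifting $j$ gives (w-w), (ca-w) and (w-ca).

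\paragraph{The local term.} No covering is needed, because $\Qwtilde{Q}$ itself satisfies configuration (i):
\begin{itemize}
\item $\ell_t^{1/m}=2\ell(Q)$ lies between $c\,3\sqrt n\,\ell(Q)$ and $3\sqrt n\,\ell(Q)$;
\item $\inf E_t=4^{-m}\ell(Q)^m\ge (2c)^m\ell(Q)^m=c^m\ell_t$, since $c\le 1/8$.
\end{itemize}
Moreover, ``adjusting $c$'' is not available to you, since $c$ is fixed in Definition~\ref{def:ODE}.
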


\begin{proof}
Using Proposition \ref{prop:decomposition}, we decompose
\begin{align}\label{eq:decompinproof}
\int_{{\mathbb R}^{1+n}_+}\abs{Tf}\abs{g}  \leq
\sum_{Q \in \mc{D}} \int_{\mbs{Q}^{\whit}} \absb{T(f\ind_{\Qwtilde{Q}})}\abs{g} &+ 
\sum_{k=1,2,3} \sum_{Q \in \mc{D}} \int_{{\mbs{F}}_Q^k} \absb{T(f\ind_{{\mbs{E}}_Q^k})}\abs{g}
\end{align}
where $E_Q^k$ and $F_Q^k$ for $k=1,2,3$ are as in Proposition \ref{prop:decomposition}. For the first term on the right-hand side of \eqref{eq:decompinproof} we have by Definition \ref{def:ODE}\ref{it:ODEDef1}
\begin{align*}
\sum_{{Q} \in {\mc{D}}} \int_{\mbs{Q}^{\whit}} \absb{T(f\ind_{\Qwtilde{Q}})}\abs{g} &\leq \sum_{{Q} \in {\mc{D}}}
\nrm {T(f\ind_{\Qwtilde{Q}})}_{\vphantom{\Qwtilde{Q}}L^{r}(\mbs{Q}^{\whit})} \cdot \nrm {g}_{\vphantom{\Qwtilde{Q}}L^{r'}(\mbs{Q}^{\whit})} \\
&\lesssim \sum_{Q \in \mc{D}} \ell(Q)^{m\kappa} \cdot \ip{f}_{r,\Qwtilde{Q}} \cdot \ip{g}_{\vphantom{\Qwtilde{Q}} r',\Qw{Q}}\cdot \abs{\Qw{Q}},
\end{align*}
which matches the first term on the right-hand side of our claim. The $t$-separated terms in \eqref{eq:decompinproof},
i.e., those involving  ${\mbs{E}}_Q^k$ and ${\mbs{F}}_Q^k$ for $k=1,2$, are handled by Lemma \ref{lem:esttseparated},
and are dominated by terms {$(\mathrm{w}$-$\mathrm{ca})$} and {$(\mathrm{ca}$-$\mathrm{w})$}.
It remains to analyse 
$$
    \sum_{Q \in \mc{D}} \int_{{\mbs{F}}_Q^3} \absb{T(f\ind_{{\mbs{E}}_Q^3})}\abs{g},
$$
using the $x$-separated case and the $t$- and $x$-separated case in Definition~\ref{def:ODE}.

\medskip

Fix $Q \in \mc{D}$ and recall that
\begin{align*}
    {\mbs{E}}_Q^3 &:= (0,2^m\ell(Q)^m)\times (3\widehat{Q}\setminus 3{Q}), & {\mbs{F}}_Q^3&:= \Qca{Q}.
\end{align*}
Note that these sets are $x$-separated. Indeed, one has $\ell_t^{1/m} = 2\ell(Q) \le 6\sqrt{n} \ell(Q) = \ell_x$, and
\[ c\ell_x \le \ell(Q) = d(3\widehat{Q}\setminus 3{Q},Q). \]
To exploit the decay when the $x$-separation is much bigger than the $t$-separation, we further decompose the $t$-interval dyadically.
For $j \geq 0$, define the following {intervals and strips}
\begin{align*}
  R_j&:= (2^{-jm} \ell(Q)^m,2^{-(j-1)m}\ell(Q)^m),&& \mbs{R}_j:= R_j\times \R^n,\\
   \widetilde{R}_j&:= (2^{-(j+1)m} \ell(Q)^m,2^{\min\{-(j-2),1\}m}\ell(Q)^m),&& \widetilde{\mbs{R}}_j:= \widetilde{R}_j\times \R^n,\\
 {R}^{\ca}_j&:= (0,2^{-(j+1)m}\ell(Q)^m), && \mbs{R}^{\ca}_j:={R}_j^{\ca}\times \R^n,
\end{align*}
and set $\mbs{Q}_j:= R_j\times Q$.
Note that, up to  null sets, ${R}^{\ca}_j = \bigcup_{k\ge j+2} {R}_k$ and $\widetilde{R}_j = \bigcup_{|k- j|\le 1, k\ge 0} {R}_k$, and therefore
\begin{align*}
    (0,2^m\ell(Q)^m)\times (0,2^m\ell(Q)^m)= \bigcup_{j=0}^\infty \bigcup_{k=0}^\infty R_j\times R_k = \bigcup_{j=0}^\infty R_j \times \widetilde{R}_j \cup \bigcup_{j=0}^\infty R_j \times {R}_j^{\ca} \cup \bigcup_{j=0}^\infty  {R}_j^{\ca} \times R_j.
\end{align*} 
Hence, we can decompose further as follows
\begin{align*}
     \int_{{\mbs{F}}_Q^3} \absb{T(f\ind_{{\mbs{E}}_Q^3})}\abs{g} &\leq \sum_{j=0}^\infty \int_{{\mbs{F}}_Q^3\cap \mbs{R}_j} \absb{T(f\ind_{{\mbs{E}}_Q^3\cap \widetilde{\mbs{R}}_j})}\abs{g} + \sum_{j=0}^\infty\int_{{\mbs{F}}_Q^3\cap \mbs{R}_j} \absb{T(f\ind_{{\mbs{E}}_Q^3\cap \mbs{R}_j^{\ca}})}\abs{g}\\&\hspace{2cm}+\sum_{j=0}^\infty\int_{{\mbs{F}}_Q^3\cap \mbs{R}_j^{\ca}} \absb{T(f\ind_{{\mbs{E}}_Q^3\cap \mbs{R}_j})}\abs{g}\\&=: \text{\framebox[15pt]{A}}+\text{\framebox[15pt]{B}}+\text{\framebox[15pt]{C}},
\end{align*}
{where the sets in \text{\framebox[15pt]{B}} and \text{\framebox[15pt]{C}} are separated in both $t$ and $x$. Indeed, observe that $$c^m \ell_t = c^m 2^m 2^{-jm} \ell(Q)^m \le (1-2^{-m})2^{-jm}\ell(Q)^m = d(R_j,R_j^{\ca}).$$}

Let us first consider \framebox[15pt]{A}. Note that $\mbs{E}_Q^3$ is a finite union of rectangles in $\R_+^{1+n}$, so the estimates in Definition \ref{def:ODE} are applicable. Fix $j\geq 0$. By 
 H\"older's inequality and the $x$-separated case in Definition~\ref{def:ODE},  we can estimate 
\begin{align}\label{eq:Aestimatebegin}\begin{aligned}
    \int_{{\mbs{F}}_Q^3\cap \mbs{R}_j} \absb{T(f\ind_{{\mbs{E}}_Q^3\cap \widetilde{\mbs{R}}_j})}\abs{g} &\lesssim  \ell(Q)^{-n[u_x,v_x]+m\kappa}\cdot 2^{-jm(M+[u_t,v_t])}   \\&\hspace{1cm} 
    \cdot \nrmb{f\ind_{{\mbs{E}}_Q^3\cap \widetilde{\mbs{R}}_j}}_{L^{\vphantom{r'_{\vphantom{x}}}r}_{t}L^{\vphantom{r'_{\vphantom{x}}}u_x}_{\vphantom{t}x}}\cdot \nrmb{g\ind_{{\mbs{F}}_Q^3\cap \mbs{R}_j}}_{L^{r'_{\vphantom{x}}}_{t}L^{v_x'}_{\vphantom{t}x}}.
\end{aligned} 
\end{align} 
Now note that for $(t,x)  \in \mbs{Q}_j$ we have
\begin{align*}
 \widetilde{R}_j\times (3\widehat{Q}\setminus 3{Q} )&\subseteq (4^{-m} t,4^m t)\times B(x,{(2^{jm}\cdot \br{\sqrt{n}}^m\cdot 4^mt)^{1/m}}),
\end{align*}
and thus 
\begin{align}\label{eq:whitestf}\begin{aligned}
 \nrmb{f\ind_{{\mbs{E}}_Q^3\cap \widetilde{\mbs{R}}_j}}_{L^{r_{\vphantom{x}}}_{t}L^{u_x}_{\vphantom{t}x}}
   &\leq \nrmb{f\ind_{(4^{-m} t,4^m t)\times B(x,{(2^{jm}\cdot\br{\sqrt{n}}^m\cdot4^mt)^{1/m}})}}_{L^{r_{\vphantom{x}}}_{t}L^{u_x}_{\vphantom{t}x}} \\
   &\lesssim (2^{-j}\ell(Q))^{\frac{m}{r}} \ell(Q)^{\frac n{u_x}}\cdot H^{\whit}_{(r,u_x), 2^{jm}\br{\sqrt{n}}^m }f(4^mt,x).\end{aligned}
\end{align} 
Similarly, we have for $(t,x)  \in \mbs{Q}_j$
\begin{align}\label{eq:whitestg}
  \nrmb{g\ind_{{\mbs{F}}_Q^3\cap \mbs{R}_j}}_{L^{r'_{\vphantom{x}}}_{t}L^{v'_x}_{\vphantom{t}x}} \lesssim  
  (2^{-j}\ell(Q))^{\frac{m}{r'}} \ell(Q)^{\frac n{v_x'}}
  \cdot H^{\whit}_{(r',v_x'), 2^{jm}\br{\sqrt{n}}^m }g(4^mt,x).
\end{align} 
Combining \eqref{eq:Aestimatebegin}, \eqref{eq:whitestf}, and \eqref{eq:whitestg}, we conclude 
\begin{align*}\begin{aligned}
  \int_{{\mbs{F}}_Q^3\cap \mbs{R}_j} \absb{T(f\ind_{{\mbs{E}}_Q^3\cap \widetilde{\mbs{R}}_j})}\abs{g}&\lesssim  2^{-jm[u_t,v_t]-jmM}\cdot  2^{-jm}\ell(Q)^{m+n+m\kappa} \\&\cdot \inf_{(t,x) \in \mbs{Q}_j} H^{\whit}_{(r,u_x),2^{jm}\br{\sqrt{n}}^m}f(4^mt,x)\cdot H^{\whit}_{(r',v_x'), 2^{jm} \br{\sqrt{n}}^m }g(4^mt,x).
\end{aligned}\end{align*}
Now note that $2^{-jm} \ell(Q)^{m+n} \eqsim \abs{\mbs{Q}_j}$
{and that $t\eqsim (2^{-j}\ell(Q))^m$ for $(t,x)\in \mbs{Q}_j$}. Furthermore, note that for almost all fixed $(t,x)\in \R^{1+n}_+$ and all $j \geq 0$ there is a unique $Q \in \mc{D}$ such that $(t,x) \in \mbs{Q}_j$. Therefore, we obtain {
\begin{align*}\begin{aligned}
      {\sum_{{Q} \in \mc{D}}}\text{\framebox[15pt]{A}}&\lesssim \sum_{{Q} \in \mc{D}} \sum_{j=0}^{\infty}  2^{-jm(M+[u_t,v_t]-\kappa)}\\&\hspace{2cm}\cdot \int_{\mbs{Q}_j} t^\kappa\cdot   H^{\whit}_{(r,u_x),2^{jm}\br{\sqrt{n}}^m}f(4^mt,x)\cdot H^{\whit}_{(r',v_x'), 2^{jm} \br{\sqrt{n}}^m }g(4^mt,x) \dd t \dd x\\
  &\lesssim \sum_{j=0}^{\infty} 2^{-jm(M+[u_t,v_t]-\kappa)} \cdot \int_{\R^{1+n}_+}t^\kappa\cdot   H^{\whit}_{(r,u_x),2^{jm}}f(t,x)\cdot H^{\whit}_{(r',v_x'), 2^{jm} }g(t,x) \dd t \dd x
\end{aligned}
\end{align*} }
after a change of variables and a shift in the $j$ indices, giving an estimate by term {$(\mathrm{w}$-$\mathrm{w})$}.

Consider next the term \framebox[15pt]{B}.
Applying
the off-diagonal estimate from the $t$- and $x$-separated case in Definition~\ref{def:ODE}
gives that
\begin{align}\begin{aligned} \label{eq:Btermstart}
    \int_{{\mbs{F}}_Q^3\cap\mbs{R}_j} 
    \absb{T(f\ind_{{\mbs{E}}_Q^3\cap \mbs{R}^{\ca}_j})}\abs{g} &\lesssim  \ell(Q)^{-m[u_t,v_t]-n[u_x,v_x]+m\kappa}\cdot 2^{-jmM} \\
    &\hspace{1cm} \cdot 
    \nrmb{f\ind_{{\mbs{E}}_Q^3\cap {\mbs{R}}^{\ca}_j}}_{L^{\vphantom{v'_{t}}u_{t}}_{t}L^{\vphantom{v'_{t}}u_{\vphantom{t}x}}_{\vphantom{t}x}}\cdot 
    \nrmb{g\ind_{{\mbs{F}}_Q^3\cap \mbs{R}_j}}_{L^{v'_{t}}_{t}L^{v'_{\vphantom{t}x}}_{\vphantom{t}x}},
\end{aligned} 
\end{align} 
where
$$
 \nrmb{f\ind_{{\mbs{E}}_Q^3\cap {\mbs{R}}_j^{\ca}}}_{L^{u_{t}}_{t}L^{u_{\vphantom{t}x}}_{\vphantom{t}x}} 
  \lesssim (2^{-j}\ell(Q))^{\frac{m}{u_t}} \ell(Q)^{\frac n{u_x}}\cdot H^{\ca}_{\vec u, 2^{jm}\br{2\sqrt{n}}^m }f(2^mt,x)
$$
and 
$$
  \nrmb{g\ind_{{\mbs{F}}_Q^3\cap \mbs{R}_j}}_{L^{v'_{t}}_{t}L^{v'_{\vphantom{t}x}}_{\vphantom{t}x}} \lesssim  
  (2^{-j}\ell(Q))^{\frac{m}{v_t'}} \ell(Q)^{\frac n{v_x'}}
  \cdot H^{\whit}_{\vec v', 2^{jm}\br{\sqrt{n}}^m }g(2^mt,x).
$$
Collecting these estimates as for \framebox[15pt]{A}
gives an estimate by term {$(\mathrm{ca}$-$\mathrm{w})$}.
Finally, the estimate     
$$
{\sum_{{Q} \in \mc{D}}}\text{\framebox[15pt]{C}}\lesssim    \sum_{j=0}^{\infty} 2^{-jm(M+[u_t,v_t]-\kappa)}\cdot  \int_{\R^{1+n}_+}t^\kappa\cdot   H^{\whit}_{\vec{u},2^{jm}}f(t,x)\cdot H^{\ca}_{\vec{v}', 2^{jm} }g(t,x) \dd t \dd x
$$
is similar, which gives the stated domination.

If $T$ is causal, we note that \framebox[15pt]{C} and the term involving ${\mbs{E}}_Q^2$ and ${\mbs{F}}_Q^2$ in \eqref{eq:decompinproof} are zero. Hence, we do not get a Carleson type operator on $g$ in any of the estimates. Similarly, if $T$ is anti-causal, then \framebox[15pt]{B} and the term involving ${\mbs{E}}_Q^1$ and ${\mbs{F}}_Q^1$ in \eqref{eq:decompinproof} are  zero.  Hence, in this case we do not get a Carleson type operator on $f$ in any of the estimates.
\end{proof}

\subsection{Form domination for quasi-Banach spaces}
In this subsection, we adapt the form domination result in Theorem~\ref{thm:maindominationform} to a setting that is suitable for estimating singular operators in quasi-Banach spaces, such as \(T^{p,q,r}_\beta\) with \(\min\{p,q\}\in(0,1)\). To this end, we study the form

\[
  \int_{\R^{1+n}_+} |Tf|^\nu |g|
\]
for an exponent \(\nu\in(0,1)\) and a singular operator \(T\) of type \((\kappa,m,r,\vec u,\vec v,M)\).
A natural first idea would be to apply Theorem~\ref{thm:maindominationform} to the subadditive operator
\[
\widetilde T f := |Tf|^\nu.
\]
However, this approach does not work since \(\widetilde T\) does not satisfy the same off-diagonal estimates as \(T\). On the other hand, the decomposition in Proposition~\ref{prop:decomposition} only uses subadditivity, and therefore remains valid in this setting. More precisely, for  \(\nu\in(0,1)\) we have
\begin{align}  \label{eq:pdecomp}
\int_{\R^{1+n}_+}\abs{Tf}^\nu\abs{g}
 \leq
\sum_{Q \in \mc{D}} \int_{\mbs{Q}^{\whit}} \absb{T(f\ind_{\Qwtilde{Q}})}^\nu\abs{g}
 + 
\sum_{k=1,2,3} \sum_{Q \in \mc{D}} \int_{{\mbs{F}}_Q^k} \absb{T(f\ind_{{\mbs{E}}_Q^k})}^\nu\abs{g}.
\end{align}
This leads to the following form domination principle in the quasi-Banach range.

\begin{theorem}\label{thm:quasimaindominationform}
Let \(\nu\in(0,1)\), $r\in[1,\infty]$, $\vec{u},\vec{v} \in [1,\infty]^2$,  $\kappa, M \in \R$ and $m \in [1,\infty)$. 
Let $T$ be a
singular  operator  of type $(\kappa,m,r,\vec u,\vec v, M)$.
Then, for any $f,g \in L^\infty_c(\R^{1+n}_+)$ 
\begin{align*}
&\int_{{\mathbb R}^{1+n}_+}\abs{Tf}^\nu\abs{g} 
\lesssim 
\sum_{Q \in \mc{D}} \ell(Q)^{ \nu m\kappa} \cdot \ip{f}_{r,\Qwtilde{Q}}^\nu \cdot \ip{g}_{\vphantom{\Qwtilde{Q}}({r}/{\nu})',\Qw{Q}}^{}\cdot \abs{\Qw{Q}}
&& \\
&+\sum_{j=0}^{\infty} 2^{-jm\nu\br{M+[u_t,v_t]-\kappa}} \cdot \int_{\R^{1+n}_+}t^{\nu\kappa}\cdot   (H^{\whit}_{(r,u_x),2^{jm}}f(t,x))^\nu\cdot H^{\whit}_{\vphantom{\Qwtilde{Q}}(( r/\nu)',({v_x}/\nu)'), 2^{jm}}g(t,x)\dd x\dd t  &&\text{$(\mathrm{w}$-$\mathrm{w})$}\\
&+\sum_{j=0}^{\infty} 2^{-jm\nu\br{M+[u_t,v_t]-\kappa}} \cdot \int_{\R^{1+n}_+}t^{\nu\kappa}\cdot   (H^{\ca}_{\vec{u},2^{jm}}f(t,x))^\nu\cdot H^{\whit}_{({\vec{v}}/\nu)', 2^{jm}}g(t,x)\dd x\dd t &&\text{$(\mathrm{ca}$-$\mathrm{w})$}\\
&+\sum_{j=0}^{\infty} 2^{-jm\nu\br{M+[u_t,v_t]-\kappa}} \cdot
\int_{\R^{1+n}_+}t^{\nu\kappa}\cdot   (H^{\whit}_{\vec{u},2^{jm}}f(t,x))^\nu\cdot H^{\ca}_{({\vec{v}}/\nu)', 2^{jm}}g(t,x)\dd x\dd t. &&\text{$(\mathrm{w}$-$\mathrm{ca})$}
\end{align*}
If $T$ is causal, the term {$(\mathrm{w}$-$\mathrm{ca})$} is omitted, and if $T$ is anti-causal, the term {$(\mathrm{ca}$-$\mathrm{w})$} is  omitted.
\end{theorem}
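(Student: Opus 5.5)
We start from the decomposition \eqref{eq:pdecomp}, which only uses subadditivity of $T$ together with the elementary inequality $(a+b)^\nu\le a^\nu+b^\nu$. Each of the four families of terms is then estimated exactly as in the proof of Theorem~\ref{thm:maindominationform}. The one modification is the H\"older step: where before we paired $T(f\ind_{\mbs E})$ in $L^{v_t}_tL^{v_x}_x$ with $g$ in the dual norm, we now pair $|T(f\ind_{\mbs E})|^\nu$ in $L^{v_t/\nu}_tL^{v_x/\nu}_x$ with $g$ in $L^{(v_t/\nu)'}_tL^{(v_x/\nu)'}_x$. Since $\nu<1$ and $v_t,v_x\ge1$, we have $v/\nu\ge 1$, so these are genuine Lebesgue exponents. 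The mixed-norm H\"older inequality then gives
\[
\int_{\mbs F}|T(f\ind_{\mbs E})|^\nu|g|\le \nrmb{\ind_{\mbs F}T(f\ind_{\mbs E})}_{L^{v_t}_tL^{v_x}_x}^\nu\,\nrmb{g\ind_{\mbs F}}_{L^{(v_t/\nu)'}_tL^{(v_x/\nu)'}_x},
\]
using $\||h|^\nu\|_{L^{a/\nu}L^{b/\nu}}=\|h\|_{L^aL^b}^\nu$. The analogous bound holds with $(r,r)$ in the Whitney term, and with $(r,v_x)$ in the $x$-separated term \framebox[15pt]{A}.

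Next we apply Definition~\ref{def:ODE} to the first factor, raising the bound to the power $\nu$. This produces $\ell(Q)^{\nu m\kappa}\ip{f}^\nu_{r,\Qwtilde{Q}}|\Qwtilde{Q}|^{\nu/r}$ in the Whitney term. In the off-diagonal terms it produces $\nu$-th powers of the off-diagonal decay factors times $\nu$-th powers of $\|f\ind_{\mbs E}\|$, namely $2^{-jm\nu(M+[u_t,v_t])}$ for \framebox[15pt]{A}, and $2^{-jm\nu M}\ell(Q)^{-\nu m[u_t,v_t]}$ for \framebox[15pt]{B}, \framebox[15pt]{C} and Lemma~\ref{lem:esttseparated}. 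As before, we then bound the norms by averages times measures. For $f$ this gives $|\mbs E|$-type powers $\bigl((2^{-j}\ell(Q))^{m/u_t}\ell(Q)^{n/u_x}\bigr)^\nu$ times $(H_{\vec u}f)^\nu$. For $g$ it gives $(2^{-j}\ell(Q))^{m/(v_t/\nu)'}\ell(Q)^{n/(v_x/\nu)'}$ times $H_{(\vec v/\nu)'}g$. These containments are the same as in Lemma~\ref{lem:esttseparated} and the proof of Theorem~\ref{thm:maindominationform}.

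The main bookkeeping step is to check that the powers of $\ell(Q)$ and $2^{-j}$ collapse to $|\mbs Q_j|\, t^{\nu\kappa}\,2^{-jm\nu(M+[u_t,v_t]-\kappa)}$. The total exponent of $2^{-j}\ell(Q)$ from the $t$-variable is
\[
\frac{\nu}{u_t}+\frac1{(v_t/\nu)'} = \frac{\nu}{u_t}+1-\frac{\nu}{v_t}=1+\nu[u_t,v_t],
\]
and similarly in $x$. The decay factors contribute $-\nu m[u_t,v_t]-\nu n[u_x,v_x]+\nu m\kappa$. These cancel exactly as in the case $\nu=1$, leaving $(2^{-j}\ell(Q))^m\ell(Q)^n\,\ell(Q)^{\nu m\kappa}$ times the decay. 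Writing $\ell(Q)^{\nu m\kappa}=(2^{j}t^{1/m})^{\nu m\kappa}$ up to constants on $\mbs Q_j$ produces the factor $2^{jm\nu\kappa}t^{\nu\kappa}$. The Whitney term similarly becomes $\ell(Q)^{\nu m\kappa}\ip{f}_r^\nu\ip{g}_{(r/\nu)'}|\Qw{Q}|$, using $|\Qw{Q}|\eqsim|\Qwtilde{Q}|$ and $\nu/r+1/(r/\nu)'=1$.

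Finally, we take the infimum over $(t,x)\in\mbs Q_j$ (resp.\ $\Qw{Q}$), use the disjointness of the $\mbs Q_j$ for fixed $j$, and apply the same change of variables $t\mapsto 2^mt$ or $4^mt$ and shift of $j$ as before. Monotonicity in $\delta$ of the model operators, up to constants, absorbs the factors $(\sqrt n)^m$ and $(2\sqrt n)^m$. The causal and anti-causal omissions are identical to those in Theorem~\ref{thm:maindominationform}. I expect no real obstacle beyond the exponent arithmetic. The only subtlety is making sure Definition~\ref{def:ODE}\ref{it:ODEDef4} is used with the $L^r_t$ exponent on $f$, so that the $g$-side of term (w-w) indeed carries $(r/\nu)'$ in $t$.
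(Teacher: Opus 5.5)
Your proposal is correct and follows the paper's proof essentially step by step. You start from \eqref{eq:pdecomp}, pair $\|\ind_{\mbs F}T(f\ind_{\mbs E})\|^\nu$ against $g$ in the $(\cdot/\nu)'$-norms, and use the exponent identity \eqref{eq:pduality} (equivalently your $\nu/u_t+1/(v_t/\nu)'=1+\nu[u_t,v_t]$) to recover the scaling $|\mbs Q_j|\,t^{\nu\kappa}\,2^{-jm\nu(M+[u_t,v_t]-\kappa)}$; the containments, disjointness, change of variables, shift in $j$ and causal/anti-causal omissions are then exactly as in the proof of Theorem~\ref{thm:maindominationform}.
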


\begin{proof}
  The proof follows that of Theorem~\ref{thm:maindominationform}, using the 
  decomposition~\eqref{eq:pdecomp} instead. Therefore, we only point out the differences.
The argument for the first term on the right-hand side of \eqref{eq:pdecomp} becomes
\begin{align*}
\sum_{{Q} \in {\mc{D}}} \int_{\mbs{Q}^{\whit}} \absb{T(f\ind_{\Qwtilde{Q}})}^\nu\abs{g} &\leq \sum_{{Q} \in {\mc{D}}}
\nrm {T(f\ind_{\Qwtilde{Q}})}_{\vphantom{\Qwtilde{Q}}L^{r}(\mbs{Q}^{\whit})}^\nu \cdot\nrm {g}^{\vphantom{\nu}}_{\vphantom{\Qwtilde{Q}}L^{( r/\nu)'}(\mbs{Q}^{\whit})} \\
&\lesssim \sum_{Q \in \mc{D}} \ell(Q)^{ \nu m\kappa} \cdot \ip{f}_{r,\Qwtilde{Q}}^\nu \cdot \ip{g}^{\vphantom{\nu}}_{\vphantom{\Qwtilde{Q}}({r}/{\nu})',\Qw{Q}}\cdot \abs{\Qw{Q}},
\end{align*}
which again matches the first term on the right-hand side of our claim.  
The H\"older estimate replacing \eqref{eq:toffest} for the term $k=1$ is
\begin{align*} 
 \int_{\mbs{Q}^{\whit}} &\absb{T(f\ind_{(0,4^{-m}\ell(Q){^m})\times 3Q})}^\nu\abs{g}\\
 &\lesssim  \Big(\ell(Q)^{-m[u_t,v_t]-{n}[u_x,v_x]+{m}\kappa} \nrmb{f\ind_{(0,4^{-m}\ell(Q)^m)\times 3Q}}_{L^{u_{t}}_{t}L^{u_{\vphantom{t}x}}_{\vphantom{t}x}}\Big)^\nu
 \cdot\nrmb{g\ind_{\mbs{Q}^{\whit}}}_{L^{(v_t/\nu)'}_{t}L^{(v_x/\nu)'}_{\vphantom{t}x}} \\
  &\lesssim \ell(Q)^{m+n+m\kappa \nu} \inf_{(t,x) \in \mbs{Q}^{\whit}} (H^{\ca}_{\vec{u},\br{2\sqrt{n}}^m}f(2^mt,x))^\nu\cdot H^{\whit}_{(\vec{v}/\nu)',\br{\sqrt{n}}^m}g(2^mt,x),
\end{align*}
using that 
\begin{equation}  \label{eq:pduality}
 -\nu[u_t,v_t]+\nu/u_t+ 1/(v_t/\nu)'=1= 
-\nu[u_x,v_x]+\nu/u_x+ 1/(v_x/\nu)'.   
\end{equation}
The estimate replacing \eqref{eq:Aestimatebegin} for term \framebox[15pt]{A}, using \eqref{eq:pduality}, is
\begin{align*}
    \int_{{\mbs{F}}_Q^3\cap \mbs{R}_j} &\absb{T(f\ind_{{\mbs{E}}_Q^3\cap \widetilde{\mbs{R}}_j})}^\nu\abs{g} \\
    &\lesssim  \Big(\ell(Q)^{-n[u_x,v_x]+m\kappa}\cdot 2^{-jm(M+[u_t,v_t])}
     \nrmb{f\ind_{{\mbs{E}}_Q^3\cap \widetilde{\mbs{R}}_j}}_{L^{r_{\vphantom{t}}}_{t}L^{u_{\vphantom{t}x}}_{\vphantom{t}x}}\Big)^\nu
     \cdot \nrmb{g\ind_{\mbs{F}_Q^3\cap \mbs{R}_j}}_{L^{(r_{\vphantom{t}}/\nu)'}_{t}L^{(v_x/\nu)'}_{\vphantom{t}x}} \\
     &\lesssim  2^{-jm\nu([u_t,v_t]+M)}\cdot  2^{-jm}\ell(Q)^{m+n+m\kappa \nu} \\
     &\hspace{1cm}\cdot \inf_{(t,x) \in \mbs{Q}_j} (H^{\whit}_{(r,u_x),2^{jm}\br{\sqrt{n}}^m}f(4^mt,x))^\nu\cdot H^{\whit}_{((r/\nu)',(v_x/\nu)'), 2^{jm} \br{\sqrt{n}}^m }g(4^mt,x), 
\end{align*}
which leads to an estimate by term {$(\mathrm{w}$-$\mathrm{w})$}.
The estimate replacing \eqref{eq:Btermstart} for term \framebox[15pt]{B}, using \eqref{eq:pduality}, is
\begin{align*}
    \int_{{\mbs{F}}_Q^3\cap\mbs{R}_j} &
    \absb{T(f\ind_{{\mbs{E}}_Q^3\cap  \mbs{R}^{\ca}_j})}^\nu\abs{g} \\
    &\lesssim  \Big(\ell(Q)^{-m[u_t,v_t]-n[u_x,v_x]+m\kappa}\cdot 2^{-jmM} 
    \nrmb{f\ind_{{\mbs{E}}_Q^3\cap {\mbs{R}}^{\ca}_j}}_{L^{u_{t}}_{t}L^{u_{\vphantom{t}x}}_{\vphantom{t}x}}\Big)^\nu
    \cdot\nrmb{g\ind_{\mbs{F}_Q^3\cap \mbs{R}_j}}_{L^{(v_t/\nu)'}_{t}L^{(v_x/\nu)'}_{\vphantom{t}x}}\\
     &\lesssim  2^{-jm\nu([u_t,v_t]+M)}\cdot  2^{-jm}\ell(Q)^{m+n+m\kappa \nu} \\
     &\hspace{1cm}\cdot \inf_{(t,x) \in \mbs{Q}_j} (H^{\ca}_{\vec u,2^{jm}\br{2\sqrt{n}}^m}f(2^mt,x))^\nu\cdot H^{\whit}_{(\vec v/\nu)', 2^{jm} \br{\sqrt{n}}^m }g(2^mt,x), 
\end{align*} 
which leads to an estimate by term {$(\mathrm{ca}$-$\mathrm{w})$}. The term \framebox[15pt]{C} is handled similarly, and the causal and anti-causal discussion is identical to that in the proof of Theorem \ref{thm:maindominationform}.
\end{proof}

\section{Boundedness of singular operators on tent spaces}\label{sec:estontentspace}

In this section we will first introduce a scale of $m$-parabolic tent spaces. Afterwards, we will show boundedness of singular operators on these spaces, using our domination results in Section~\ref{sec:domination}.

\subsection{Weighted, Whitney-averaged tent spaces}

We start by introducing  the scale of $m$-parabolic tent spaces $T^{p,q,r}_\beta$ with a
Whitney averaging parameter $r$ and a power weight $t^{-\beta}$, following \cite{ABH26,Haa26}.  These spaces 
appeared in a slightly different form,  {based on the classical tent spaces introduced
in \cite{CMS85} and with equivalent norms}, in \cite{HR13} at the endpoints $q=1$, $q=\infty$, and in  the general case  in \cite{Hua16}. Tent spaces, and even these weighted, Whitney-averaged tent spaces, arise naturally in the analysis of elliptic boundary value or parabolic initial value problems.
The basic observation is that a point source at $(0,x)$ mainly contributes to the
solution $f(t,y)$ of the PDE in a region $|y-x|\lesssim t^{1/m}$, where $m=1$ for elliptic boundary value problems and $m=2$ for parabolic initial value problems. This region can be decomposed into Whitney regions, over which the $T^{p,q,r}_\beta$-norm takes $L^r$-averages, followed by a weighted mixed-norm in $t$ and $x$.
The endpoint non-tangential maximal function spaces $T^{p,\infty,2}_0$ at $q=\infty$ were introduced in \cite{KP93} for $p=2$, where the $L^2$ Whitney averaging compensates for the lack of interior pointwise bounds for gradients of solutions to elliptic PDEs. The versions for general $p$ are introduced in  \cite{HR13}.
{The additional $L^r$-averaging on Whitney regions in the tent space norms fits well  with  our method based on local averaging. This recovers tent spaces by taking $q=r$ in our statements and, in particular, weighted Lebesgue spaces by taking $p=q=r$.}
New applications to PDEs, {already in the context of tent spaces in which $q=r$}, are given in Section~\ref{sec:ellipticPDES} for elliptic problems and in Section~\ref{sec:parabolic} for parabolic problems. 

Let $p,q,r \in (0,\infty]$ and \(\beta\in\R\). For a measurable \(f\colon\R^{1+n}_+\to\C\), define the weighted mixed-norm
\(L^p_{\vphantom{\beta}}(\R^n;L^q_\beta(\R_+))\) by
\begin{align*}
  \nrm{f}_{L^p_{\vphantom{\beta}}(\R^n;L^q_\beta(\R_+))}  &:= \nrm{(t,x) \mapsto t^{-\beta}f(t,x)}_{L^p_{\vphantom{\beta}}(\R^n;L^q_{\vphantom{\beta}}(\R_+))}\\&\phantom{:}=
  \brB{   \int_{\R^n} \brB{\int_0^\infty |t^{-\beta}f(t,x)|^q \dd t}^{p/q}\dd x
  }^{1/p},
\end{align*}
with the usual modifications for infinite exponents.
For $r \in (0,\infty)$ define the Whitney average
\begin{align*}
  W_{r}f(t,x)
  :=  \brB{    \fint_{2^{-m}t}^{t}
    \fint_{B(x, t^{1/m})}
    |f(s,y)|^r \dd y \dd s
  }^{1/r},
  \qquad (t,x)\in\R^{1+n}_+.
\end{align*}
 For $r=\infty$,  $W_{\infty}f(t,x)$ is defined by replacing the integral by the  essential supremum over the same region.

Throughout this section, we fix $m\in[1,\infty)$ and suppress it from the notation for the Whitney averages. In principle, one could allow $m>0$, at the expense of some modifications. The parameter $m$ is a homogeneity parameter determined by the intended application. Typical choices are $m=1$ for elliptic problems and $m=2$ for second-order parabolic problems.

For $p<\infty$, we define $T^{p,q,r}_{\beta}$ as the space of all measurable $f \colon \R_+^{1+n} \to \C$ such that
\begin{align*}
\nrm{f}_{T^{p,q,r}_{\beta}}
  &:=
  \nrm{W_r f}_{L^p_{\vphantom{\beta}}(\R^n;L^q_\beta(\R_+))}<\infty.
\end{align*}
Note that for $p=q=r$, this is the weighted space {$L^r_\beta(\R_+^{1+n})= L^r_{\vphantom{\beta}}(\R_+^{1+n}; t_{\vphantom{\beta}}^{-\beta r}\dd x \dd t)$ by Fubini's theorem.}
For $p=\infty$ and $q<\infty$ we define $T^{\infty,q,r}_\beta$ as the space of all measurable $f \colon \R_+^{1+n} \to \C$ such that
$$
  \nrm{f}_{T^{\infty,q,r}_\beta}
  := \sup_{(\tau,y)\in\R^{1+n}_+} \brB{ 
  \int_0^\tau \fint_{B(y,\tau^{1/m})} |t^{-\beta}W_r f(t,x)|^q \dd x\dd t }^{1/q}<\infty.
$$
{Lower semi-continuity of the averages on Carleson boxes shows that one can use $\esssup$ and $\sup$ interchangeably.} For $p=q=\infty$, set {
$$\nrm{f}_{T^{\infty,\infty,r}_\beta}:= \nrm{t^{-\beta} W_r f}_{L^\infty_{\vphantom{\beta}}(\R^{1+n}_+)}=\sup_{(\tau,y)\in\R^{1+n}_+} \nrm{t^{-\beta}W_r f(t,x)}_{L^\infty((0,\tau)\times B(y,\tau^{1/m}))},$$ where the last term is also the  limit as $q\to \infty$ of $\nrm{f}_{T^{\infty,q,r}_\beta}$.
   }
Furthermore, we set
\begin{align*}
  T^{p,q}_{\beta}
  :=
  T^{p,q,q}_{\beta},
\end{align*}
which coincides with the more classical tent spaces with an equivalence of norms. In particular, for $p<\infty$ we have
$$
\nrm{f}_{T^{p,q}_\beta} \eqsim \brB{\int_{\R^n} \brB{\int_0^\infty \fint_{B(x,t^{1/m})} \abs{t^{-\beta} f(t,y)}^q \dd y \dd t}^{p/q}\dd x}^{1/p}.
$$

Let us record some basic properties of these weighted tent spaces with Whitney averages. Note that we use the Lebesgue measure $\ddn t$ on $\R_+$ in the definition of $T^{p,q,r}_{\beta}$, while earlier references use the Haar measure $\ddn t/t$. This only induces a trivial shift in the parameter $\beta$.
First, \(T^{p,q,r}_\beta\) is a quasi-Banach function space (see \cite{Hua16,LN24}).
For \(\nu \in (0,\infty)\) we have
\begin{equation}  \label{eq:rescalenorm}
  \|f\|_{T^{p,q,r}_\beta}^\nu = \||f|^\nu\|_{T^{p/\nu,q/\nu,r/\nu}_{\beta \nu}},
\end{equation}
from which it follows immediately that \(T^{p,q,r}_\beta\) is \(\nu\)-convex for
\(\nu := \min\cbrace{p,q,r}\), i.e., for all \(f_1,\ldots,f_k \in T^{p,q,r}_\beta\) we have
\begin{align}\label{eq:convex}
    \nrmB{\brB{\sum_{j=1}^k \abs{f_j}^\nu}^{1/\nu}}_{T^{p,q,r}_\beta}
    \leq
    \brB{\sum_{j=1}^k \nrm{f_j}_{T^{p,q,r}_\beta}^{\nu}}^{1/\nu}.
\end{align}
Indeed, for this choice of \(\nu\) one has \(p/\nu, q/\nu, r/\nu \geq 1\), so that
\(T^{p/\nu,q/\nu,r/\nu}_{\beta \nu}\) is a Banach function space.
Furthermore, for \(\lambda > 0\) the norm has the following dilation behaviour
\begin{equation*}
    \| (t,x)\mapsto f(\lambda^m t, \lambda x) \|_{T^{p,q,r}_\beta}
    =
    \lambda^{m(\beta - \frac{1}{q}) - \frac{n}{p}} \|f\|_{T^{p,q,r}_\beta},
\end{equation*}
which follows directly from a change-of-variables argument.

Finally, the precise choice of Whitney averages is flexible, provided the averaging region remains uniformly separated from the boundary \(t=0\). This follows from \cite[Observation 2.4]{Hua16} after using the change of variables $t = s^m$ to reduce to the case $m= 1$.
\begin{prop}  \label{prop:equivnorms}
  Let $p,q,r \in (0,\infty]$, fix $0<a<b<\infty$ and $c>0$.
    Replacing $W_r f$ by 
    \begin{align*}
  W^{a,b,c}_{r}f(t,x)
  :=  \brB{    \fint_{at}^{bt}
    \fint_{B(x, (ct)^{1/m})}
    |f(s,y)|^r \dd y \dd s
  }^{1/r},
  \qquad (t,x)\in\R^{1+n}_+,
\end{align*}
in the definition of the tent space norms
yields (quasi-)norms equivalent to $\nrm{f}_{T^{p,q,r}_\beta}$.
\end{prop}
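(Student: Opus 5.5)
The plan is to reduce to the case $m=1$ and then compare both Whitney averages with a third, common one. First I would substitute $t=\sigma^m$, $s=\tau^m$. The region $(at,bt)\times B(x,(ct)^{1/m})$ becomes $(a^{1/m}\sigma,b^{1/m}\sigma)\times B(x,c^{1/m}\sigma)$ in the new variables, and the Jacobian $\ddn s=m\tau^{m-1}\ddn\tau$ has $\tau^{m-1}\eqsim\sigma^{m-1}$ on that region. So the normalized average $W^{a,b,c}_r f(\sigma^m,x)$ is comparable to an elliptic ($m=1$) Whitney average of $\tilde f(\tau,y):=f(\tau^m,y)$.

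Next the outer norm. Under the same substitution, $\int_0^\infty |t^{-\beta}F(t,x)|^q\dd t$ becomes $\int_0^\infty|\sigma^{-m\beta+(m-1)/q}F(\sigma^m,x)|^q\dd\sigma$. The $L^p(\R^n;L^q_\beta)$ norm therefore turns into an $m=1$ norm with a modified power weight. For $p=\infty$, Carleson boxes of height $\tau$ go to boxes of height $\tau^{1/m}$ with the same radius, so the boundary geometry is preserved. This reduces everything to the statement of \cite[Observation 2.4]{Hua16} for $m=1$, with a power weight; that weight is harmless because $\sigma^{\alpha}$ is essentially constant on Whitney regions.

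To be self-contained, I would then prove the $m=1$ statement directly. By monotonicity in the region it suffices to compare $W^{a,b,c}_r$ with $W^{a',b',c'}_r$ when the first region contains the second. One direction is pointwise: the larger region's average dominates the smaller one up to a constant depending on the volume ratio. For the reverse direction, I would cover $(at,bt)\times B(x,ct)$ by $N$ regions of the form $(a'\theta_i t,b'\theta_i t)\times B(x_i,c'\theta_i t)$. Here $\theta_i$ ranges over finitely many dilation factors and $|x_i-x|\lesssim t$, with $N$ and the constants depending only on $a,b,c,a',b',c',n$. This gives
\[
W^{a,b,c}_rf(t,x)\lesssim \sum_i W^{a',b',c'}_rf(\theta_i t,x_i).
\]
Using quasi-subadditivity of the outer quasinorm, it then remains to show that the outer norm is stable under the translation $(t,x)\mapsto(\theta t,x+h)$ with $|h|\le Ct$.

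The dilation $t\mapsto\theta t$ is trivial because the weight is a power. The main obstacle is the $t$-dependent horizontal shift $x\mapsto x+h(t)$, since this mixes the $x$ and $t$ integrations; it is a change of aperture. For this step I would first try averaging: replace the pointwise value at a shifted point by an $L^r$-average over a slightly larger Whitney ball. That converts the shift into a conical square-function-type expression $\big(\int_0^\infty \sup_{|y-x|<Ct}|G(t,y)|^q t^{-\beta q}\dd t\big)^{1/q}$ with $G$ again a Whitney average. The change-of-aperture estimates for tent spaces (\cite{CMS85}, and \cite{Hua16} for the Whitney-averaged and weighted setting) then bound this by the aperture-one expression, for all $p,q\in(0,\infty]$, including the Carleson-type norm at $p=\infty$. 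The exponent $r$ only enters through the pointwise comparison of averages, so it imposes no restriction.
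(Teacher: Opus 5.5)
Your proposal is correct and is essentially the paper's proof: the substitution $t=\sigma^m$ turns the $m$-parabolic Whitney regions and Carleson boxes into elliptic ones, shifts the power weight, and reduces the claim to the $m=1$ statement \cite[Observation 2.4]{Hua16}. Your extra ``self-contained'' $m=1$ sketch is unnecessary and not actually independent: bounding the shifted value by $\sup_{|y-x|<Ct}$ of a Whitney average is equivalent to passing to a Whitney average of larger aperture, so that step restates the claim and closes only by citing the same change-of-aperture result from \cite{Hua16} (the classical \cite{CMS85} change of aperture does not by itself treat $r\ne q$).
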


Next, we note that we can discretize the tent space norms, using our fixed dyadic system $\mc{D}$.

\begin{prop}\label{prop:dyadic}
Let $p,q,r \in (0,\infty]$ and $\beta \in \R$. For $Q \in \mc{D}$, define
\[
  W_{r,Q}(f) := \brB{ \fint_{\mbs{Q}^{\whit}} \abs{f(t,y)}^r \dd y \dd t }^{1/r}.
\]
If $p<\infty$, then
\[
  \nrm{f}_{T^{p,q,r}_\beta} \eqsim \nrmb{\cbrace{W_{r,Q}(f)}_{Q \in \mc{D}}}_{t^{p,q}_\beta}:=\nrmB{ \brB{ \sum_{Q \in \mc{D}} \ell(Q)^{m-m\beta q}\cdot  W_{r,Q}(f)^q \ind_Q }^{1/q} }_{L^p_{\vphantom{\beta}}(\R^n)}.
\]
If $p=\infty$, then
$$
 \nrm{f}_{T^{\infty,q,r}_\beta} \eqsim \nrmb{\cbrace{W_{r,Q}(f)}_{Q \in \mc{D}}}_{t^{\infty,q}_\beta}:=
 \sup_{Q_0\in\mc{D}}
 \brB{ \fint_{Q_0}\sum_{Q\in \mc{D}(Q_0)}
 \ell(Q)^{m-m\beta q}\cdot  W_{r,Q}(f)^q \ind_Q(x) \dd x
 }^{1/q},
$$
with the usual modifications  when $q=\infty$.
\end{prop}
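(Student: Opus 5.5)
The plan is to compare $W_r f$ on each Whitney region $\Qw{Q}$ with the dyadic average $W_{r,Q}(f)$ from both sides. Then sum over the disjoint regions $\Qw{Q}$, which tile $\R^{1+n}_+$ up to null sets. Proposition~\ref{prop:equivnorms} lets us replace $W_r$ by any convenient variant $W^{a,b,c}_r$, so we may choose the averaging region to suit each inequality.

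\emph{Upper bound.} Fix $(t,x)\in\Qw{Q}$. Then $t\eqsim\ell(Q)^m$, and the region $(at,bt)\times B(x,(ct)^{1/m})$ with small $c$ and $b/a$ close to $1$ is covered by a bounded number of Whitney regions $\Qw{P}$. These $P$ satisfy $\ell(P)\eqsim\ell(Q)$ and $P\subseteq 3Q$. This gives
\[
W^{a,b,c}_rf(t,x)\lesssim \sum_{P\in\mc N(Q)} W_{r,P}(f),
\]
where $\mc N(Q)$ is a uniformly finite family of neighbours. Integrating in $t$ over $(2^{-m}\ell(Q)^m,\ell(Q)^m)$ yields the factor $\ell(Q)^{m-m\beta q}$. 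For fixed $x$, summing over the $Q\ni x$ gives $\int_0^\infty|t^{-\beta}W f|^q\,\ddn t$. We are then left with bounding
\[
\Bigl\|\Bigl(\sum_Q \ell(Q)^{m-m\beta q}\Bigl(\sum_{P\in\mc N(Q)}W_{r,P}\Bigr)^q\ind_Q\Bigr)^{1/q}\Bigr\|_{L^p}
\]
by the $t^{p,q}_\beta$ norm. This is a shifting problem: $\ind_Q$ must be replaced by $\ind_P$ for neighbouring cubes $P$ of the same size.

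\emph{Lower bound.} For $(t,x)\in\Qw{Q}$, or with $t$ in a slightly shifted interval, a variant $W^{a,b,c}_r$ with $c$ large (say $(ct)^{1/m}\ge 2\sqrt n\,\ell(Q)$ and $(at,bt)\supseteq(2^{-m}\ell(Q)^m,\ell(Q)^m)$) contains $\Qw{Q}$. Hence $W_{r,Q}(f)\lesssim W^{a,b,c}_rf(t,x)$ pointwise on $\Qw{Q}$. Raising to the power $q$, integrating in $t$ and summing over $Q\ni x$ gives the pointwise bound
\[
\Bigl(\sum_Q\ell(Q)^{m-m\beta q}W_{r,Q}^q\ind_Q(x)\Bigr)^{1/q}\lesssim\Bigl(\int|t^{-\beta}W^{a,b,c}_rf(t,x)|^q\,\ddn t\Bigr)^{1/q}.
\]
The Carleson version for $p=\infty$ is handled in the same way by localizing to $\mc D(Q_0)$ and a box $(0,\ell(Q_0)^m)\times Q_0$. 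The case $q=\infty$ follows with sup in place of sums.

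\emph{Main obstacle.} The hard step is the neighbour shifting in the upper bound, that is, showing
\[
\Bigl\|\Bigl(\sum_Q \ell(Q)^{m-m\beta q}W_{r,P(Q)}^q\ind_Q\Bigr)^{1/q}\Bigr\|_{L^p}\lesssim \|\{W_{r,Q}\}\|_{t^{p,q}_\beta}
\]
for each fixed neighbour map $Q\mapsto P(Q)$ with $P(Q)$ of the same size inside $3Q$. My first approach is to note that $\ind_Q\le \ind_{3P(Q)}$ and to use a change of aperture for discrete tent spaces. The sequence-space analogue of the Coifman--Meyer--Stein aperture equivalence (valid for all $0<p,q\le\infty$, cf.\ \cite{Hua16}) lets one replace $\ind_{3P}$ by $\ind_P$. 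An alternative avoids this altogether: take a variant $W^{a,b,c}_r$ whose region at $(t,x)\in\Qw Q$ lies inside $\Qw Q$ or its parent's Whitney region, but no ball centred at $x$ fits inside $Q$ near the boundary of $Q$. So I would instead use the aperture equivalence, or a $1/3$-shifted dyadic grid trick (cover by boundedly many adjacent dyadic systems). Once the norm is independent of the grid up to constants, the shift can be absorbed. For $p=\infty$ the shift only enlarges $Q_0$ by a bounded factor, which is harmless.
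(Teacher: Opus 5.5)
Your argument is correct and follows essentially the route the paper takes. The paper defers to \cite[Propositions 3.12 and 3.13]{Haa26}: a two-sided comparison of $W_r f$ with the dyadic averages $W_{r,Q}(f)$ on Whitney regions, with Proposition~\ref{prop:equivnorms} supplying the freedom in the averaging region in place of an exact change of angle.

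The neighbour-shifting step you isolate can be settled directly. For $x\in Q$, dominate each neighbour term $W_{r,P}(f)$ by $\bigl(M F(x)\bigr)^{1/\theta}$, where $M$ is the Hardy--Littlewood maximal operator, $F=\sum_{\ell(P')=\ell(P)}W_{r,P'}(f)^\theta\ind_{P'}$ and $\theta<\min\{p,q\}$. Then apply the Fefferman--Stein inequality as in the proof of Lemma~\ref{lem:slice}. This makes the shifted-grid alternative unnecessary; its grid-independence would itself need this estimate, so as stated it is circular.
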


\begin{proof}
  The proof is a straightforward generalization of \cite[Propositions 3.12 and 3.13]{Haa26} to $m\ge 1$. In the proofs there, it suffices to use 
  Proposition~\ref{prop:equivnorms} instead of a precise change of angle for the norms. 
\end{proof}

We end our introduction of weighted, Whitney-averaged tent spaces with a duality,  density 
and norming statement for $T^{p,q,r}_\beta$. To have a unified duality statement including $p,q,r = \infty$, we use the \emph{K\"othe dual} or \emph{associate space} $X'$ of a (quasi)-Banach function space $X \subseteq L^0(\R^{1+n}_+)$. It is defined as
\[
X':=\{g\in L^0(\R^{1+n}_+):fg\in L^1(\R^{1+n}_+)\text{ for all $f\in X$}\},
\]
with norm 
\[
\nrm{g}_{X'}:=\sup_{\nrm{f}_X=1}\|fg\|_{L^1(\R^{1+n}_+)}, \qquad  g\in X',
\]
see \cite[Section 3.1]{LN24} and the references therein for an introduction.

\begin{prop}\label{prop:tentspacedensityduality} 
Let $p,q,r\in (0,\infty]$ and $\beta\in \R$.
\begin{enumerate}[(i)]
    \item\label{it:density} $L^\infty_c(\R^{1+n}_+)$ is dense in $T^{p,q,r}_\beta$ if $\max\{p,q,r\}<\infty$ and weak-star dense if  $\min\{p,q,r\}>1$.
    \item\label{it:duality} If $\min\{p,q,r\} \ge  1$, the K\"othe dual of $T^{p,q,r}_\beta$ is
$T^{p',q',r'}_{-\beta}$.
In particular 
$$
  \nrm{f}_{T^{p,q,r}_\beta}\eqsim
  \sup_g\int_{\R^{1+n}_+} |f(t,x)|\cdot |g(t,x)| \dd t \dd x,
$$
where the supremum is taken over all $g\in L^\infty_c(\R^{1+n}_+)$ with 
$\nrm{g}_{T^{p',q',r'}_{-\beta}}=1$.
\item\label{it:quasiduality} If $0<\nu \le \min(p,q,r)$, 
\begin{equation*} 
  \|f\|_{T^{p,q,r}_{\beta}}^\nu \eqsim \sup_{g} \int_{\R^{1+n}_+} |f(t,x)|^\nu \cdot|g(t,x)| \dd t \dd x,
\end{equation*}  
where the supremum is taken over all $g\in L^\infty_c(\R^{1+n}_+)$ with $\|g\|_{T^{(p/\nu)',(q/\nu)', (r/\nu)'}_{-\beta \nu}} = 1$.
\end{enumerate}
\end{prop}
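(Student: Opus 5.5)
The plan is to transfer everything to the dyadic sequence space of Proposition~\ref{prop:dyadic} and use the known duality and density theory for the classical (Coifman--Meyer--Stein type) tent sequence spaces, via the change of variables $t=s^m$ which reduces to $m=1$ as in Proposition~\ref{prop:equivnorms}.

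\textbf{Density, item~\ref{it:density}.} Assume $\max\{p,q,r\}<\infty$. First truncate: $f_k:=f\ind_{K_k}$ with $K_k:=(k^{-1},k)\times B(0,k)$. The difference $f-f_k$ has Whitney averages $W_{r,Q}(f-f_k)\le W_{r,Q}(f)$, and they tend to $0$ for every $Q$. Dominated convergence in the $t^{p,q}_\beta$ norm then gives $f_k\to f$; all exponents are finite, so dominated convergence applies both in the inner $\ell^q$ sum and in the outer $L^p$ integral. Next, each $f_k$ is supported in a compact set, which meets only finitely many enlarged Whitney regions. On that set the $T^{p,q,r}_\beta$ norm is controlled by the $L^r$ norm, so truncating the values $\{|f_k|\le N\}$ converges by dominated convergence in $L^r$.

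Weak-star density when $\min>1$ will follow once item~\ref{it:duality} is known. $T^{p,q,r}_\beta$ is then the dual of $T^{p',q',r'}_{-\beta}$ when the latter has finite exponents, and in the remaining case it is the Köthe bidual. Since $L^\infty_c$ is norming, a Hahn--Banach or annihilator argument gives that its weak-star closure is everything.

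\textbf{Duality, item~\ref{it:duality}.} This is the main step. The easy inclusion is the Hölder bound
\[
\int|fg|\lesssim \|f\|_{T^{p,q,r}_\beta}\|g\|_{T^{p',q',r'}_{-\beta}}.
\]
To prove it, write $\int|fg|\le\sum_Q |\Qw{Q}| W_{r,Q}(f)W_{r',Q}(g)$ by Hölder on each Whitney region. Since $|\Qw{Q}|=\ell(Q)^{m}\int\ind_Q$, the factors $\ell(Q)^{m-m\beta q}$ and $\ell(Q)^{m+m\beta q'}$ combine correctly. What remains is the sequence-space duality $t^{p,q}_\beta\times t^{p',q',0}$, namely
\[
\sum_Q \ell(Q)^{m}|Q|\,a_Qb_Q\lesssim \|a\|\,\|b\|.
\]
For $p,q$ finite this is Hölder in $\ell^q$ and then in $L^p$. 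For $p=1$ or $p=\infty$ it is the classical $T^{1,q}$--$T^{\infty,q'}$ pairing. That pairing is proved with a stopping-time or atomic decomposition on the dyadic grid: write the $T^{1}$ side as a sum of atoms supported in Carleson boxes, and test each atom against the Carleson-measure norm of the $T^{\infty}$ side.

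For the reverse inclusion, take $g$ in the Köthe dual. Its pairing with the subspace of functions constant on Whitney regions defines a functional on $t^{p,q}_\beta$, and the sequence-space duality (the known dyadic result of Cohn--Verbitsky type) identifies its norm with the $t^{p',q'}_{-\beta}$ norm of the averages. Within each Whitney region, $L^r$--$L^{r'}$ duality upgrades this to $W_{r',Q}(g)$: choose $f$ on $\Qw{Q}$ extremal for $g$ and with prescribed $L^r$ average. The norming formula then follows by restricting to $g\in L^\infty_c$, which is possible by truncation and monotone convergence.

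\textbf{Quasi-duality, item~\ref{it:quasiduality}.} Apply item~\ref{it:duality} to $|f|^\nu$ in the space $T^{p/\nu,q/\nu,r/\nu}_{\beta\nu}$, whose exponents are all at least $1$, and combine with the rescaling identity~\eqref{eq:rescalenorm}.

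The main obstacle is the endpoint sequence-space duality at $p=1$ or $p=\infty$, specifically the Carleson-type estimate for $t^{\infty,q}$ with a weight. The weight can be absorbed by replacing $a_Q$ with $\ell(Q)^{-\beta m}a_Q$; after that it is a citation of the classical dyadic tent space duality, or of \cite{Hua16,Haa26}.
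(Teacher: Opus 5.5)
Your proposal is correct. Parts (i) and (iii) are essentially the paper's argument. The paper simply calls the truncation in (i) a ``standard approximation argument'', and it derives weak-star density from (ii) and (iii) from (ii) plus \eqref{eq:rescalenorm}, exactly as you do.

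For (ii) your route differs from the paper's. The paper changes variables to $m=1$ and quotes from \cite{Hua16} the identification $T^{p',q',r'}_{-\beta}=\mathcal{M}(T^{p,q,r}_\beta,L^1(\R^{1+n}_+))$, which is literally the Köthe-dual statement. It then obtains the norming formula from the Lorentz--Luxemburg theorem ($X''=X$ for Banach function spaces with the Fatou property), followed by truncation.

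You instead discretize with Proposition~\ref{prop:dyadic}. H\"older on each $\Qw{Q}$ gives the easy inclusion. For the reverse inclusion, the extremal functions you describe amount to testing against $f=\sum_Q a_Q h_Q$, where $h_Q\ge0$ is supported in $\Qw{Q}$ with $W_{r,Q}(h_Q)=1$ and $\fint_{\Qw{Q}}h_Q|g|\approx W_{r',Q}(g)$. Then $W_{r,Q}(f)=a_Q$, and the whole problem reduces to the Köthe duality of the $r$-independent sequence spaces $t^{p,q}_\beta$ and $t^{p',q'}_{-\beta}$. That duality is $L^p(\ell^q)$ duality plus Stein/Doob for dyadic averages when $1<p<\infty$, and the dyadic Carleson duality when $p\in\{1,\infty\}$, which you still have to import.

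What your route buys is transparency: the passage $r\mapsto r'$ is seen to be purely local, and the only deep input is the classical discrete duality. The paper's route is shorter but treats the Whitney-averaged duality as a black box.

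One step you should make explicit. Identifying $X'=T^{p',q',r'}_{-\beta}$ for $X=T^{p,q,r}_\beta$ only says that $X$ norms $T^{p',q',r'}_{-\beta}$. The displayed formula, that $T^{p',q',r'}_{-\beta}$ norms $X$, is the statement $X''=X$. You get it without Lorentz--Luxemburg by running your reverse-inclusion argument with $(p',q',r',-\beta)$ in place of $(p,q,r,\beta)$, which is legitimate since $\min\{p',q',r'\}\ge1$. Only after that does the truncation to $g\in L^\infty_c(\R^{1+n}_+)$ finish.

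Two minor points. The plain H\"older case is $1<p<\infty$ with any $q$, not ``$p,q$ finite'', since $p=1$ already needs the Carleson pairing. And the ``remaining case'' in your weak-star argument is vacuous, because $\min\{p,q,r\}>1$ forces $\max\{p',q',r'\}<\infty$.
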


\begin{proof}
{Strong density in \ref{it:density}  follows from a standard approximation argument and weak-star density follows as a consequence of \ref{it:duality}.}
For \ref{it:duality} we note that, after a change of variables $s=t^m$ to reduce to the case $m=1$ and in our notation, \cite[Theorem 5.2]{Hua16} states  that 
\begin{align}\label{eq:multiplierkothe}
    T^{p',q',r'}_{-\beta} = \mc{M}(T^{p,q,r}_\beta, T^{1,1,1}_0) = \mc{M}(T^{p,q,r}_\beta, L^1(\R^{1+n}_+)).
\end{align}
Here, $\mc{M}(T^{p,q,r}_\beta, L^1(\R^{1+n}_+))$ denotes the space of all multipliers from $T^{p,q,r}_\beta$ to $L^1(\R^{1+n}_+)$, i.e., all $g \in L^0(\R_{+}^{1+n})$ such that $f \mapsto gf$ is a bounded operator from $T^{p,q,r}_\beta$ to $L^1(\R^{1+n}_+)$. Hence, \eqref{eq:multiplierkothe}  is exactly the same as saying that $T^{p',q',r'}_{-\beta}$ is the K\"othe dual of $T^{p,q,r}_\beta$. 
Now by the Lorentz-Luxemburg theorem (see \cite[Theorem~71.1]{Za67}), we know that
$$
  \nrm{f}_{T^{p,q,r}_\beta}\eqsim
  \sup_{g}\int_{\R^{1+n}_+} |f(t,x)|\cdot |g(t,x)| \dd t \dd x,
$$
where the supremum is taken over all $g \in T^{p',q',r'}_{-\beta}$ with $\nrm{g}_{T^{p',q',r'}_{-\beta}} = 1$. Finally, we note that it suffices to take the supremum over $g \in L^\infty_c(\R^{1+n}_+)$ by truncations and monotone convergence. 
Finally, \ref{it:quasiduality} follows from the power rule formula \eqref{eq:rescalenorm} and \ref{it:duality} applied with $p/\nu, q/\nu, r/\nu$.  
\end{proof}

\begin{remark}\label{rem:p<1}
Even though the dual of $T^{p,q,r}_\beta$ in the quasi-Banach range $\min\{p,q,r\}<1$ has been identified in \cite[Theorems 3.23 and 3.25]{Haa26}, this  is not of use in our analysis. Indeed, since the 
Lorentz-Luxemburg theorem does not hold for quasi-Banach function spaces, we cannot use the dual of $T^{p,q,r}_\beta$ to calculate the norm of $f \in T^{p,q,r}_\beta$ in this range. Instead, we will use Proposition \ref{prop:tentspacedensityduality}\ref{it:quasiduality}, which is based on the power rule \eqref{eq:rescalenorm}. However, the dual of $T^{p,q,r}_\beta$ can be used to deduce boundedness results for the adjoints {$T^*$ of linear singular operators $T$ to which our statements apply}.
\end{remark}

\subsection{Model operators} \label{sec:modelops}
In Section~\ref{sec:domination} we showed that singular operators can be dominated by a local Whitney averaging operator and the model operators \(H^{\ca}_{\vec{u},\delta}\) and \(H^{\whit}_{\vec{u},\delta}\). Therefore, in order to prove boundedness of singular operators on the tent spaces $T^{p,q,r}_\beta$, we need to study the boundedness of these operators on $T^{p,q,r}_\beta$, which is the aim of this subsection.
The basic idea is to extrapolate from $L^r$- to $T^{p,q,r}_\beta$-estimates.
Indeed, an early version of this work used Rubio de Francia extrapolation from weighted $L^r(w)$ to 
$T^{p,q,r}_\beta$, similarly to \cite{MP24}, to obtain the estimates in this subsection in the case $q=r$.  
However, the proofs below use direct estimates, valid also for $q\ne r$.

As we shall see, even if one is interested only in the two-parameter tent spaces \(T^{p,q}_\beta\), it is useful to view them in the proofs as Whitney-averaged tent spaces \(T^{p,q,r}_\beta\) with \(r=q\). Moreover, we shall prove boundedness in a somewhat larger range of exponents than is used later, for example allowing \(r=\infty\), as this requires  no additional effort. We will also track the dependence on the aperture parameter \(\delta\) carefully. This is crucial because our domination result for singular operators in Section~\ref{sec:domination} contains infinite sums of model operators whose convergence depends on the rate of growth in~\(\delta\).

We start with the estimate for the local Whitney averaging operator.

\begin{lemma}\label{lemma:Whitneymap}
Let $p,q,r \in (0,\infty]$, $\beta,\kappa \in \R$ . Then
$$
  A_{r,\kappa}^{\whit} f:=  \sum_{Q \in \mc{D}} \ell(Q)^{m\kappa} \cdot \ip{f}_{r,\Qwtilde{Q}} \ind_{\Qw{Q}}, \qquad f\in L^0(\R^{1+n}_+),
$$
defines a bounded operator  from $T^{p,q,r}_\beta$ to $T^{p,q,r}_{\beta+\kappa}$.
\end{lemma}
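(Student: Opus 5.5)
The plan is to work entirely with the dyadic discretization of Proposition~\ref{prop:dyadic}. Since $A_{r,\kappa}^{\whit} f$ is constant on each Whitney region $\Qw{Q}$, with value $\ell(Q)^{m\kappa}\ip{f}_{r,\Qwtilde{Q}}$, and the Whitney regions are pairwise disjoint, we get exactly
\[
W_{r,Q}(A_{r,\kappa}^{\whit} f) = \ell(Q)^{m\kappa}\ip{f}_{r,\Qwtilde{Q}}, \qquad Q\in\mc{D}.
\]
By Proposition~\ref{prop:dyadic}, the claim then reduces to the sequence-space estimate
\[
\nrmb{\cbrace{\ell(Q)^{m\kappa}\ip{f}_{r,\Qwtilde{Q}}}_{Q}}_{t^{p,q}_{\beta+\kappa}} \lesssim \nrmb{\cbrace{W_{r,Q}(f)}_{Q}}_{t^{p,q}_{\beta}}.
\]
The weight $\ell(Q)^{m-m(\beta+\kappa)q}$ on the left absorbs the factor $\ell(Q)^{m\kappa q}$ exactly, so it remains to show that $\cbrace{\ip{f}_{r,\Qwtilde{Q}}}_Q$ is controlled by $\cbrace{W_{r,Q}(f)}_Q$ in $t^{p,q}_\beta$.

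Next I cover the enlarged region by Whitney regions of nearby cubes. $\Qwtilde{Q}=(4^{-m}\ell(Q)^m,2^m\ell(Q)^m)\times 3Q$ is covered by the Whitney regions $\Qw{P}$ of the cubes $P\in\mc{D}$ with $\ell(P)\in\{\tfrac12\ell(Q),\ell(Q),2\ell(Q)\}$ that meet $3Q$. This is a family $\mc{N}(Q)$ of boundedly many cubes, each at comparable scale and each contained in $C Q$ for a fixed dilation constant $C$. Since all these regions have measure comparable to $|\Qwtilde{Q}|$, we obtain
\[
\ip{f}_{r,\Qwtilde{Q}} \lesssim \sum_{P\in\mc{N}(Q)} W_{r,P}(f),
\]
which also holds for $r=\infty$ and for $r<1$, with constants depending on $r$. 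As $\mc{N}(Q)$ has bounded cardinality and the relevant quasi-norms are quasi-subadditive, it suffices to treat a single ``neighbour map'' $Q\mapsto P=\phi(Q)$. Such a map can be chosen with $\ell(\phi(Q))=2^{i}\ell(Q)$ for fixed $i\in\{-1,0,1\}$, a bounded spatial shift (in units of $\ell(Q)$), and bounded multiplicity of preimages.

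The main step is to show that $\sigma\mapsto\sigma\circ\phi$ is bounded on $t^{p,q}_\beta$. The weight $\ell(Q)^{m-m\beta q}$ changes only by a factor $2^{\pm im(1-\beta q)}$, which is harmless. For $p<\infty$, the square function $\sum_Q \ell(Q)^{m-m\beta q}\sigma_{\phi(Q)}^q\ind_Q$ must then be compared with the analogous sum indexed by $P$ with $\ind_P$. Since $\phi(Q)\subseteq CQ$ at comparable scale, $\ind_Q\le \ind_{C'\phi(Q)}$, and the task becomes the standard change of aperture in the discrete tent norm: replacing $\ind_P$ by $\ind_{C'P}$ costs only a constant in $L^p(\ell^q)$. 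For $p<\infty$ I would prove this via the Fefferman--Stein vector-valued maximal inequality when $p/q>1$. Alternatively, and covering all $p$ including $p\le q$, I would invoke the aperture equivalence of Proposition~\ref{prop:equivnorms}, phrased for the continuous norm, since the dilated Whitney averages are equivalent norms. The case $p=\infty$ uses the Carleson-type definition: the sum over $\mc{D}(Q_0)$ of shifted indices is controlled by sums over boundedly many neighbouring cubes of $\widehat{Q_0}$ of comparable size, giving the bound directly.

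I expect the main obstacle to be this change-of-aperture step in the full quasi-Banach range $p,q\in(0,\infty]$. The cleanest route is probably to avoid the discrete argument altogether. Since $\ip{f}_{r,\Qwtilde{Q}}\lesssim W^{a,b,c}_r f(t,x)$ for all $(t,x)\in\Qw{Q}$, with suitable fixed $a,b,c$, we get $W_r(A^{\whit}_{r,\kappa}f)\lesssim t^{\kappa}\,W^{a',b',c'}_r f$ pointwise (after one more harmless enlargement). Proposition~\ref{prop:equivnorms} then gives the bound at once. I would present this pointwise comparison as the proof, checking only that the constants $a',b',c'$ are uniform.
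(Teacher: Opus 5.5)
Your final pointwise route is correct and is essentially the paper's proof. The paper uses the same containment $\Qwtilde{Q}\subseteq (at,bt)\times B(x,(ct)^{1/m})$ for $(t,x)\in\Qw{Q}$ to dominate by $W^{a,b,c}_r f$ and concludes with Proposition~\ref{prop:equivnorms}. The only difference is that it first discretizes the norm of $A^{\whit}_{r,\kappa}f$ via Proposition~\ref{prop:dyadic}, as in your first paragraph, while your direct estimate $W_r(A^{\whit}_{r,\kappa}f)\lesssim t^{\kappa}W^{a',b',c'}_r f$ bypasses the dyadic characterization and treats $p=\infty$ the same way. The neighbour-map and discrete change-of-aperture detour in your middle paragraph is unnecessary and can be dropped.
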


\begin{proof} 
Consider first $p<\infty$. Applying first Proposition~\ref{prop:dyadic} and then 
Proposition~\ref{prop:equivnorms}, we estimate
\begin{align*}
    \nrm{A_{r,\kappa}^{\whit}f }_{T^{p,q,r}_{\beta+\kappa}} &\eqsim \nrmB{ \brB{ \sum_{Q \in \mc{D}} \ell(Q)^{m-m(\beta+\kappa) q}\cdot  W_{r,Q}\brb{\ell(Q)^{m\kappa}\cdot  \ip{f}_{r,\Qwtilde{Q}} \ind_{\Qw{Q}}}^q \ind_Q }^{1/q} }_{L^p_{\vphantom{\beta}}(\R^n)}\\
    &= 
    \nrmB{ \brB{ \sum_{Q \in \mc{D}} \ell(Q)^{m-m\beta q}\cdot  \brB{\fint_{\Qwtilde{Q}}|f|^r}^{q/r} \ind_Q }^{1/q} }_{L^p_{\vphantom{\beta}}(\R^n)} \\
    &\lesssim
    \nrmb{(t,x)\mapsto t^{-\beta} W^{a,b,c}_r f(t,x)
    }_{L^p_{\vphantom{\beta}}(\R^n; L^q(\R_+))} \\
    &\eqsim\nrm{f}_{T^{p,q,r}_\beta}, 
\end{align*}
using that for suitable $a,b,c$ we have
$\Qwtilde{Q}\subseteq (at,bt)\times B(x,(ct)^{1/m})$
for $x\in Q$ and $2^{-m}\ell(Q)^m\le t\le \ell(Q)^m$.

For $p=\infty$, we similarly estimate 
\begin{align*}
   \nrm{A_{r,\kappa}^{\whit}f}_{T^{\infty,q,r}_{\beta+\kappa}} 
    &\eqsim
 \sup_{Q_0\in\mc{D}}
 \brB{ \fint_{Q_0}\sum_{Q\in \mc{D}(Q_0)}
 \ell(Q)^{m-m\beta q}\cdot  \brB{\fint_{\Qwtilde{Q}}|f|^r}^{q/r}  \ind_Q(x) \dd x
 }^{1/q}\\
    &\lesssim
    \sup_{\tau>0}\sup_{y\in\R^n} \brB{ 
  \int_0^\tau \fint_{B(y,\tau^{1/m})} |t^{-\beta}W^{a,b,c}_r f(t,x)|^q \dd x\dd t }^{1/q} \\
    &\eqsim\nrm{f}_{T^{p,q,r}_\beta}, 
\end{align*}
again by embedding 
$\Qwtilde{Q}\subseteq (at,bt)\times B(x,(ct)^{1/m})$.
\end{proof}

To estimate \(H^{\ca}_{\vec{u},\delta}\) and \(H^{\whit}_{\vec{u},\delta}\), we will repeatedly use the following elementary martingale-type fact:
for a measurable $f \colon \R^n \to [0,\infty)$, $r \in (0,\infty)$, $s,t \in \R_+$ with $s\leq t$ and $x \in \R^n$ we have 
\begin{align}\label{eq:martingale}
  \brB{\fint_{B(x,s)}\brB{\fint_{B(y,t)} f(z) \dd z}^r \dd y}^{1/r} \leq \esssup_{y \in B(x,s)}\fint_{B(y,t)} f(z) \dd z\lesssim  \fint_{B(x,s+t)} f(z) \dd z.
\end{align}
Conversely, we note that
\begin{align}\label{eq:martingaleback}
  \brB{\fint_{B(x,t)} f(z)^r \dd z}^{1/r} \lesssim  \fint_{B(x,s)}\brB{\fint_{B(y,s+t)} f(z)^r \dd z}^{1/r} \dd y.
\end{align}

Our estimates for \(H^{\ca}_{\vec{u},\delta}\) and \(H^{\whit}_{\vec{u},\delta}\) will be based on estimates for the following auxiliary averaging operator: for $u \in (0,\infty)$, $\delta\geq 1$ and a measurable $f \colon \R^{1+n}_+ \to \C$  define
\begin{equation}  \label{eq:avop}
  A_{u,\delta}f(t,x)
  :=
  \brB{\fint_{B(x,(\delta t)^{1/m})}
    |f(t,z)|^u\dd z
  }^{1/u},
  \qquad (t,x)\in\R^{1+n}_+ .    
\end{equation}

We will start by considering the case $p<\infty$ and  treat the endpoint $p=\infty$ afterwards.

\begin{lemma}\label{lem:slice}
    Let $p,u \in (0,\infty)$, $q,r \in (0,\infty]$ with $u\leq r$, $\beta \in \R$  and $\delta \ge 1$ and $\varepsilon>0$. Set
\[
  \gamma
  :=\tfrac{n}{m}
  \bracb{\min\cbrace{p,q,u},u}.
\]
Then we have for all $f \in T^{p,q,r}_\beta$
\[
  \nrm{A_{u,\delta}f}_{T^{p,q,r}_\beta}
  \lesssim
  \delta^{\gamma+\varepsilon}\,
  \|f\|_{T^{p,q,r}_\beta},
\]
The implicit constant is independent of $\delta$.
\end{lemma}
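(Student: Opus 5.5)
The plan is to reduce the estimate to a purely \emph{scalar} statement about $g:=W_rf$ (or a comparable Whitney average). Two facts about $g$ will then suffice: it is essentially constant at the Whitney scale $t^{1/m}$, and $L^{p}(\R^n;L^q_\beta(\R_+))$ norms of averages of translates of such functions are controlled by Minkowski's inequality. The factor $\delta^{\gamma}$ should come only from a reverse H\"older inequality for locally constant functions, passing from $L^u$- to $L^\nu$-averages with $\nu:=\min\{p,q,u\}$. If $u\le\min\{p,q\}$, then $\nu=u$ and $\gamma=0$.

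\emph{Step 1: pointwise reduction.} Fix $(t,x)$ and expand $W_r(A_{u,\delta}f)(t,x)^r$ as an average over $s\in(2^{-m}t,t)$ and $y\in B(x,t^{1/m})$ of $\bigl(\fint_{B(y,(\delta s)^{1/m})}|f(s,z)|^u\dd z\bigr)^{r/u}$. Since $r/u\ge1$, Minkowski's inequality moves the $(s,y)$-average inside the $z$-average; the $r=\infty$ case is read as an essential supremum. Note that $\delta s\eqsim\delta t$, and average the $z$-ball centre over a ball of radius $t^{1/m}$ as in \eqref{eq:martingale} and \eqref{eq:martingaleback}. This should give
\[
W_r(A_{u,\delta}f)(t,x)\lesssim \Bigl(\fint_{B(x,(C\delta t)^{1/m})} \bigl(\widetilde W_r f(t,z)\bigr)^u\dd z\Bigr)^{1/u}=A_{u,C\delta}(\widetilde W_rf)(t,x),
\]
where $\widetilde W_r=W_r^{a,b,c}$ is an enlarged Whitney average. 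By Proposition~\ref{prop:equivnorms}, $\widetilde W_r f$ has mixed norm $\eqsim\|f\|_{T^{p,q,r}_\beta}$. It therefore suffices to show
\[
\|A_{u,\delta}g\|_{L^p(\R^n;L^q_\beta(\R_+))}\lesssim\delta^{\gamma+\varepsilon}\|g\|_{L^p(\R^n;L^q_\beta(\R_+))}
\]
for $g=\widetilde W_rf$.

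\emph{Step 2: the case $u\le\min\{p,q\}$.} Here $\gamma=0$. Write $(A_{u,\delta}g)^u(t,x)=\fint_{B(0,1)}|g(t,x+(\delta t)^{1/m}w)|^u\dd w$. Then $\|A_{u,\delta}g\|^u_{L^pL^q_\beta}=\|(A_{u,\delta}g)^u\|_{L^{p/u}L^{q/u}_{\beta u}}$, with $p/u,q/u\ge1$. Minkowski's inequality bounds this by the supremum over $w$ of the norm of $(t,x)\mapsto|g(t,x+(\delta t)^{1/m}w)|^u$. The shift depends on $t$, so it is not a pure translation in $x$; this is exactly where the Whitney structure of $g$ is needed. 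The idea is to replace a $t$-dependent shift by an $x$-translation at the cost of enlarging the Whitney region by a bounded factor. The difficulty is that the shift $(\delta t)^{1/m}w$ varies over a Whitney interval by about $\delta^{1/m}t^{1/m}$, which is too much. I would therefore first dominate $g$ by a dyadic step function: by Proposition~\ref{prop:dyadic}, replace $g$ by $\sum_Q W_{r,\widetilde Q}(f)\ind_{\Qw{Q}}$. Within each dyadic $t$-layer the shift is then frozen at the top of the layer up to an aperture enlargement. After that, the $L^p_x$ norm is translation invariant layer by layer, and the $L^q_t$ sum is recombined. I expect the $\varepsilon$-loss to come from here, from summing the layers with a small loss in $\delta$ if needed, or from interpolating near the endpoints $q=\infty$ and $u=r$.

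\emph{Step 3: the case $u>\nu$.} Cover $B(x,(\delta t)^{1/m})$ by about $\delta^{n/m}$ balls of radius $t^{1/m}$. On each small ball, local constancy gives $\sup g\lesssim$ the $L^\nu$-average of $g'$ over a concentric ball of twice the radius, where $g'$ is a slightly enlarged Whitney average. Combining this with Hölder's inequality in the number of pieces yields
\[
A_{u,\delta}g\lesssim\delta^{\frac nm(\frac1\nu-\frac1u)}A_{\nu,C\delta}g'=\delta^{\gamma}A_{\nu,C\delta}g'.
\]
Step 2, applied with exponent $\nu\le\min\{p,q\}$, then finishes the proof. The main obstacle is Step 2: controlling the $t$-dependent shifts in the mixed norm, uniformly in $\delta$ up to $\delta^{\varepsilon}$.
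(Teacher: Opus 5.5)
Your Steps 1 and 3 are fine. Since $u\le r$, Minkowski's inequality gives $W_r(A_{u,\delta}f)\lesssim A_{u,\delta}(W_rf)$, after you replace $(\delta s)^{1/m}$ by $(\delta t)^{1/m}$ at the cost of a constant. Your local-constancy/H\"older step in Step 3 is exactly the factor $N^{1/\nu-1/u}$ in the paper.

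The gap is Step 2, and it cannot be repaired along the lines you describe. After Minkowski in $w$ you need
\[
\|g_w\|_{L^p_xL^q_t}\lesssim\delta^{\varepsilon}\|g\|_{L^p_xL^q_t},\qquad g_w(t,x):=g(t,x+(\delta t)^{1/m}w),
\]
uniformly in $w$. Two problems block this:
\begin{itemize}
\item ``Translation invariance layer by layer, then recombine the $L^q_t$ sum'' would require interchanging $L^p_x$ and $\ell^q_j$. A layer-dependent translation is a shear, and it does not preserve $\|(\sum_j|h_j|^q)^{1/q}\|_{L^p}$ unless $p=q$. That per-scale argument is what works for the $Z$-spaces (Lemma~\ref{lem:sliceZ}), where $L^p_x$ is the inner norm; in tent spaces it is the outer norm.
\item Passing to a dyadic step function does not freeze the shift with bounded enlargement. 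For $t$ in one layer, the point $x+(\delta t)^{1/m}w$ still crosses about $\delta^{1/m}$ cubes of that layer.
\end{itemize}

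The loss is in fact polynomial when $p<q$. Take $\beta=0$, $f=\ind_{(0,\rho^m)\times B(c,\rho)}$, $g=W_rf$, $|w|\ge\frac12$, $e=w/|w|$, and $\delta$ large. Then $g_w\ge 1$ at every $x=c-Re+\xi$ with $\rho\le R\lesssim\delta^{1/m}\rho$, $\xi\perp e$, $|\xi|\le\rho/4$. This holds for $t$ in an interval of length $\eqsim R^{m-1}\rho/\delta$. Hence
\[
\|g_w\|_{L^p_xL^q_t}^p\gtrsim\int_{\rho}^{c\delta^{1/m}\rho}\rho^{n-1}\Bigl(\frac{R^{m-1}\rho}{\delta}\Bigr)^{p/q}\dd R\eqsim\delta^{\frac1m(1-\frac pq)}\rho^{n+\frac{mp}{q}}\eqsim\delta^{\frac1m(1-\frac pq)}\|g\|_{L^p_xL^q_t}^p .
\]
So $\fint_{B(0,1)}\|g_w\|^u\dd w\gtrsim\delta^{\frac um(\frac1p-\frac1q)}\|g\|^u$, although $\gamma=0$ when $u\le p<q$. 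Once the ball average has been split into translates, no choice of $\varepsilon$ recovers the lemma.

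The missing idea is to keep the average intact and dominate it by a Hardy--Littlewood maximal function at each scale. The Fefferman--Stein inequality then handles the mixed norm. Concretely, take $\nu<\min\{p,q,u\}$ and set $F_j:=\sum_{\ell(P)=2^{-j}}W_{r,P}(f)^\nu\ind_P$. Covering $B(x,(\delta t)^{1/m})$ by the $N\eqsim\delta^{n/m}$ cubes $Q+k\ell(Q)$ and doing your H\"older step gives
\[
W_{r,Q}(A_{u,\delta}f)\lesssim\delta^{\frac nm(\frac1\nu-\frac1u)}\bigl(MF_j(x)\bigr)^{1/\nu},\qquad x\in Q,\ \ell(Q)=2^{-j}.
\]
The vector-valued Fefferman--Stein inequality in $L^{p/\nu}(\ell^{q/\nu})$, whose exponents are both $>1$, together with Proposition~\ref{prop:dyadic}, then concludes. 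The $\varepsilon$ comes from having to take $\nu$ strictly below $\min\{p,q\}$, because Fefferman--Stein fails at exponent $1$; it does not come from summing layers. When $u<\min\{p,q\}$ one may take $\nu=u$, and there is no loss at all.
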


\begin{proof}
By Proposition \ref{prop:dyadic}, it suffices to bound the discrete sequence norm $\nrm{ \{ W_{r,Q}(A_{u,\delta} f) \}_{Q \in \mc{D}} }_{t^{p,q}_\beta}$. 
For $\delta\ge 1$, there exists an integer $J$, only depending on $n,m$ and $\delta$, such that the index set 
\begin{equation}\label{eq:J}
 \Lambda_\delta := \cbraceb{ k \in \Z^n : \abs{k}_\infty \le 2^J },
\end{equation}
  (where $\abs{k}_\infty$ denotes the max-norm) has  cardinality \(N:=\abs{\Lambda_\delta}\eqsim \delta^{n/m}\) and we have the uniform covering
\[
  B(x, (\delta t)^{1/m}) \subseteq \bigcup_{k \in \Lambda_\delta} (Q + k \ell(Q))
\]
for all $Q \in \mc{D}$ and $(t,x) \in \mbs{Q}^{\whit}$. Indeed, this is obtained by translating and rescaling a corresponding covering for the unit dyadic cube.

 Fix a dyadic cube $Q \in \mc{D}$. For $k \in \Lambda_\delta$ let $Q_k := Q + k\ell(Q)$ denote the shifted dyadic cubes. For all $(t,x) \in \Qw{Q}$ we have by the covering and Jensen's inequality
\begin{align*}
  A_{u,\delta} f(t,x) 
  &= \brB{ \fint_{B(x, (\delta t)^{1/m})} \abs{f(t,z)}^u \dd z }^{\frac{1}{u}} \\
  &\lesssim \brB{ \frac{1}{N} \sum_{k \in \Lambda_\delta} \fint_{Q_k} \abs{f(t,z)}^u \dd z }^{\frac{1}{u}} \\
  &\le \brB{ \frac{1}{N} \sum_{k \in \Lambda_\delta} \brB{ \fint_{Q_k} \abs{f(t,z)}^r \dd z }^{\frac{u}{r}} }^{\frac{1}{u}}.
\end{align*}
Fix $\nu < \min\{p, q, u\}$ such that $\gamma_\nu := \frac{n}{m} [\nu,u] =\gamma+\varepsilon$.
Since the right-hand side is independent of $x$, averaging this over $\Qw{Q}$ and using Minkowski's inequality since $u\le r$, we obtain 
\begin{align}\label{eq:whitofav}
\begin{aligned}
  W_{r,Q}(A_{u,\delta} f) &\lesssim
  \brB{\fint_{2^{-m}\ell(Q)^m}^{\ell(Q)^m}\brB{ \frac{1}{N} \sum_{k \in \Lambda_\delta} \brB{ \fint_{Q_k} \abs{f(t,z)}^r \dd z }^{\frac{u}{r}} }^{\frac{r}{u}} \dd t}^{1/r}
  \\ &\leq\brB{ \frac{1}{N} \sum_{k \in \Lambda_\delta} W_{r,Q_k}(f)^u }^{\frac{1}{u}} \\
  &\leq N^{\frac{1}{\nu} - \frac{1}{u}} \brB{ \frac{1}{N} \sum_{k \in \Lambda_\delta} W_{r,Q_k}(f)^\nu }^{\frac{1}{\nu}} \\
  &\eqsim \delta^{\gamma_\nu} \brB{ \frac{1}{N} \sum_{k \in \Lambda_\delta} W_{r,Q_k}(f)^\nu }^{\frac{1}{\nu}}. 
\end{aligned}
\end{align}
This estimate also holds for $r=\infty$ with the usual modifications. 

Define for $j\in \Z$, 
\[
  F_j(y) := \sum_{P \in \mc{D}: \ell(P) = 2^{-j}} W_{r,P}(f)^\nu \ind_P(y), \qquad y \in \R^n.
\]
Note that when $\ell(Q)=2^{-j}$, 
\[ \fint_{\cup_{k \in \Lambda_\delta } Q_k} F_j(y)\dd y= \frac{1}{N} \sum_{k \in \Lambda_\delta} W_{r,Q_k}(f)^\nu.
\] 
As $\cup_{k \in \Lambda_\delta } Q_k$ is a region contained within a ball centered at $x$ of radius $\eqsim \delta^{\frac{1}{m}}\ell(Q)$ and of measure $\eqsim \delta^{\frac{n}{m}}\abs{Q}$, we obtain 
\[ 
  \brB{ \frac{1}{N} \sum_{k \in \Lambda_\delta} W_{r,Q_k}(f)^\nu }^{\frac{1}{\nu}} \lesssim \brb{ {M}(F_j)(x) }^{\frac{1}{\nu}},
\]
where ${M}$ denotes the Hardy--Littlewood maximal operator.

Let us now evaluate the $t^{p,q}_\beta$-sequence norm. By partitioning the sum over all $Q \in \mc{D}$ into scales $j \in \Z$, and using the fact that the cubes at a fixed scale $j$ partition $\R^n$, we obtain
\begin{align*}
  \nrmb{ \{ W_{r,Q}(A_{u,\delta} f) \}_{Q \in \mc{D}} }_{t^{p,q}_\beta}
  &= \nrmB{ \brB{ \sum_{j \in \Z} \sum_{\ell(Q)=2^{-j}} 2^{-jm+jm\beta q} \cdot W_{r,Q}(A_{u,\delta} f)^q \ind_Q }^{1/q} }_{L^p_{\vphantom{\beta}}(\R^n)} \\
  &\lesssim \delta^{\gamma_\nu} \nrmB{ \brB{ \sum_{j \in \Z} 2^{-jm+jm\beta q}\cdot {M}(F_j)^{q/\nu} }^{1/q} }_{L^p_{\vphantom{\beta}}(\R^n)}\\
  &= \delta^{\gamma_\nu} \nrmB{ \brB{ \sum_{j \in \Z} 2^{-jm+jm\beta q}\cdot {M}(F_j)^{q/\nu} }^{\nu/q} }_{L^{p/\nu}_{\vphantom{\beta}}(\R^n)}^{1/\nu}.
\end{align*}
Now, using that  $q/\nu>1$ and $p/\nu>1$, the vector-valued Fefferman--Stein 
maximal inequality \cite{FS71}, which trivially also holds when $q=\infty$, yields 
\begin{align*}
  \nrmb{ \{ W_{r,Q}(A_{u,\delta} f) \}_{Q \in \mc{D}} }_{t^{p,q}_\beta}
  &\lesssim \delta^{\gamma_\nu } \nrmB{ \brB{ \sum_{j \in \Z} \brb{ 2^{-jm\nu/q+jm\beta \nu}\cdot {M}(F_j) }^{q/\nu} }^{\nu/q} }_{L^{p/\nu}_{\vphantom{\beta}}(\R^n)}^{1/\nu} \\
  &\lesssim \delta^{\gamma_\nu } \nrmB{ \brB{ \sum_{j \in \Z} \brb{ 2^{-jm\nu/q+jm\beta \nu} \cdot F_j }^{q/\nu} }^{\nu/q} }_{L^{p/\nu}_{\vphantom{\beta}}(\R^n)}^{1/\nu} \\
  &= \delta^{\gamma_\nu} \nrmB{ \brB{ \sum_{j \in \Z} \sum_{\ell(Q)=2^{-j}}  2^{-jm+jm\beta q}\cdot W_{r,Q}(f)^q \ind_Q }^{1/q} }_{L^p_{\vphantom{\beta}}(\R^n)} \\
  &= \delta^{\gamma +\varepsilon} \nrmb{ \{ W_{r,Q}(f) \}_{Q \in \mc{D}} }_{t^{p,q}_\beta}.
\end{align*}
Applying Proposition \ref{prop:dyadic} yields the claimed estimate.
\end{proof}

\begin{remark}\label{remark:noepsilonloss}
   The $\varepsilon$-loss in Lemma~\ref{lem:slice} can be avoided in two cases. First, if $\min\{p,q\}>u$, one may take $\nu=u$ in the proof, which directly gives the estimate with $\varepsilon=0$. Indeed, the only place where we used $\nu<\min\{p, q, u\}$ is in the application of the vector-valued Fefferman--Stein inequality, which only used $\nu <\min\cbrace{p,q}$.  
   
   Second, if $q\leq p,u$, one may also take $\varepsilon=0$. Indeed, taking $\nu=q$ in \eqref{eq:whitofav}, raising to the $q$-th power, and dualizing the resulting $L^{p/q}$-norm, one can reindex the shifted dyadic cubes and reduce the estimate to the scalar Hardy--Littlewood maximal operator. 
   
   We do not need these refinements below and leave the details to the interested reader.
\end{remark}

Using the boundedness of the auxiliary averaging operator $A_{u,\delta}$, we are now ready to prove the boundedness of $H^{\whit}_{\vec u,\delta}$ and $H^{\ca}_{\vec u,\delta}$ on $T^{p,q,r}_{\beta}$ for $p<\infty$.

\begin{prop}
\label{prop:modelops-whitneymixed}
Let \(p \in (0,\infty)\), $q,r \in (0,\infty]$, \(\beta\in\R\), \(\delta\in[1,\infty)\) and $\varepsilon>0$. Let
\(\vec u\in(0,\infty)^2\) satisfy \(u_t,u_x\le r\) and set
\begin{align*}
  \gamma
  :=
  \tfrac{n}{m}
  {\bracb{\min\cbrace{p,q,u_x},u_x}.}
\end{align*}
For all $f \in T^{p,q,r}_{\beta}$ we have
  \begin{align*}
    \nrm{H^{\whit}_{\vec u,\delta}f}_{T^{p,q,r}_{\beta}}
    \lesssim  \delta^{\gamma+\varepsilon}\,\nrm{f}_{T^{p,q,r}_{\beta}}.
  \end{align*}
 If $
    \beta>\gamma-[u_t,q]$, 
    we also have
  \begin{align*}
    \nrm{H^{\ca}_{\vec u,\delta}f}_{T^{p,q,r}_{\beta}}
    \lesssim
    \delta^{\gamma+\varepsilon}\,
    \nrm{f}_{T^{p,q,r}_{\beta}}.
  \end{align*}
 The implicit constants are independent of $\delta$.
\end{prop}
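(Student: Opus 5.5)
The plan is to reduce both estimates to Lemma~\ref{lem:slice} together with elementary one-dimensional Hardy-type estimates in the $t$-variable. The reduction works at the level of the dyadic norm from Proposition~\ref{prop:dyadic}.

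\textbf{Whitney operator.} First observe the pointwise bound $H^{\whit}_{\vec u,\delta}f(t,x)\lesssim \brb{\fint_{c^mt}^t (A_{u_x,\delta}f(s,x'))^{u_t}\dd s}^{1/u_t}$, but with the $x$-centre frozen at $x$ rather than moving. This is exactly $H^{\whit}_{\vec u,\delta}f(t,x)=\brb{\fint_{c^mt}^t A_{u_x,\delta_s}f(s,x)^{u_t}\dd s}^{1/u_t}$ with $\delta_s=\delta t/s\in[\delta,c^{-m}\delta]$. Hence $H^{\whit}_{\vec u,\delta}f(t,x)\lesssim \brb{\fint_{c^mt}^t A_{u_x,c^{-m}\delta}f(s,x)^{u_t}\dd s}^{1/u_t}$.

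Next compute $W_{r,Q}$ of this expression. Since $u_t\le r$, Minkowski's inequality (or Hölder followed by Fubini) bounds it by a finite sum of $W_{r,Q'}(A_{u_x,c^{-m}\delta}f)$. Here the $Q'$ range over a bounded number of dyadic cubes whose Whitney regions cover $\{(s,y):c^m2^{-m}\ell(Q)^m<s<\ell(Q)^m,\ y\in Q\}$, i.e.\ $Q$ and a fixed number of ancestors and neighbours of comparable size. Equivalently, by Proposition~\ref{prop:equivnorms}, the result is a Whitney average $W^{a,b,c}_r$ of $A_{u_x,c^{-m}\delta}f$ with $a=c^m2^{-m}$, $b=1$. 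Taking the $T^{p,q,r}_\beta$ norm and applying Proposition~\ref{prop:equivnorms} and then Lemma~\ref{lem:slice} with $u=u_x\le r$ gives the bound $\delta^{\gamma+\varepsilon}\nrm{f}_{T^{p,q,r}_\beta}$. This yields the first estimate.

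\textbf{Carleson operator.} Decompose $(0,t)=\bigcup_{k\ge0}(c^m)^{k+1}t,(c^m)^kt)$, or more simply dyadically, $(0,t)=\bigcup_{k\ge0}(2^{-k-1}t,2^{-k}t)$. On the $k$-th piece the spatial ball $B(x,(\delta t)^{1/m})$ has radius $(2^k\delta s)^{1/m}$ relative to the scale $s$. Then
\[
H^{\ca}_{\vec u,\delta}f(t,x)\lesssim \Bigl(\sum_{k\ge0}2^{-k}\bigl(H^{\whit}_{\vec u,2^{k}\delta}f(2^{-k}t,x)\bigr)^{u_t}\Bigr)^{1/u_t}.
\]
If the final exponent $\min\{1,q,p\}$ issue arises, I will raise to a power $\nu\le\min\{1,p,q,u_t\}$ and use $\nu$-convexity \eqref{eq:convex} of $T^{p,q,r}_\beta$. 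For each $k$, the dilation $t\mapsto 2^{-k}t$ in the $t$-variable alone costs a factor $2^{k(\beta-1/q)}$ in $T^{p,q,r}_\beta$ norm. This is seen directly from the definition, since the Whitney averages only rescale in $t$ and the ball radius $t^{1/m}$ changes by a bounded-aperture amount, which is handled again by Proposition~\ref{prop:equivnorms} (or \eqref{eq:martingale}) at a harmless cost. Combined with the first part applied with $\delta$ replaced by $2^k\delta$, the $k$-th term therefore carries the factor
\[
2^{-k/u_t}\,2^{-k(\beta-1/q)}\,(2^k\delta)^{\gamma+\varepsilon}=\delta^{\gamma+\varepsilon}\,2^{-k(\beta+[u_t,q]-\gamma-\varepsilon)}.
\]
Summing over $k$ converges exactly when $\beta>\gamma-[u_t,q]$, after choosing $\varepsilon$ smaller than the gap.

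\textbf{Main obstacle.} The delicate point is the $t$-dilation step. Rescaling only in $t$ does not preserve the conical geometry: the spatial radius should become $2^{-k/m}$ times smaller, so it must be absorbed into a change of aperture $2^{k}$, which Proposition~\ref{prop:equivnorms} does not control uniformly. I would instead avoid dilating and estimate directly in the dyadic norm. The value $H^{\whit}_{\vec u,2^k\delta}f(2^{-k}t,x)$ for $(t,x)\in\Qw{Q}$ is controlled by averages over Whitney regions of the cubes $P$ of side $2^{-k/m}\ell(Q)$ (for $m=1$, the $k$-th generation below $Q$), spread over a ball of radius $\eqsim(2^k\delta)^{1/m}\ell(P)$. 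The proof of Lemma~\ref{lem:slice}, with the maximal function $M(F_{j+k})$ and the Fefferman--Stein inequality at fixed offset $k$, then yields the factor $(2^k\delta)^{\gamma+\varepsilon}$ together with the weight ratio $\ell(Q)^{m-m\beta q}/\ell(P)^{m-m\beta q}=2^{k(1-\beta q)}$. The latter is the dilation factor from before, now obtained rigorously.
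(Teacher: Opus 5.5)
Your estimate for $H^{\whit}_{\vec u,\delta}$ is the paper's argument: enlarge the aperture to $c^{-m}\delta$ with the centre frozen, go from $u_t$ to $r$ by Jensen, use Fubini to reach a $t$-enlarged Whitney average, and conclude with Proposition~\ref{prop:equivnorms} and Lemma~\ref{lem:slice}. Keep the continuous formulation here; a dyadic version would involve boundedly many generations of descendants of $Q$, not ancestors.

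For $H^{\ca}_{\vec u,\delta}$ your final, dyadic argument is correct but takes a different route at the decisive step. Both proofs split $(0,t)$ into geometric layers, pass to a small exponent $\nu$ via \eqref{eq:convex}, and end with the same geometric series in $\beta+[u_t,q]-\gamma-\varepsilon$.

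The paper keeps Lemma~\ref{lem:slice} as a black box and dilates \emph{before} applying it. In \eqref{eq:dilatedest}, the spatial Whitney average of the $k$-th layer is replaced by a supremum and collapsed to the centre via \eqref{eq:martingale}. The substitution $\sigma=c^{mk}\tau$ then acts on a function of $t$ alone and produces exactly $c^{mk(\beta-1/q)}$, after which a spatial average is restored with \eqref{eq:martingaleback}. Each of these moves only multiplies the aperture of $A_{u_x,\cdot}$ by $c^{-m}$. This is harmless because the layer is already an average at aperture $\eqsim c^{-mk}\delta$.

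You never dilate. Instead you rerun the proof of Lemma~\ref{lem:slice} with input cubes $P$ of the $k$-th generation below the output cube $Q$. The aperture $\eqsim 2^k\delta$ then gives $(2^k\delta)^{\gamma+\varepsilon}$ through $N^{1/\nu-1/u_x}$, and $2^{k(1/q-\beta)}$ comes from the shift $j\mapsto j+k$ in the weights after Fefferman--Stein.

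Your route makes the origin of the dilation factor transparent, at the price of duplicating the maximal-function argument. The paper's route is more modular and carries over verbatim to $p=\infty$ in Proposition~\ref{prop:cabddp>infty}. Yours would there require redoing the Carleson-box counting with a scale offset.

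Please delete your first version of the Carleson step, the operator bound for $H^{\whit}_{\vec u,2^k\delta}$ followed by a $t$-only dilation ``at harmless cost''. It does not work. A $t$-only dilation of a general element of $T^{p,q,r}_\beta$ enlarges the Whitney ball by the factor $2^{k/m}$. For $p<r$, a single Whitney bump shows this can cost an extra $2^{k\frac nm[p,r]}$, which would shrink the admissible range of $\beta$.

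Three smaller points:
\begin{itemize}
\item For general $m\ge1$, use layers $(2^{-m(k+1)}t,2^{-mk}t)$, so that the input cubes are exactly $k$-th generation descendants.
\item The dilation factor in your text should read $2^{-k(\beta-1/q)}$, as in your display.
\end{itemize}
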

\begin{proof}
The claim for $H^{\whit}_{\vec u,\delta}$ follows almost immediately from Lemma \ref{lem:slice}. Indeed, since $u_t\le r$, Jensen's inequality in the $s$-average gives for $(t,x) \in \R^{1+n}_+$
\begin{align*}
      H^{\whit}_{\vec u,\delta}f(t,x) &=
\brB{\fint_{c^m t}^t\brB{\fint_{B(x,(\delta t)^{1/m})} \abs{f(s,y)}^{u_x} \dd y}^{\frac{u_t}{u_x}} \dd s}^{\frac{1}{u_t}}\\  
  &\lesssim
  \brB{
    \fint_{c^m t}^t
    A_{u_x,c^{-m}\delta}f(s,x)^r\dd s
  }^{1/r}.
\end{align*}
Therefore, for $(\tau,z) \in \R^{1+n}_+$
\begin{align}\label{eq:AudomHw}
\begin{aligned}
      W_r(H^{\whit}_{\vec u,\delta}f)(\tau,z)
  &\lesssim
  \brB{\fint_{2^{-m} \tau}^\tau
    \fint_{B(z,\tau^{1/m})}
    \fint_{c^m t}^t
    A_{u_x,c^{-m}\delta}f(s,x)^r\dd s\dd x\dd t
  }^{1/r} \\
  &\lesssim
  \brB{
    \fint_{(c/2)^{m} \tau}^\tau
    \fint_{B(z,\tau^{1/m})}
    A_{u_x,c^{-m}\delta}f(s,x)^r\dd x\dd s
  }^{1/r}.
\end{aligned}
\end{align}
On the right-hand side, we are integrating over a $t$-enlarged Whitney region. Therefore, taking $L^p_{\vphantom{\beta}}(\R^n;L^q_\beta(\R_+))$-norms and applying  
 Lemma \ref{lem:slice}, we conclude 
\[
  \|H^{\whit}_{\vec u,\delta}f\|_{T^{p,q,r}_\beta}
  \lesssim
  \|A_{u_x,c^{-m}\delta}f\|_{T^{p,q,r}_\beta}
  \lesssim
  \delta^{\gamma+\varepsilon} \|f\|_{T^{p,q,r}_\beta}.
\]

To prove the claim for $H^{\ca}_{\vec u,\delta}$, fix $(t,x) \in \R^{1+n}_+$. Define $I_k(t):= (c^{m(k+1)}t ,c^{mk}t )$ for $k\geq 0$ and decompose
\[
  (0,t )=\bigcup_{k=0}^\infty I_k(t)
\]
modulo null sets.
Define $\nu:=\min\cbrace{p,q,u_t}$. Then, since for $s \in I_k(t)$ we have $t \leq  c^{-m(k+1)}s \leq c^{-m} t$, we can estimate
\begin{align}\label{eq:pointdomHw}
\begin{aligned}
  H^{\ca}_{\vec u,\delta}f(t,x)
  &=
  \brB{
    \sum_{k=0}^\infty
    \frac{|I_k(t)|}{t }    \fint_{I_k(t)}
    \brB{      \fint_{B(x,(\delta t )^{1/m})}|f(s,y)|^{u_x}\dd y
    }^{u_t/u_x}    \dd s
  }^{1/u_t} \\
  &\lesssim
  \brB{\sum_{k=0}^\infty
  c^{\nu mk/u_t}  \brB{
    \fint_{I_k(t)}
    \Bigl(
      \fint_{B(x,(\delta t )^{1/m})}|f(s,y)|^{u_x}\dd y
    \Bigr)^{u_t/u_x}
    \dd s
  }^{\nu/u_t}}^{1/\nu}.\\
 &\lesssim
  \brB{\sum_{k=0}^\infty
  c^{\nu mk/u_t}  \brB{
    \fint_{I_k(t)}
    A_{u_x, c^{-m(k+1)}\delta}f(s,x)^{u_t}
    \dd s
  }^{\nu/u_t}}^{1/\nu}.
\end{aligned}
\end{align}
Taking the $T^{p,q,r}_\beta$-norm and using \eqref{eq:convex}
we obtain
\begin{equation}\label{eq:estHcafirst}
  \|H^{\ca}_{\vec u,\delta}f\|_{T^{p,q,r}_\beta} \lesssim \brB{ \sum_{k=0}^\infty c^{\nu mk/u_t} \cdot \nrmB{(t,x)\mapsto \brB{
    \fint_{I_k(t)}
    A_{u_x, c^{-m(k+1)}\delta}f(s,x)^{u_t}
    \dd s
  }^{1/u_t} }_{T^{p,q,r}_\beta}^\nu }^{1/\nu}.
\end{equation}
We will analyse the norm on the right-hand side for fixed $k\geq 0$. For $(\tau,z) \in \R^{1+n}_+$ the Whitney average of the $T^{p,q,r}_\beta$-norm can, by Jensen's inequality, be estimated as follows
\begin{align*}
  \brB{ \fint_{2^{-m}\tau}^\tau &\fint_{B(z,\tau^{1/m})} \brB{ \fint_{I_k(t)} A_{u_x, c^{-m(k+1)}\delta}f(s,x)^{u_t} \dd s }^{\frac{r}{u_t}} \dd x \dd t }^{\frac{1}{r}}\\&\lesssim \brB{ \fint_{c^{m(k+1)}2^{-m}\tau}^{c^{mk}\tau} \sup_{x \in B(z, \tau^{1/m})} A_{u_x, c^{-m(k+1)}\delta}f(s,x)^r  \dd s }^{\frac{1}{r}}\\
  &\leq \brB{ \fint_{c^{m(k+1)}2^{-m}\tau}^{c^{mk}\tau} \sup_{x \in B(z, {2c^{-(k+1)}s^{1/m}})} A_{u_x, c^{-m(k+1)}\delta}f(s,x)^r  \dd s }^{\frac{1}{r}}\\
  &\lesssim \brB{ \fint_{c^{m(k+1)}2^{-m}\tau}^{c^{mk}\tau} A_{u_x, c^{-m(k+2)}\delta}f(s,z)^r \dd s }^{\frac{1}{r}},
\end{align*}
where we used \eqref{eq:martingale} in the final step.
Taking the $L^p_{\vphantom{\beta}}(\R^n; L^q_\beta(\R_+))$-norm and first performing a change of variables $\sigma = c^{mk}\tau$ and then reintroducing the spatial average using \eqref{eq:martingaleback}, we obtain
\begin{align}\label{eq:dilatedest}
\begin{aligned}
    \nrmB{&(t,x)\mapsto \brB{
    \fint_{I_k(t)}
    A_{u_x, c^{-m(k+1)}\delta}f(s,x)^{u_t}
    \dd s
  }^{1/u_t} }_{T^{p,q,r}_\beta} \\&\lesssim  c^{mk(\beta - \frac{1}{q})} \nrmB{(\sigma,z)\mapsto \brB{ \fint_{c^m2^{-m}\sigma}^\sigma A_{u_x, c^{-m(k+2)}\delta}f(s,z)^r \dd s }^{1/r} }_{L^p_{\vphantom{\beta}}(\R^n; L^q_\beta(\R_+))}\\
  &\lesssim c^{mk(\beta - \frac{1}{q})} \nrmB{(\sigma,z)\mapsto \brB{ \fint_{c^m 2^{-m}\sigma}^\sigma \fint_{B(z, \sigma^{1/m})} A_{u_x, c^{-m(k+3)}\delta}f(s,w)^r \dd w \dd s }^{1/r} }_{L^p_{\vphantom{\beta}}(\R^n; L^q_\beta(\R_+))} \\
  &\lesssim c^{mk(\beta - \frac{1}{q})} \|A_{u_x, c^{-m(k+3)}\delta}f\|_{T^{p,q,r}_\beta},
\end{aligned}
\end{align}
where in the last step we used Proposition~\ref{prop:equivnorms}.
Combining with \eqref{eq:estHcafirst} and Lemma \ref{lem:slice}, we conclude
\begin{align*}
  \|H^{\ca}_{\vec u,\delta}f\|_{T^{p,q,r}_\beta}
  &\lesssim  \brB{\sum_{k=0}^\infty  c^{\nu mk(\beta+[u_t,q])}  \|A_{u_x, c^{-m(k+3)}\delta}f\|_{T^{p,q,r}_\beta}^\nu}^{1/\nu}\\
  &\lesssim
  \delta^{\gamma+\varepsilon}
  \brB{\sum_{k=0}^\infty
  c^{\nu mk(\beta+[u_t,q]-\gamma-\varepsilon)}}^{1/\nu}
  \|f\|_{T^{p,q,r}_\beta}.
\end{align*}
Since $0<c<1$, the series converges whenever
\[
  \beta+[u_t,q]-\gamma>\varepsilon.
\]
Since we may shrink $\varepsilon>0$ if necessary, this finishes the proof when $r<\infty$. If $r=\infty$, the usual modifications apply.
\end{proof}

The endpoint estimate $p=\infty$ for the model operators 
is as follows.

\begin{prop}\label{prop:cabddp>infty}
Let $q,r\in (0,\infty]$, \(\beta\in\R\), \(\delta\in[1,\infty)\). Let
\(\vec u\in(0,\infty)^2\) satisfy \(u_t,u_x\le r\) and set
\begin{align*}
  \gamma
  :=
  \tfrac{n}{m} {\bracb{\min\cbrace{q,u_x},u_x}}.
\end{align*}
For all $f \in T^{\infty,q,r}_{\beta}$ we have
  \begin{align*}
    \nrm{H^{\whit}_{\vec u,\delta}f}_{T^{\infty,q,r}_{\beta}}
    \lesssim  \delta^{\gamma}(1+\ln \delta)^{\frac1q}\,\nrm{f}_{T^{\infty,q,r}_{\beta}},
  \end{align*}
 and if $
    \beta>\gamma-[u_t,q]$, we also have
  \begin{align*}
    \nrm{H^{\ca}_{\vec u,\delta}f}_{T^{\infty,q,r}_{\beta}}
    \lesssim
    \delta^{\gamma} (1+\ln \delta)^{\frac1q}\,
    \nrm{f}_{T^{\infty,q,r}_{\beta}},
  \end{align*}
  where the implicit constants are independent of $\delta$.
\end{prop}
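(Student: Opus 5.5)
The plan is to copy the structure of the proof of Proposition~\ref{prop:modelops-whitneymixed}. First I would prove a $p=\infty$ analogue of Lemma~\ref{lem:slice} for the slice averaging operator $A_{u,\delta}$, namely
\[
  \nrm{A_{u,\delta}f}_{T^{\infty,q,r}_\beta}\lesssim \delta^{\gamma}(1+\ln\delta)^{1/q}\nrm{f}_{T^{\infty,q,r}_\beta}.
\]
The pointwise reductions \eqref{eq:AudomHw} and \eqref{eq:pointdomHw}, the dilation step \eqref{eq:dilatedest} and the convexity \eqref{eq:convex} do not depend on $p$. They transfer verbatim to the Carleson-box norm defining $T^{\infty,q,r}_\beta$, with Proposition~\ref{prop:equivnorms} and the $p=\infty$ case of Proposition~\ref{prop:dyadic} used in place of the $p<\infty$ versions.

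Given this slice estimate, the $H^{\whit}$ bound follows at once: \eqref{eq:AudomHw} bounds $W_r(H^{\whit}_{\vec u,\delta}f)$ by a $t$-enlarged Whitney average of $A_{u_x,c^{-m}\delta}f$, and the change of aperture costs only a constant. For $H^{\ca}$ I would use the layer decomposition $I_k(t)$ with $\nu=\min\{q,u_t\}$ and \eqref{eq:convex}. This gives
\[
  \sum_k c^{\nu mk(\beta+[u_t,q])}\,\nrm{A_{u_x,c^{-m(k+3)}\delta}f}^\nu_{T^{\infty,q,r}_\beta}.
\]
The slice bound then contributes a factor $c^{-\nu mk\gamma}(1+k+\ln\delta)^{\nu/q}$. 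Since $\beta>\gamma-[u_t,q]$ the geometric decay beats the polynomial factor in $k$, and the sum is $\lesssim \delta^{\nu\gamma}(1+\ln\delta)^{\nu/q}$. In \eqref{eq:dilatedest} the dilation $\sigma=c^{mk}\tau$ has to be done inside the supremum over Carleson boxes. I would check that rescaling the box $(0,\tau)\times B(y,\tau^{1/m})$ produces a box of comparable shape, up to enlarging the spatial radius by a factor $c^{-k}$ that is absorbed into the aperture of $A$ via \eqref{eq:martingale}. I would also check that the time factor $c^{mk(\beta-1/q)}$ appears as before, since the normalisation $\fint_B\dd x$ is invariant under this rescaling.

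The main obstacle is the slice lemma itself, because Fefferman--Stein is not available in $L^\infty$. I would work dyadically, starting from \eqref{eq:whitofav} with $\nu$ chosen as $q$ if $q\le u$, so that $\gamma=\frac nm[q,u]$ with no loss, and as $u$ otherwise, so that $\gamma=0$. Fix $Q_0$ and split the sum over $Q\in\mc D(Q_0)$ into two groups. For cubes with $\ell(Q)\ge \delta^{-1/m}\ell(Q_0)$ there are only $\eqsim 1+\ln\delta$ scales. At each such scale the shifted cubes $Q_k$ lie in a $\delta^{1/m}$-dilate of $Q_0$, which is covered by $\lesssim\delta^{n/m}$ cubes of size $\ell(Q_0)$, each carrying one Carleson-box bound. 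The normalisation $N^{-1}$ exactly compensates this count, and the generation count produces the factor $(1+\ln\delta)^{1/q}$. For the small cubes, with $\ell(Q)<\delta^{-1/m}\ell(Q_0)$, the shifts $Q_k$ stay inside $3Q_0$. There I would reindex the shifted cubes, as in Remark~\ref{remark:noepsilonloss}, and bound the sum by finitely many Carleson boxes of size $\ell(Q_0)$ with no $\delta$ loss beyond $N^{q/\nu-1}$. If the case $q>u$ causes trouble, I would instead use John--Nirenberg type self-improvement of the Carleson condition, which lets one pass to $L^s$ Carleson averages for $s<\infty$.
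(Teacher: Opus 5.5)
Your proposal is correct and is essentially the paper's proof. The paper also proves a dyadic $p=\infty$ bound for $A_{u,\delta}$ in which each of the $\eqsim 1+\ln\delta$ generations coarser than $\delta^{-1/m}\ell(Q_0)$ costs one Carleson bound, while all finer generations stay in $3Q_0$ and are handled at once. It then uses \eqref{eq:AudomHw} for $H^{\whit}$, and the layer decomposition with a $t$-only rescaling for $H^{\ca}$.

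Three small corrections:

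\emph{Overlap count.} The paper controls the overlap of the shifted cubes at generation $j$ by $\min\{2^{nj},N\}$. Your version (a $\delta^{1/m}$-dilate covered by $\lesssim\delta^{n/m}$ boxes, with $N^{-1}$ compensating) ignores this overlap as written. It becomes the same argument if you count for each fixed shift $k$: translation is injective on each generation, so each shifted generation lies in at most $2^n$ cubes of side $\ell(Q_0)$.

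\emph{The case $q>u$.} No John--Nirenberg argument is needed. The power-mean inequality $\bigl(\frac1N\sum_k a_k^u\bigr)^{q/u}\le\frac1N\sum_k a_k^q$ already gives the bound with $\gamma=0$.

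\emph{The rescaling step.} After rescaling, the box $(0,c^{mk}\tau)\times B(y,\tau^{1/m})$ is simply contained in $(0,\tau)\times B(y,\tau^{1/m})$. Nothing has to be absorbed into the aperture of $A$. Doing so would cost an extra factor $c^{-mk\gamma}$ and weaken the threshold $\beta>\gamma-[u_t,q]$.
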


\begin{proof}
We only consider the case $q<\infty$, the case $q= \infty$ follows by the usual modifications. We first estimate the averaging operator $A_{u,\delta}$ from 
\eqref{eq:avop} for $u\le r$, using the dyadic characterization of $T^{\infty,q,r}_\beta$ from 
Proposition~\ref{prop:dyadic}.
Let $J$ be the integer in \eqref{eq:J}
and recall that
$2^{nJ} \eqsim N  =  \abs{\Lambda_\delta} \eqsim \delta^{n/m}$. Fix $Q^0\in\mc{D}$ and
consider the $j$-th generation of children of $Q^0$, denoted by $\mc{D}_j(Q^0)$.
It follows as in
\eqref{eq:whitofav}, replacing the embedding $\ell^q \hookrightarrow \ell^u$ by Jensen's inequality in case $q \geq u$, that
\begin{align*}
    \fint_{Q^0}&\sum_{Q\in\mc{D}_j(Q^0)} 
    W_{r,Q}(A_{u,\delta}f)^q \ind_Q(x)\dd x
    =2^{-jn}\sum_{Q\in\mc{D}_j(Q^0)} 
     W_{r,Q}(A_{u,\delta}f)^q\\
    &\lesssim 2^{-jn}\sum_{Q\in\mc{D}_j(Q^0)} 
    \brB{\frac 1N\sum_{k\in \Lambda_\delta} W_{r,Q_k}(f)^{u} }^{q/u} \\
     &\lesssim  2^{-jn}\sum_{Q\in\mc{D}_j(Q^0)} \delta^{\gamma_u q}
    \cdot {\frac 1N\sum_{k\in \Lambda_\delta} W_{r,Q_k}(f)^q } \\
    &=2^{-jn}\cdot \frac{\delta^{\gamma_u q}}{N}\sum_{Q\in\mc{D}_j(Q^0)} 
    \sum_{k\in \Lambda_\delta} \frac{1}{|Q_k|} \int_{Q_k} W_{r,Q_k}(f)^q  \ind_{Q_k}(x) \dd x 
    \\
    &= \frac{\delta^{\gamma_u q}}{N} \cdot \frac1{|Q^0|}\int_{R_j} G_j(x) \sum_{Q\in\mc{D}_j(Q^0)} 
     \sum_{k\in \Lambda_\delta}\ind_{Q_k}(x)\dd x,
\end{align*}
where
$\gamma_u:=\tfrac{n}{m}\brac{\min\cbrace{q,u},u}$,  $R_j:= \bigcup_{Q\in \mc{D}_j(Q^0),\, k\in \Lambda_\delta} Q_k$  and 
$$
 G_j(x)\colon = \sum_{Q\in\mc{D}: \ell(Q)= 2^{-j}\ell(Q^0)} 
    W_{r,Q}(f)^q \ind_Q(x), \qquad x\in \R^n.
$$
Note that, by the disjointness of the dyadic cubes at scale $j$, we have for all $x\in \R^n$,
\[
\sum_{Q\in\mc{D}_j(Q^0)} 
     \sum_{k\in \Lambda_\delta}\ind_{Q_k}(x) \le \min \cbraceb{\abs{\mc{D}_j(Q^0)}, |\Lambda_\delta|}\lesssim \min ( 2^{nj}, 2^{nJ}) \eqsim {N\cdot 2^{-n\max(J-j, 0)}}.
\]
Moreover, by the definition of $\Lambda_\delta$  we have 
\begin{itemize}
    \item If $0\le j<J$, then $R_j$ is contained in the union of $3^n$ dyadic cubes of side length $2^{J-j} \ell(Q^0)$ and
has measure $\eqsim 2^{n(J-j)} \abs{Q^0}$.
\item If $j \geq J$, then $R_j$ is contained in $3Q^0$, which is the union of $3^n$ dyadic cubes of side length $\ell(Q^0)$, and $R_j$ contains $Q^0$, hence has measure $\eqsim |Q^0|$.
\end{itemize}
Thus we have obtained 
\[
 \fint_{Q^0}\sum_{Q\in\mc{D}_j(Q^0)} 
    W_{r,Q}(A_{u,\delta}f)^q \ind_Q(x)\dd x \lesssim \delta^{\gamma_u q} \fint_{R_j} G_j(x) \dd x.
\]
Therefore
\begin{align*}
      \fint_{Q^0}&\sum_{Q\in \mc{D}(Q^0)}
 \ell(Q)^{m-m\beta q}\cdot  W_{r,Q}(A_{u,\delta}f)^q \ind_Q(x) \dd x \\
& =\sum_{j=0}^\infty (2^{-j}\ell(Q^0))^{m-m\beta q}
\fint_{Q^0}\sum_{Q\in\mc{D}_j(Q^0)} 
    W_{r,Q}(A_{u,\delta}f)^q \ind_Q(x)\dd x\\
    &\lesssim \sum_{j=0}^\infty \delta^{\gamma_u q} (2^{-j}\ell(Q^0))^{m-m\beta q}\fint_{R_j} G_j(x) \dd x 
    \\
    & \lesssim \delta^{\gamma_u q}
\sum_{j=0}^{J-1} \fint_{R_j} (2^{-j}\ell(Q^0))^{m-m\beta q} G_j(x) \dd x\\
 &\qquad\qquad+\delta^{\gamma_u q}
\fint_{3Q^0}\sum_{j=J}^\infty (2^{-j}\ell(Q^0))^{m-m\beta q} G_j(x) \dd x
\\
&\leq \delta^{\gamma_u q}
\sum_{j=0}^{J-1} \fint_{R_j} \sum_{Q\in \mc{D}, Q\subseteq R_j}
 \ell(Q)^{m-m\beta q}\cdot  W_{r,Q}(f)^q \ind_Q(x) \dd x\\
 &\qquad\qquad +\delta^{\gamma_u q}
\fint_{3Q^0}\sum_{Q\in \mc{D}, Q\subseteq 3Q^0}
 \ell(Q)^{m-m\beta q}\cdot  W_{r,Q}(f)^q \ind_Q(x)  \dd x\\
&\lesssim \delta^{\gamma_u q}\cdot J\cdot  3 ^n\cdot \nrm{f}_{T^{\infty,q,r}_\beta}^q+
\delta^{\gamma_u q}\cdot 3^n\cdot \nrm{f}_{T^{\infty,q,r}_\beta}^q\\
&\lesssim \delta^{\gamma_u q} (1+\ln \delta)\nrm{f}_{T^{\infty,q,r}_\beta}^q,
\end{align*}
where we have used the dyadic Carleson bound on $f$ from Proposition~\ref{prop:dyadic} on each of the $3^n$ dyadic cubes that $R_j$ is the union of, for each $0\le j<J$, and
 on each of the dyadic cubes that make up $3Q^0$ for the second term.
Taking supremum over all $Q^0$ proves
$$
    \nrm{A_{u,\delta}f}_{T^{\infty,q,r}_{\beta}}
    \lesssim
    \delta^{\gamma_u} (1+\ln \delta)^{\frac1q}\,
    \nrm{f}_{T^{\infty,q,r}_{\beta}}.
$$

With this estimate we can argue as in Proposition~\ref{prop:modelops-whitneymixed} for $H^{\whit}_{\vec{u}, \delta}$ and $H^{\ca}_{\vec{u}, \delta}$. 
Using again \eqref{eq:AudomHw} and replacing $u$ by $u_x$ so that  $\gamma_{u_x}=\gamma$, we immediately conclude that
$$
    \nrm{H^{\whit}_{\vec{u}, \delta}f}_{T^{\infty,q,r}_{\beta}}
    \lesssim
    \nrm{A_{u_x,c^{-m}\delta}f}_{T^{\infty,q,r}_{\beta}}
    \lesssim
    \delta^{\gamma} (1+\ln \delta)^{\frac1q}\,
    \nrm{f}_{T^{\infty,q,r}_{\beta}}.
$$
For $H^{\ca}_{\vec{u}, \delta}f$, we only need to note the analogue of \eqref{eq:dilatedest} for $p=\infty$.
For fixed $(\tau,z) \in \R^{1+n}_+$, we calculate 
\begin{align*}
    \int_0^\tau \fint_{B(z,\tau^{1/m})}
    & t^{-\beta q}
    \brB{ \fint_{c^{m(k+1)}2^{-m}t}^{c^{mk}t}
    A_{u_x, c^{-m(k+2)}\delta} f(s,x)^r \dd s}^{q/r} \dd x \dd t \\
    &= c^{mk(\beta q-1)}\int_0^{c^{mk} \tau} \fint_{B(z,\tau^{1/m})}
     t^{-\beta q}
    \brB{ \fint_{(c/2)^{m}t}^{t}
    A_{u_x, c^{-m(k+2)}\delta} f(s,x)^r \dd s}^{q/r}  \dd x\dd t \\
    &\lesssim c^{mk(\beta q-1)}\int_0^{\tau} \fint_{B(z,\tau^{1/m})}
    t^{-\beta q}
    \brB{ 
    {W_r^{(c/2)^m,1,1}} \brb{A_{u_x, c^{-m(k+3)}\delta} f}(t,x)}^q \dd x\dd t, 
\end{align*}
where we used a change of $t$-variable in the second line, and we used $s\geq (c/2)^m t$,   \eqref{eq:martingaleback}  and $c^{-1}\geq 2$ in the third line. The rest of the estimate of $\nrm{H^{\ca}_{\vec{u}, \delta}f}_{T^{\infty,q,r}_{\beta}}$ follows the case $p<\infty$ in Proposition \ref{prop:modelops-whitneymixed}, and we conclude that, using $\nu = \min\cbrace{q,u_t}$
\begin{align*}
  \|H^{\ca}_{\vec u,\delta}f\|_{T^{\infty,q,r}_\beta}
  &\lesssim
  \delta^{\gamma}
  \brB{\sum_{k=0}^\infty
  c^{\nu mk(\beta+[u_t,q]-\gamma)}
  (1+\ln\delta +mk\ln\tfrac 1c)^{\nu/q}
  }^{1/\nu}
  \|f\|_{T^{\infty,q,r}_\beta}\\
    &\le
  \delta^{\gamma}(1+\ln\delta)^{\frac 1q}
  \brB{\sum_{k=0}^\infty
  c^{\nu mk(\beta+[u_t,q]-\gamma)}
  (1+mk\ln\tfrac 1c)^{\nu/q}
  }^{1/\nu}
  \|f\|_{T^{\infty,q,r}_\beta}\\
      &\lesssim
  \delta^{\gamma}(1+\ln\delta)^{\frac 1q}
  \|f\|_{T^{\infty,q,r}_\beta},
\end{align*}
where we used that $\ln\delta\ge 0$ and
$\beta+[u_t,q]-\gamma> 0$.
\end{proof}

\begin{remark} \label{rem:HwODE}
Let $p,q,r \in (0,\infty]$, \(\beta\in\R\), \(\delta\in[1,\infty)\). Let
\(\vec u\in(0,\infty)^2\) satisfy \(u_t,u_x\le r\) and assume that
$$\gamma:=
  \tfrac{n}{m}
  \bracb{\min\cbrace{p,q,u_x},u_x}<\beta+[u_t,q].
$$
Let $E_t,F_t\subseteq\R_+$ with $\inf F_t>\sup E_t$.
    Then, for every $\varepsilon>0$, the operator $H^{\ca}_{\vec u,\delta}$ satisfies the 
    off-diagonal estimate
$$
\|\ind_{F_t\times\R^n} H^{\ca}_{\vec u,\delta} (\ind_{E_t\times \R^n}f)\|_{T^{p,q,r}_\beta}
  \lesssim \delta^{\gamma+\varepsilon} 
  \brB{\frac{\sup E_t}{\inf F_t}}^\eta
  \|f\|_{T^{p,q,r}_\beta}
$$
for some $\eta>0$, and with the $\log$-improvement of $\delta^\epsilon$ when $p=\infty$ as in Proposition~\ref{prop:cabddp>infty}.
This follows upon inspection of the pointwise estimate
\eqref{eq:pointdomHw}. Indeed, since $t>\inf F_t$ and $s<\sup E_t$, only the terms with $c^{mk}\le \sup E_t/\inf F_t$ will contribute. We conclude by summing the geometric series only over these $k$.
\end{remark}

\subsection{A priori estimates on singular operators}\label{sec:apriori}
{Combining the domination principles for singular operators in Theorems~\ref{thm:maindominationform} and~\ref{thm:quasimaindominationform}} with the boundedness of the model operators on tent spaces proved in Section~\ref{sec:modelops}, we can deduce our main result on the boundedness of singular operators on tent spaces. Recall our definition of a singular operator from Definition \ref{def:ODE}.

\begin{theorem}\label{thm:mainestimate}
Let $r \in (1,\infty)$, $\vec{u} \in [1,\infty)^2$ and $\vec{v} \in (1,\infty]^2$ be such that
$$\max\cbrace{u_t, u_x} \leq r\leq \min\cbrace{v_t, v_x}.$$ 
Let $\kappa, M\in\R$ and suppose that  $T$ is a singular operator of type $(\kappa,m,r,\vec u,\vec v, M)$.
Let $p,q\in (0,\infty]$ and $\beta\in\R$, and assume the decay condition
\begin{align}
    \tag{d}
    \label{e:main-decay-condition}
    M &> -[u_t,v_t] +\tfrac{n}{m}  {\bracb{\min\cbrace{p,q,u_x},u_x}} + \tfrac{n}{m}{\bracb{v_x,\max\cbrace{p,q,v_x}} } + \kappa \\
\intertext{
and the lower and upper bounds
}
    \tag{l}
    \label{e:main-beta-lower-bound}
    \beta &> -[u_t,q]+ \tfrac{n}{m}{\bracb{\min\cbrace{p,q,u_x},u_x}}, \\
    \tag{u}
    \label{e:main-beta-upper-bound}
    \beta&< \phantom{-}[q,v_t] -\tfrac{n}{m}{\bracb{v_x,\max\cbrace{p,q,v_x}} }-\kappa.
\end{align}
Then we have for all $f \in L^\infty_c(\R^{1+n}_+)$,
\begin{equation*}
  \nrm{Tf}_{T^{p,q,r}_{\beta+\kappa}} \lesssim \nrm{f}_{T^{p,q,r}_{\beta}}.
\end{equation*}
If $T$ is causal,  the upper bound \eqref{e:main-beta-upper-bound} is omitted. If $T$ is anti-causal,  the lower bound \eqref{e:main-beta-lower-bound} is omitted.
\end{theorem}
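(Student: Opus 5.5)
We argue by duality, combining the form domination of Theorem~\ref{thm:maindominationform} (or Theorem~\ref{thm:quasimaindominationform} in the quasi-Banach range) with the model operator bounds of Section~\ref{sec:modelops}. First suppose $\min\{p,q\}\ge 1$. By Proposition~\ref{prop:tentspacedensityduality}\ref{it:duality},
\[
\nrm{Tf}_{T^{p,q,r}_{\beta+\kappa}}\eqsim\sup_g\int|Tf||g|,
\]
with the supremum over $g\in L^\infty_c$ and $\nrm{g}_{T^{p',q',r'}_{-\beta-\kappa}}=1$. We insert the domination and treat the four terms separately. For the local term, we write it as $\int A^{\whit}_{r,\kappa}f\cdot(\text{Whitney averages of }g)$. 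Hölder's inequality on each $\Qw{Q}$ bounds it by $\int A^{\whit}_{r,\kappa}f\,|\widetilde g|$, where $\widetilde g=\sum_Q\ip{g}_{r',\Qw{Q}}\ind_{\Qw{Q}}$, which has $T^{p',q',r'}_{-\beta-\kappa}$-norm comparable to that of $g$ by Proposition~\ref{prop:dyadic}. Duality then lets us conclude via Lemma~\ref{lemma:Whitneymap}. For each decay term we put the weight $t^\kappa$ on $f$ and apply the Hölder-type duality pairing between $T^{p,q,r}_{\beta+\kappa}$ and $T^{p',q',r'}_{-\beta-\kappa}$. This gives the bound
\[
2^{-jm(M+[u_t,v_t]-\kappa)}\nrm{H_{\cdot,2^{jm}}f}_{T^{p,q,r}_{\beta}}\nrm{H_{\cdot',2^{jm}}g}_{T^{p',q',r'}_{-\beta-\kappa}} .
\]
Here we use that $t^\kappa$ commutes with the averages up to constants, since $s\eqsim t$ on Whitney regions, though not on Carleson regions; for $H^{\ca}$ we instead absorb $t^\kappa$ by shifting $\beta$.

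Next we apply Propositions~\ref{prop:modelops-whitneymixed} and~\ref{prop:cabddp>infty}. On the $f$-side, using $u_t,u_x\le r$, both $H^{\whit}_{\vec u,\delta}$ and $H^{\ca}_{\vec u,\delta}$ are bounded on $T^{p,q,r}_\beta$ with constant $\delta^{\gamma_f+\varepsilon}$, where
\[
\gamma_f=\tfrac nm[\min\{p,q,u_x\},u_x].
\]
For the Carleson operator this requires $\beta>\gamma_f-[u_t,q]$, which is exactly \eqref{e:main-beta-lower-bound}. On the $g$-side, the exponents are $v_t',v_x'\le r'$, the space is $T^{p',q',r'}_{-\beta-\kappa}$, and
\[
\gamma_g=\tfrac nm[\min\{p',q',v_x'\},v_x']=\tfrac nm[v_x,\max\{p,q,v_x\}].
\]
The Carleson condition $-\beta-\kappa>\gamma_g-[v_t',q']=\gamma_g-[q,v_t]$ is exactly \eqref{e:main-beta-upper-bound}. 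For the $(\mathrm{w}$-$\mathrm{w})$ term the $t$-exponents are $r,r'$, and the Whitney bound needs no condition on $\beta$. Summing over $j$, we obtain a geometric series $\sum_j2^{-jm(M+[u_t,v_t]-\kappa-\gamma_f-\gamma_g-2\varepsilon)}$, which converges by \eqref{e:main-decay-condition} for $\varepsilon$ small; at $p=\infty$ the logarithmic factors are harmless. In the causal case the $(\mathrm{w}$-$\mathrm{ca})$ term is absent, so no Carleson bound on $g$ is needed and \eqref{e:main-beta-upper-bound} drops; the anti-causal case is symmetric and \eqref{e:main-beta-lower-bound} drops.

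In the quasi-Banach range $\nu:=\min\{p,q\}<1$ (note $\nu\le 1<r$), we use Proposition~\ref{prop:tentspacedensityduality}\ref{it:quasiduality}:
\[
\nrm{Tf}^\nu_{T^{p,q,r}_{\beta+\kappa}}\eqsim\sup_g\int|Tf|^\nu|g|,
\]
with $g$ normalized in $T^{(p/\nu)',(q/\nu)',(r/\nu)'}_{-\nu(\beta+\kappa)}$. We then apply Theorem~\ref{thm:quasimaindominationform}. Each term is handled by the power rule \eqref{eq:rescalenorm}: for instance, $(H f)^\nu$ lies in $T^{p/\nu,q/\nu,r/\nu}_{\nu\beta}$ with norm $\nrm{Hf}^\nu_{T^{p,q,r}_\beta}$, and the $g$-averages with exponents $(\vec v/\nu)'$ are estimated on the dual space. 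The step we expect to be the main obstacle is the exponent bookkeeping in this range. We must check that the $\gamma$'s from Propositions~\ref{prop:modelops-whitneymixed} and~\ref{prop:cabddp>infty} for $(v/\nu)'$ on $T^{(p/\nu)',(q/\nu)',(r/\nu)'}$ reproduce $\nu\gamma_g$. Using $[a/\nu,b/\nu]=\nu[a,b]$, this reduces to the identity
\[
\tfrac nm[(v_x/\nu)',\min\{(p/\nu)',(q/\nu)',(v_x/\nu)'\}']\text{-type}=\nu\,\tfrac nm[v_x,\max\{p,q,v_x\}] .
\]
We must also check that the Carleson conditions become $\nu$ times \eqref{e:main-beta-lower-bound} and \eqref{e:main-beta-upper-bound}, and that the decay exponent is $\nu$ times \eqref{e:main-decay-condition}. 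A second delicate point is the treatment of $t^\kappa$ inside the Carleson averages; we would handle it by moving the weight to the Whitney-side factor, which is always available because every decay term pairs one Whitney operator with at most one Carleson operator.
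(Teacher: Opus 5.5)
Your proposal is correct and follows essentially the same route as the paper: Köthe duality (resp.\ the power-rule duality when $\min\{p,q\}<1$), then the form domination of Theorem~\ref{thm:maindominationform} (resp.\ Theorem~\ref{thm:quasimaindominationform}), then the model-operator bounds of Propositions~\ref{prop:modelops-whitneymixed} and~\ref{prop:cabddp>infty}, using exactly the exponent identities you list. The one thing you flag as delicate is not: the factor $t^\kappa$ (resp.\ $t^{\nu\kappa}$) sits outside the model operators, and $\|t^\kappa F\|_{T^{p,q,r}_{\beta+\kappa}}\eqsim\|F\|_{T^{p,q,r}_\beta}$ holds for arbitrary $F$ (the tent-space Whitney average only sees $s\eqsim t$), so the weight can be absorbed on either factor without further work.
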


To illustrate the bounds of $\beta$, we give a graphic representation in Figure~\ref{fig:beta-p-diagram}. 
 
\begin{figure}[ht]
\label{fig:main}
    \centering
    \includegraphics[width=0.5\linewidth]{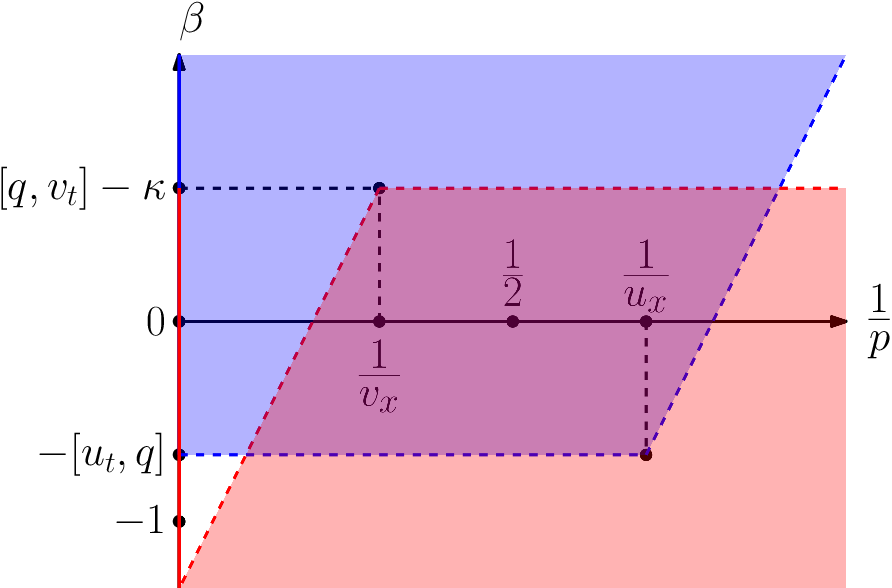}
    \caption{Illustration for the ranges of $\beta$ in \eqref{e:main-beta-lower-bound} and \eqref{e:main-beta-upper-bound} when $q=r$. The blue shaded trapezoid denotes the range of $\beta$ in \eqref{e:main-beta-lower-bound}, and the red shaded one denotes that in \eqref{e:main-beta-upper-bound}.}
    \label{fig:beta-p-diagram}
\end{figure}

\begin{remark}We make a few remarks on the assumptions in Theorem \ref{thm:mainestimate}.
\begin{enumerate}[(i)]
 \item For $u_x\le p,q\le v_x,$ the conditions \eqref{e:main-decay-condition}, \eqref{e:main-beta-lower-bound}, \eqref{e:main-beta-upper-bound} for $\beta$ and $M$  are 
 \[M>-[u_t,v_t]+\kappa,\quad \beta>-[u_t,q], \quad \beta<[q,v_t]-\kappa,\]
 and thus only depend on $u_t,v_t, q$ and $\kappa$. In particular when $p=q=r$,  this yields a sufficient condition for $L^r_\beta(\R^{1+n}_+)$ boundedness when $\max\cbrace{u_t, u_x} \leq r\leq \min\cbrace{v_t, v_x}.$
\item In the general case, the theorem also provides information about the admissible range of \(p\) and \(q\). Let \(\beta_-\) and \(\beta_+\) denote the lower and upper bounds for \(\beta\), respectively. The decay condition in Theorem~\ref{thm:mainestimate} can then be rewritten as
\[
  M>\beta_- - \beta_+.
\]
Thus, whenever \(\beta_-<\beta_+\), i.e., whenever the admissible range of $\beta$ is non-empty, condition \eqref{e:main-decay-condition} reduces to the negative lower bound
\[
  M>-(\beta_+-\beta_-).
\]
In particular, condition \eqref{e:main-decay-condition} is then automatically satisfied for every $M\geq 0$. Hence, increasing the decay parameter \(M\) beyond \(M=0\) does not enlarge the admissible range of \(p\) and \(q\) in this case.
In contrast, for causal or anti-causal operators, only one of the bounds on \(\beta\) is imposed. The admissible range is then \((\beta_-,\infty)\) or \((-\infty,\beta_+)\), respectively, and is therefore always non-empty. Thus, increasing the decay parameter \(M\) can substantially enlarge the range of \(p\) and \(q\) for which boundedness holds.
\item The proof of Theorem~\ref{thm:mainestimate} can be adapted to $r=1$ and $r=\infty$, but we foresee applications only for $r \in (1,\infty)$.
\end{enumerate}
\end{remark}

\begin{remark} The conclusion of the theorem is essentially optimal   as can be seen with ``elementary'' examples. Namely,  the various conditions on $M$ and $\beta$ are best possible for the class of singular operators we considered.  Of course, this does not exclude a stronger conclusion for a specific operator.  The upper bound and lower bound on $\beta$ are already optimal for the $L^r_\beta(\R^{1+n}_+) \to L^r_{\beta+\kappa}(\R^{1+n}_+)$ boundedness of the causal $\kappa$-Hardy operator given 
$$H^+_{\kappa}f(t,x)=\int_0^t (t-s)^{\kappa-1} f(s,x)\dd s, \qquad (t,x) \in \R^{1+n}_+, $$ and its  anti-causal adjoint $$H^-_{\kappa}f(t,x)=\int_t^\infty  (s-t)^{\kappa-1} f(s,x)\dd s,  \qquad (t,x) \in \R^{1+n}_+,$$ which are of type $(\kappa, m, r, (1,r),(\infty,r), M)$ for any $m>0, M\in \R$ and $1\le r\le \infty$ when $\kappa>0$. Optimality of the lower bound on $M$  can be checked with spatially lacunary operators of the type \[Tf(t,x)= \sum_{j=1}^N c_j\ind_{Q_j}(x) J_\kappa \bigg(s\mapsto \int_{Q_0} \phi(y)f(s,y) \dd y\bigg)(t),\] where $Q_j$ are unit cubes in $\R^n$ whose distance to $Q_0$ is $L^j$ for some large $L$,  $J_\kappa$ is the truncated fractional operator given by $$J_\kappa g(t)= \ind_{[1,2]}(t)\int_1^2 (t-s)^{\kappa - 1}g(s) \dd s, \qquad t \in \R_+,$$ and   the positive function $\phi$ and  the positive numbers $c_j$ satisfy  $L^{\rho}$-integrability and $\ell^\sigma$-summability conditions, respecively,  with appropriate $\rho,\sigma$.  The lacunarity implies  that the singular operator bounds do not depend on $N$ and one can compute explicitly the decay $M$.  On the other hand, the appropriate tent space norm of  $Tf$ grows with $N$ for  $f= \ind_{[1,2]\times Q_0}$.  The dependence of $M$ on $\max\{p,q\}$ and $\min\{p,q\}$ is also sharp. We leave the details to the interested readers.      
\end{remark}

\begin{proof}[Proof of Theorem \ref{thm:mainestimate}]
As  $r > 1$, we only need to distinguish whether $\min\cbrace{p,q}\ge 1$ or not.
Consider first the case $\min\cbrace{p,q}\ge 1$. 
By Proposition~\ref{prop:tentspacedensityduality}, it suffices to show for $f,g \in L^\infty_c(\R^{1+n}_+)$ 
that
$$
\int_{{\mathbb R}^{1+n}_+}|Tf||g|  \lesssim \nrm{f}_{\vphantom{T^{p',q',r'}_{-\beta-\kappa}}T^{p,q,r}_\beta} \cdot \nrm{g}_{T^{p',q',r'}_{-\beta-\kappa}}.
$$
To this end, we use the form domination from
Theorem~\ref{thm:maindominationform} and estimate each
of the four terms separately.

For the first term, we note that with $A^{\whit}_{r,\kappa}$ as in Lemma \ref{lemma:Whitneymap}, and  using disjointness of the Whitney cubes $\Qw{Q}$, $\Qw{Q} \subseteq \Qwtilde{Q}$ and Proposition~\ref{prop:tentspacedensityduality},
\begin{align*}
\sum_{Q \in \mc{D}} \ell(Q)^{m\kappa} \cdot \ip{f}_{r,\Qwtilde{Q}}\cdot \ip{g}_{\vphantom{\Qwtilde{Q}}r',\Qw{Q}} \cdot\abs{\Qw{Q}} &\lesssim  \int_{\R^{1+n}_+} A^{\whit}_{r,\kappa}f(t,x) \cdot A^{\whit}_{r',0}g(t,x)\dd t \dd x \\
&\le \nrm{A^{\whit}_{r,\kappa}f}_{\vphantom{T^{p',q',r'}_{-\beta-\kappa}}T^{p,q,r}_{\beta+\kappa}}\cdot \nrm{A^{\whit}_{r',0}g}_{\vphantom{T^{p',q',r'}_{-\beta-\kappa}}T^{p',q',r'}_{-\beta-\kappa}}
\lesssim  \nrm{f}_{\vphantom{T^{p',q',r'}_{-\beta-\kappa}}T^{p,q,r}_\beta} \cdot\nrm{g}_{\vphantom{T^{p',q',r'}_{-\beta-\kappa}}T^{p',q',r'}_{-\beta-\kappa}}.
\end{align*}
Next, for the other terms, we can use Propositions~\ref{prop:modelops-whitneymixed} and~\ref{prop:cabddp>infty} thanks to the conditions on $r$ and $\beta$, noting that  
$(1+\ln\delta)^{1/q}\lesssim \delta^\varepsilon$. 
For the term $(\mathrm{w}$-$\mathrm{w})$, we see that 
\begin{align*}
  \sum_{j=0}^\infty 2^{-jm(M+[u_t,v_t]-\kappa)} &\cdot 
\nrm{H^{\whit}_{(r,u_x), 2^{jm}}f}_{\vphantom{T^{p',q',r'}_{-\beta-\kappa}}T^{p,q,r}_\beta}\cdot \nrm{H^{\whit}_{(r', v_x'), 2^{jm}}g}_{\vphantom{T^{p',q',r'}_{-\beta-\kappa}}T^{p',q',r'}_{-\beta-\kappa}}
    \\&\lesssim \sum_{j=0}^\infty 2^{-jm\alpha} \cdot\nrm{f}_{\vphantom{T^{p',q',r'}_{-\beta-\kappa}}T^{p,q,r}_\beta} \cdot  \nrm{g}_{\vphantom{T^{p',q',r'}_{-\beta-\kappa}}T^{p',q',r'}_{-\beta-\kappa}},
\end{align*}
where 
\begin{equation}
    \label{eq:alpha}
        \alpha = M +[u_t, v_t]-\kappa -\tfrac{n}{m} {\bracb{\min\cbrace{p,q,u_x},u_x}} -\tfrac{n}{m}{\bracb{\min\cbrace{p',q',v_x'},v_x'}}-\varepsilon,
\end{equation}
for any $\varepsilon>0$.
By the decay condition \eqref{e:main-decay-condition}, we can choose $\varepsilon>0$ small enough 
so that $\alpha>0$ and the sum converges. 

The term $(\mathrm{ca}$-$\mathrm{w})$ is estimated similarly to term $(\mathrm{w}$-$\mathrm{w})$, noting that we need $\beta>\gamma-[u_t,q]= \tfrac n m \bracb{\min\cbrace{p,q,u_x},u_x}-[u_t,q]$
for the estimate of $H^{\ca}_{\vec{u},2^{jm}}f$, which follows from \eqref{e:main-beta-lower-bound}. This yields  
\begin{align*}
   \sum_{j=0}^\infty 2^{-jm(M+[u_t,v_t]-\kappa)} &\cdot 
\nrm{H^{\ca}_{\vec{u}, 2^{jm}}f}_{\vphantom{T^{p',q',r'}_{-\beta-\kappa}}T^{p,q,r}_\beta}\cdot \nrm{H^{\whit}_{\vec{v}', 2^{jm}}g}_{\vphantom{T^{p',q',r'}_{-\beta-\kappa}}T^{p',q',r'}_{-\beta-\kappa}}\\
&\lesssim\sum_{j=0}^\infty 2^{-jm\alpha} \cdot\nrm{f}_{\vphantom{T^{p',q',r'}_{-\beta-\kappa}}T^{p,q,r}_\beta} \cdot  \nrm{g}_{T^{p',q',r'}_{-\beta-\kappa}},
\end{align*}
with $\alpha$ as before, so the sum again converges.
Recall that if $T$ is anti-causal, then the term {$(\mathrm{ca}$-$\mathrm{w})$} is not present in the domination from
Theorem~\ref{thm:maindominationform}. Finally, unless $T$ is causal, the term $(\mathrm{w}$-$\mathrm{ca})$ is estimated identically to term $(\mathrm{ca}$-$\mathrm{w})$,
now using \eqref{e:main-beta-upper-bound} for the estimate
of $H^{\ca}_{\vec{v}', 2^{jm}}g$.

\medskip

Consider now the case $\nu:= {\min\cbrace{p,q}}\in(0,1)$.
{Using Proposition~\ref{prop:tentspacedensityduality}\ref{it:quasiduality}}, it suffices to prove that for $f,g \in L^\infty_c(\R^{1+n}_+)$
$$
\int_{{\mathbb R}^{1+n}_+}|Tf|^\nu|g|  \lesssim \nrm{f}_{\vphantom{T^{p',q',r'}_{-\beta-\kappa}}T^{p,q,r}_\beta}^\nu \cdot \nrm{g}_{T^{(p/\nu)',(q/\nu)',(r/\nu)'}_{-\nu(\beta+\kappa)}}^{}.
$$
For this we apply Theorem~\ref{thm:quasimaindominationform},
and estimate the four terms similarly to the above arguments.
{For the first term, as before together with  \eqref{eq:rescalenorm} and Proposition~\ref{prop:tentspacedensityduality},  
\begin{align*}
\sum_{Q \in \mc{D}} \ell(Q)^{\nu m\kappa} \cdot \ip{f}^\nu_{r,\Qwtilde{Q}}\cdot \ip{g}_{\vphantom{\Qwtilde{Q}}({r}/\nu)',\Qw{Q}} \cdot\abs{\Qw{Q}} & \lesssim \int_{\R^{1+n}_+} \brb{A^{\whit}_{r,\kappa}f(t,x)}^{\nu} \cdot A^{\whit}_{({r}/\nu)',0}g(t,x)\dd t \dd x \\
&\le \nrm{A^{\whit}_{r,\kappa}f}_{\vphantom{T^{p',q',r'}_{-\beta-\kappa}}T^{p,q,r}_{\beta+\kappa}}^\nu \cdot \nrm{A^{\whit}_{({r}/\nu)',0}g}_{T^{(p/\nu)'
   ,(q/\nu)',(r/\nu)'}_{-\nu(\beta+\kappa)}}^{}\\&
\lesssim  \nrm{f}_{\vphantom{T^{p',q',r'}_{-\beta-\kappa}}T^{p,q,r}_\beta}^\nu \cdot\nrm{g}_{T^{(p/\nu)',(q/\nu)',(r/\nu)'}_{-\nu(\beta+\kappa)}}^{}.
\end{align*}}
The last three terms are each estimated by
\begin{align*}
\sum_{j=0}^\infty  2^{-jm\nu(M+[u_t,v_t]-\kappa)} \cdot
&\brb{2^{jn({\brac{\min\cbrace{p,q,u_x},u_x}}+\varepsilon)} \nrm{f}_{T^{p,q,r}_\beta} }^\nu\\ &\cdot  2^{jn( {\brac{\min\cbrace{(p/\nu)',(q/\nu)',(v_x/\nu)'},(v_x/\nu)'}}+\varepsilon)} \nrm{g}_{T^{(p/\nu)',(q/\nu)',(r/\nu)'}_{-\nu(\beta+\kappa)}},    
\end{align*}
where assumption \eqref{e:main-decay-condition} shows that the sum converges.
As in the case $\min\cbrace{p,q}\geq 1$, the hypothesis on $\beta$ required for the estimate on $f$ in the term $(\mathrm{ca}$-$\mathrm{w})$
follows from \eqref{e:main-beta-lower-bound}. 
For the estimate on $g$ in term $(\mathrm{w}$-$\mathrm{ca})$, the hypothesis 
$$-\nu(\beta+\kappa)>\tfrac nm{ \bracb{\min\cbraceb{(\tfrac{p}{\nu})',(\tfrac{q}{\nu})',(\tfrac{v_x}{\nu})'},(\tfrac{v_x}{\nu})'}}-\bracb{(\tfrac{v_t}{\nu})', (\tfrac q\nu)'}$$ is needed,
which follows from \eqref{e:main-beta-upper-bound}. 
This completes the proof.
\end{proof}

\subsection{Extension} 
\label{sec:extension}

In Subsection \ref{sec:apriori}, we have obtained a priori estimates for singular operators. We will now extend these estimates to the full  tent spaces. 
The nature of the off-diagonal estimates allows for a non-conventional extension procedure. An earlier occurrence of this extension procedure was made in \cite[Theorem~3.1]{AP25} when $p=\infty$.

\begin{theorem} \label{thm:extension} Let $T$ be a singular operator satisfying the assumptions of Theorem~\ref{thm:mainestimate}and the pointwise almost everywhere Lipschitz estimate
\begin{equation}\label{eq:pointwiseLipschitz}
    |Tf-Tg|\le C \, |T(f-g)|, \qquad f,g \in L^\infty_c(\R^{1+n}_+)
\end{equation}
for some $C<\infty$.
Then $T$ has a unique bounded extension $\widetilde T:T^{p,q,r}_\beta\to T^{p,q,r}_{\beta+\kappa}$ satisfying for all compact $\mbs{K}\subseteq\R^{1+n}_+$, all sequences $(\mbs{K}_j)_j$  of compact sets, with
$\mbs{K}_j\subseteq \text{Int}\, \mbs{K}_{j+1}$,
that exhaust $\R^{1+n}_+$, and all $f\in T^{p,q,r}_\beta$, that
$$
 \ind_{\mbs{K}} \widetilde Tf:= \lim_{j\to\infty} \ind_{\mbs{K}} T(f\ind_{{\mbs{K}}_j}), 
$$
where the limit is in the  norm topology of $T^{p,q,r}_{\beta+\kappa}$.

Moreover, if $T$ is linear, then $\widetilde T$ coincides with the extension by norm continuity if $\max\{p,q\}<\infty$ and by  weak-star continuity if $\min\{p,q\}>1$. 
    \end{theorem}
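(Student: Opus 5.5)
Fix $f\in T^{p,q,r}_\beta$, a compact $\mbs{K}$ and an exhausting sequence $(\mbs{K}_j)_j$. The plan is to show that $(\ind_{\mbs{K}}T(f\ind_{\mbs{K}_j}))_j$ is Cauchy in $T^{p,q,r}_{\beta+\kappa}$. We cannot use density of $L^\infty_c$, since $p$ or $q$ may be infinite. Instead, the off-diagonal structure will do the work.

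First note that $f\ind_{\mbs{K}_j}\in L^\infty_c$ is not automatic, since $f$ need not be bounded. So the first step is to extend Theorem~\ref{thm:mainestimate} to $L^r$-functions with compact support. Take $f\in L^r$ with compact support and truncate, $f_N:=f\ind_{|f|\le N}$. By \eqref{eq:pointwiseLipschitz},
\[
|Tf_N-Tf_{N'}|\le C|T(f_N-f_{N'})|.
\]
Local $L^r$-boundedness (Definition~\ref{def:ODE}\ref{it:ODEDef1}) together with the a priori bound applied to $f_N-f_{N'}$ makes $(Tf_N)_N$ Cauchy in $T^{p,q,r}_{\beta+\kappa}$. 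Here we use that the tent norm of a compactly supported $L^r$ function is controlled by its $L^r$ norm. This defines $T$ on $L^r_c$ with the same bound and the Lipschitz property. Since $f\in T^{p,q,r}_\beta$ lies in $L^r_{\loc}$ (Whitney averages are finite), each $f\ind_{\mbs{K}_j}$ is then admissible.

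Next, the Lipschitz estimate gives
\[
\ind_{\mbs{K}}|T(f\ind_{\mbs{K}_j})-T(f\ind_{\mbs{K}_k})|\le C\,\ind_{\mbs{K}}|T(f\ind_{\mbs{K}_k\setminus\mbs{K}_j})|, \qquad k>j.
\]
For $j$ large, $\mbs{K}$ is contained in the interior of $\mbs{K}_j$, so $\mbs{K}$ and $\mbs{K}_k\setminus\mbs{K}_j$ are separated. The key step is a localized version of the domination. Run Theorem~\ref{thm:maindominationform} (or Theorem~\ref{thm:quasimaindominationform}) with $g$ supported in $\mbs{K}$ and input $h=f\ind_{\mbs{K}_k\setminus\mbs{K}_j}$. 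The local Whitney term vanishes once $\Qwtilde{Q}$ meeting $\mbs{K}$ cannot meet the complement of $\mbs{K}_j$. In the model terms, only contributions of $h$ over Whitney regions or Carleson regions at large horizontal distance (large $2^{j'm}$) or far in $t$ survive.

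The idea is to retain a tail. Split the $j'$-sum at $j'\ge J$, which is small because $\alpha>0$ in \eqref{eq:alpha}. For $j'<J$, the operators $H^{\whit}_{\cdot,2^{j'm}}$ and $H^{\ca}_{\cdot,2^{j'm}}$ applied to $h$ and tested on $\mbs{K}$ only see $h$ on a fixed compact set $\mbs{K}'$ depending on $J$ and $\mbs{K}$, apart from the Carleson part near $t=0$. Once $\mbs{K}_j\supseteq\mbs{K}'$, that restriction is zero. The Carleson contribution from small $t$ is controlled by Remark~\ref{rem:HwODE}, which gives the factor $(\sup E_t/\inf F_t)^\eta$. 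It is small when $\mbs{K}_j$ contains a strip $(\epsilon,\infty)$ intersected with a neighbourhood of $\mbs{K}$'s projection.

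Since norms of all pieces are bounded by $\|f\|_{T^{p,q,r}_\beta}$ times these small factors, we get
\[
\|\ind_{\mbs{K}}T(f\ind_{\mbs{K}_k\setminus\mbs{K}_j})\|\to0.
\]
Hence the limit exists. It is independent of the exhaustion, by interlacing two exhausting sequences. The limits for nested $\mbs{K}$ are compatible, so they define $\widetilde Tf$ a.e. Fatou's lemma in the tent norm then gives the bound $\|\widetilde Tf\|\lesssim\|f\|$. Uniqueness is built into the defining property. For linear $T$ with $\max\{p,q\}<\infty$, both extensions agree on $L^\infty_c$ and are continuous, so they coincide. In the weak-star case, pair with $g\in L^\infty_c$ supported in $\mbs{K}$ and use the limit formula.

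The main obstacle is making the localized tail estimate quantitative, in particular controlling the Carleson-type terms near $t=0$ uniformly in $k$. I would first attempt this via Remark~\ref{rem:HwODE} combined with the $\delta$-growth bounds of Propositions~\ref{prop:modelops-whitneymixed} and~\ref{prop:cabddp>infty}.
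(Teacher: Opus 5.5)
Your overall route is the paper's. You extend $T$ to compactly supported $L^r$ functions via the Lipschitz estimate, which is Lemma~\ref{lem:extension}. You then run the form domination with $g$ supported in $\mbs{K}$ and an input vanishing on a large compact set. The tail in the aperture index is killed by $\alpha>0$, and the Carleson pieces by Remark~\ref{rem:HwODE}. The paper packages this quantitatively in Lemma~\ref{lem:annulitobox}, with fixed regions $\mbs{W},\mbs{A}_k$ and an explicit rate $2^{-k\varepsilon}$, whereas you argue qualitatively: choose $J$, then the other cut-offs, then $j$ large.

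\textbf{The gap is in the $(\mathrm{w}$-$\mathrm{ca})$ term.} You claim that for $j'<J$ the model terms tested on $\mbs{K}$ ``only see $h$ on a fixed compact set, apart from the Carleson part near $t=0$.'' This is false for $(\mathrm{w}$-$\mathrm{ca})$.
\begin{itemize}
\item There the Carleson operator falls on $g$, and $H^{\ca}_{\vec v',2^{j'm}}g(t,x)$ does not vanish for all $t>\inf \pi_1(\mbs{K})$ with $\mathrm{dist}(x,\pi_2(\mbs{K}))\lesssim 2^{j'}t^{1/m}$. So $t$ is unbounded above.
\item Hence $H^{\whit}_{\vec u,2^{j'm}}h$ is tested at arbitrarily large heights. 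It sees $h$ on regions $(c^mt,t)\times B$ with $B$ of radius about $2^{J}t^{1/m}$, and these escape every compact set.
\item Since $h=f\ind_{\mbs{K}_k\setminus\mbs{K}_j}$ with $k>j$ arbitrary, enlarging $\mbs{K}_j$ does not make this contribution small uniformly in $k$.
\item The naive bound $\|H^{\whit}h\|\cdot\|H^{\ca}g\|$ is not small either.
\end{itemize}
Your wish that $\mbs{K}_j$ contain a ``strip $(\epsilon,\infty)$'' is impossible for compact $\mbs{K}_j$, which signals the same problem. The near-$t=0$ Carleson piece you singled out as the main obstacle is actually the easier one.

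\textbf{The fix is to cut at a large height $T_0$ as well.}
\begin{itemize}
\item For $t<T_0$ and $j'<J$, $h$ is seen only on a compact set, and that part vanishes once $\mbs{K}_j$ contains it.
\item For $t\ge T_0$, estimate by
\[
\|H^{\whit}_{\vec u,2^{j'm}}h\|_{T^{p,q,r}_\beta}\cdot\|\ind_{(T_0,\infty)\times\R^n}H^{\ca}_{\vec v',2^{j'm}}(g\ind_{\pi_1(\mbs K)\times\R^n})\|_{T^{p',q',r'}_{-\beta-\kappa}},
\]
or its analogue via Theorem~\ref{thm:quasimaindominationform} in the quasi-Banach case.
\item Apply Remark~\ref{rem:HwODE} to $g$ with $E_t=\pi_1(\mbs{K})$ and $F_t=(T_0,\infty)$. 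Its hypothesis for $g$ is exactly \eqref{e:main-beta-upper-bound}. This gains a factor $(\sup\pi_1(\mbs{K})/T_0)^\eta$, which is summable against $2^{-j'm\alpha}$.
\end{itemize}
This is what the paper does by splitting $t\in(2^{(\ell-1)m}t_0,2^{\ell m}t_0)$ and using $F_t=(2^{(\ell-1)m}t_0,\infty)$.

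Two minor points. First, you should note that the domination holds for $L^r_c$ inputs such as $h$. This follows from the $L^\infty_c$ case by truncation, since the model operators are monotone and $Th_N\to Th$ in $L^r_{\loc}$. Second, local $L^r$-boundedness is not needed in your truncation step; the a priori bound alone suffices. With the large-$t$ cut added, your argument goes through.
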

    
\begin{remark}
  For linear operators, \eqref{eq:pointwiseLipschitz} always holds with $C=1$. For subadditive operators such that $Tf\ge 0$ and $T(f)=T(-f)$, this also holds with $C=1$.
Indeed, these additional assumptions imply that
$Tf\le Tg+T(f-g)$ and $Tg\le Tf+T(g-f)=Tf+T(f-g)$, hence  \eqref{eq:pointwiseLipschitz}. This covers many subadditive operators such as maximal operators and, more generally,
those coming from a vector-valued linear operator, that is $Tf=\|\mathcal{T}f\|_B$, where $\mathcal{T}$ is linear and $\|\cdot\|_B$ is a norm.  
\end{remark}

\begin{remark}
{This extension in Theorem \ref{thm:extension}  clearly preserves (anti-)causality.}
\end{remark}

The proof of Theorem \ref{thm:extension} depends on the following lemmas. 

\begin{lemma}\label{lem:extension} Let $T$ be as in Theorem \ref{thm:extension}. Then $T$ extends to a unique bounded operator $T^{p,q,r}_{\beta,c}\to T^{p,q,r}_{\beta+\kappa}$, where $T^{p,q,r}_{\beta,c}$ denotes the space of $T^{p,q,r}_\beta$ functions with compact support.
    \end{lemma}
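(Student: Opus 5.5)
The plan is to use that, for functions supported in a fixed compact set, the $T^{p,q,r}_\beta$ quasi-norm is equivalent to the $L^r$ norm. Density of $L^\infty_c$ in $L^r$ (recall $r<\infty$), the a priori bound of Theorem~\ref{thm:mainestimate} and the Lipschitz estimate \eqref{eq:pointwiseLipschitz} then let us extend $T$ by uniform continuity.

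\emph{Step 1 (localization).} Fix a compact $\mbs{K}\subseteq[a,b]\times \overline{B(0,R)}$ with $a>0$, and let $f$ be supported in $\mbs{K}$. Then $W_rf(t,x)$ vanishes unless $a\le t\le 2^mb$ and $|x|\le R+(2^mb)^{1/m}$. On that bounded set, $W_rf(t,x)\lesssim_{\mbs{K}}\|f\|_{L^r}$. Hence
\[
\|f\|_{T^{p,q,r}_\beta}\lesssim_{\mbs{K}}\|f\|_{L^r(\mbs{K})}
\]
for all $p,q\in(0,\infty]$, including $p=\infty$ via the Carleson-type definition. Conversely, $\mbs{K}$ is covered by finitely many Whitney regions $\Qw{Q}$, $Q\in\mc{D}$, since $t\ge a$ on $\mbs{K}$. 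Each term $W_{r,Q}(f)$ is controlled by the discrete norm of Proposition~\ref{prop:dyadic}, which gives $\|f\|_{L^r(\mbs{K})}\lesssim_{\mbs{K}}\|f\|_{T^{p,q,r}_\beta}$. So the space $\{f\in T^{p,q,r}_\beta:\supp f\subseteq \mbs{K}\}$ coincides with $L^r(\mbs{K})$, with equivalent quasi-norms. Because $r<\infty$, $L^\infty(\mbs{K})$ is dense in it. I expect this step to be the only place needing some care, in particular the lower bound and the case $p=\infty$. Both appear routine via the dyadic characterization.

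\emph{Step 2 (uniform continuity).} For $f,g\in L^\infty_c(\R^{1+n}_+)$, inequality \eqref{eq:pointwiseLipschitz} and the lattice property of the tent space give
\[
\|Tf-Tg\|_{T^{p,q,r}_{\beta+\kappa}}\le C\|T(f-g)\|_{T^{p,q,r}_{\beta+\kappa}}\lesssim\|f-g\|_{T^{p,q,r}_\beta},
\]
where the last step is Theorem~\ref{thm:mainestimate}. Thus $T$ is uniformly continuous for the quasi-norms.

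\emph{Step 3 (extension).} Take $f$ supported in $\mbs{K}$ and choose $f_k\in L^\infty(\mbs{K})$ with $f_k\to f$ in $T^{p,q,r}_\beta$. By Step 2, $(Tf_k)$ is Cauchy in the complete quasi-Banach space $T^{p,q,r}_{\beta+\kappa}$; we use the $\nu$-triangle inequality \eqref{eq:convex} to handle the quasi-norm. Define $\widetilde Tf$ as the limit. Interlacing two approximating sequences and applying Step 2 shows the limit does not depend on the sequence. It also does not depend on the compact set: if $\mbs{K}\subseteq\mbs{K}'$, an approximating sequence supported in $\mbs{K}$ is also admissible for $\mbs{K}'$. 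Passing to the limit in Step 2 yields
\[
\|\widetilde Tf\|_{T^{p,q,r}_{\beta+\kappa}}\lesssim\|f\|_{T^{p,q,r}_\beta}
\]
on $T^{p,q,r}_{\beta,c}$. Uniqueness of a bounded extension follows because any two continuous extensions agree on the dense subset $L^\infty(\mbs{K})$ of each such piece. Hence they agree on $T^{p,q,r}_{\beta,c}$.
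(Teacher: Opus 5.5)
Your proposal is correct and follows the paper's argument. The Lipschitz estimate \eqref{eq:pointwiseLipschitz} combined with the a priori bound of Theorem~\ref{thm:mainestimate} gives uniform continuity on $L^\infty_c(\R^{1+n}_+)$, and $T$ is then extended to the closure of $L^\infty_c$, which contains $T^{p,q,r}_{\beta,c}$. Your Step~1, which identifies functions supported in a fixed compact set with $L^r(\mbs{K})$ up to equivalent quasi-norms, simply supplies the details behind the paper's brief remark that ``one checks that this closure contains $T^{p,q,r}_{\beta,c}$.''
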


    \begin{proof}
     The pointwise Lipschitz inequality implies $\nrm{Tf-Tg}\lesssim \nrm{T(f-g)}$ for any of the tent space norms above. Hence, $T$   initially defined on $L^\infty_c(\R^{1+n}_+)$,    can be defined uniquely on the closure of $L^\infty_c(\R^{1+n}_+)$ in $T^{p,q,r}_\beta$  by continuity  with
$\nrm{Tf}_{T^{p,q,r}_{\beta+\kappa}}\lesssim
  \nrm{f}_{T^{p,q,r}_{\beta}}.$
  If $\max\{p,q\}<\infty$, this closure is all of $T^{p,q,r}_\beta$ by Proposition~\ref{prop:tentspacedensityduality}\ref{it:density}.
  Otherwise, one checks that this closure contains $T^{p,q,r}_{\beta,c}$ (which consists of all compactly supported functions in $L^r(\R^{1+n}_+)$).   
    \end{proof}

\begin{lemma} \label{lem:annulitobox} Fix $(t_0,x_0) \in \R^{1+n}_+$ and define
\begin{align*}
   \mbs{A}_k&:= (2^{-(k+1)m}t_0, 2^{km}t_0)\times B(x_0,(2^{km}t_0)^{1/m}), &&k \in \N,\\
    \mbs{W}&:=(2^{-m}t_0, t_0)\times B(x_0,t_0^{1/m}).
\end{align*}
 Under the assumptions of Theorem~\ref{thm:extension},  
   there exists an $\varepsilon>0$ such that for all $f\in T^{p,q,r}_\beta$ with compact support and  $k\in\N$ we have
   $$
\nrm{\ind_{\mbs{W}}T(f\ind_{\R^{1+n}_+\setminus\mbs{A}_k})}_{T^{p,q,r}_{\beta+\kappa}} \lesssim 2^{-k\varepsilon}\nrm{f\ind_{\R^{1+n}_+\setminus \mbs{A}_k}}_{T^{p,q,r}_{\beta}},
   $$
   where $T$ is the extension from Lemma~\ref{lem:extension}. 
\end{lemma}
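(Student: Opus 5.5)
We would prove Lemma~\ref{lem:annulitobox} by duality, rerunning the decomposition behind Theorem~\ref{thm:maindominationform} but tracking the extra geometric gain that comes from $f$ vanishing on $\mbs{A}_k$ while the output is tested only on $\mbs{W}$. Write $f_k:=f\ind_{\R^{1+n}_+\setminus\mbs{A}_k}$. By the pointwise Lipschitz estimate and Lemma~\ref{lem:extension}, it suffices to treat $f_k\in L^\infty_c$; the general case then follows by approximation. We take $\min\{p,q\}\ge1$ first and test against $g\in L^\infty_c$ supported in $\mbs{W}$, using Proposition~\ref{prop:tentspacedensityduality}\ref{it:duality}. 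The quasi-Banach range is handled identically with Theorem~\ref{thm:quasimaindominationform} and Proposition~\ref{prop:tentspacedensityduality}\ref{it:quasiduality}.

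We then bound $\int|Tf_k||g|$ via Proposition~\ref{prop:decomposition} and the proof of Theorem~\ref{thm:maindominationform}, and see which terms survive. Since $g$ lives on $\mbs{W}$, only cubes $Q$ with $\Qca{Q}$ (or $\Qw{Q}$) meeting $\mbs{W}$ contribute, so $\ell(Q)^m\gtrsim t_0$ and $Q$ meets $B(x_0,t_0^{1/m})$. The local term and the terms with $\ell(Q)^m\eqsim t_0$ vanish for $k$ large, because the enlarged regions $\Qwtilde{Q}$, $\mbs{E}^1_Q$, $\mbs{E}^2_Q$ and the $j$-th strips then lie inside $\mbs{A}_k$. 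The surviving contributions come in two kinds.
\begin{enumerate}[(a)]
\item Cubes with $\ell(Q)^m\gtrsim 2^{km}t_0$, for which $\mbs{W}$ sits deep inside $\Qca{Q}$ at relative height $t_0/\ell(Q)^m\lesssim 2^{-km}$.
\item Cubes with $\ell(Q)^m\eqsim t_0$ but with the strip index $j$ in terms \framebox[15pt]{A}, \framebox[15pt]{B}, \framebox[15pt]{C}, or the $t$-scale of $f$, forced to be $\gtrsim k$, since $f_k$ vanishes on $(2^{-(k+1)m}t_0,2^{km}t_0)\times B(x_0,2^kt_0^{1/m})$.
\end{enumerate}

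In case (b) the summation over $j$ starts at $j\gtrsim k$. We keep the decay factor $2^{-jm\alpha}$, with $\alpha>0$ as in \eqref{eq:alpha}, and bound the model operators by Propositions~\ref{prop:modelops-whitneymixed} and~\ref{prop:cabddp>infty}. This already gives $2^{-km\alpha}\nrm{f_k}\nrm{g}$.

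In case (a) the terms are $t$-separated between the height of $\mbs{W}$ and the support of $f_k$. Here we use the off-diagonal refinement of Remark~\ref{rem:HwODE} for $H^{\ca}$: the relevant ratio is $\sup E_t/\inf F_t$ or its reciprocal, which is $\lesssim 2^{-km}$. This yields a factor $2^{-km\eta}$ from the strict inequalities \eqref{e:main-beta-lower-bound} and \eqref{e:main-beta-upper-bound}. For the causal or anti-causal pieces without such a Carleson factor, the $x$-separation of $\R^n\setminus B(x_0,2^kt_0^{1/m})$ from $B(x_0,t_0^{1/m})$ again forces $j\gtrsim k$, so the argument of case (b) applies.

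The main obstacle is case (a) for the terms (w-w) and (ca-w)/(w-ca), where the $t$-scale is comparable to $\ell(Q)^m\gg t_0$. There one has to verify that the restriction of $g$ to a tiny Whitney box produces a gain after summing over the $\eqsim k$ ancestor scales. We would use the slack in the strict inequality for $\beta$, just as $\varepsilon$ is absorbed in Proposition~\ref{prop:modelops-whitneymixed}. Finally, $\varepsilon:=m\min\{\alpha,\eta\}/2$ works uniformly in $k$.
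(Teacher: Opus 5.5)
\textbf{There is a genuine gap.} Your overall strategy is the paper's: test against $g$ supported in $\mbs{W}$, rerun the domination of Theorem~\ref{thm:maindominationform}, and extract decay either from the factor $2^{-jm\alpha}$ or from Remark~\ref{rem:HwODE}. However, your bookkeeping of which contributions survive, and which mechanism controls each, is wrong, so as written the proof does not close.

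\emph{False vanishing claim.} The terms with $\ell(Q)^m\eqsim t_0$ do not vanish for large $k$. The sets $\mbs{E}^1_Q=(0,4^{-m}\ell(Q)^m)\times 3Q$ and $\mbs{E}^3_Q\cap\mbs{R}^{\ca}_j$ reach down to $t=0$, so they are not contained in $\mbs{A}_k$. On them, $f_k$ is the part of $f$ at heights below $2^{-(k+1)m}t_0$, which is not zero uniformly in $f$.

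\emph{Missing intermediate cubes.} Your dichotomy (a)/(b) omits the cubes with $t_0\ll\ell(Q)^m\ll 2^{km}t_0$ that meet $B(x_0,t_0^{1/m})$. For these, $3\widehat{Q}\subseteq B(x_0,2^kt_0^{1/m})$ and $2^m\ell(Q)^m<2^{km}t_0$. So the $t$-and-$x$-separated pieces of the third term (with $f$ below $g$) survive, again only through the near-boundary part of $f_k$.

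\emph{Why case (b) fails.} In all the pieces just described, $g$ sits in the strip of $Q$ at height $\eqsim t_0$. The strip index is therefore $j\approx\log_2(\ell(Q)/t_0^{1/m})\in[0,k]$, not $\gtrsim k$. The factor $2^{-jm\alpha}$ then gives no gain, so your case (b) argument (``the summation over $j$ starts at $j\gtrsim k$'') does not apply.

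\emph{The missing step.} These pieces are exactly the part $j\le k-3$ of the $(\mathrm{ca}$-$\mathrm{w})$ series. There $H^{\ca}_{\vec u,2^{jm}}f_k$ only sees $f_k\ind_{(0,2^{-(k+1)m}t_0)\times\R^n}$, while $H^{\whit}g$ is supported in $t>2^{-m}t_0$. Applying Remark~\ref{rem:HwODE} with $E_t=(0,2^{-(k+1)m}t_0)$ and $F_t=(2^{-m}t_0,\infty)$ gives the bound $\sum_{j\le k-3}2^{-jm\alpha}2^{-km\eta}\lesssim 2^{-km\eta}$.

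\emph{Your ``main obstacle'' is not one.} For the large scales, the $(\mathrm{w}$-$\mathrm{w})$ and $(\mathrm{ca}$-$\mathrm{w})$ terms are evaluated where $g$ lives, not at height $\ell(Q)^m$. Indeed, $H^{\whit}g(t,x)\ne0$ forces $t\in(2^{-m}t_0,2^mt_0)$ and $|x-x_0|\le 2^jt^{1/m}+t_0^{1/m}$. Their index is $j\ge k-2$, so $2^{-jm\alpha}$ already yields $2^{-km\alpha}$. No $t$-separation argument and no slack in $\beta$ is needed. Also, there are infinitely many large ancestor scales, not $\eqsim k$; the roughly $k$ scales are the intermediate ones you left out.

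\emph{Where Remark~\ref{rem:HwODE} is needed at large scales.} Only in the $(\mathrm{w}$-$\mathrm{ca})$ term, where $f_k$ lies above $g$:
\begin{itemize}
\item split the integral according to $t\in(2^{(\ell-1)m}t_0,2^{\ell m}t_0)$, $\ell\ge0$;
\item apply Remark~\ref{rem:HwODE} to $H^{\ca}_{\vec v',2^{jm}}g$ with $E_t=(0,t_0)$ and $F_t=(2^{(\ell-1)m}t_0,\infty)$ to gain $2^{-\ell m\eta}$;
\item use the $x$-supports of $f_k$ and $g$ to see that only $j+\ell+1\ge k$ contributes;
\item conclude with $\sum_{j+\ell+1\ge k}2^{-jm\alpha}2^{-\ell m\eta}\lesssim 2^{-k\varepsilon}$ for $\varepsilon<m\min\{\alpha,\eta\}$.
\end{itemize}

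In short, you have assigned the two decay mechanisms to the wrong ranges of scales. The cleanest fix is to do the support analysis directly on the four model-operator terms, using the supports of $H^{\whit}$ and $H^{\ca}$ applied to $f_k$ and $g$, rather than cube by cube.
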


Before turning to the proof of Lemma \ref{lem:annulitobox}, let us deduce Theorem~\ref{thm:extension}.

\begin{proof}[Proof of Theorem~\ref{thm:extension}] 
Fix  an $f\in T^{p,q,r}_\beta$,  a compact subset $\mbs{K}\subseteq\R^{1+n}_+$ and an increasing sequence of compact sets $(\mbs{K}_j)_j$   
that exhaust $\R^{1+n}_+$. By a finite covering, we may assume that  $\mbs{K} \subseteq \mbs{W}$, and for any $k$ there is an integer $j_k$ such that $\mbs{A}_k\subseteq \mbs{K}_j$ for $j>j_k$, where $\mbs{W}$ and $\mbs{A}_k$ are as in Lemma \ref{lem:annulitobox}. Then $(\ind_{\mbs{K}} T(f\ind_{\mbs{K}_j}))$ is a Cauchy sequence in $T^{p,q,r}_{\beta+\kappa}$ by Lemma~\ref{lem:annulitobox}. Hence, one can set 
$$
  T_{\mbs{K}} f:= \lim_{j\to\infty} \ind_{\mbs{K}} T(f\ind_{\mbs{K}_j} ).
$$
Observe that for fixed $\mbs{K}$,  the limit does not  depend on the choice of the exhausting sequence. 
Moreover, 
$\nrm{T_{\mbs{K}}f}_{T^{p,q,r}_{\beta+\kappa}}\lesssim \nrm{f}_{T^{p,q,r}_{\beta}}$ 
with implicit constant independent of $f$ and ${\mbs{K}}$. 
Clearly, $T_{\mbs{K}}f$ and $T_{\tilde {\mbs{K}}}f$ agree on the intersection of two compact sets ${\mbs{K}}, \tilde {\mbs{K}}$. Thus we may define $\widetilde Tf$ by
$\widetilde Tf(t,x):= T_{\mbs{K}}f(t,x)$ if $(t,x)\in {\mbs{K}}$. By monotone convergence  we have 
\begin{equation*} 
    \nrm{\widetilde Tf}_{T^{p,q,r}_{\beta+\kappa}}\eqsim
  \sup_{\mbs{K}} \,\nrm{ T_{\mbs{K}} f}_{T^{p,q,r}_{\beta+\kappa}},    
\end{equation*}
so this yields a well-defined and bounded extension 
$ \widetilde T:T^{p,q,r}_\beta\to T^{p,q,r}_{\beta+\kappa}$ of $T$.

Uniqueness is immediate. Moreover, it is easy to see that when  $T$ is  linear, this extension  coincides with the extension by 
strong density if $\max\{p,q\}<\infty$ and weak-star density 
if $\min\{p,q\}>1$ by Proposition~\ref{prop:tentspacedensityduality}\ref{it:density}. 
\end{proof}

It remains to prove Lemma~\ref{lem:annulitobox}, which is the main technical difficulty in this subsection. We will once again rely on the decomposition we used to prove Theorem~\ref{thm:maindominationform}.

\begin{proof}[Proof of Lemma~\ref{lem:annulitobox}] By assumption, we know that $f$ has compact support and $f\in L^r(\R^{1+n}_+)$. Fix $k \in \N$ and define $f_k:=f\ind_{\R^{1+n}_+\setminus \mbs{A}_k}$. 
We will first consider the case $\min\cbrace{p,q}\ge 1$. By Proposition~\ref{prop:tentspacedensityduality}\ref{it:duality}, it suffices to control $
\int_{\mbs{W}}\abs{Tf_k}\abs{g} $ for bounded $g$ with support in $\mbs{W}$, where $T$ is the extension from Lemma~\ref{lem:extension}. We note that this extension satisfies Definition~\ref{def:ODE}, extended  to functions in $L^r(\R^{1+n}_+)$ with compact support by density.  One can thus perform the estimates on this integral leading to Theorem~\ref{thm:maindominationform}, i.e.,
    \begin{align*}
\int_{\mbs{W}}&\abs{Tf_k}\abs{g} 
\lesssim 
\sum_{Q \in \mc{D}} \ell(Q)^{m\kappa} \cdot \ip{f_k}_{r,\Qwtilde{Q}}\cdot \ip{g}_{\vphantom{\Qwtilde{Q}}r',\Qw{Q}} \cdot\abs{\Qw{Q}}
&& \\
&+\sum_{j=0}^{\infty} 2^{-jm\br{M+[u_t,v_t]-\kappa}} \cdot \int_{\R^{1+n}_+}t^\kappa\cdot   H^{\whit}_{(r,u_x),2^{jm}}f_k(t,x)\cdot H^{\whit}_{(r',v_x'), 2^{jm}}g(t,x)\dd x\dd t  &&
\text{$(\mathrm{w}$-$\mathrm{w})$}\\
&+\sum_{j=0}^{\infty} 2^{-jm\br{M+[u_t,v_t]-\kappa}} \cdot \int_{\R^{1+n}_+}t^\kappa\cdot   H^{\ca}_{\vec{u},2^{jm}}f_k(t,x)\cdot H^{\whit}_{\vec{v}', 2^{jm}}g(t,x)\dd x\dd t &&\text{$(\mathrm{ca}$-$\mathrm{w})$}\\
&+\sum_{j=0}^{\infty} 2^{-jm\br{M+[u_t,v_t]-\kappa}} \cdot
\int_{\R^{1+n}_+}t^\kappa\cdot   H^{\whit}_{\vec{u},2^{jm}}f_k(t,x)\cdot H^{\ca}_{\vec{v}', 2^{jm}}g(t,x)\dd x\dd t. &&\text{$(\mathrm{w}$-$\mathrm{ca})$}.
\end{align*}
 The idea is to leverage the distance between the supports of $f_k$ and $g$.

Clearly, for $k$ larger than some number depending only on dimension, $\ip{f_k}_{r,\Qwtilde{Q}}\cdot \ip{g}_{\vphantom{\Qwtilde{Q}}r',\Qw{Q}}=0$ for all $Q\in \mc{D}$, so the first term vanishes and it remains to control the three other terms. To simplify the exposition, we assume that the constant $c$ in the definition of the operator  $H^{\whit}$   is $\tfrac12$. The general case can be obtained using a finite covering in $\R_+$. 

For the (w-w) term, we observe that from the support condition on $g$, we only have $H^\whit g(t,x) \neq 0$ if $t\in (2^{-m}t_0,{2^m} t_0)$ and $|x-x_0|\le 2^jt^{1/m}+{t_0}^{1/m}$. 
Furthermore, by the support of $f_k$ we only have $H^\whit f_k(t,x) \neq 0$ if $|x-x_0|\ge 2^k t_0^{1/m}- 2^j t^{1/m}$. Hence the {integrand} vanishes unless {
$2^k t_0^{1/m} \le  2^{j+1} t^{1/m}+t_0^{1/m} < (2^{j+2}+1) t_0^{1/m}$ and thus $j+2\ge k$.} Therefore, the series in the (w-w)-term starts at {$j=k-2$} and we get the decay $2^{-k m\alpha}$ from the argument in 
Theorem~\ref{thm:maindominationform} using \eqref{e:main-decay-condition} with  $\alpha$  given by \eqref{eq:alpha}, i.e., we can use $\varepsilon := m\alpha$ for the (w-w)-term. 

For the (ca-w) term, as before  we only have $H^\whit g(t,x) \neq 0$ if $t\in (2^{-m}t_0,{2^m} t_0)$ and $|x-x_0|\le 2^jt^{1/m}+{t_0}^{1/m}$.
 If {$j\le k-3$}, the only contributions of $f_k$ come from its values near the boundary. 
We apply the off-diagonal estimates for $H^{\ca}_{\vec{u},2^{jm}}$ 
 noted in Remark~\ref{rem:HwODE} with $E_t=(0,2^{-(k+1)m}t_0)$
 and $F_t= (2^{-m}t_0,\infty)$, which shows that the series for  {$j\leq k-3$} in the (ca-w) term is bounded by 
\[   {\sum_{j=0}^{k-3} 2^{-jm\alpha}\cdot 2^{-km\eta} }\cdot\nrm{f_k}_{\vphantom{T^{p',q',r'}_{-\beta-\kappa}}T^{p,q,r}_\beta} \cdot  \nrm{g}_{\vphantom{T^{p',q',r'}_{-\beta-\kappa}}T^{p',q',r'}_{-\beta-\kappa}} \lesssim  2^{-km\eta} \cdot\nrm{f_k}_{\vphantom{T^{p',q',r'}_{-\beta-\kappa}}T^{p,q,r}_\beta} \cdot  \nrm{g}_{\vphantom{T^{p',q',r'}_{-\beta-\kappa}}T^{p',q',r'}_{-\beta-\kappa}},\]
for some $\eta>0$.
 The series for {$j\geq k-2$} is controlled by $2^{-k m\alpha} \cdot\nrm{f_k}_{\vphantom{T^{p',q',r'}_{-\beta-\kappa}}T^{p,q,r}_\beta} \cdot  \nrm{g}_{\vphantom{T^{p',q',r'}_{-\beta-\kappa}}T^{p',q',r'}_{-\beta-\kappa}}$ as for the (w-w)-term. Hence, for the  (ca-w)-term we can use $\varepsilon := m\min\cbrace{\alpha,\eta}$.

Finally, the (w-ca) term is handled differently. Fix $j$ and split the integral in the (w-ca) series according to $t\in (2^{(-1+\ell)m}t_0,2^{\ell m}t_0)$ for $\ell\ge 0$. Considering the $x$-supports of $f_k$ and $g$ shows that the integrand vanishes unless {$j+\ell+1\ge k$}.
We apply the off-diagonal estimates for $H^{\ca}_{\vec{v}',2^{jm}}$ 
 noted in Remark~\ref{rem:HwODE} with $E_t=(0,t_0)$
 and $F_t= (2^{(\ell-1)m}t_0,\infty)$.
 By these bounds for the $H^\ca$ and $H^\whit$ operators, the (w-ca) term is  
\[\lesssim \sum_{j=0}^{\infty}\sum_{\ell=0}^\infty  {\ind_{j+\ell+1\ge k}} \cdot \, 2^{-jm\alpha}\cdot  2^{-\ell m\eta} \cdot\nrm{f_k}_{\vphantom{T^{p',q',r'}_{-\beta-\kappa}}T^{p,q,r}_\beta} \cdot  \nrm{g}_{\vphantom{T^{p',q',r'}_{-\beta-\kappa}}T^{p',q',r'}_{-\beta-\kappa}},\]
for some $\eta>0$, which is controlled by 
\[   2^{-k \varepsilon } \cdot\nrm{f_k}_{\vphantom{T^{p',q',r'}_{-\beta-\kappa}}T^{p,q,r}_\beta} \cdot  \nrm{g}_{\vphantom{T^{p',q',r'}_{-\beta-\kappa}}T^{p',q',r'}_{-\beta-\kappa}}\]
for  $\varepsilon <m \min\{\alpha,\eta\}$.
This finishes the proof in the case $\min\{p,q\}\geq 1$.
The proof for $\min\{p,q\}<1$ is similar, using instead Theorem~\ref{thm:quasimaindominationform} applied to the extension.
\end{proof}

\begin{remark} It is possible to do a purely dyadic version of Lemma~\ref{lem:annulitobox} based on the decomposition in Proposition~\ref{prop:decomposition}
or the decomposition~\eqref{eq:pdecomp} by taking $g$ supported in a Whitney cube $\Qw{P}$ and $f_k$ supported in the complement of the union of all Whitney regions $\Qw{Q}$ with $2^{-km}\ell(P)\le \ell(Q)\le 2^{km}\ell(P)$ with $d(Q,P)\le 2^k\ell(P)$. This indicates that 
we did not lose too much information in controlling these decompositions using the model operators. 
\end{remark}

\begin{remark}
 The space $L^\infty_c(\R^{1+n}_+)$ has the advantage of being contained in all tent spaces.  Yet, it could happen that a singular operator is defined on a larger space to start with and other density procedures are available, leading to the same extension. For example,  the space of functions of bounded support (not necessarily away from the boundary) in $L^r(\R^{1+n}_+; t^{-\beta r}\dd t\dd x)\cap T^{p,\infty,r}_\beta$ is dense in $T^{p,\infty,r}_\beta$ for $p\in (0,\infty)$, $r\in (1,\infty)$ and  $\beta\in \R$. This was proved in \cite[Lemma 5.2]{HR23} when $\beta=0$ and the proof also holds for $0<p\le 1$. If the singular operator $T$ is initially defined on that dense space, one can check that our decompositions and estimates go through also for 
functions of bounded support, showing that, under the assumptions of Theorem~\ref{thm:mainestimate}, 
$T$ extends uniquely from this dense subspace to $T^{p,\infty,r}_\beta$.
The key point is that bounded support allows us to start the decomposition in 
Proposition~\ref{prop:decomposition} from a top Carleson box $\Qca{Q}_0$.
It does not seem that  we have density of some space of functions with bounded support when $p=\infty$ and our extension is the only one we know. 
\end{remark}

\subsection{Improvement on earlier results.}
\label{sec:comparison}
Let us explain how Theorems~\ref{thm:mainestimate} and \ref{thm:extension} fully recover and improve the main $T^{p,2,2}_{\beta} \to T^{p,2,2}_{\beta+\kappa}$ boundedness results for singular integral operators from \cite[Section 3.2]{AH25a} when $u_t=1$ and $v_t=\infty$. Following Proposition \ref{prop:compareAH} and Remark \ref{rem:KernelODE}\ref{it:kernelODEAH}, we will use the number   
\begin{equation*}
 M_{\text{AH}} :=  M+ 1-\kappa+\tfrac{n}{m}[u_x,v_x],
\end{equation*}
where $u_x,v_x \in [1,\infty]$ will be chosen, to express the off-diagonal parameter used there.  

To make the comparison easier, we  specify Theorem~\ref{thm:mainestimate} when $q=r=2$ in each of the four situations:  causal or anti-causal, $p\ge2$ or $p\le 2$, {where we understand that $T$ is extended from compactly supported functions to  tent spaces
as in Section~\ref{sec:extension}.}

\begin{cor}\label{cor:q=r=2} Let $T$ be a  singular operator of type $(\kappa, m, 2, (1,u_x), (\infty,v_x), M)$ with $\kappa, M\in \R$ and $u_x,v_x\in [1,\infty]$. Let $p\in (0,\infty]$ and $\beta\in \R$. Then     $T$ extends to a bounded operator from $T^{p,2,2}_{\beta}$ to $T^{p,2,2}_{\beta+\kappa}$ if $T$ satisfies the pointwise Lipschitz property and one of the following holds
\begin{enumerate}[(i)]
    \item \label{it:causalpge2} $T$ is causal, $u_x=2, v_x\ge 2$, $p\ge 2$ and   
    \begin{align*}
    \beta&>-\tfrac 12 
    \\
    M_{\textup{AH}}& > \tfrac{n}{m} \cdot  [2,\max \{p,v_x\}].
    \end{align*}
    \item \label{it:anticausalpge2} $T$ is anti-causal, $u_x=2, v_x\ge 2$, $p\ge 2$ and  \begin{align*}
    \beta&<\tfrac 12 - \tfrac{n}{m} \cdot [v_x,\max\cbrace{p,v_x}]-\kappa    \\M_{\textup{AH}} &> \tfrac{n}{m} \cdot  [2,\max \{p,v_x\}].
    \end{align*}
    \item \label{it:causalple2} $T$ is causal, $u_x\le 2, v_x= 2$, $p\le 2$ and  \begin{align*}
    \beta&>-\tfrac 12 +\tfrac{n}{m} \cdot  [ \min\cbrace{p,u_x},u_x]\\ M_{\textup{AH}} &> \tfrac{n}{m} \cdot  [\min \{p,u_x\}, 2].\end{align*}
     \item \label{it:anticausalple2} $T$ is anti-causal, $u_x\le 2, v_x= 2$, $p\le 2$ and
     \begin{align*}
     \beta&<\tfrac 12 -\kappa \\M_{\textup{AH}}& > \tfrac{n}{m} \cdot  [\min \{p,u_x\}, 2].
     \end{align*}
\end{enumerate}
\end{cor}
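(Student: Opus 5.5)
The proof will specialize Theorem~\ref{thm:mainestimate} to $q=r=2$, $\vec u=(1,u_x)$, $\vec v=(\infty,v_x)$, rewrite its three conditions in terms of $M_{\textup{AH}}$, and then invoke Theorem~\ref{thm:extension} for the extension.

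\emph{Step 1: admissibility of the exponents.} First I check that Theorem~\ref{thm:mainestimate} applies. We have $r=2\in(1,\infty)$ and $\vec u=(1,u_x)\in[1,\infty)^2$, because $u_x\le 2$ in all four cases. Likewise $\vec v=(\infty,v_x)\in(1,\infty]^2$, because $v_x\ge 2$ in all four cases. Finally,
\[
\max\{1,u_x\}\le 2\le \min\{\infty,v_x\}.
\]
I also record the identities $[u_t,v_t]=[1,\infty]=1$, $[u_t,q]=[1,2]=\tfrac12$ and $[q,v_t]=[2,\infty]=\tfrac12$.

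\emph{Step 2: the decay condition.} Condition \eqref{e:main-decay-condition} reads
\[
M>-1+\kappa+\tfrac nm\bigl[\min\{p,2,u_x\},u_x\bigr]+\tfrac nm\bigl[v_x,\max\{p,2,v_x\}\bigr].
\]
Adding $1-\kappa+\tfrac nm[u_x,v_x]$ to both sides and using the telescoping identity $[a,b]+[b,c]=[a,c]$, this is equivalent to
\[
M_{\textup{AH}}>\tfrac nm\bigl[\min\{p,2,u_x\},\max\{p,2,v_x\}\bigr].
\]
For $p\ge2$ and $u_x=2$ we have $\min\{p,2,u_x\}=2$ and $\max\{p,2,v_x\}=\max\{p,v_x\}$, which gives the decay conditions in \ref{it:causalpge2} and \ref{it:anticausalpge2}. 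For $p\le 2$ and $v_x=2$ we have $\min\{p,2,u_x\}=\min\{p,u_x\}$ and $\max\{p,2,v_x\}=2$, which gives the decay conditions in \ref{it:causalple2} and \ref{it:anticausalple2}.

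\emph{Step 3: the bounds on $\beta$.} Condition \eqref{e:main-beta-lower-bound} becomes
\[
\beta>-\tfrac12+\tfrac nm\bigl[\min\{p,2,u_x\},u_x\bigr].
\]
For $u_x=2\le p$ the bracket is $[2,2]=0$, so the bound is $\beta>-\tfrac12$. For $p\le2$ the bound is $\beta>-\tfrac12+\tfrac nm[\min\{p,u_x\},u_x]$. Condition \eqref{e:main-beta-upper-bound} becomes
\[
\beta<\tfrac12-\tfrac nm\bigl[v_x,\max\{p,2,v_x\}\bigr]-\kappa.
\]
For $p\ge2$ this is the bound stated in \ref{it:anticausalpge2}. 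For $v_x=2\ge p$ the bracket vanishes, leaving $\beta<\tfrac12-\kappa$.

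Since Theorem~\ref{thm:mainestimate} drops the upper bound for causal $T$ and the lower bound for anti-causal $T$, each case retains exactly the single $\beta$-condition listed in it. Theorem~\ref{thm:mainestimate} then yields the a priori bound
\[
\nrm{Tf}_{T^{p,2,2}_{\beta+\kappa}}\lesssim \nrm{f}_{T^{p,2,2}_\beta},\qquad f\in L^\infty_c(\R^{1+n}_+).
\]

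\emph{Step 4: extension.} Finally, since $T$ satisfies the pointwise Lipschitz property \eqref{eq:pointwiseLipschitz} and the hypotheses of Theorem~\ref{thm:mainestimate} have just been verified, Theorem~\ref{thm:extension} gives the bounded extension $T^{p,2,2}_\beta\to T^{p,2,2}_{\beta+\kappa}$. This includes the case $p=\infty$.

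There is no genuine obstacle here. The only delicate point is the bookkeeping in Step 2: merging the three brackets into a single one, and matching the $\min$ and $\max$ with the case assumptions on $p$, $u_x$ and $v_x$.
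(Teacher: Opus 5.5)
Your proposal is correct and follows essentially the same route as the paper: specialize Theorem~\ref{thm:mainestimate} to $q=r=2$, $\vec u=(1,u_x)$, $\vec v=(\infty,v_x)$, then translate \eqref{e:main-decay-condition}, \eqref{e:main-beta-lower-bound} and \eqref{e:main-beta-upper-bound} case by case, and finally extend via Theorem~\ref{thm:extension}. Your telescoped form $M_{\textup{AH}}>\tfrac nm[\min\{p,2,u_x\},\max\{p,2,v_x\}]$ is exactly the bookkeeping the paper leaves implicit, and, as in the paper, the standing assumption $m\ge 1$ from Section~\ref{sec:estontentspace} is tacitly used.
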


\begin{proof} These are simple verifications from Theorem~\ref{thm:mainestimate}. The lower bound on $M_{\text{AH}}$ follows from \eqref{e:main-decay-condition}, the lower bound on $\beta$ if $T$ is causal from \eqref{e:main-beta-lower-bound}, and the upper bound on $\beta$ if $T$ is anti-causal from \eqref{e:main-beta-upper-bound}. 
    \end{proof}

    \begin{remark}\label{rem:L2bdd}
    Note that lower bounds for $M_{\text{AH}}$ are $p$-dependent.
    An interesting  particular case  is $p=u_x=v_x=2$ which arises in each item: the condition  $M_{\text{AH}}>0$ yields   $L^2_{\beta}(\R^{1+n}_+)$ to $L^2_{\beta+\kappa}(\R^{1+n}_+)$ boundedness for \textbf{all} $\beta$ in the given range, whereas we  are only assuming  localized $L^2$-boundedness on Whitney regions.  Thus, if one knows only such boundedness for one value of $\beta$, as is often the case, then boundedness for any $\beta$ as above follows. 
    For maximal regularity operators this is a well-known phenomenon, see, e.g., \cite{CK18}.
    \end{remark}

    Before  making the comparison, we introduce the two exponents $p_{M_{\text{AH}}}$ and $p_{s}(\beta)$ defined implicitly by 
\[ M_{\text{AH}}\coloneqq  \tfrac{n}{m} \cdot [p_{M_{\text{AH}}}, 2] \qquad \text{and} \qquad \beta\coloneqq -\tfrac 12 + \tfrac{n}{m} \cdot [p_{s}(\beta),s].  \]
A calculation shows that the hypotheses on $\beta$ and $M_{\text{AH}}$ in item  \ref{it:causalple2} are equivalent to
\[ \beta>-\tfrac 12 \qquad \text{and} \qquad  \max\{p_{M_{\text{AH}}}, p_{u_x}(\beta)\}  < \min\cbrace{p,u_x}.\]
Similarly, the conditions on $\beta$ and $M_{\text{AH}}$ in item  \ref{it:anticausalpge2} are equivalent to 
\[ \beta+\kappa <\tfrac 12 \qquad \text{and} \qquad  \max\{p_{M_{\text{AH}}}, p_{v_x'}(-\beta-\kappa)\}  < \min\cbrace{p',v_x'}.\]
This shows a duality between the two statements. 

In \cite{AH25a}, for the same conclusion as above, $T$ is additionally assumed to be linear, to satisfy global  $L^2_\gamma(\R^{1+n}_+)$ to $L^2_{\gamma+\kappa}(\R^{1+n}_+)$ boundedness for $\gamma=0$ and $\gamma =\beta$ and to have an $L^2(\R^n)$-bounded kernel representation (see Definition~\ref{def:kernelODE}). Given the equivalences above, one can check that in the four separate statements there, the conditions on $\beta$ are exactly the same while {the lower bound on $M_{\text{AH}}$ is more restrictive and is, in particular, at least  $\frac{n}{2m}$}.

\begin{remark} 
  \cite[Corollary~2 and Corollary 3]{AH25a} also establish $T^{\infty,2, 2}_{\beta,[p,1]} \to T^{\infty,2,2}_{\beta+\kappa,[p,1]}$ boundedness for the adjoint $T^*$  in the range  $0<p<1$ and with $\beta$ for which one has $T^{p,2,2}_{-\beta-\kappa}$ to $T^{p,2,2}_{-\beta}$ boundedness for the singular integral operator $T$. This is obtained using duality since $T^{\infty,2, 2}_{\beta,[p,1]}$ (which we do not define here) is identified with the dual space of $T^{p,2, 2}_{-\beta}$ in the duality of $L^2(\R^{1+n}_+, \dd t\dd x)$. We can also  recover this result differently with the following argument.  Indeed, $T^*$  is easily seen to be a singular operator to which the form domination applies.   Applying the required model operator boundedness, see Lemma~\ref{lemma:Whitneymap} and Proposition \ref{prop:cabddp>infty}, and Theorem~\ref{thm:mainestimate} proves  $T^{\infty,\infty,2}_{\beta+ \frac nm [p,1]}$ to $T^{\infty,\infty,2}_{\beta+ \frac nm [p,1]+\kappa}$ boundedness of $T^*$ under appropriate conditions on $\beta+ \frac nm [p,1]$ and $M_{\text{AH}}$. As \cite[Proposition 3.27]{Haa26}, re-expressed in our context, shows that $T^{\infty,2,2}_{\beta,[p,1]} = T^{\infty,\infty,2}_{\beta+ \frac nm [p,1]}$ isomorphically, we conclude using these isomorphisms. As before, one can check that we get the same conditions on $\beta, p$ with a smaller lower bound on the decay parameter. 
   \end{remark}

\section{Applications to elliptic PDEs}
\label{sec:ellipticPDES}

In this section, we shall first improve results on Calder\'on--Zygmund operators and answer questions concerning maximal regularity for elliptic PDEs posed in \cite{HR23}.
We shall also prove new tent space estimates for Riesz transforms in the half-space.

\subsection{Calder\'on--Zygmund operators}

Consider  the classical Calder\'on--Zygmund operators 
$$
  Sf(t,x)=\text{p.v.}\int_{\R^{1+n}_+} k(t,x;s,y) f(s,y) \dd s\dd y, \qquad
  (t,x)\in \R^{1+n}_+,
$$
where we assume that the Calder\'on--Zygmund kernel has the estimate 
\begin{equation}  \label{eq:kest}
      |k(t,x;s,y)|\lesssim \frac 1{(|t-s|+|x-y|)^{1+n}},\qquad
  (t,x),(s,y)\in \R^{1+n}_+.
\end{equation}
If $S:L^r(\R^{1+n}_+)\to L^r(\R^{1+n}_+)$ is bounded for some $r \in (1,\infty)$, 
then $S$ is an SIO
of type $(0,1,r,1,\infty,0)$ in the sense of 
Definition~\ref{def:kernelODE}.

We will also consider the causal (upward mapping) truncation $S^+$ of $S$, with kernel
\begin{equation}  \label{eq:kplus}
      k^+(t,x;s,y):= k(t,x;s,y) \ind_{t>s},
\end{equation}
assuming that $S^+\colon L^r(\R^{1+n}_+)\to L^r(\R^{1+n}_+)$ is bounded.
Note that this does not follow from the $L^r$-boundedness of $S$. Instead, this typically 
requires a double cancellation of $k$, which is present for Beurling-type
SIOs, but not for Riesz-type transforms.
Similarly, we also consider the anti-causal (downward mapping) truncation $S^-$
of $S$, with kernel 
\begin{equation}  \label{eq:kminus}
      k^-(t,x;s,y):= k(t,x;s,y) \ind_{t<s}.
\end{equation}

Proposition~\ref{prop:compareAH} and
Theorem~\ref{thm:mainestimate} immediately give the following theorem, where we understand that $S$, $S^+$ and $S^-$ are extended from compactly supported functions to the tent spaces
as in Section~\ref{sec:extension}.

\begin{theorem}   \label{thm:cz}
Let $r\in(1,\infty)$, $p,q\in(0,\infty]$ and $\beta\in\R$, and assume that
$$
  \gamma:= 1-n\cdot \bracb{\min\cbrace{p,q,1},1}>0,
$$
or equivalently, $p,q>\frac{n}{n+1}$.
\begin{itemize}
    \item 
    If $S:L^r(\R^{1+n}_+)\to L^r(\R^{1+n}_+)$ is bounded and has a CZ kernel $k$
with pointwise upper bounds \eqref{eq:kest},
then $S: T^{p,q,r}_\beta\to T^{p,q,r}_\beta$ is bounded
if $q^{-1}-\gamma<\beta <q^{-1}$.
\item 
If $S^+:L^r(\R^{1+n}_+)\to L^r(\R^{1+n}_+)$ is bounded and has a causal 
CZ kernel $k^+$ as in \eqref{eq:kplus} with pointwise upper bounds \eqref{eq:kest}
on $k$,
then $S^+: T^{p,q,r}_\beta\to T^{p,q,r}_\beta$ is bounded
if $q^{-1}-\gamma<\beta$.
\item 
If $S^-:L^r(\R^{1+n}_+)\to L^r(\R^{1+n}_+)$ is bounded and has an anti-causal 
CZ kernel $k^-$ as in \eqref{eq:kminus} with pointwise upper bounds \eqref{eq:kest}
on $k$,
then $S^-: T^{p,q,r}_\beta\to T^{p,q,r}_\beta$ is bounded
if $\beta<q^{-1}$.
\end{itemize}
\end{theorem}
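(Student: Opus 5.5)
We reduce Theorem~\ref{thm:cz} to Theorem~\ref{thm:mainestimate} via Proposition~\ref{prop:compareAH}, with $m=1$, $\kappa=0$, $\vec u=(1,1)$, $\vec v=(\infty,\infty)$, taking $M$ as large as the kernel bound \eqref{eq:kest} permits.

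\textbf{Step 1: identify the singular integral type.} The kernel operator is $K(t,s)g(x)=\int k(t,x;s,y)g(y)\dd y$, and $L^r$-boundedness gives Definition~\ref{def:kernelODE}\ref{it:Lr} with $\gamma=0$. For the $t$-separation condition, \eqref{eq:kest} gives $\sup_{x,y}|k(t,x;s,y)|\lesssim |t-s|^{-1-n}$, i.e.\ an $L^1_x\to L^\infty_x$ bound $|t-s|^{-1-n[1,\infty]}$, as required. For the $x$-separation condition, with $d(E_x,F_x)\ge c|t-s|$ one has $|k|\lesssim d(E_x,F_x)^{-1-n}$, which is the required bound with $M=0$. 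The same holds for $k^\pm$, since truncation only lowers $|k|$, and $S^\pm$ are causal, resp.\ anti-causal. Proposition~\ref{prop:compareAH} (valid since $M=0>-1$) then shows that $S,S^\pm$ are singular operators of type $(0,1,r,(1,1),(\infty,\infty),0)$. Then $\max\{u_t,u_x\}=1\le r\le\infty=\min\{v_t,v_x\}$, and $[u_t,v_t]=1$.

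\textbf{Step 2: check the conditions of Theorem~\ref{thm:mainestimate}.} With $u_x=1$ we get $\tfrac nm[\min\{p,q,1\},1]=1-\gamma$. With $v_x=\infty$, $[v_x,\max\{p,q,v_x\}]=[\infty,\infty]=0$. Hence:
\begin{itemize}
\item \eqref{e:main-decay-condition} reads $0>-1+(1-\gamma)$, i.e.\ $\gamma>0$;
\item \eqref{e:main-beta-lower-bound} reads $\beta>-[1,q]+1-\gamma=q^{-1}-\gamma$;
\item \eqref{e:main-beta-upper-bound} reads $\beta<[q,\infty]=q^{-1}$.
\end{itemize}
These are exactly the stated ranges. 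In the causal case \eqref{e:main-beta-upper-bound} is dropped, and in the anti-causal case \eqref{e:main-beta-lower-bound} is dropped. The equivalence $\gamma>0\iff p,q>\frac n{n+1}$ is immediate when $\min\{p,q\}<1$, and trivial otherwise.

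\textbf{Step 3: extend and interpret.} Since $S,S^\pm$ are linear, Theorem~\ref{thm:extension} applies and gives bounded extensions $T^{p,q,r}_\beta\to T^{p,q,r}_\beta$.

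The main obstacle is the principal value in $S$. The representation \eqref{eq:representation} is only needed off $\pi(f)$, where the kernel is integrable, so this should be harmless. Still, one must make sure that Proposition~\ref{prop:compareAH} only uses the kernel on separated rectangles, which its proof indeed does.
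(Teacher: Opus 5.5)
Your proposal is correct and is the paper's own argument. The paper observes that $S$ (and likewise $S^\pm$) is a singular integral operator of type $(0,1,r,1,\infty,0)$, then applies Proposition~\ref{prop:compareAH}, then Theorem~\ref{thm:mainestimate}, and extends via Section~\ref{sec:extension}. Your explicit checks of the kernel conditions and of \eqref{e:main-decay-condition}, \eqref{e:main-beta-lower-bound}, \eqref{e:main-beta-upper-bound} fill in details the paper leaves implicit.
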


\begin{remark}~
\begin{itemize}
    \item Note that in the Banach range $1\le p,q\le \infty$, we have $\gamma=1$ in Theorem~\ref{thm:cz}.
In particular this recovers \cite[Theorem 5.1]{HR23}.
Indeed, for $S^+$ this uses $1<p<\infty$, $q=\infty$ and $\beta=0$, for which the above hypothesis holds. 
For $S^-$ this uses $1<p<\infty$, $q=1$ and $\beta=0$, for which the above hypothesis also holds using an equivalent  norm on $T^{p,1,r}_0$ based on a Carleson functional \cite[Theorem 3]{CMS85}.
Our Theorem~\ref{thm:cz} also answers the open problem posed in the second bullet in 
\cite[Section 6.1]{HR23}, in that it gives boundedness results for Calder\'on--Zygmund operators on general tent spaces.
\item  It is interesting to note that our strategy to obtain tent space estimates is completely different from that in \cite{HR23}. We exclusively rely on the pointwise upper bound \eqref{eq:kest}, yet we assume $L^r(\R^{1+n}_+)$-boundedness, which usually uses some regularity on $k$. In \cite{HR23}, H\"older regularity of $k$ is used  to show  weak $L^1(\R^{1+n}_+)$-boundedness of the SIO and the Lerner maximally truncated SIO,
\cite[Lemma 3.1 and Proposition 3.2]{HR23}, which in turn is used in the proof of
$L^r(\R^{1+n}_+)$ boundedness of $S, S^+$ and $S^-$ for $1<r<\infty$. The size estimate \eqref{eq:kest} was also needed in \cite{HR23} to show the weak $L^1(\R^{1+n}_+)$-estimates and to bound the 
technical $x$-separation term $II_1$ in the proof of the sparse domination (Theorem 4.1). 
\end{itemize}
\end{remark}

\subsection{ODE for \texorpdfstring{$DB$}{DB}}
\label{ssec:ODE-DB}
We next aim to show how Theorem~\ref{thm:mainestimate} implies 
tent space estimates for singular integral operators associated with a divergence form elliptic operator
\begin{equation} 
    \label{e:L}
  L=-\Div_x A(x) \nabla_x
\end{equation}
in $\R^n$,
with general bounded, measurable and accretive coefficients $A$.
Here the singular operators that appear typically have a non-causal structure.
To start with, we require off-diagonal estimates (using one possible interpretation of the subtitle) of resolvent operators.
For the second order operator $L$, we recall from
\cite{A07} and \cite[Chapters 6 and 12]{Auscher-Egert2023-book}
that the maximal open interval
$(p_-(L),p_+(L))\subseteq (1,\infty)$
where  we have
\begin{equation}
    \label{eq:p(L)}
    \sup_{t>0} \, \nrm{(I+t^2 L)^{-1}}_{L^p\to L^p} <\infty,\qquad p \in (p_-(L),p_+(L)),
\end{equation}
  is such that $\min\cbrace{[p_-(L),2], [2,p_+(L)]}> 1/n$ if $n\ge 3$, while $p_-(L)=1$ and $p_+(L)=\infty$ if $n=1,2$. 
We also recall that the maximal open interval $(q_-(L),q_+(L))\subseteq (1,\infty)$
where  we have
\begin{equation}
    \label{eq:q(L)}
  \sup_{t>0} \, \nrm{t\nabla(I+t^2 L)^{-1}}_{L^p\to L^p} <\infty,\qquad p \in (q_-(L),q_+(L)),
\end{equation}
is such that $q_-(L)=p_-(L)$  and $q_+(L)>2$.
Furthermore 
$[q_+(L),p_+(L)]\ge 1/n$ if $q_+(L)<n$, and otherwise $p_+(L)=\infty$.
Throughout this section, we work with $L^p$-estimates on $\R^n$ and we drop the subscript $x$ in the notation for simplicity.
Off-diagonal estimates for operators $\psi(L)$ in the holomorphic functional calculus of $L$ are well
understood, see \cite{Auscher-Egert2023-book}.

More generally, we shall make use of associated first order operators $DB$ on
$\R^n$, where 
$$
D= \begin{bmatrix}
   0 & -\Div \\
   \nabla & 0
\end{bmatrix}
$$ 
and $B$ is a bounded, measurable and accretive $(1+n)\times (1+n)$ matrix multiplier on $\R^n$, related to the coefficients $A$.
The relation between $DB$ and $L$ is roughly speaking the same as that between the Laplace operator and the Cauchy--Riemann operator, and will be explained in more detail below.
Note that in the special case of 
$B= \begin{bmatrix}
   1 & 0 \\
   0 & A
\end{bmatrix}$,
the operator
$(DB)^2$ is diagonal, with the $(1,1)$-element equal to $L$. However, in general we do not assume the coefficients $B$ to be block diagonal.
Operators $DB$ are often referred to as perturbed Dirac operators, although strictly speaking $-D^2$ only equals the Laplace operator when acting on tuples 
$\begin{bmatrix}
    u \\  v
\end{bmatrix}$,
where $v$ is a gradient vector field.

By \cite{AKMc06}, the operator $DB$ is bisectorial in $L^2(\R^n;\C^{1+n})$ with square function estimates and bounded bisectorial $H^\infty$-functional calculus. 
Extensions to $L^p(\R^n;\C^{1+n})$ can be found in 
\cite[Theorem 3.6]{AS16}, where $L^p$-bisectoriality of $DB$ and the bisectorial holomorphic functional calculus $H^\infty$ estimate
$$
  \|\varphi(DB)u\|_{L^p}\lesssim \|\varphi\|_{H^\infty} \|u\|_{L^p}
$$
is shown to hold for $p$ in an open interval
$(p_-(DB),p_+(DB))\subseteq (1,\infty)$ 
containing $p=2$ and depending on the coefficients in $B$.
Here $\varphi$ is a bounded and holomorphic symbol on an open bisector containing the spectrum of $DB$. 
In particular, for $p\in (p_-(DB), p_+(DB))$ we have
$$
  \sup_{t\in \R} \, \nrm{(I+it DB)^{-1}}_{L^p\to L^p} <\infty.
$$
In the specific block diagonal case
$B= \begin{bmatrix}
   1 & 0 \\
   0 & A
\end{bmatrix}$, we have $(p_-(DB), p_+(DB))= (q_+(L^*)', q_+(L))$. See \cite[Proposition 15.1]{Auscher-Egert2023-book} and \cite{ABE25}, where 
$p_+(DB)$ is identified with $q_+(L)$ and the Meyers exponent,
see \cite[Definition~7.2]{ABE25},
respectively, while $p_-(DB)$  is the H\"older conjugate of $p_+(DB^*)$.

Coming back to the general case, off-diagonal estimates for operators $\psi(DB)$ in the holomorphic functional calculus of $DB$, with general coefficients $B$, were proved in \cite[Chapter 15]{AS16}.
The complication, as compared to $L$, is that
the operator $DB$ is not injective when $n\ge 2$, but has an infinite dimensional null space.
For all $p\in (p_-(DB), p_+(DB))$, we have a decomposition into  (the closure of) the range and the null space
\begin{equation}   \label{eq:hodge}
      L^p(\R^n; \C^{1+n})= \overline{R(DB)}\oplus N(DB),
\end{equation}
where the closure and direct sum are in the $L^p$-topology. One can work within their intersections with the corresponding spaces defined on $L^2$ by density in $L^p$-topology. In other words, the calculi for different $p\in (p_-(DB), p_+(DB))$ are consistent.
Below we improve on the off-diagonal estimates from
 \cite{AS16}, starting with resolvent type operators.

\begin{lemma}  \label{lem:DBresODEs}
    Let $p_-(DB)<p_1<p_2<p_+(DB)$ and define the bisector
    $$
S_\mu:=\cbraceb{t\in \C: |\Im (t)| \le \tan(\mu) |\Re (t)| }
    $$
with $0<\mu<\frac{\pi}{2}$ depending on $B$.
\begin{enumerate}[(i)]
    \item \label{it:DBODE1}
    If $[p_1,p_2]\le \frac1n$, then the estimate 
\begin{equation*}   
\nrm{(I+it DB)^{-1} f}_{L^{p_2}}
\lesssim |t|^{-n[p_1,p_2]}\nrm{f}_{L^{p_1}}    
\end{equation*}
holds for all $0\ne t\in S_\mu$ and all $f\in \overline{R(D)}$.
    \item \label{it:DBODE2}
    If $[p_1,p_2]< \frac1n$, there exists
$c>0$ such that 
the off-diagonal estimates  
\begin{equation*}  
\nrm{\ind_F tDB(I+t^2 (DB)^2)^{-1} (f\ind_E) }_{L^{p_2}}
\lesssim |t|^{-n[p_1,p_2]}
 e^{-c \frac{d(E,F)}{|t|}}
\nrm{f\ind_E}_{L^{p_1}}    
\end{equation*}
hold for all $0\ne t\in S_\mu$, all $E,F\subseteq \R^n$, and all $f\in L^{p_1}(\R^n; \C^{1+n})$.
\item \label{item:ODEsN}
If $[p_1,p_2]< \frac{N}{n}$ for some $N\in\N$, then
there exists
$c>0$ and $0\leq \theta<1$ such that
the off-diagonal estimates
\begin{equation*} 
\nrm{\ind_F (I+it DB)^{-1} (f\ind_E) }_{L^{p_2}}
\lesssim |t|^{-n[p_1,p_2]}
\brB{\frac {|t|}{d(E,F)}}^{(2N-1)\theta}  e^{-c \frac{d(E,F)}{|t|}}
\nrm{f\ind_E}_{L^{p_1}}    
\end{equation*}
hold for all $0\ne t\in S_\mu$, all $E,F\subseteq \R^n$
with $d(E,F)>0$, and all $f\in L^{p_1}(\R^n; \C^{1+n})$.
\end{enumerate}
\end{lemma}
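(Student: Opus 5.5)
The plan is to reduce everything to the second order operators obtained by squaring $DB$, exploiting the Hodge splitting \eqref{eq:hodge} and the known $L^2$ off-diagonal (Davies--Gaffney) estimates from \cite{AKMc06,AS16}. Throughout, $t\in S_\mu$ with $\mu$ smaller than the bisectoriality angle of $DB$. This guarantees that $(I+itDB)^{-1}$ and $tDB(I+t^2(DB)^2)^{-1}$ are uniformly $L^p$-bounded for $p\in(p_-(DB),p_+(DB))$. The same choice of $\mu$ also gives $L^2$ off-diagonal estimates with exponential decay $e^{-c\,d(E,F)/|t|}$, which are obtained by the standard commutator/Davies perturbation argument.

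For \ref{it:DBODE1}, I would prove an $L^{p_1}\to L^{p_2}$ Sobolev-type gain on $\overline{R(D)}$. Write $u=(I+itDB)^{-1}f$, so that $u+itDBu=f$. The key observation is that on $\overline{R(D)}$ the operator $D$ controls the full gradient: for $g=Bu$, the component in $R(D)$ satisfies $\|\nabla_x g_\parallel\|\eqsim\|Dg\|$ by Riesz transform bounds, and similarly for the scalar part. Hence $\|Du\|_{L^{p}}$-type bounds yield $W^{1,p}$ control up to the null space $N(D)$. Since $f\in\overline{R(D)}$, one has $u\in\overline{R(D)}$ as well, because $(I+itDB)^{-1}$ preserves $\overline{R(D)}$ (it equals $f-itDBu$).

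I would then combine $\|u\|_{L^{p}}+|t|\,\|DBu\|_{L^{p}}\lesssim\|f\|_{L^{p}}$ with a Sobolev embedding on $\overline{R(D)}$ to get $\|u\|_{L^{p^*}}\lesssim |t|^{-1}\|f\|_{L^p}$ when $[p,p^*]=1/n$. The restriction $[p_1,p_2]\le 1/n$ is exactly what a single Sobolev step allows. Interpolating between this and the $L^{p}$ bound handles intermediate $p_2$. If $p_1$ and $p_2$ are far apart within the interval, one should instead iterate in small steps, using that the calculi are consistent across $p$. The delicate point in this step is that $Bu$ rather than $u$ lies in a space where $D$ is elliptic. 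I would handle it by rewriting $u=f-itDBu$ and applying Sobolev to $DBu\in R(D)$ only through its primitive $Bu$ projected onto $\overline{R(D)}$, using the projection's $L^p$ boundedness in the range of \eqref{eq:hodge}.

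For \ref{it:DBODE2}, the operator $tDB(I+t^2(DB)^2)^{-1}$ annihilates $N(DB)$ and maps into $\overline{R(DB)}$, and it factors as a composition of two resolvents, one of which acts on $\overline{R(D)}$ after commuting $D$ through. Splitting $\psi(tDB)=\psi_1(tDB)\psi_2(tDB)$, I would use \ref{it:DBODE1} on one factor for the $L^{p_1}\to L^{p_2}$ gain, with strict inequality so that there is room, and uniform $L^p$ bounds on the other. I would then interpolate this global gain bound with exponential $L^{p}$ off-diagonal decay, obtained by Riesz--Thorin/Stein interpolation between the $L^2$ Davies--Gaffney bounds and uniform $L^{p}$ bounds, with $p$ slightly beyond $p_1,p_2$. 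The strict inequality $[p_1,p_2]<1/n$ is what leaves room for this interpolation; the cost is a smaller $c$.

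For \ref{item:ODEsN}, I would compose $N$ operators, each gaining $[p_1,p_2]/N<1/n$ and each with exponential decay, using the standard composition lemma for off-diagonal estimates (\cite[Chapter 4]{Auscher-Egert2023-book}). The non-injectivity prevents a direct application of \ref{it:DBODE1} to $(I+itDB)^{-1}$ on all of $L^{p_1}$. I would therefore write
\[
(I+itDB)^{-1}=P_N+itDB(I+itDB)^{-1}(\cdots),
\]
or more concretely expand $(I+itDB)^{-1}$ via $(I+t^2(DB)^2)^{-1}$ and $tDB(I+t^2(DB)^2)^{-1}$. The null-space part is then local: it is handled by a Hodge projection with only polynomial off-diagonal decay. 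This is where the factor $(|t|/d(E,F))^{(2N-1)\theta}$ with $\theta<1$ appears, as an interpolation between a trivial bound and the $N$-fold composition bounds. The main obstacle I expect is precisely this handling of the $N(DB)$ component, since it carries no smoothing and no exponential decay on its own; controlling it through the $t$-dependent factors without losing the exponential term is the step requiring the most care.
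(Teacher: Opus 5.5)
Parts (i) and (ii) are workable as sketched.

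In (i), the argument closes through the route you reach at the end. Let $P$ be the projection onto $\overline{R(D)}$ along $N(D)$. Then $DBu=DPBu$, so $\|\nabla PBu\|_{L^{p_1}}\lesssim\|DBu\|_{L^{p_1}}\lesssim|t|^{-1}\|f\|_{L^{p_1}}$, and Sobolev gives the gain for $PBu$. You then return to $u\in\overline{R(D)}$ via $\|u\|_{L^{p_2}}\lesssim\|PBu\|_{L^{p_2}}$; this is where the $L^{p_2}$ Hodge theory enters. Sobolev cannot be applied to $u$ directly, since $\|Du\|$ is not controlled by $\|DBu\|$ for rough $B$. The rewriting $u=f-itDBu$ plays no role, because $f\notin L^{p_2}$.

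In (ii), the factorization $tDB(I+t^2(DB)^2)^{-1}=(I+itDB)^{-1}\circ tDB(I-itDB)^{-1}$ is equivalent to the paper's difference of resolvents combined with \eqref{eq:hodge}. Interpolating against exponential decay also works; the paper interpolates directly with the $L^2$ bound.

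The genuine gap is in (iii). Splitting $f\ind_E=f_1+f_0$ by \eqref{eq:hodge} and bounding the pieces separately cannot give the claimed estimate. Since $(I+itDB)^{-1}f_0=f_0$,
\[
\ind_F(I+itDB)^{-1}(f\ind_E)=\ind_F(I+itDB)^{-1}f_1+\ind_F P_0(f\ind_E),
\]
where $P_0$ is the projection onto $N(DB)$. The last term does not depend on $t$ and is in general nonzero; for $B=I$ it contains $\nabla\Delta^{-1}\Div$, whose kernel decays only like $|x-y|^{-n}$. By contrast, the right-hand side of (iii) tends to $0$ as $t\to0$ with $d(E,F)$ fixed. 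The two pieces therefore cancel, and treating the $N(DB)$-part on its own by a Hodge projection with polynomial decay loses both the $t$-dependence and the exponential factor.

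Expanding $(I+itDB)^{-1}$ through $(I+t^2(DB)^2)^{-1}$ and $tDB(I+t^2(DB)^2)^{-1}$ only moves the same problem into the first term, which also fixes $N(DB)$. Nor can you compose $N$ operators: unlike a semigroup, a single resolvent is not an $N$-fold product of operators of the same type.

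The missing idea is to extract the smallness before splitting, using the commutator identity
\[
\ind_F(I+itDB)^{-1}\ind_E=-\ind_F(I+itDB)^{-1}\,it[\eta,D]B\,(I+itDB)^{-1}\ind_E .
\]
Here $\eta$ is Lipschitz with $\eta=1$ on $F$, $\eta=0$ on $E$ and $\|\nabla\eta\|_\infty\lesssim d(E,F)^{-1}$. Only then split $\ind_E f=f_1+f_0$:
\begin{itemize}
\item For $f_1\in\overline{R(DB)}$, apply (i) to the right resolvent.
\item For $f_0\in N(DB)$, the right resolvent acts as the identity, and $[\eta,D]Bf_0=-D(\eta Bf_0)\in\overline{R(D)}$. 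Hence (i) applies to the left resolvent.
\end{itemize}
Both pieces are bounded by $\frac{|t|}{d(E,F)}|t|^{-n[p_1,p_2]}$, with no cancellation needed. Iterating the identity along a chain of exponents with $n[q_{j-1},q_j]\le 1$ gives the factor $(|t|/d(E,F))^{2N-1}$. Interpolating this with the $L^2$ exponential off-diagonal bound then produces the power $\theta<1$ together with $e^{-c\,d(E,F)/|t|}$.
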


\begin{proof}
For \ref{it:DBODE1} we use the Sobolev embedding $ W^{1,p_1} \subseteq L^{p_2}$ and the ellipticity estimate
  $\nrm{\nabla u}_{L^p}\lesssim \nrm{Du}_{L^p}$ for $u\in \overline{R(D)}$, which follows from Riesz transform bounds for $p \in (1,\infty)$.
  Details are in \cite[Section 15.3, Claim 18]{AS16}.

For \ref{it:DBODE2} we first claim the estimate
$$
\nrm{tDB(I+t^2 (DB)^2)^{-1} f }_{L^{p_2}}
\lesssim |t|^{-n[p_1,p_2]}
\nrm{f}_{L^{p_1}}.  
$$
For $f\in \overline{R(DB)} = \overline{R(D)}$,
 this follows from \ref{it:DBODE1} upon writing
$$
  tDB(I+t^2 (DB)^2)^{-1}=
  \tfrac 1{2i}\brb{(I-itDB)^{-1}-(I+itDB)^{-1}}.
$$
For $f\in N(DB)$, we have $tDB(I+t^2 (DB)^2)^{-1} f =0$. Thus, \eqref{eq:hodge} proves the claim for general $f\in L^{p_1}(\R^n; \C^{1+n})$.

The off-diagonal estimate in \ref{it:DBODE2} for the operator
$Q:=\ind_F tDB(I+t^2 (DB)^2)^{-1}\ind_E$ 
follows by interpolation.
Indeed, taking $0<\theta<1$
    sufficiently close to $1$ and defining $\tilde p_1,\tilde p_2$ by the relations  $[p_1,2]=\theta[\tilde p_1,2]$
and $[2,p_2]=\theta[2,\tilde p_2]$, we have 
$p_-(DB)<\tilde p_1<\tilde p_2<p_+(DB)$ and 
    $[\tilde p_1,\tilde p_2]\le \frac1n$.
By the above claim,
$$
  \nrm{Q}_{L^{\tilde p_1}(E)\to 
L^{\tilde p_2}(F)}\lesssim |t|^{-n[\tilde p_1,\tilde p_2]}.
$$
Exponential $L^2$ off-diagonal estimates 
$$
  \nrm{Q}_{L^2(E)\to 
L^2(F)}\lesssim e^{-c d(E,F)/|t|}
$$
for $DB$ are proved as in \cite[Lemma 5.3]{CMcM13}.
Interpolation now gives \ref{it:DBODE2} upon replacing $c(1-\theta)$ by $c$ and noting that
$\theta[\tilde p_1, \tilde p_2]= [p_1,p_2]$.

For \ref{item:ODEsN}, start by assuming $[p_1,p_2]\le \frac1n$.
The argument for \ref{it:DBODE2}  does not directly apply to the resolvents 
$(I+it DB)^{-1}$, since the estimate
\ref{it:DBODE1} only holds for $f\in \overline{R(D)}$.
The non-trivial observation is that it is nevertheless possible to obtain an 
$L^{p_1}(E)\to L^{p_2}(F)$ estimate for all
$f\in L^{p_1}(\R^n; \C^{1+n})$ if $d(E,F)>0$.
To see this, we consider the commutator with a Lipschitz function $\eta:\R^n\to \R$, such that $\eta=1$ on $F$, $\eta=0$ on $E$, and $\nrm{\nabla\eta}_\infty\lesssim 1/d(E,F)$.
Note the commutator identity
\begin{align} \label{eq:firstcomm} \begin{aligned}
     \ind_F(I+itDB)^{-1}\ind_E&= \ind_F[\eta, (I+itDB)^{-1}]\ind_E \\
 &= -\ind_F(I+itDB)^{-1} it[\eta, D]B(I+itDB)^{-1}\ind_E 
\end{aligned}\end{align}
and that $[\eta,D]$ is a multiplier by a matrix
pointwise bounded by $|\nabla\eta|$.
We can further choose $\eta$ so that 
$G:= \supp{\nabla\eta}$ satisfies
$d(F,G)\eqsim d(G,E)\eqsim d(E,F)$.

Given $f\in L^{p_1}(\R^n; \C^{1+n})$, we write $\ind_E f=f_1+f_0$ using \eqref{eq:hodge}, with $f_1\in\overline{R(DB)}$
and $f_0\in N(DB)$.
For $f_1$ we estimate
$$
\nrmB{\brB{\frac 1{I+itDB} t[\eta, D]B} \brB{ \frac 1{I+itDB} }f_1}_{L^{p_2}}\lesssim
\frac {|t|}{d(E,F)} |t|^{-n[p_1,p_2]} \nrm{f_1}_{L^{p_1}},
$$
using $L^{p_2}$-boundedness of the operator in the left parenthesis, and \ref{it:DBODE1} for the right one.
For $f_0$ we estimate
\begin{align*}
\nrmB{\brB{\frac 1{I+itDB}} \brB{ t[\eta, D]B 
\frac 1{I+itDB} }f_0}_{L^{p_2}}
&\lesssim |t|^{-n[p_1,p_2]} \nrm{ t[\eta, D]B f_0}_{L^{p_1}}\\
&\lesssim
|t|^{-n[p_1,p_2]}\frac {|t|}{d(E,F)}  \nrm{f_0}_{L^{p_1}}, 
\end{align*}
using that $(I+itDB)^{-1}f_0= f_0\in N(DB)$ and
that $[\eta,D]B f_0= -D(\eta Bf_0)\in \overline{R(D)}$, i.e.,  \ref{it:DBODE1} applies 
to the operator in the left parenthesis.
Using \eqref{eq:hodge}, we have shown that 
\begin{equation}  \label{eq:resOD}
\nrm{\ind_F (I+it DB)^{-1} (f\ind_E) }_{L^{p_2}}
\lesssim |t|^{-n[p_1,p_2]}
\frac {|t|}{d(E,F)}
\nrm{f\ind_E}_{L^{p_1}}.     
\end{equation}

In general, if $[p_1,p_2]<\frac{ N}{n}$, we can iterate 
the argument with the commutator \eqref{eq:firstcomm}.
To this end, consider a chain of exponents
$$
  p_1= q_0<q_1<q_2<\ldots < q_N= p_2,
$$
where $n[q_{j-1},q_j]\le 1$ for $j=1,2,\ldots, N$.
Here \eqref{eq:resOD} applies with $p_1,p_2$ replaced
by $q_{j-1}, q_j$, and iterated use of 
\eqref{eq:firstcomm} proves that 
$$
\nrm{\ind_F (I+it DB)^{-1} (f\ind_E) }_{L^{p_2}}
\lesssim |t|^{-n[p_1,p_2]}
\brB{\frac {|t|}{d(E,F)}}^{2N-1}
\nrm{f\ind_E}_{L^{p_1}}.  
$$
Interpolation with the exponential $L^2$ off-diagonal estimates as in \ref{it:DBODE2} now proves 
the off-diagonal estimates \ref{item:ODEsN}, which completes the proof.
\end{proof}

\begin{remark}
To appreciate the off-diagonal estimates in Lemma \ref{lem:DBresODEs}\ref{item:ODEsN},
it is instructive to consider the case $B=I$.
Using the identity
$$
  (I-t^2 \delta d)(I-t^2 d\delta)= I-t^2 \Delta, 
$$
where $d$ and $\delta$ are the exterior and interior derivatives on vector fields, the formula
$$
\frac 1{I+itD}= 
\begin{bmatrix}
    1 & it\Div \\ -it\nabla & I-t^2\delta d
\end{bmatrix}
\frac 1{I-t^2 \Delta}
$$
follows. 
This reveals that the $(2,2)$-component of the 
matrix kernel of $(I+itD)^{-1}$ contains terms with
second derivatives of Bessel potentials.
This yields a Calder\'on--Zygmund kernel near the diagonal, but a smooth exponentially decaying kernel
away from the diagonal.
In particular we see that the estimate \ref{it:DBODE2} is not possible for the resolvents.
\end{remark}

\subsection{Elliptic maximal regularity operators} \label{sec:ellipticmaxreg}

In \cite{AAM10, AA11}, singular integral type representation formulas for solutions to
\begin{equation}  \label{eq:divformeq}
  \Div_{t,x} (A(t,x) \nabla_{t,x} u)=0
\end{equation}
were proved
and used to prove solvability
estimates for boundary value problems.
Here $t$ has the meaning of the coordinate transversal to the boundary $\R^n$ rather than time.
A first order PDE equivalent to \eqref{eq:divformeq} 
is the Cauchy--Riemann type system
\begin{equation}  \label{eq:CRsystem}
  \partial_t f + DB f=0
\end{equation}
for the conormal gradient $f=\begin{bmatrix}
    \partial_{\nu_A} u \\ \nabla_x u
\end{bmatrix}$
of $u$.
The coefficients $B$ are obtained from $A$ via a pointwise 
non-linear transformation of bounded accretive coefficients,
and $t$-dependency is handled perturbatively by writing
the equation as $$\partial_t f + DB_0 f=DB_0\mE f,$$ with 
$\mE= B_0^{-1}(B_0-B)$ and $B_0(x)= B(0,x)$.
Here the generator $DB_0$ is bisectorial in
$L^2(\R^n;\C^{1+n})$, and therefore Duhamel 
integration involves both a causal and an anti-causal term.
More precisely, letting  $$\Lambda:= |DB_0|= DB_0(E^+-E^-),$$ where 
$E^\pm=\ind_{\pm \Re(\lambda)>0}(DB_0)$ 
are the spectral projections of $DB_0$ for the sectors
$\pm \Re(\lambda)>0$,  the integrated, equivalent form of \eqref{eq:CRsystem} is 
$$
  f= \mathcal{C}^+ f_0+ (S^+ + S^-)\mE f,
$$
where now $f_0$ denotes the boundary trace 
$\lim_{t\to 0} f$.
The Cauchy operator $\mathcal{C}^+$ is defined by  $$\mathcal{C}^+ f_0(t):= e^{-t\Lambda}E^+f_0, \qquad f(t):= x\mapsto f(t,x),$$ (when $n=1$ and $A=I$, this is exactly the Cauchy operator from complex analysis). The causal elliptic maximal regularity operator is given by
$$
  (S^+ f)(t):= \int_0^t \Lambda e^{-(t-s)\Lambda}E^+ f(s) \dd s,
$$
and the anti-causal maximal regularity operator by $$
  (S^- f)(t):= \int_t^\infty \Lambda e^{-(s-t)\Lambda}E^- f(s) \dd s.
$$
It was shown for suitable small Carleson norm of the perturbation $\mE$, and with appropriate estimates of $S^\pm$, 
that this leads to a perturbed Cauchy-type reproducing formula
$$
  f= (I-(S^++S^-)\mE)^{-1}\mathcal{C}^+ f_0
$$
for $f$ in terms of its boundary trace $f_0$.
In \cite[Theorem 6.5]{AA11}, the boundedness of
$
S^+\colon T^{2,2}_{\beta}\to T^{2,2}_{\beta}
$
and dually $S^-\colon T^{2,2}_{-\beta}\to T^{2,2}_{-\beta}$,
was proved for $\beta>-1/2$, 
together with a weaker endpoint estimate for $\beta=-1/2$
in \cite[Theorem 6.8]{AA11}.

In the special case of a block form multiplier
$B_0=\begin{bmatrix}
   1 & 0 \\
   0 & A_0
\end{bmatrix}$,
$\Lambda=|DB_0|$ is block diagonal with $(1,1)$-
component equal to $\sqrt L$.
Here $A_0=A_0(x)$ is a bounded, measurable and accretive $n\times n$ matrix multiplier on $\R^n$, and $L= -\Div_x (A_0 \nabla_x)$.
The range of $L^p$ boundedness for the Poisson semigroup 
$e^{-t\sqrt{L}}$ is the larger interval
$(p_-(L),p_+(L))\subseteq (1,\infty)$, the same as the interval of $L^p$-boundedness for the resolvent   
and  the $H^\infty$-functional calculus for $L$, see \cite[Chapters~10, 12 and 13]{Auscher-Egert2023-book}.
We define the causal and anti-causal maximal regularity operators $T^\pm$ by 
 $$
  (T^+f)(t) := \int_0^t \sqrt L e^{-(t-s)\sqrt L} f(s) \dd s, \qquad (T^-f)(t) := \int_t^\infty \sqrt L e^{-(s-t)\sqrt L} f(s) \dd s,
$$
for the Poisson semigroup associated to $\sqrt{L}$, a priori defined for $f\in L^\infty_c(\R^{1+n}_+)$.

From Theorem~\ref{thm:mainestimate} we obtain the following general tent space maximal regularity estimates for these 
Poisson semigroups, where again the extension from compactly supported functions is as in Section~\ref{sec:extension}.
In particular this solves the open problems formulated in the first two bullets in
\cite[Section 6.1]{HR23} for $S^\pm$.
For $T^\pm$ it improves 
the results obtained when $q=r=2$ in \cite{AKMP12} for $T^\pm$, and extends them to a range of $q,r$.

\begin{theorem}   \label{thm:beurling}
Consider a perturbed Dirac operator $DB$, where we write $B$ for the matrix multiplier on $\R^n$.
  Let $p_\pm:= p_\pm(DB)$,  $r \in (p_-,p_+)$, and $\beta\in \R$. Suppose that $p,q\in(0,\infty]$ satisfy
$$
n\bracb{\min\cbrace{p,q,p_-},p_-} +
    n\bracb{p_+,\max\cbrace{p,q,p_+}} < 1.
$$
  Then $S^+$ is bounded on  $T^{p,q,r}_{\beta}$ $($of $\C^{1+n}$-valued functions$)$ 
  if 
\begin{gather*}
    \beta>\tfrac{1}{q}-1+n\bracb{\min\cbrace{p,q,p_-},p_-}.
\end{gather*}
$S^-$ is bounded on  $T^{p,q,r}_{\beta}$ $($of $\C^{1+n}$-valued functions$)$ if
\begin{gather*}
 \beta< \tfrac 1q-n\bracb{p_+,\max\cbrace{p,q,p_+}} 
\end{gather*}

The maximal regularity operators $T^\pm$ for the Poisson semigroup are bounded on  $T^{p,q,r}_{\beta}$ 
under the same conditions as for $S^\pm$, but with 
$p_\pm:=p_\pm(L)$.
\end{theorem}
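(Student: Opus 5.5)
The plan is to realise $S^\pm$, and likewise $T^\pm$, as singular integral operators in the sense of Definition~\ref{def:kernelODE} with $m=1$ and $\kappa=0$. Proposition~\ref{prop:compareAH} then makes them singular operators of type $(0,1,r,(1,u_x),(\infty,v_x),M)$, and Theorem~\ref{thm:mainestimate} together with the extension in Theorem~\ref{thm:extension} gives the result. Linearity supplies the Lipschitz property \eqref{eq:pointwiseLipschitz}.

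\textbf{Choice of exponents and the kernel.} Fix $p_-<u_x<r<v_x<p_+$, taken close enough to $p_\mp$ that the strict hypotheses of the theorem still hold with $p_\mp$ replaced by $u_x,v_x$; this is possible since every condition is open. The kernel of $S^+$ is
$$K(t,s)=\ind_{t>s}\,\Lambda e^{-(t-s)\Lambda}E^+,$$
and $S^-$ is treated symmetrically. For the $t$-separation bound I need
$$\nrm{\tau\Lambda e^{-\tau\Lambda}E^+}_{L^{u_x}\to L^{v_x}}\lesssim \tau^{-n[u_x,v_x]}.$$
I will obtain it by writing $\tau\Lambda e^{-\tau\Lambda}E^+=\varphi(\tau DB_0)$ for a holomorphic $\varphi$ with decay at $0$ and $\infty$, and factoring $\varphi(\tau DB_0)$ through resolvents $(I+i\tau DB_0)^{-1}$ acting on $\overline{R(D)}$ (note $E^+$ annihilates $N(DB_0)$). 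Lemma~\ref{lem:DBresODEs}\ref{it:DBODE1} is then applied along a chain of exponents $u_x=q_0<\dots<q_N=v_x$ with steps of size at most $1/n$.

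For the $x$-separation bound I will use the Cauchy integral representation of $\varphi(\tau DB_0)$ over the boundary of the bisector $S_\mu$ and insert Lemma~\ref{lem:DBresODEs}\ref{item:ODEsN}. This produces
$$\tau^{-n[u_x,v_x]}\bigl(\tau/d(E,F)\bigr)^{M'}$$
with $M'$ as large as the number of resolvent factors allows, which is more than enough for \eqref{e:main-decay-condition}. For $T^\pm$ I will use instead the standard off-diagonal theory of $\psi(L)$ from \cite{Auscher-Egert2023-book}, with $p_\pm(L)$ in place of $p_\pm(DB)$.

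\textbf{Local $L^r$ bound.} The remaining ingredient is condition~\ref{it:Lr} of Definition~\ref{def:kernelODE}, that is, $L^r(\R^{1+n}_+)$-boundedness of $S^\pm$, or at least local boundedness on Whitney regions as in Definition~\ref{def:ODE}\ref{it:ODEDef1}. I plan to get this from maximal $L^r$-regularity. The operator $DB_0$ has bounded $H^\infty$-calculus on $L^r$, which is UMD, and off-diagonal estimates upgrade this to $R$-sectoriality. The Kalton--Weis theorem then gives boundedness of $S^+$ on $L^r(\R_+;\overline{R(DB_0)})$. Restricting to Whitney regions then yields \ref{it:ODEDef1} directly.

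\textbf{Checking the conditions.} With $u_t=1$ and $v_t=\infty$ we have $[u_t,q]=1-\frac1q$ and $[q,v_t]=\frac1q$. Condition \eqref{e:main-beta-lower-bound} becomes $\beta>\frac1q-1+n[\min\{p,q,u_x\},u_x]$, and \eqref{e:main-beta-upper-bound} becomes $\beta<\frac1q-n[v_x,\max\{p,q,v_x\}]$. Letting $u_x\downarrow p_-$ and $v_x\uparrow p_+$ recovers the stated ranges. Causality of $S^+$ removes the upper bound, and anti-causality of $S^-$ removes the lower bound.

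\textbf{Main obstacle.} I expect the difficulty to lie in the off-diagonal step, because $DB_0$ is non-injective and Lemma~\ref{lem:DBresODEs}\ref{it:DBODE1} only applies on $\overline{R(D)}$. If the decay exponent $M$ obtained from Lemma~\ref{lem:DBresODEs}\ref{item:ODEsN} (where only $\theta<1$ is available) turns out to be limited, then the hypothesis $n[\min\{p,q,p_-\},p_-]+n[p_+,\max\{p,q,p_+\}]<1$ should be exactly what makes \eqref{e:main-decay-condition} hold, together with $M>-1$ in Proposition~\ref{prop:compareAH}. So I would first try a single resolvent step with $n[u_x,v_x]\le1$, and check that \eqref{e:main-decay-condition} then reduces precisely to this hypothesis.
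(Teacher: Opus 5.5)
\textbf{Gap in the $x$-separation step.} Your overall architecture matches the paper's. You realise $S^\pm$ and $T^\pm$ as singular integral operators of type $(0,1,r,u_x,v_x,M)$, get the $t$-separation bound from a chain of exponents and Lemma~\ref{lem:DBresODEs}\ref{it:DBODE1}, get $L^r$-boundedness from maximal regularity, and conclude with Proposition~\ref{prop:compareAH} and Theorem~\ref{thm:mainestimate}. The problem is the $x$-separation step.

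The off-diagonal decay of $\tau\Lambda e^{-\tau\Lambda}E^+$ in $d(E,F)/\tau$ is not controlled by the number of resolvent factors. It is controlled by the order of vanishing at $0$ of the non-entire symbol $\lambda e^{-\lambda}\ind_{\Re\lambda>0}$, which is $1$. The best possible bound is $\tau^{-n[u_x,v_x]}(1+d(E,F)/\tau)^{-1-n[u_x,v_x]}$, that is, $M=0$. The Poisson kernel of $-\Delta$ with $u_x=1$, $v_x=\infty$ is an example: it gives exactly $\tau d^{-1-n}$. Splitting the symbol into factors does not help: the factors' vanishing orders add up to $1$, and composing polynomial off-diagonal estimates does not add their decay orders. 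This is precisely why the theorem assumes $n[\min\{p,q,p_-\},p_-]+n[p_+,\max\{p,q,p_+\}]<1$. That hypothesis is \eqref{e:main-decay-condition} with $M=0$, $\kappa=0$ and $[u_t,v_t]=1$, not something satisfied with room to spare. Your ``Checking the conditions'' paragraph never actually verifies it.

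\textbf{Why your fallback does not cover the theorem.} Inserting Lemma~\ref{lem:DBresODEs}\ref{item:ODEsN} directly into the Cauchy integral for the symbol with $\alpha=1$ converges at small $|\lambda|$ only if $1+n[u_x,v_x]>(2N-1)\theta$, where $N>n[u_x,v_x]$ and $\theta$ is close to $1$. This fails as soon as $n[u_x,v_x]>1$. You cannot restrict to $n[u_x,v_x]\le 1$. Theorem~\ref{thm:mainestimate} needs $u_x\le r\le v_x$ with $r\in(p_-,p_+)$ arbitrary, and the stated ranges need $u_x\downarrow p_-$ and $v_x\uparrow p_+$. Yet, for instance, $n[p_-(L),p_+(L)]>2$ whenever $n\ge 3$.

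\textbf{The missing idea.} The paper uses a two-stage argument.
\begin{enumerate}
\item Prove the estimate for $\psi_\alpha(\tau DB)$ with $\psi_\alpha(\lambda)=\lambda^\alpha e^{-\lambda}\ind_{\Re\lambda>0}$ and $\alpha>(2N-1)\theta-n[u_x,v_x]$ large. Then the Cauchy integral with Lemma~\ref{lem:DBresODEs}\ref{item:ODEsN} converges and gives $\tau^\alpha d(E,F)^{-\alpha-n[u_x,v_x]}$ for $d(E,F)\gtrsim\tau$. Combine this with the chain bound for $d(E,F)\lesssim\tau$.
\item Descend to $\alpha=1$ via the Calder\'on reproducing formula $f=c\int_0^\infty (s\Lambda)^{\alpha-1}e^{-s\Lambda}f\,\frac{ds}{s}$ on $R(E^+)$. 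This yields exactly $\tau^{-n[u_x,v_x]}(1+d(E,F)/\tau)^{-1-n[u_x,v_x]}$, as in \cite[Proposition 12.7]{Auscher-Egert2023-book}.
\end{enumerate}
That proposition, with its proof run for target exponent $\neq 2$, is also what you need to invoke for $T^\pm$. ``Standard off-diagonal theory'' there likewise only gives $M=0$.
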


\begin{remark}
  The $T^{p,2,2}_{1/2}$ boundedness of $T^+$ was shown in
  \cite[Proposition~1.7]{AKMP12}. Recall that
  $\min\cbrace{[p_-(L),2], [2,p_+(L)]}> \frac1n$ if $n\ge 3$, while $p_-(L)=1$ and $p_+(L)=\infty$ if $n=1,2$.
  Setting $q=r=2$ and
  $\beta=1/2$,  Theorem~\ref{thm:beurling} 
  recovers this result 
  in dimension $1\le n\le 4$, while 
  for $n\ge 5$, we obtain here 
  $$
    \frac{2n}{n+4}-\varepsilon<p<\frac{2n}{n-4}+\varepsilon',
  $$
  with $\varepsilon,\varepsilon'>0$ depending on $L$, which is a strict improvement. When $A$ has real coefficients, the range for $p$ is $(\tfrac{n}{n+1},\infty]$,  since $p_-(L)=1$ and $p_+(L)=\infty$ in all dimensions.
\end{remark}

\begin{proof}
    Consider first $T^+$ and let $p_-(L)<p_1<r<p_2<p_+(L)$. By
    Proposition~\ref{prop:compareAH} and Theorem~\ref{thm:mainestimate}, it suffices to show that $T^+$ is a singular integral operator of type $(0,1,r,p_1,p_2,0)$ in the sense of Definition~\ref{def:kernelODE},
    since $p_1>p_-(L)$ and $p_2<p_+(L)$ are arbitrary.
    Using the symbol $\psi(\lambda)= \lambda e^{-\lambda}1_{\Re(\lambda)>0}$, the operator-valued kernel of $T^+$ is
    $$
      K(t,s)=\ind_{s<t}\ (t-s)^{-1} \psi((t-s)\sqrt{L}).
    $$ 
    The boundedness of $T^+$ on $L^r(\R^{1+n}_+)$ follows from  
    $L^r$-maximal regularity of $\sqrt{L}$ on $L^r(\R^{n})$, which is a consequence of the 
     $H^\infty$-functional calculus on the same space, see \cite{KW04}.
    Off-diagonal estimates 
    \begin{equation}\label{eq:maxregdecay}
      \nrm{\ind_F \psi(\tau\sqrt{L})(f \ind_E )}_{L^{p_2}}\lesssim 
      \tau^{-n[p_1,p_2]}(1+d(E,F)/\tau)^{-1-n[p_1,p_2]}
      \nrm{f\ind_E }_{L^{p_1}}        
    \end{equation}
    follow from
    \cite[Proposition 12.7]{Auscher-Egert2023-book}, where again, we drop the $x$-subscript in the notation.
    We note that this is stated only for $p_2=2$,
    but inspecting the proof reveals that it goes through for any $p_2<p_+(L)$.
    This proves the result for $T^+$, and the proof for $T^-$ is similar.
    
    Consider next $S^+$, where we now let
     $p_-(DB)<p_1<r<p_2<p_+(DB)$.
    Once the off-diagonal estimates \eqref{eq:maxregdecay}, with $\sqrt{L}$ replaced
    by $DB$, for the operator-valued kernel
    $$
      K(t,s)=\ind_{s<t}\ (t-s)^{-1} \psi((t-s)DB)
    $$ 
    have been proved, the proof for $S^\pm$ is analogous to the proof for $T^\pm$. 
    These off-diagonal estimates however are more 
    complicated to see due to the non-injectivity
    of $DB$ and, with this optimal order of decay, are not in the literature.
    To this end, let $\alpha>0$ and consider the auxiliary symbol
    $\psi_\alpha(\lambda)= \lambda^\alpha e^{-\lambda}1_{\Re(\lambda)>0}$.
    By Dunford--Riesz calculus
    \begin{equation}   \label{eq:dunford}
      \psi_\alpha(\tau DB)= \frac 1{2\pi i}
      \int_{\gamma} \frac{(\tau \lambda)^\alpha e^{-\tau\lambda}}{I-\lambda^{-1}DB} \frac{d\lambda}\lambda, 
    \end{equation}
    where $\tau>0$ and $\gamma:=\partial S_{\nu+}$ with
    $S_{\nu+}:= \{\lambda \in \C:|\arg\lambda\, |<\nu\}$ for $\pi/2-\mu<\nu<\pi/2$ with $\mu$ as in Lemma~\ref{lem:DBresODEs}. Fractional powers are obtained using the principal branch of the logarithm.  We will prove \eqref{eq:maxregdecay} in three steps.

    \emph{Step 1:}
    We first prove $L^{p_1}-L^{p_2}$ boundedness of 
    $\psi_\alpha(\tau DB)$, where by \eqref{eq:hodge}
    we  may assume that $f\in \overline{R(D)}$, since $\psi_\alpha(\tau DB) f=0$ if $f\in N(DB)$.
    Let $N>n[p_1,p_2]$ and pick a chain of
    exponents 
$$
  p_1= q_0<q_1<q_2<\ldots < q_N= p_2,
$$
where $[q_{j-1},q_j]< \frac1n$ for $j=1,2,\ldots, N$.
Estimating \eqref{eq:dunford} using
Lemma~\ref{lem:DBresODEs}\ref{it:DBODE1} gives
$$
    \nrm{\psi_{\alpha/N}(\tau DB)f}_{L^{q_j}}\lesssim
      \int_{\gamma} (\tau |\lambda|)^{\alpha/N} e^{-{c_\nu}\tau|\lambda|} |\lambda|^{n[q_{j-1},q_j]}  \frac{|\ddn \lambda|}{|\lambda|} \nrm{f}_{L^{q_{j-1}}} 
      \lesssim \tau^{-n[q_{j-1},q_j]}\nrm{f}_{L^{q_{j-1}}}, 
$$
for $j=1,\ldots, N$, with $c_\nu>0$.
Composing these operators in the functional calculus 
of $DB$ gives
\begin{align*}
   \nrm{\psi_{\alpha}(\tau DB)f}_{L^{p_2}}
   &\eqsim \nrm{\brb{\psi_{\alpha/N}(\tfrac \tau N DB)}^N f}_{L^{p_2}} \\
&\lesssim \tau^{-n[q_{N-1},q_N]-n[q_{N-2},q_{N-1}]-\ldots -n[q_0,q_1]}
   \nrm{f}_{L^{p_1}}= \tau^{-n[p_1,p_2]}\nrm{f}_{L^{p_1}}.
\end{align*}

\emph{Step 2:}
We next prove off-diagonal estimates of 
$\psi_{\alpha}(\tau DB)$ for $E,F \subseteq \R^n$ with $d(E,F)\gtrsim \tau$ and large enough $\alpha$.
Estimating \eqref{eq:dunford} using instead
Lemma~\ref{lem:DBresODEs}\ref{item:ODEsN} gives
\begin{align*}
      \nrm{\ind_F\psi_{\alpha}&(\tau DB)(f \ind_E )}_{L^{p_2}}\\
      &\lesssim
      \int_{\gamma} (\tau |\lambda|)^{\alpha} e^{-{c_\nu}\tau|\lambda|} |\lambda|^{n[p_1,p_2]}
      \brb{|\lambda| d(E,F)}^{-(2N-1)\theta}
      e^{- c d(E,F) |\lambda|}
      \frac{|\ddn \lambda|}{|\lambda|} \nrm{f\ind_E }_{L^{p_1}}\\
  &\lesssim
  \frac{\tau^\alpha d(E,F)^{-(2N-1)\theta}}
  {({c_\nu}\tau+cd(E,F))^{\alpha+n[p_1,p_2]-(2N-1)\theta}}
  \nrm{f\ind_E }_{L^{p_1}}  \eqsim 
    \frac{\tau^\alpha}
  {d(E,F)^{\alpha+n[p_1,p_2]}}
  \nrm{f\ind_E }_{L^{p_1}},
\end{align*}
provided that $\alpha>(2N-1)\theta-n[p_1,p_2]$.

\emph{Step 3:} Proof of \eqref{eq:maxregdecay} for $DB$.
Combining the estimates from Steps 1 and 2, we have proved that
$$
      \nrm{\ind_F\psi_{\alpha}(\tau DB)(f \ind_E )}_{L^{p_2}}\lesssim
    \tau^{-n[p_1,p_2]}
    \brb{1+d(E,F)/\tau}^{-\alpha-n[p_1,p_2]}
  \nrm{f\ind_E }_{L^{p_1}},
$$
for any $E,F\subseteq \R^n$, $\tau>0$, 
and 
$\alpha>(2N-1)\theta-n[p_1,p_2]$.
By adapting the proof of \cite[Proposition 12.7]{Auscher-Egert2023-book}, we now extrapolate this 
result down to $\alpha=1$.
Using the Calder\'on reproducing formula
$$
  f= c\int_0^\infty (s\Lambda)^{\alpha-1} e^{-s\Lambda} f\frac {\ddn s}s,
$$
valid for $f\in \overline{R(DB)}\supset R(E^+)$ and $\alpha >1$,
writing $\Lambda= |DB|$ and $E^+= \ind^+(DB)$,
we get
$$
  \ind_F\tau \Lambda e^{-\tau\Lambda} E^+ (f\ind_E) 
  = c\int_0^\infty \frac {\tau s^{\alpha-1}}{(s+\tau)^\alpha} \ind_F \brb{(s+\tau)\Lambda}^\alpha 
  e^{-(s+\tau)\Lambda}E^+(f \ind_E)  \frac{\ddn s}s.
$$
This gives the estimate
\begin{align*}
  \nrm{\ind_F\tau \Lambda e^{-\tau\Lambda} &E^+ (f \ind_E )}_{L^{p_2}} \\
  &\lesssim \int_0^\infty \frac {\tau s^{\alpha-1}}{(s+\tau)^\alpha} 
  (s+\tau)^{-n[p_1,p_2]} 
  \brb{1+\tfrac{d(E,F)}{s+\tau}}^{-\alpha-n[p_1,p_2]}
  \nrm{f \ind_E }_{L^{p_1}} \frac{\ddn s}s\\
  &\lesssim 
  \tau^{-n[p_1,p_2]}
  \int_0^\infty \frac {\sigma^{\alpha-1}}{(\sigma+1)^{\alpha+n[p_1,p_2]}} 
  \brB{1+\tfrac{d(E,F)}{\tau(\sigma+1)}}^{-\alpha-n[p_1,p_2]}\frac{\ddn \sigma}\sigma
  \nrm{f \ind_E }_{L^{p_1}} \\
   &\lesssim 
   \tau^{-n[p_1,p_2]}\brb{1+\tfrac{d(E,F)}\tau}^{-1-n[p_1,p_2]}
  \nrm{f \ind_E }_{L^{p_1}},
\end{align*}
where the last integral estimate with the parameter 
$d(E,F)/\tau$ is seen as in 
\cite[Proposition 12.7]{Auscher-Egert2023-book}
with $\gamma=n[p_1,p_2]$ and $\theta \alpha$
replaced by $\alpha+n[p_1,p_2]$ and choosing $\alpha$ large enough.
This proves the off-diagonal estimates \eqref{eq:maxregdecay} for $DB$, which completes the
proof.
\end{proof}

\subsection{Generalized Riesz transforms on the half-space}
\label{sec:GRT}

We switch our perspective from dimension $n$ to dimension $n+1$ and consider operators $DB$ on $\R^{1+n}_+$. The pointwise multiplier $B= B(t,x)$ is now bounded, measurable and accretive on $\R^{1+n}_+$. To obtain self-adjoint Dirac operators $D$ in 
$L^2(\R^{1+n}_+; \C^{2+n})$, we impose either normal or tangential boundary conditions on the gradient and divergence operators respectively.
Consider these operators in 
$L^r$ {for $1<r<\infty$}.
The domain $\dom_r(\nabla_{t,x})$  consists of those $f\in L^r(\R^{1+n}_+)$ for which the distributional gradient $\nabla_{t,x}f$ is in $L^r(\R^{1+n}_+;\C^{1+n})$. 
The domain $\dom_r(\Div_{t,x})$  consists of those $f\in L^r(\R^{1+n}_+;\C^{1+n})$ for which the distributional divergence $\Div_{t,x}f$ is in $L^r(\R^{1+n}_+)$.
The operators $\underline{\nabla}_{t,x}$ and
$\underline{\Div}_{t,x}$ with natural boundary 
conditions are related to 
$\Div_{t,x}$ and $\nabla_{t,x}$ in $L^{r'}$
by duality:
$$
  \underline{\nabla}_{t,x}= -(\Div_{t,x})^*\quad
  \text{and}\quad 
  \underline{\Div}_{t,x}= -(\nabla_{t,x})^*.
$$
Concretely, the domain  
$\dom_r(\underline{\nabla}_{t,x})$ is $W^{1,r}_0(\R^{1+n}_+)$, and the boundary condition on fields $f$ in the domain  
$\dom_r(\underline{\Div}_{t,x})$ is that $f$ is tangential on $\R^n$ in the natural weak sense.

Define the Dirac operator with tangential boundary conditions
$$
D_\ta= \begin{bmatrix}
   0 & -\underline{\Div}_{t,x} \\
   \nabla_{t,x} & 0
\end{bmatrix},
$$ 
and with normal boundary conditions
$$
D_\no= \begin{bmatrix}
   0 & -\Div_{t,x} \\
   \underline{\nabla}_{t,x} & 0
\end{bmatrix}.
$$ 

A convenient way to handle these boundary conditions is the following reduction to operators on the full space $\R^{1+n}$, which
is an adaptation of the technique from \cite{AT03}.
Define the Dirac operator
$$
D= \begin{bmatrix}
   0 & -\Div_{t,x} \\
   \nabla_{t,x} & 0
\end{bmatrix}
$$ 
on $L^r(\R^{1+n};\C^{2+n})$.
Given the coefficients $B$ on $\R^{1+n}_+$ above, we define coefficients
$$
\widetilde B(t,x)
=\begin{cases}
    B(t,x), & t>0, \\
    RB(-t,x)R, & t<0, 
\end{cases}
$$
on $\R^{1+n}$, where 
$$
  R\begin{bmatrix} u \\ v
\end{bmatrix}\coloneqq \begin{bmatrix}
   u \\ R'v
\end{bmatrix} :=
\begin{bmatrix}
   u \\ v- 2\scl{v}{e_0}e_0
\end{bmatrix}
$$
where $R'$ is the reflection
across $\{0\}\times \C^n$ along the vertical vector $e_0$ in $\C^{1+n}=\C\times \C^n$.

\begin{lemma}   \label{lem:Jtrick}
  The operator $D\widetilde B$ on $\R^{1+n}$
  is similar to the direct sum of the operators
  $D_\ta B$ and $D_\no B$ on $\R^{1+n}_+$.
  More precisely, the isomorphism
  $J: L^r(\R^{1+n}; \C^{2+n})\to L^r(\R^{1+n}_+;\C^{2+n})\oplus L^r(\R^{1+n}_+;\C^{2+n})$ defined by
  $$
    (Jf)(t,x)= \frac 12
    \begin{bmatrix} f(t,x)+Rf(-t,x) \\ 
    f(t,x)-Rf(-t,x)
\end{bmatrix},\qquad t>0,
  $$
  intertwines the operators, that is, 
  $$
    \begin{bmatrix} D_\ta B & 0 \\ 0 & D_\no B
\end{bmatrix} J=
J (D\widetilde B)
  $$
  and $J\dom(D\widetilde B)= 
  \dom(D_\ta B)\oplus \dom(D_\no B)$.
\end{lemma}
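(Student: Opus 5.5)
Write $f=\begin{bmatrix} u\\ v\end{bmatrix}$ with $u$ scalar and $v=(v_0,v_\ta)$ a field in $\C^{1+n}$, $v_0$ its $e_0$-component. Then $R'v=(-v_0,v_\ta)$, and $Rf(-t,x)$ is the reflected field. The plan is to verify the similarity by direct computation on each block, in three steps.

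\emph{Step 1 ($J$ is an isomorphism).} $J$ is bounded on $L^r$ because $R$ is an isometric involution and $t\mapsto -t$ is measure preserving. Its inverse is explicit: given $(g_1,g_2)$ on $\R^{1+n}_+$, set
\[
f(t,x)=g_1(t,x)+g_2(t,x),\qquad f(-t,x)=R\bigl(g_1(t,x)-g_2(t,x)\bigr)\qquad (t>0).
\]

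\emph{Step 2 (symmetry of the coefficients and of $D$).} By construction $\widetilde B(-t,x)=R\widetilde B(t,x)R$, so multiplication by $\widetilde B$ commutes with the reflection operator $\mathcal{R}f(t,x):=Rf(-t,x)$. A direct check, using that $t\mapsto -t$ flips the sign of $\partial_t$, gives
\[
\nabla_{t,x}(u(-t,\cdot))=R'\bigl((\nabla_{t,x} u)(-t,\cdot)\bigr),\qquad \Div_{t,x}(R'v(-t,\cdot))=(\Div_{t,x} v)(-t,\cdot).
\]
Hence $D\mathcal{R}=\mathcal{R}D$ in the distributional sense on $\R^{1+n}$. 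It follows that $D\widetilde B$ commutes with $\mathcal{R}$ and preserves the $\pm1$ eigenspaces of $\mathcal{R}$, which are the even and odd parts $\tfrac12(I\pm\mathcal{R})$. Restricting to $t>0$ gives exactly the two components of $J$. So $J D\widetilde B f$ has components $D B$ applied, on $\R^{1+n}_+$ and away from $t=0$, to the components of $Jf$, since $\widetilde B=B$ there.

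\emph{Step 3 (domains and boundary conditions).} This is the main point. Let $f\in\dom(D\widetilde B)$, so that $\widetilde Bf=\begin{bmatrix} a\\ w\end{bmatrix}$ with $a\in W^{1,r}(\R^{1+n})$ and $w\in\dom_r(\Div)$ on $\R^{1+n}$. For the even part $\tfrac12(I+\mathcal{R})\widetilde Bf$:
\begin{itemize}
\item the scalar part is even in $t$, and its restriction lies in $\dom_r(\nabla_{t,x})$ with no boundary condition;
\item the field part is even with normal component odd, so its normal trace vanishes weakly.
\end{itemize}
This last claim is checked by pairing with test functions $\varphi\in C_c^\infty(\overline{\R^{1+n}_+})$, extended evenly, and using the divergence in the full space. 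Thus the restriction lies in $\dom_r(\underline{\Div}_{t,x})$, matching $D_\ta$.

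For the odd part, the scalar is odd, and an odd $W^{1,r}$ function has zero trace, so it lies in $W^{1,r}_0$ and matches $\underline{\nabla}$. The field has even normal component, with no condition, so it lies in $\dom_r(\Div)$. This matches $D_\no$.

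Conversely, given elements of $\dom(D_\ta B)\oplus\dom(D_\no B)$, glue them with $J^{-1}$. The boundary conditions are exactly what ensures the glued scalar lies in $W^{1,r}(\R^{1+n})$ and the glued field has distributional divergence in $L^r$. The argument is the standard one: an even extension of a $W^{1,r}$ function is $W^{1,r}$; an odd extension of a $W^{1,r}_0$ function is $W^{1,r}$; and gluing fields with matching normal traces (here a vanishing normal trace, or a symmetric one) yields no jump term in the divergence. This is verified by integrating by parts against test functions on $\R^{1+n}$, split into $t>0$ and $t<0$.

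I expect this trace and duality bookkeeping, done with the weak definitions of $\underline{\nabla}_{t,x}=-(\Div_{t,x})^*$ and $\underline{\Div}_{t,x}=-(\nabla_{t,x})^*$, to be the only delicate part. Combining Steps 2 and 3 gives the intertwining identity together with $J\dom(D\widetilde B)=\dom(D_\ta B)\oplus\dom(D_\no B)$.
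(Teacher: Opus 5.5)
Your proposal is correct and follows essentially the paper's proof. The paper likewise reduces to $\begin{bmatrix} B&0\\0&B\end{bmatrix}J=J\widetilde B$ and to the chain-rule identities $\nabla_{t,x}(u(-t,x))=R'(\nabla_{t,x} u)(-t,x)$ and $\Div_{t,x}(R'v(-t,x))=(\Div_{t,x} v)(-t,x)$, which you package as commutation of $\widetilde B$ and $D$ with $\mathcal{R}$; it then characterizes the domains by the same even/odd extension facts for $W^{1,r}$, $W^{1,r}_0$, $\dom_r(\Div_{t,x})$ and $\dom_r(\underline{\Div}_{t,x})$, where your weak-formulation check adds detail the paper omits (in the converse, apply the gluing to $(Bg_1,Bg_2)$, using $J^{-1}(Bg_1,Bg_2)=\widetilde B J^{-1}(g_1,g_2)$).
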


\begin{proof}
  One checks that 
  $\begin{bmatrix} B & 0 \\ 0 & B\end{bmatrix} J=J \widetilde B$.
  Therefore it suffices to show that
    $\begin{bmatrix} D_\ta & 0 \\ 0 & D_\no
\end{bmatrix} J=J D
  $.
  To this end, let $f= \begin{bmatrix} u \\ v
\end{bmatrix}$ and
$Jf= \left( \begin{bmatrix} u_+ \\ v_+
\end{bmatrix}, \begin{bmatrix} u_- \\ v_-
\end{bmatrix} \right)$.
Concretely, this means for $t>0$ that
\begin{align*}
    u_\pm(t,x)&= \tfrac 12(u(t,x)\pm u(-t,x)), \\
    v_\pm(t,x)&= \tfrac 12(v(t,x)\pm R'v(-t,x)). 
\end{align*}
The chain rule implies that 
$\nabla(u(-t,x))= R'(\nabla u)(-t,x)$
and $\Div(R'v(-t,x))= (\Div v)(-t,x)$.

For smooth $f$, we see that $u_-(0,x)=0= \scl{v_+(0,x)}{e_0}$.
Conversely, if this holds, then $u$ and $\scl{v}{e_0}$ are continuous across $\R^n$.
In general, one checks that $u\in W^{1,r}(\R^{1+n})$ if and only if $u_-\in W^{1,r}_0(\R^{1+n}_+)$ and
$u_+\in W^{1,r}(\R^{1+n}_+)$, and also
$v\in \dom_r(\Div_{t,x})$ on $\R^{1+n}$ if and only if
$v_-\in \dom_r(\Div_{t,x})$ and
$v_+\in\dom_r(\underline{\Div}_{t,x})$ 
on $\R^{1+n}_+$.
This shows that 
  $$
    \begin{bmatrix} D_\ta & 0 \\ 0 & D_\no
\end{bmatrix} J=
JD,$$
with matching domains.
\end{proof}

Consider now the operators 
$\sgn(D_\ta B)$ and $\sgn(D_\no B)$, using the symbol
$\sgn(\lambda):= \frac{\Re(\lambda)}{|\Re(\lambda)|}$.
These generalize the  
Riesz transforms 
\begin{equation}
\label{eq:RTNeu}
   R_{N,A}\coloneqq \nabla_{t,x}(-\underline{\Div}_{t,x}(A\nabla_{t,x}))^{-1/2}
\end{equation}
and 
\begin{equation}
\label{eq:RTDir}R_{D,A}\coloneqq\underline{\nabla}_{t,x}(-\Div_{t,x}(A\underline{\nabla}_{t,x}))^{-1/2}
\end{equation}
associated with the divergence form equation
\eqref{eq:divformeq}, with Neumann and Dirichlet boundary conditions respectively.
Indeed, for the choice of coefficients
$B= \begin{bmatrix}
    1 & 0 \\ 0 & A
\end{bmatrix}$,
we have
$$
  \sgn(D_\ta B)=
  \begin{bmatrix}
    0 & (-\underline{\Div}_{t,x}(A\nabla_{t,x}))^{-1/2} (-\underline{\Div}_{t,x}A)\\
    \nabla_{t,x}(-\underline{\Div}_{t,x}(A\nabla_{t,x}))^{-1/2} & 0
  \end{bmatrix}
$$
and 
$$
  \sgn(D_\no B)=
  \begin{bmatrix}
    0 & (-\Div_{t,x}(A\underline{\nabla}_{t,x}))^{-1/2} (-\Div_{t,x}A)\\
    \underline{\nabla}_{t,x}(-\Div_{t,x}(A\underline{\nabla}_{t,x}))^{-1/2} & 0
  \end{bmatrix},
$$
containing the Riesz transforms along with their adjoints, upon swapping $A$ and $A^*$.
As in \cite{AT03}, we define the auxiliary
 maximal accretive operator $\tilde L=-\Div_{t,x}(\tilde A{\nabla}_{t,x})$ on $\R^{1+n}$, where $\tilde A(t,x)=A(t,x)$ if $t>0$ and $\tilde A(t,x)=R'A(-t,x)R'$ if $t<0$.

From Theorem~\ref{thm:mainestimate} we obtain the following new tent space estimates for these (generalized) Riesz transforms, where again the extension from compactly supported functions is as in Section~\ref{sec:extension}.
We note that the proof uses off-diagonal estimates with $u_t=u_x$ and $v_t=v_x$, rather than 
$u_t=1$ and $v_t=\infty$.

\begin{theorem}   \label{thm:riesz}
  Let $B(t,x)$ be a bounded, measurable and accretive multiplier on $\R^{1+n}_+$.
  Let $p_\pm= p_\pm(D\widetilde B)$, $r \in (p_-,p_+)$ and  $\beta\in\R$. 
  Let $p,q\in(0,\infty]$ satisfy 
  $$
n\bracb{\min\cbrace{p,q,p_-},p_-} +
    n\bracb{p_+,\max\cbrace{p,q,p_+}} < \brac{p_-,p_+}.
  $$
  Then the generalized Riesz transforms 
  $\mathrm{\sgn}(D_\ta B)$ and $\mathrm{\sgn}(D_\no B)$ of $\C^{2+n}$-valued functions on 
  $\R^{1+n}_+$ are bounded on $T^{p,q,r}_\beta$
   if 
\begin{gather*}
    -\brac{p_-,q}+n\bracb{\min\cbrace{p,q,p_-},p_-}<\beta<\brac{q,p_+}-n\bracb{p_+,\max\cbrace{p,q,p_+}}.
\end{gather*}
The same result holds for the Riesz transforms $R_{N,A}$ and $R_{D,A}$, mapping $\C$-valued functions to $\C^{1+n}$-valued
functions, in \eqref{eq:RTNeu} and \eqref{eq:RTDir} with $p_\pm(D\widetilde B)$ replaced by 
$q_\pm(\tilde L)$.
\end{theorem}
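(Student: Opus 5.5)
The plan is to apply Theorem~\ref{thm:mainestimate} and Theorem~\ref{thm:extension} to $T=\sgn(D_\ta B)$ and $T=\sgn(D_\no B)$. I would show that for every $p_-<p_1<r<p_2<p_+$ each of these is a singular operator of type
$$
(0,1,r,(p_1,p_1),(p_2,p_2),0),
$$
so in elliptic scaling $m=1$, with order $\kappa=0$ and decay $M=0$. With $u_t=u_x=p_1$, $v_t=v_x=p_2$, $\kappa=M=0$ and $n$ replaced by $n+1$ in the $t$-exponents, conditions \eqref{e:main-decay-condition}, \eqref{e:main-beta-lower-bound} and \eqref{e:main-beta-upper-bound} become exactly the stated hypothesis and the stated range of $\beta$. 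Indeed, \eqref{e:main-decay-condition} reads $0>-[p_1,p_2]+n[\min\{p,q,p_1\},p_1]+n[p_2,\max\{p,q,p_2\}]$, and the bounds on $\beta$ are $-[p_1,q]+\dots<\beta<[q,p_2]-\dots$. All inequalities are strict, so letting $p_1\downarrow p_-$ and $p_2\uparrow p_+$ gives the result. Linearity supplies the Lipschitz property \eqref{eq:pointwiseLipschitz}.

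To verify the singular operator property I would use Proposition~\ref{prop:SIO-simple} with $\gamma=0$ and $M=0$. It needs two inputs. The first is $L^r(\R^{1+n}_+)$-boundedness. By Lemma~\ref{lem:Jtrick}, $\sgn(D_\ta B)\oplus\sgn(D_\no B)=J\,\sgn(D\widetilde B)\,J^{-1}$, so this follows from the $H^\infty$-calculus of $D\widetilde B$ on $L^r(\R^{1+n})$ for $r\in(p_-,p_+)$. The second is the full-space estimate \eqref{eq:odeprototype} with $M=0$, which amounts to
$$
\nrm{\ind_F\sgn(D\widetilde B)(f\ind_E)}_{L^{p_2}(\R^{1+n})}\lesssim d(E,F)^{-(n+1)[p_1,p_2]}\nrm{f\ind_E}_{L^{p_1}(\R^{1+n})}
$$
for $d(E,F)>0$. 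Here $d(E,F)$ is the Euclidean distance in $\R^{1+n}$, which is comparable to $d(E_t,F_t)+d(E_x,F_x)$. For configuration (iv), Proposition~\ref{prop:SIO-simple} already performs the H\"older step in $t$ that produces the factor $(\diam(E_t\cup F_t)/d_x)^{[p_1,p_2]}$. Transferring this estimate from $\R^{1+n}$ to $\R^{1+n}_+$ is harmless: $J$ only reflects across $t=0$, and for rectangles in the half-space the reflected copy of $E$ is at least as far from $F$ as $E$ itself.

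For the off-diagonal bound I would use the representation
$$
\sgn(\lambda)=\tfrac{2}{\pi}\int_0^\infty s\lambda(1+s^2\lambda^2)^{-1}\,\tfrac{\ddn s}{s},
$$
which is valid on $\overline{R(D\widetilde B)}$, while $\sgn(D\widetilde B)$ vanishes on $N(D\widetilde B)$. When $[p_1,p_2]<\frac1{n+1}$, Lemma~\ref{lem:DBresODEs}\ref{it:DBODE2} in dimension $n+1$ gives integrand bounds $s^{-(n+1)[p_1,p_2]}e^{-cd/s}$, where $d=d(E,F)$. Integrating,
$$
\int_0^\infty s^{-(n+1)[p_1,p_2]}e^{-cd/s}\,\tfrac{\ddn s}{s}\eqsim d^{-(n+1)[p_1,p_2]},
$$
and the integral converges at $\infty$ because $[p_1,p_2]>0$.

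The main obstacle is a large gap $[p_1,p_2]\ge\frac1{n+1}$. Here the non-injectivity of $D\widetilde B$ blocks direct Sobolev-type estimates. I would first replace the symbol by $\psi_\alpha$-type symbols with large $\alpha$. I would then run Steps~1--3 of the proof of Theorem~\ref{thm:beurling}: a chain of exponents with Lemma~\ref{lem:DBresODEs}\ref{it:DBODE1} for the global $L^{p_1}\to L^{p_2}$ bound, Lemma~\ref{lem:DBresODEs}\ref{item:ODEsN} for separated sets, and then a Calder\'on reproducing formula to bring the decay down. This should give, for $\tau>0$,
$$
\nrm{\ind_F\, \tau D\widetilde B(1+\tau^2(D\widetilde B)^2)^{-1}(f\ind_E)}_{L^{p_2}}\lesssim \tau^{-(n+1)[p_1,p_2]}(1+d/\tau)^{-\varepsilon-(n+1)[p_1,p_2]}\nrm{f\ind_E}_{L^{p_1}}
$$
for some $\varepsilon>0$. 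That is enough for the $s$-integral to converge and to produce $d^{-(n+1)[p_1,p_2]}$.

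For $R_{N,A}$ and $R_{D,A}$ I would argue the same way with $\widetilde L$ in place of $D\widetilde B$. Restricting to even and odd reflections in $t$ gives the Neumann and Dirichlet operators. I would write $\nabla\widetilde L^{-1/2}=c\int_0^\infty s\nabla(1+s^2\widetilde L)^{-1}\,\frac{\ddn s}{s}$ and use the $L^{p_1}\to L^{p_2}$ off-diagonal estimates for $s\nabla(1+s^2\widetilde L)^{-1}$ when $q_-(\widetilde L)<p_1<p_2<q_+(\widetilde L)$, available from \cite{Auscher-Egert2023-book}. The rest of the argument is identical.
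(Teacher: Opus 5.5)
Most of your reduction is sound and matches the paper: type $(0,1,r,(p_1,p_1),(p_2,p_2),0)$ via Proposition~\ref{prop:SIO-simple}, $L^r$-boundedness and transfer to $\R^{1+n}_+$ through Lemma~\ref{lem:Jtrick}, and the translation of \eqref{e:main-decay-condition}, \eqref{e:main-beta-lower-bound}, \eqref{e:main-beta-upper-bound} into the stated conditions. The case $[p_1,p_2]<\frac1{n+1}$, via $\sgn(\lambda)=\frac2\pi\int_0^\infty s\lambda(1+s^2\lambda^2)^{-1}\frac{\ddn s}{s}$ and Lemma~\ref{lem:DBresODEs}\ref{it:DBODE2}, is also fine.

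The gap is the case $[p_1,p_2]\ge\frac1{n+1}$. This case is needed in general to reach the stated ranges; for example, $B=I$ gives $(p_-,p_+)=(1,\infty)$. There the estimate you aim for,
\[
\nrm{\ind_F\,\tau D\widetilde B(1+\tau^2(D\widetilde B)^2)^{-1}(f\ind_E)}_{L^{p_2}}\lesssim \tau^{-(n+1)[p_1,p_2]}(1+d/\tau)^{-\varepsilon-(n+1)[p_1,p_2]}\nrm{f\ind_E}_{L^{p_1}},
\]
is false. For $d\lesssim\tau$, in particular $E=F$, it asserts $L^{p_1}\to L^{p_2}$ boundedness of an operator that gains only one derivative. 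For $B=I$ and $f=(u,0)$ it produces $\tau\nabla(1-\tau^2\Delta)^{-1}u$, whose kernel behaves like $|z|^{-n}$ near the diagonal in $\R^{1+n}$. That operator is not $L^{p_1}\to L^{p_2}$ bounded once $(n+1)[p_1,p_2]>1$. Even for separated sets with $d\ll\tau$, its norm is of order $\tau^{-1}d^{1-(n+1)[p_1,p_2]}$, which blows up relative to $\tau^{-(n+1)[p_1,p_2]}$. This is exactly why Lemma~\ref{lem:DBresODEs}\ref{it:DBODE2} is restricted to small gaps and \ref{item:ODEsN} carries the factor $(|t|/d(E,F))^{(2N-1)\theta}$. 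Nor can Step~3 of Theorem~\ref{thm:beurling} be imitated: its reproducing formula rests on the semigroup law $e^{-s\Lambda}e^{-\tau\Lambda}=e^{-(s+\tau)\Lambda}$, and nothing analogous lowers the decay to a symbol that decays only like $\lambda^{-1}$ at infinity. Your Riesz transform argument has the same flaw, since $s\nabla(1+s^2\widetilde L)^{-1}$ is again only one-derivative smoothing.

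Your first instinct, using symbols of large order, is the right one; you just must not return to the resolvent. The paper writes
\[
\sgn(D\widetilde B)=c_N\int_0^\infty\psi(\tau D\widetilde B)\tfrac{\ddn\tau}{\tau},\qquad \psi(\lambda)=\lambda^N(1+\lambda^2)^{-N},
\]
with $N$ odd and $N>(n+1)[p_1,p_2]$. Then $\psi(\tau D\widetilde B)$ is the $N$-th power of $\tau D\widetilde B(1+\tau^2(D\widetilde B)^2)^{-1}$. Take a chain $p_1=q_0<\dots<q_N=p_2$ with $[q_{j-1},q_j]<\frac1{n+1}$. Each factor satisfies the exponential $L^{q_{j-1}}\to L^{q_j}$ bound of Lemma~\ref{lem:DBresODEs}\ref{it:DBODE2} in $\R^{1+n}$. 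Composing them (Auscher--Egert, Lemma~4.6) gives $\tau^{-(n+1)[p_1,p_2]}e^{-cd/\tau}$, and your $\tau$-integral then yields $d^{-(n+1)[p_1,p_2]}$.

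Equivalently, you could use $\sgn(\lambda)=\Gamma(\alpha)^{-1}\int_0^\infty(\psi_\alpha(\tau\lambda)-\psi_\alpha(-\tau\lambda))\frac{\ddn\tau}{\tau}$ with $\alpha$ large, relying only on Steps~1--2 of Theorem~\ref{thm:beurling}. For $R_{N,A}$ and $R_{D,A}$, use for instance $\nabla\widetilde L^{-1/2}=\pi^{-1/2}\int_0^\infty\sqrt{s}\,\nabla e^{-s\widetilde L}\frac{\ddn s}{s}$. Its integrand has Gaussian $L^{p_1}\to L^{p_2}$ off-diagonal bounds on the whole range $q_-(\widetilde L)<p_1\le p_2<q_+(\widetilde L)$.
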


\begin{proof}
    Let $p_-(D\widetilde B)<p_1<r<p_2<p_+(D\widetilde B)$ and note that the operator $D\widetilde B$ on $\R^{1+n}$ from 
  Lemma~\ref{lem:Jtrick} has a bounded $H^\infty$ functional calculus in $L^r(\R^{1+n};\C^{2+n})$.
    Consider first the auxiliary generalized Riesz transform $S=\sgn(D\widetilde B)$, which is bounded on $L^r(\R^{1+n};\C^{2+n})$.
    To prove off-diagonal estimates for $S$, by functional calculus we have
    $$
      \sgn(D\widetilde B)= c_N\int_0^\infty
      \psi(\tau D\widetilde B) \ \frac{\ddn \tau}\tau,
    $$
    where $\psi(\lambda)= \lambda^N/(1+\lambda^2)^N$ for odd $N\in \N$  and
    $0<c_N<\infty$.
    From Lemma~\ref{lem:DBresODEs} in $\R^{1+n}$ and 
    \cite[Lemma 4.6]{Auscher-Egert2023-book}, we have
    $$
\|\ind_{\mbs{\widetilde F}} \psi(\tau D\widetilde B) (f \ind_{\mbs{\widetilde E}} )\|_{L^{p_2}_{t,x}}\le 
C \tau^{-(n+1)[p_1,p_2]} e^{- c d(\mbs{\widetilde E} , \mbs{\widetilde F})/\tau}
\|\ind_{\mbs{\widetilde E}}f \|_{L^{p_1}_{t,x}}
$$
for subsets $\mbs{\widetilde E},\mbs{\widetilde F}\subset \R^{1+n}$
and some $c>0$, and $N>(n+1)[p_1,p_2]$
which we fix.
We get
\begin{align*}
\|\ind_{\mbs{\widetilde F}} \sgn(D\widetilde B) ( f\ind_{\mbs{\widetilde E}} )\|_{L^{p_2}_{t,x}}
&\lesssim \int_0^\infty  \tau^{-(n+1)[p_1,p_2]} 
e^{- c d(\mbs{\widetilde E} , \mbs{\widetilde F})/\tau}
\|f \ind_{\mbs{\widetilde E}}\|_{L^{p_1}_{t,x}}
\frac{\dd \tau}\tau \\
&\lesssim d(\mbs{\widetilde E},\mbs{\widetilde F})^{-(n+1)[p_1,p_2]}
\|f \ind_{\mbs{\widetilde E}}\|_{L^{p_1}_{t,x}}.
\end{align*}
By Lemma~\ref{lem:Jtrick}, the same off-diagonal
estimates for subsets $\mbs{E},\mbs{F}$ of $\R^{1+n}_+$ hold for 
$\sgn(D_\ta B)$ and $\sgn(D_\no B)$,
since
  $$
    \begin{bmatrix} \sgn(D_\ta B) & 0 \\ 0 & \sgn(D_\no B)
\end{bmatrix} J=
J \,\sgn(D\widetilde B),
  $$
and noting that  we have 
$d(\mbs{\widetilde E}, \mbs{\widetilde F})= d(\mbs{E},\mbs{F})$,
where $\mbs{\widetilde E}= \mbs{E}\cup R'\mbs{E}$ and
$\mbs{\widetilde F}= \mbs{F}\cup R'\mbs{F}$,
acting pointwise with the reflection $R'$ on the sets. 

We have therefore shown \eqref{eq:odeprototype} with $M=0$, $m=1$, $\kappa=0$,  $(u_t,u_x)=(p_1,p_1)$ and $(v_t,v_x)=(p_2,p_2)$.  Proposition~\ref{prop:SIO-simple} 
applies to show  that $\sgn(D_\ta B)$ and $\sgn(D_\no B)$ are singular operators of type $(0,1,r,(p_1,p_1),(p_2,p_2),0)$ in the sense of Definition~\ref{def:ODE}.
Their tent space bounds now follow from Theorem~\ref{thm:mainestimate},
if $p_1,p_2$ are such that
\begin{align*}
    n\bracb{\min\cbrace{p,q,p_1},p_1} +
    n\bracb{p_2,\max\cbrace{p,q,p_2}} < \brac{p_1,p_2}
\end{align*}
and
\begin{align*}
-\brac{p_1,q}+n\bracb{\min\cbrace{p,q,p_1},p_1}<\beta<\brac{q,p_2}-n\bracb{p_2,\max\cbrace{p,q,p_2}}.
\end{align*}
Since $p_1>p_-(D\widetilde B)$ and $p_2<p_+(D\widetilde B)$ are arbitrary,
the result for $\mathrm{\sgn}(D_\ta B)$ and $\mathrm{\sgn}(D_\no B)$ follows.

The result for the Riesz transforms 
$R_{N,A}$ and $R_{D,A}$
is proved analogously, replacing $p_\pm(D\widetilde B)$ by 
$q_\pm(\tilde L)$, the endpoints of the maximal interval on which the Riesz transform $\nabla \tilde L^{-1/2}$ is $L^p$-bounded. See \cite[Theorem~7.3]{Auscher-Egert2023-book}.
\end{proof}

\section{Applications to parabolic PDEs}
\label{sec:parabolic}

In this section, we obtain tent space boundedness of the Duhamel and Lions operators related to solving the non-autonomous parabolic Cauchy problems 
\begin{equation}
    \label{e:NaPC}
    \tag{NaPC}
    \begin{cases}
    \partial_t u - \Div_x(A(t,x)\nabla_x u) = f + \Div_x F, \quad (t,x) \in (0,\infty) \times \R^n \\
    u(0) = u_0.
\end{cases}
\end{equation}

We first ignore the initial data and work on $\R^{1+n}$. Assume that the coefficient matrix $A \in L^\infty(\R^{1+n};\mat_n(\C))$ is bounded, measurable, and accretive. We do \emph{not} impose \emph{any} assumptions on the regularity or symmetry of $A$. Our methods also apply to systems of equations, with modifications reflecting the chosen notion of accretivity. We restrict to scalar equations for simplicity. We introduce the relevant function (distribution) spaces, derive the invertibility of the parabolic operator $\partial_t  - \Div_x (A(t,x)\nabla_x)$ via Sneiberg's lemma, and deduce some bounds on mixed-norm Lebesgue spaces $\Ltx{q}{p}$, as well as some local self-improvement properties of weak solutions.

The Duhamel and Lions operators arise when restricting to the half-space with $t>0$. Using the above estimates (with $A$ equal to the identity matrix $\bfI$ when $t<0$) allows us to show these operators fall into our new framework of singular operators. We finish with applications to the Cauchy problem.

\subsection{Function spaces}

Let $p,q\in (1,\infty)$. We want to work with homogeneous distributional spaces corresponding to their inhomogeneous versions ${\cV}^{q,p}\coloneq L^{\vphantom{1,p}q}_t {H}^{1,p}_{\vphantom{t}x} \cap {H}^{1/2,q}_tL^{\vphantom{1/2}p}_{\vphantom{t}x}$, which are adapted to the parabolic setting. The situation for inhomogeneous spaces was described in \cite[Section 6.1]{AEN20} and is well-known from e.g. \cite{Liz70}. But for the homogeneous counterparts, the naive expression $L^{\vphantom{1,p}q}_t \dot{H}^{1,p}_{\vphantom{t}x} \cap \dot{H}^{1/2,q}_t L^{\vphantom{1/2}p}_{\vphantom{t}x}$ cannot simply be taken as a definition, since they are typically defined in the space of tempered distributions modulo polynomials, while we want a space of distributions for the study of the PDE. As we have not seen the needed spaces explicitly introduced in the literature, we provide them here. Inspired by the approach in \cite{AE23} when $q=p=2$, we define our spaces via Littlewood-Paley theory, using the Fourier transform acting on tempered distributions $\cS'(\R^{1+n})$ with $(\tau,\xi)$ as the variables dual to $(t,x)$. With this definition at hand, it will not be difficult to transpose results from earlier literature on homogeneous anisotropic, mixed-norm Triebel--Lizorkin spaces, see e.g. \cite{CGN19} and \cite{Li21}. We require a particular \(\mathcal S'\)-realization of these known homogeneous anisotropic mixed-norm spaces, and have not found this realization stated in precisely the form needed here. Therefore, we will provide limited proofs.  

We first introduce a parabolic Littlewood--Paley decomposition of the identity. Define the parabolic quasi-norm 
\[ |(\tau, \xi)|_{\mathrm{par}}:=\left( \abs{\tau}^{2} + \abs{\xi}^4 \right)^{1/4} \eqsim |\tau|^{1/2} + |\xi|, \qquad (\tau,\xi) \in \R^{1+n}, \]
which is smooth away from the origin. Equip $\R^{1+n}$ with the induced parabolic quasi-metric. 

Let $\psi \in \cS(\R^{1+n})$ be an arbitrary real-valued function with Fourier transform supported in $\{(\tau,\xi)\in \R^{1+n} : 1/2 < |(\tau,\xi)|_{\mathrm{par}} < 4\}$ and $\hat \psi(\tau,\xi)>0$ when $1 \le |(\tau,\xi)|_{\mathrm{par}} \le 2$. For any $j \in \Z$, define $\psi_{j}(t,x) := (2^j)^{n+2} \psi(2^{2j} t,2^j x)$, and hence, $\widehat{{\psi}_j}(\tau,\xi) = \hat{\psi}(2^{-2j}\tau, 2^{-j}\xi)$. Using the Mikhlin multiplier theorem in anisotropic vector-valued mixed-norm Lebesgue spaces (see \cite[Theorem 7.1]{L21} with $X=\C$ and $Y=\ell^2(\Z)$ and vice versa), we have the characterisation 
\[\|f\|_{\Ltx{q}{p}} \eqsim \nrmb{\|(\psi_j\ast f)_{j\in \Z}\|_{\ell^2(\Z)} }_{\Ltx{q}{p}} . \]
Moreover, if $f\in \Ltx{q}{p}$, then the series $\sum_{j\in \Z} \psi_j \ast f$ converges to $f$ in $\cS'(\R^{1+n})$ when assuming the function $\psi$ is normalised such that 
\begin{equation*}
    \sum_{j = -\infty}^\infty \widehat{{\psi}_j}(\tau,\xi)=1, \qquad (\tau,\xi)\in \R^{1+n} \setminus \{(0,0)\}.
\end{equation*}

For $f\in \cS'(\R^{1+n})$, define the semi-norm
\[ \|f\|_{\dot\cV^{q,p}} := \nrmb{\|(2^{j}\psi_j\ast f)_{j\in \Z}\|_{\ell^2(\Z)} }_{\Ltx{q}{p}}. \]
For any $f \in \cS'(\R^{1+n})$, the series $\sum_{j \ge 0} \psi_j \ast f$ converges in $\cS'(\R^{1+n})$. For the series with $j < 0$, if $\|f\|_{\dot\cV^{q,p}} < \infty$, then one can show that
\begin{itemize}
    \item If $2/q+n/p>1$, then $\sum_{j< 0} \psi_j \ast f$ converges in $\cS'(\R^{1+n})$. 
    \item If $2/q+n/p \le 1$, then $\sum_{j< 0} \psi_j \ast f$ converges in $\cS'_0(\R^{1+n})$, the dual of the space $\cS_0(\R^{1+n})$ of Schwartz functions with Fourier transform vanishing at the origin.
\end{itemize}
Define the semi-normed space
\begin{equation*}
    \dot\cV^{q,p} \coloneq \left\{ f \in \cS'(\R^{1+n}) : \|f\|_{\dot\cV^{q,p}} < \infty \text{ and } \sum_{j \in \Z} \psi_j \ast f = f \right\},
\end{equation*}
equipped with the semi-norm $\|\cdot\|_{\dot\cV^{q,p}}$, where the equality for the series in the definition is understood from the above convergence. When $2/q+n/p>1$, it becomes a norm. The space $\dot\cV^{q,p}$ is complete. The Schwartz space $\cS(\R^{1+n})$ is contained in $\dot\cV^{q,p}$, and its subspace $\cZ(\R^{1+n})$ of functions with compactly supported Fourier transforms in $\R^{1+n} \setminus \{(0,0)\}$ is dense in $\dot\cV^{q,p}$. Any other choice of $\psi$ (not necessarily normalised) gives an equivalent norm. The dual space $(\dot\cV^{q,p})'$ identifies with a subspace of $\cS'(\R^{1+n})$ characterised by 
\[ \left\| \| (2^{-j}\psi_j\ast f)_{j \in \Z} \|_{\ell^2(\Z)} \right\|_{\Ltx{q'}{p'}}<\infty, \]
and $\cZ(\R^{1+n})$ is dense in $(\dot\cV^{q,p})'$. The proofs follow the classical arguments for Lebesgue spaces, with minor modifications for the parabolic quasi-norm. 

For completeness, we give the definition of the inhomogeneous space $\cV^{q,p}$. Let $\varphi$ be a real-valued Schwartz function with Fourier transform supported in $\{(\tau,\xi)\in \R^{1+n}:|(\tau,\xi)|_{\mathrm{par}} < 4\}$ and $\hat \varphi(\tau,\xi)>0$ when $|(\tau,\xi)|_{\mathrm{par}} \le 2$. Define the inhomogeneous $\cV^{q,p}$ norm by 
\[ \|f\|_{\cV^{q,p}} := \|\varphi\ast f \|_{\Ltx{q}{p}} + \nrmb{\|(2^{j}\psi_j\ast f)_{j\in \N}\|_{\ell^2(\N)} }_{\Ltx{q}{p}}. \]
If $\varphi$ and $\psi$ are normalized by
\begin{equation*}
    \widehat{\varphi}(\tau,\xi) + \sum_{j = 0}^\infty \widehat{{\psi}_j}(\tau,\xi)=1, \qquad (\tau,\xi)\in \R^{1+n},
\end{equation*}
then we have $f=\varphi\ast f+ \sum_{j\ge 0}\psi_j\ast f$ for all $f\in \cS'(\R^{1+n})$. Define the inhomogeneous space
\begin{equation*}
    \cV^{q,p} \coloneq \left\{ f\in \cS'(\R^{1+n}) :  \|f\|_{\cV^{q,p}}<\infty \right\}.
\end{equation*}
Note that $\cS(\R^{1+n})$ is dense in $\cV^{q,p}$.

The following proposition summarizes two basic properties of $\dot\cV^{q,p}$ and $\cV^{q,p}$. 
For $f\in \cS(\R^{1+n})$, define $D_t^{1/2}f \in \cS'(\R^{1+n})$ by the Fourier multiplication $\widehat{D_t^{1/2} f}(\tau,\xi) := |\tau|^{1/2} \hatf(\tau,\xi)$. The next result shows it will extend to $f \in \dot\cV^{q,p}$ by density.

\begin{prop}[Basic properties of $\dot\cV^{q,p}$ and $\cV^{q,p}$]
    \label{prop:Vrho,p}
    Let $q,p \in (1,\infty)$.
    \begin{enumerate}[label=\normalfont(\roman*)]
        \item \label{item:Vrho,p-eq-norms}
        {\normalfont (Equivalent norms)}
        It holds that
        \begin{align*}
            \|f\|_{\dot\cV^{q,p}} &\eqsim \|\nabla_x f\|_{\Ltx{q}{p}} + \|D_t^{1/2}f\|_{\Ltx{q}{p}}, && f \in \dot\cV^{q,p}, \\
            \|f\|_{\cV^{q,p}} &\eqsim \|f\|_{\Ltx{q}{p}} + \|\nabla_x f\|_{\Ltx{q}{p}} + \|D_t^{1/2}f\|_{\Ltx{q}{p}}, && f \in \cV^{q,p}.
        \end{align*}
        
        \item \label{item:Vrho,p-Sobolev-embed}
        {\normalfont (Sobolev embedding)}
        The space $\dot\cV^{q,p}$ embeds into $\Ltx{\ovq}{\ovp}$ if
        \begin{equation}
            \label{eq:parabolicembedding}
            2[q,\ovq]+n[p,\ovp]=1, \quad (\ovq,\ovp)\in [q,\infty) \times [p,\infty). 
        \end{equation}
        The space $\cV^{q,p}$ embeds into $\Ltx{\ovq}{\ovp}$ if
        \begin{equation}
            \label{eq:parabolicembeddinginhomogeneous}
            2[q,\ovq]+n[p,\ovp]\le 1,  \quad (\ovq,\ovp)\in [q,\infty)\times [p,\infty). 
        \end{equation}
        Moreover, assume $q>2$. Then the space $\cV^{q,p}$ embeds into $\Ltx{\infty}{\ovp}$ if 
        \begin{equation}\label{eq:parabolicembeddinginfty}
            \begin{cases}
            \ovp\in  [p,\infty]   &\text{if}\ \  2[q,\infty]+n[p,\infty] < 1 \\
            \ovp\in  [p,\infty)   &\text{if}\ \ 2[q,\infty]+n[p,\infty] = 1 \\
            \ovp\in  [p,\frac{n}{2[q,\infty]+n[p,\infty]-1})   &\text{if}\ \  2[q,\infty]+n[p,\infty] > 1.
        \end{cases}
        \end{equation}
    \end{enumerate}
    All the implicit constants depend only on $n, q, p, \ovq, \ovp$.
\end{prop}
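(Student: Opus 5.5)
The plan is to reduce everything to known results for homogeneous anisotropic mixed-norm Triebel--Lizorkin spaces and to the Mikhlin multiplier theorem of \cite[Theorem 7.1]{L21}, which has already been used above for the square function characterisation of $\Ltx{q}{p}$.

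\textbf{Step 1: the homogeneous norm equivalence in \ref{item:Vrho,p-eq-norms}.} Work first on the dense class $\cZ(\R^{1+n})$. Write $m_0(\tau,\xi):=|(\tau,\xi)|_{\mathrm{par}}$; it is smooth away from the origin and parabolically homogeneous of degree $1$.

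For the upper bound, the multipliers $i\xi_k/m_0$ and $|\tau|^{1/2}/m_0$ are parabolically homogeneous of degree $0$. Although $|\tau|^{1/2}$ is not smooth at $\tau=0$, the quotient $|\tau|^{1/2}/m_0$ satisfies the mixed (product-type) Mikhlin conditions $|\partial_\tau^a\partial_\xi^b m|\lesssim|\tau|^{-a}|\xi|^{-|b|}$ permitted in \cite{L21}. It is important to use the Marcinkiewicz/Lizorkin form of the theorem here rather than the pure anisotropic Mikhlin form.

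Next, apply the vector-valued multiplier theorem with $Y=\ell^2(\Z)$ to the family $\{2^j m(2^{-2j}\tau,2^{-j}\xi)\hat\psi(2^{-2j}\tau,2^{-j}\xi)\}_j$. The Littlewood--Paley square function of $\nabla_x f$ and $D_t^{1/2}f$ is then dominated by that of $\{2^j\tilde\psi_j\ast f\}$. Here $\tilde\psi$ is a different annular function, which gives an equivalent seminorm. Combining this with the $\ell^2$ square-function characterisation of $\Ltx{q}{p}$ gives $\|\nabla_x f\|+\|D_t^{1/2}f\|\lesssim\|f\|_{\dot\cV^{q,p}}$.

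For the lower bound, write
\[
m_0\hat f=\Bigl(\sum_k \frac{-i\xi_k\,m_0}{|\xi|^2+|\tau|}\,\widehat{\partial_k f}+\frac{|\tau|^{1/2}m_0}{|\xi|^2+|\tau|}\,\widehat{D_t^{1/2}f}\Bigr).
\]
The multipliers appearing here are again of Lizorkin type and homogeneous of degree $0$. Localising with $\psi_j$ then gives the reverse inequality.

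Finally, extend from $\cZ$ to all of $\dot\cV^{q,p}$ by density. This also defines $D_t^{1/2}$ on $\dot\cV^{q,p}$.

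\textbf{Step 2: the inhomogeneous norm equivalence.} The low-frequency piece $\varphi\ast f$ is controlled by $\|f\|_{\Ltx{q}{p}}$ through Young's inequality. Conversely, $\|f\|_{\Ltx{q}{p}}\lesssim\|\varphi\ast f\|+\|\sum_{j\ge0}\psi_j\ast f\|$. The high-frequency part $\sum_{j\ge0}\psi_j\ast f$ is handled as in Step 1, since on high frequencies $2^j\gtrsim1$ and $m_0\gtrsim 1$ there.

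\textbf{Step 3: the Sobolev embeddings in \ref{item:Vrho,p-Sobolev-embed}.} Write $f=I_1 g$, where $I_1$ is the parabolic Riesz potential with symbol $m_0^{-1}$ and $\|g\|_{\Ltx{q}{p}}\eqsim\|f\|_{\dot\cV^{q,p}}$. Its kernel is bounded by $(|t|^{1/2}+|x|)^{-(n+2)+1}$.

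First take $|x|$-averages: Hardy--Littlewood--Sobolev in $x$ gives, for fixed $t$, a bound by $|t-s|^{-1/2+ n[p,\ovp]/2}$ times $\|g(s)\|_{L^p}$ when $\ovp>p$. Then apply one-dimensional Hardy--Littlewood--Sobolev in $t$, which requires exactly the scaling relation \eqref{eq:parabolicembedding}. The endpoint cases $\ovp=p$ or $\ovq=q$ reduce to a single one-dimensional fractional integration.

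For the inhomogeneous embeddings \eqref{eq:parabolicembeddinginhomogeneous}, use the Bessel-type potential instead, which has exponential decay. This makes all the subcritical cases follow from Young's inequality.

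The $L^\infty_t$ embedding \eqref{eq:parabolicembeddinginfty} for $q>2$ comes from Hölder in $s$ against the kernel $\|K(t-s)\|_{L^{p}\to L^{\ovp}}$. This kernel is $|t-s|^{-1/2-n[p,\ovp]/2}$ near $0$ and decays rapidly at infinity, so it lies in $L^{q'}_s$ precisely under the stated strict or non-strict conditions. The borderline $\ovp=\infty$ is excluded in the equality case, as usual.

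\textbf{Main obstacle.} The hardest point is the mixed-smoothness multiplier $|\tau|^{1/2}/m_0$, which is not smooth on $\{\tau=0\}$. I will rely on the Lizorkin-type vector-valued theorem in \cite{L21}, which allows such product-type symbols. If that theorem fails to apply, the fallback is to treat $D_t^{1/2}$ as a one-dimensional multiplier in $t$ acting on $L^p_x$-valued functions; since $L^p$ is UMD, this is legitimate. One then combines it with the parabolic Littlewood--Paley projections.
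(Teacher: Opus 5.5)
\textbf{Part (i): same route as the paper.} Your treatment of part (i), and of its inhomogeneous version, follows the paper: degree-zero multipliers of Marcinkiewicz--Lizorkin type, which are allowed to be non-smooth on $\{\tau=0\}$. Your $\ell^2$-valued bookkeeping is equivalent to the paper's device. The paper sets $\hat g=|(\tau,\xi)|_{\mathrm{par}}\hat f$ and uses the annular function $\hat{\tilde\psi}=|(\tau,\xi)|_{\mathrm{par}}^{-1}\hat\psi$, so that $\|f\|_{\dot\cV^{q,p}}\eqsim\|g\|_{\Ltx{q}{p}}$.

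\textbf{Part (ii): a genuinely different route.} The paper stays on the Fourier side. With $\theta=2[q,\overline q]$ it factors
\[
\hat f=\frac{|\tau|^{\theta/2}|\xi|^{1-\theta}}{|(\tau,\xi)|_{\mathrm{par}}}\,|\tau|^{-\theta/2}|\xi|^{-1+\theta}\hat g,
\]
that is, a Marcinkiewicz multiplier followed by decoupled Riesz potentials of order $\theta/2$ in $t$ and $1-\theta$ in $x$. For the inhomogeneous embeddings it only says they follow similarly and points to the literature.

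You instead use only the size bound $(|t|^{1/2}+|x|)^{-(n+1)}$ of the parabolic Riesz kernel, together with Minkowski, Young in $x$ at fixed time lag, and one-dimensional HLS in $t$. This is more elementary and needs no multiplier theorem for the non-smooth symbol above. With a rapidly decaying Bessel-type kernel, the same computation proves the inhomogeneous and $L^\infty_t$ embeddings explicitly. Indeed, $|t|^{-\frac12-\frac n2[p,\overline p]}$ times the decay factor lies in $L^{q'}(\R)$ iff $2[q,\infty]+n[p,\overline p]<1$. This reproduces exactly the three cases of \eqref{eq:parabolicembeddinginfty} and shows why $q>2$ is needed. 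The paper's factorization, in turn, treats all admissible $(\overline q,\overline p)$ uniformly.

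\textbf{Points to fix.}
\begin{enumerate}
\item The fixed-time operator norm is $\|K(t,\cdot)\|_{L^\rho}\eqsim|t|^{-\frac12-\frac n2[p,\overline p]}$, where $1/\rho=1-[p,\overline p]$; this step is Young, not HLS, in $x$. With the sign you wrote, HLS in $t$ would give $2[q,\overline q]-n[p,\overline p]=1$ instead of \eqref{eq:parabolicembedding}.
\item The genuine endpoint is $\overline q=q$. There the time exponent equals $1$ and one-dimensional HLS fails. You need the spatial Sobolev inequality $\|f(t)\|_{L^{\overline p}}\lesssim\|\nabla_x f(t)\|_{L^p}$ combined with part (i). This is an $n$-dimensional, not a one-dimensional, fractional integration; it is the case $\theta=0$ in the paper. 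By contrast, $\overline p=p$ needs no separate treatment: it is simply $\rho=1$ with time exponent $\frac12$.
\item Young only yields the strict inequality in \eqref{eq:parabolicembeddinginhomogeneous}. The equality case follows from the homogeneous embedding, since $\|f\|_{\dot\cV^{q,p}}\lesssim\|f\|_{\cV^{q,p}}$ by part (i).
\end{enumerate}
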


To illustrate the ranges of $(1/q,1/p)$ and $(1/\ovq,1/\ovp)$ in Proposition \ref{prop:Vrho,p}\ref{item:Vrho,p-Sobolev-embed}, we give graphic representations in Figure \ref{fig:rho-p-diagram-parabolic}
in a specific case $\frac{2}{q}+\frac{n}{p}=C>1$ and $q>2$. Given a point $(1/q,1/p)$, the orange shaded trapezoid, including its boundary but not the part on the $1/p$-axis, is the range of $(1/\ovq,1/\ovp)$ in \eqref{eq:parabolicembeddinginhomogeneous}, its blue boundary part the range of $(1/\ovq,1/\ovp)$ in \eqref{eq:parabolicembedding}, and its boundary on the $1/p$-axis the range of $1/\ovp$ in \eqref{eq:parabolicembeddinginfty}. The intersection of the blue and black boundary lines is the point $(0, \frac{2[q,\infty]+n[p,\infty]-1}{n})$.
\begin{figure}[ht]
    \centering
    \includegraphics[width=0.4\linewidth]{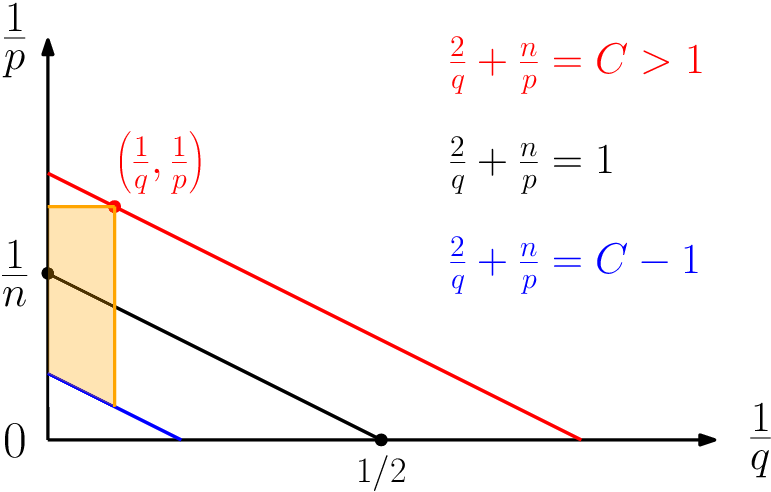}
    \caption{Illustration for the ranges of $(1/q,1/p)$, $(1/\ovq,1/\ovp)$, and $1/\ovp$}
    \label{fig:rho-p-diagram-parabolic}
\end{figure}

\begin{proof}[Proof of Proposition \ref{prop:Vrho,p}]
    We only present the proof in the homogeneous case. The inhomogeneous counterparts follow similarly. The reader can also refer to \cite[Theorem 1.1]{Li21} and \cite[Theorem 7]{Johnsen-Sickel2007-mixed-Sobolev-embedding} for the proof. By density, it suffices to prove these two statements for $f \in \cZ(\R^{1+n})$. Define $g \in \cZ(\R^{1+n})$ by the Fourier multiplication $\hatg(\tau,\xi) := |(\tau,\xi)|_{\mathrm{par}} \hatf(\tau,\xi)$. Define $\tilde \psi \in \cZ(\R^{1+n})$ by $\hat{\tilde\psi}(\tau,\xi) := |(\tau,\xi)|_{\mathrm{par}}^{-1} \hat\psi(\tau,\xi)$. Then we infer that
    \[ \|g\|_{\Ltx{q}{p}} \eqsim \nrmb{ \|(\tilde\psi_j\ast g)_{j\in \Z}\|_{\ell^2(\Z)} }_{\Ltx{q}{p}} = \nrmb{ \|(2^j\psi_j\ast f)_{j\in \Z} \|_{\ell^2(\Z)} }_{\Ltx{q}{p}} = \|f\|_{\dot\cV^{q,p}}. \]

    Let us prove \ref{item:Vrho,p-eq-norms}. Note that
    \[ (|\tau|^{1/2} + |\xi|) \hatf (\tau,\xi) = \frac{|\tau|^{1/2}+|\xi|}{|(\tau,\xi)|_{\mathrm{par}}} |(\tau,\xi)|_{\mathrm{par}} \hatf(\tau,\xi) = \frac{|\tau|^{1/2}+|\xi|}{|(\tau,\xi)|_{\mathrm{par}}} \hatg(\tau,\xi). \]
    The symbol $m(\tau,\xi) := \frac{|\tau|^{1/2}+|\xi|}{|(\tau,\xi)|_{\mathrm{par}}}$ satisfies Mikhlin's condition $| |\tau|^{\alpha} |\xi|^{|\beta|} \partial^{\alpha}_\tau \partial_\xi^\beta m(\tau,\xi) | \le C(\alpha,\beta) < \infty$ for all $\alpha \in \N$ and multi-indices $\beta \in \N^n$. Hence, the Mikhlin (or Marcinkiewicz) multiplier theorem in mixed-norm Lebesgue spaces (see \cite{Liz70}) shows that
    \[ \|D_t^{1/2} f\|_{\Ltx{q}{p}} + \|\nabla_x f\|_{\Ltx{q}{p}} \lesssim \|g\|_{\Ltx{q}{p}} \eqsim \|f\|_{\dot\cV^{q,p}}. \]
    Applying the same reasoning to $1/m$ gives us the reverse inequality. This proves \ref{item:Vrho,p-eq-norms}.

    Next, we prove \ref{item:Vrho,p-Sobolev-embed}. Observe that if $2[q,\infty]+n[p,\infty] \le 1$, then there do not exist such $\ovq$ and $\ovp$ satisfying \eqref{eq:parabolicembedding}. So assume $2[q,\infty]+n[p,\infty] > 1$. In this case, $\dot\cV^{q,p}$ embeds into $\cS'(\R^{1+n})$. Pick $\theta=2[q,\ovq]$, so that $\theta \in [0,2/q)$ and $1-\theta= n[p,\ovp]$ upon \eqref{eq:parabolicembedding}. Write
    \[ \hat f(\tau, \xi) = \frac{|\tau|^{\theta/2}|\xi|^{1-\theta}}{| (\tau,\xi) |_{\mathrm{par}}} |\tau|^{-\theta/2}|\xi|^{-1+\theta} \hat g(\tau, \xi). \]
    The symbol $\frac{|\tau|^{\theta/2}|\xi|^{1-\theta}}{| (\tau,\xi) |_{\mathrm{par}}}$ also satisfies Mikhlin's condition. Thus, using the Mikhlin multiplier theorem and boundedness of the Riesz potentials, we get
    \[ \|f\|_{\Ltx{\ovq}{\ovp}} \lesssim \|(|\tau|^{-\theta/2} |\xi|^{-1+\theta} \hat g)^\vee \|_{\Ltx{\ovq}{\ovp}} \lesssim \|( |\xi|^{-1+\theta} \hat g)^\vee \|_{\Ltx{q}{\ovp}} \lesssim \|g\|_{ \Ltx{q}{p} } \eqsim \|f\|_{\dot\cV^{q,p}}. \]
    The second inequality can be proved for example using representation in space variables of the Riesz potential in the $t$-variable and Minkowski integral inequality. This proves \ref{item:Vrho,p-Sobolev-embed} and completes the proof.
\end{proof}

From now on, we identify our spaces with subspaces of mixed-norm spaces via the embeddings in Proposition \ref{prop:Vrho,p}.

\subsection{The Sneiberg argument}

We now consider the forward and backward parabolic differential operators $\pm \partial_t - \Div_x(A(t,x)\nabla_x)$ on $\R^{1+n}$. These operators have been shown in \cite{AE23} (see also the streamlined presentation in \cite{Auscher-Baadi2025-fundamental-sol}) to be invertible from $\dot\cV^{2,2}$ onto $\big(\dot\cV^{2,2}\big)'$ and to satisfy the bounds
\[(\pm \partial_t - \Div_x(A(t,x)\nabla_x))^{-1}\colon L^2_t\dot{H}^{-1}_{\vphantom{t}x}+L^1_tL^2_{\vphantom{t}x} \to L^2_t\dot{H}^{1}_{\vphantom{t}x}\cap C_0 L^2_x, \]
where $C_0$ is the space of continuous functions on $\R$ with zero limits at $\pm\infty$, which excludes constants. Moreover, we have the duality relations
\begin{equation}
    \label{e:para-for-back-dual}
    (\pm \partial_t - \Div_x(A(t,x)\nabla_x))^\ast = \mp \partial_t - \Div_x(A^\ast(t,x)\nabla_x).
\end{equation}
The same holds for $(\pm \partial_t - \Div_x(A(t,x)\nabla_x)+1)^{-1}$ with the homogeneous spaces replaced by the inhomogeneous ones. 
Our next proposition extrapolates this result via Sneiberg's lemma.

\begin{prop}[Sneiberg extrapolation]
    \label{prop:sneiberg}
    There exists $\varepsilon>0$ such that if $q,p \in (1,\infty)$ satisfy $|[2,q]|<\varepsilon$ and $|[2,p]|<\varepsilon$, we have bounded extensions
    \begin{equation}
        \label{eq:homSneiberg}
        (\pm \partial_t - \Div_x(A(t,x)\nabla_x))^{-1} \colon (\dot\cV^{q',p'})' \to \dot\cV^{q,p}
    \end{equation}
    and 
    \begin{equation}
        \label{eq:inhomSneiberg} 
        (\pm \partial_t - \Div_x(A(t,x)\nabla_x)+1)^{-1} \colon (\cV^{q',p'})' \to \cV^{q,p}.
    \end{equation} 
\end{prop}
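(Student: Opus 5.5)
The plan is to apply Sneiberg's lemma to the operator $\mathcal H_\pm:=\pm\partial_t-\Div_x(A\nabla_x)$, viewed as a bounded map between two complex interpolation scales. The domain scale is $X_\theta:=\dot\cV^{q_\theta,p_\theta}$ and the target scale is $Y_\theta:=(\dot\cV^{q_\theta',p_\theta'})'$, with $(1/q_\theta,1/p_\theta)$ running along segments in the square $(0,1)^2$ through $(1/2,1/2)$. At $q=p=2$ the operator $\mathcal H_\pm\colon \dot\cV^{2,2}\to(\dot\cV^{2,2})'$ is invertible by \cite{AE23}. Sneiberg's lemma says that invertibility at one interior point of a complex interpolation scale persists in a neighbourhood of that point, with uniform bounds for the inverse. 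The resulting inverses are consistent on the intersections, so they agree with the $L^2$ inverse on dense classes such as $\cZ(\R^{1+n})$. This gives \eqref{eq:homSneiberg}. The inhomogeneous statement \eqref{eq:inhomSneiberg} follows by the same argument with $\mathcal H_\pm+1$ on $\cV^{q,p}\to(\cV^{q',p'})'$.

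The argument proceeds in the following steps.

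\textbf{(a) Uniform boundedness on each space.} I show that $\mathcal H_\pm\colon\dot\cV^{q,p}\to(\dot\cV^{q',p'})'$ is bounded for all $q,p\in(1,\infty)$, with a bound uniform along each scale. For $u\in\dot\cV^{q,p}$ and $v\in\dot\cV^{q',p'}$, the form is
\[
\langle\mathcal H_\pm u,v\rangle=\pm\langle D_t^{1/2}u,\,H_tD_t^{1/2}v\rangle+\langle A\nabla_xu,\nabla_xv\rangle,
\]
where $H_t$ is the Hilbert transform in $t$. Both pairings are bounded by Hölder's inequality in $\Ltx{q}{p}$ against $\Ltx{q'}{p'}$. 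This uses Proposition~\ref{prop:Vrho,p}\ref{item:Vrho,p-eq-norms} and the boundedness of $H_t$ on mixed-norm spaces. Since $A$ is only a bounded multiplier, no regularity of the coefficients is needed.

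\textbf{(b) Interpolation identities.} I establish
\[
[\dot\cV^{q_0,p_0},\dot\cV^{q_1,p_1}]_\theta=\dot\cV^{q_\theta,p_\theta},
\]
together with the dual statement. The dual statement follows from the direct one by the duality theorem for complex interpolation, using reflexivity and the density of $\cZ$. For the direct statement I use the Littlewood--Paley definition. The map $f\mapsto(2^j\psi_j\ast f)_j$ realizes $\dot\cV^{q,p}$ as a retract of $\Ltx{q}{p}(\ell^2)$. The coretraction is $(g_j)\mapsto\sum_j\tilde\psi_j\ast 2^{-j}g_j$, and it is bounded by the vector-valued anisotropic Mikhlin theorem already quoted from \cite{L21}. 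Complex interpolation of mixed-norm $\ell^2$-valued Lebesgue spaces is classical, and interpolation commutes with retracts.

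\textbf{(c) Conclusion.} Sneiberg's lemma then yields an $\varepsilon>0$ for which the bounded extensions \eqref{eq:homSneiberg} and \eqref{eq:inhomSneiberg} exist whenever $|[2,q]|<\varepsilon$ and $|[2,p]|<\varepsilon$.

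I expect the main obstacle to be step (b) in the homogeneous setting. There $\dot\cV^{q,p}$ may only be a seminormed space realized in $\cS'_0$ when $2/q+n/p\le1$, whereas near $q=p=2$ we have $2/q+n/p>1$ for all $n\ge1$. So after choosing $\varepsilon$ small, all spaces involved are genuine Banach spaces of distributions. They are compatible, since each embeds in $\cS'$, and $\cZ$ is dense in all of them. The retract argument also needs a check that the coretraction lands in the correct $\cS'$-realization. This follows from the series convergence statements preceding Proposition~\ref{prop:Vrho,p}. Once these technical points are verified, the remaining steps are routine.
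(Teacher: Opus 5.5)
Your proposal is correct and follows essentially the paper's route: Sneiberg's lemma applied at $q=p=2$ to the invertibility from \cite{AE23}, in its quantitative form, with the inhomogeneous case as in \cite[Section 6.1]{AEN20}. The quantitative form (a radius of invertibility uniform over all segments through $(\tfrac12,\tfrac12)$, cf.\ the appendix of \cite{ABES19a}) is what gives the two-parameter neighbourhood, and you should state it explicitly rather than only invoke ``uniform bounds''. The one real difference is how you realize the homogeneous spaces. The paper passes to the quotients $(\dot\cV^{q,p}+\C)/\C$, which form an interpolation scale for all $q,p\in(1,\infty)$, and then lifts. You instead use that $2/q+n/p>1$ near $(2,2)$, so the $\cS'$-realizations are already normed, compatible Banach spaces. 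You also supply the retract argument for the interpolation identity, which the paper only asserts. This works provided the segment endpoints are kept inside that region.
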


\begin{definition}[Critical numbers $v_\pm(A)$]
\label{def:v(A)}
    Let $(v_-(A),v_+(A)) \subset (1,\infty)$ be the largest open interval for which \eqref{eq:homSneiberg} and \eqref{eq:inhomSneiberg} hold for all $q,p\in (v_-(A),v_+(A))$.
\end{definition}
 
Proposition \ref{prop:sneiberg} shows that this interval is not empty and contains $2$ as an interior point. By duality in \eqref{e:para-for-back-dual}, one has
\[ v_-(A^\ast) = v_+(A)', \quad v_+(A^\ast) = v_-(A)'. \]

\begin{proof}
    The inhomogeneous counterpart \eqref{eq:inhomSneiberg} has been established in \cite[Section 6.1]{AEN20} via Sneiberg's extrapolation lemma. See also the appendix of \cite{ABES19a} for a proof of the quantitative version of Sneiberg's lemma justifying the use of two parameters in the extrapolation. The same argument applies to the homogeneous result \eqref{eq:homSneiberg}, where we first pass in the target to the quotient spaces $(\dot\cV^{q,p}+\C)/\C$  which form  a scale of complex interpolation Banach spaces and then conclude by lifting. 
\end{proof}

\begin{remark} 
    In the case where $A$ is the identity matrix $\bfI$, we have $v_-(\bfI)=1$ and $v_+(\bfI)=\infty$. Indeed, it amounts to proving the $\Ltx{q}{p}$ boundedness of the Fourier multipliers $\frac{(|\tau|^2+|\xi|^4)^{1/2}}{\pm i\tau + |\xi|^2} $ and $\frac{(|\tau|^2+|\xi|^4+1)^{1/2}}{\pm i\tau + |\xi|^2+1}$, which again follows from the Mikhlin multiplier theorem. It would be interesting to investigate the precise values of these critical exponents when $A$ has some regularity, e.g., when $x \mapsto A(t,x)$ belongs to $VMO$, to connect with   \cite{Krylov2007-para-ell-VMO,Krylov2007-parabolic-VMO,Dong-Kim2011-system-BMO}. We leave that to further developments.
\end{remark}

\begin{cor}[Operator bounds]
    \label{cor:para-op-mixed-Leb}
    Let $q,p \in (v_-(A),v_+(A))$. Let $\ovq \in [q,\infty)$ and $\ovp \in [p,\infty)$ with $2[q,\ovq]+n[p,\ovp] = 1$. Let $\ulq \in (1,q]$ and $\ulp \in (1,p]$ with $2[\ulq,q]+n[\ulp,p]=1$. Then
    \begin{align*}
        \nabla_x (\pm \partial_t - \Div_x(A(t,x)\nabla_x))^{-1}\Div_x &\colon \Ltx{q}{p} \to \Ltx{q}{p} \\
        (\pm \partial_t - \Div_x(A(t,x)\nabla_x))^{-1}\Div_x &\colon \Ltx{q}{p} \to \Ltx{\ovq}{\ovp} \\
        \nabla_x (\pm \partial_t - \Div_x(A(t,x)\nabla_x))^{-1} &\colon \Ltx{\ulq}{\ulp} \to \Ltx{q}{p} \\
        (\pm \partial_t - \Div_x(A(t,x)\nabla_x))^{-1} &\colon \Ltx{\ulq}{\ulp} \to \Ltx{\ovq}{\ovp}.
    \end{align*}
    For the operators $(\pm \partial_t - \Div_x(A(t,x)\nabla_x)+1)^{-1}$, the same statements hold with the modifications $2[q,\ovq]+n[p,\ovp] \le 1$ and $2[\ulq,q]+n[\ulp,p] \le 1$.
\end{cor}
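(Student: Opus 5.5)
The plan is to derive all four bounds from Proposition~\ref{prop:sneiberg} together with the equivalent norms and the Sobolev embedding of Proposition~\ref{prop:Vrho,p}, plus duality. Write $\mathcal{H}^\pm := \pm \partial_t - \Div_x(A\nabla_x)$. By Proposition~\ref{prop:Vrho,p}\ref{item:Vrho,p-eq-norms}, $\nabla_x\colon \dot\cV^{q,p}\to \Ltx{q}{p}$ is bounded. Dualizing the same estimate, $\Div_x \colon \Ltx{q}{p}\to (\dot\cV^{q',p'})'$ is bounded: for $F\in \Ltx{q}{p}$ and $\phi\in\cZ(\R^{1+n})$,
\[
|\langle \Div_x F,\phi\rangle| = |\langle F,\nabla_x\phi\rangle| \le \|F\|_{\Ltx{q}{p}}\|\nabla_x \phi\|_{\Ltx{q'}{p'}}\lesssim \|F\|_{\Ltx{q}{p}}\|\phi\|_{\dot\cV^{q',p'}}.
\]
Composing with \eqref{eq:homSneiberg} gives $(\mathcal{H}^\pm)^{-1}\Div_x\colon \Ltx{q}{p}\to\dot\cV^{q,p}$. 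Applying $\nabla_x$ then yields the first bound. Applying instead the embedding $\dot\cV^{q,p}\hookrightarrow \Ltx{\ovq}{\ovp}$ of Proposition~\ref{prop:Vrho,p}\ref{item:Vrho,p-Sobolev-embed}, valid under \eqref{eq:parabolicembedding}, yields the second.

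For the last two bounds, the key input is the dual Sobolev embedding $\Ltx{\ulq}{\ulp}\hookrightarrow (\dot\cV^{q',p'})'$. This will be obtained by dualizing $\dot\cV^{q',p'}\hookrightarrow \Ltx{\ulq'}{\ulp'}$, which follows from Proposition~\ref{prop:Vrho,p}\ref{item:Vrho,p-Sobolev-embed} applied to the exponents $(q',p')$. One checks that $2[q',\ulq']+n[p',\ulp']=2[\ulq,q]+n[\ulp,p]=1$, and that $\ulq'\ge q'$ and $\ulp'\ge p'$. The requirement $\ulp,\ulq>1$ ensures $\ulq',\ulp'<\infty$. Also $q',p'\in(v_-(A^*),v_+(A^*))$ plays no role here, since only the embedding is needed. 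Given $f\in\Ltx{\ulq}{\ulp}$, the functional $\phi\mapsto\int f\bar\phi$ is then bounded on $\dot\cV^{q',p'}$, so $(\mathcal{H}^\pm)^{-1}f\in\dot\cV^{q,p}$ by \eqref{eq:homSneiberg}. Composing with $\nabla_x$, respectively with the embedding into $\Ltx{\ovq}{\ovp}$, gives the third and fourth bounds. In the fourth case one chains the exponents $(\ulq,\ulp)\to(q,p)\to(\ovq,\ovp)$, which are the hypotheses exactly.

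The point needing the most care is consistency: identifying the extensions with the operators acting on nice functions, for instance $L^2$ data in the intersection. I expect this to follow from the density of $\cZ(\R^{1+n})$ in $(\dot\cV^{q',p'})'$ and the consistency of the Sneiberg extensions across exponents. A further subtlety is that for $2/q+n/p\le 1$ the space $\dot\cV^{q,p}$ is only defined modulo constants. This does not affect $\nabla_x$, and in the embedding cases the relation $2[q,\ovq]+n[p,\ovp]=1$ with $\ovq,\ovp<\infty$ forces $2/q+n/p>1$, so the embedding is genuine.

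For the inhomogeneous operators $(\mathcal{H}^\pm+1)^{-1}$, the plan is to repeat the argument with \eqref{eq:inhomSneiberg}, using the inhomogeneous norm equivalence and the embedding \eqref{eq:parabolicembeddinginhomogeneous}. There, the relaxed condition ``$\le 1$'' is available both directly and after dualization, the latter via $\cV^{q',p'}\hookrightarrow\Ltx{\ulq'}{\ulp'}$.
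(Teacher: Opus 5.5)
Your proposal is correct and follows the paper's argument. The first two bounds come from $\Div_x\colon \Ltx{q}{p}\to(\dot\cV^{q',p'})'$, \eqref{eq:homSneiberg}, and the norm equivalence and Sobolev embedding of Proposition~\ref{prop:Vrho,p}, and the last two follow by duality. The only nuance is that you dualize the embedding of $\dot\cV^{q',p'}$ directly, as the paper itself does in the proof of Proposition~\ref{prop:rhi}, whereas the paper phrases this step via the parallel bounds for the adjoint operators with $A^*$. Both routes work, and yours does not even need the relation $v_\pm(A^*)=v_\mp(A)'$.
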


\begin{proof} 
    To prove the first boundedness, we first infer from Proposition \ref{prop:Vrho,p}\ref{item:Vrho,p-eq-norms} that $L^{\vphantom{-1,p}q}_t \dot{H}^{-1,p}_{\vphantom{t}x}$ embeds into $(\dot\cV^{q',p'})'$ and $\dot\cV^{q,p}$ embeds into $L^{\vphantom{-1,p}q}_t \dot{H}^{1,p}_{\vphantom{t}x}$. Then it directly follows from \eqref{eq:homSneiberg}.
    Then the three others follow on applying the embeddings and duality, noting that the same argument applies to the adjoints $(\mp \partial_t - \Div_x(A^*(t,x)\nabla_x))^{-1}$ in parallel. The proof for the inhomogeneous case is similar. 
\end{proof}

\subsection{Local estimates}
Let us introduce some notation on exponents. For any $p \in (0,\infty]$, we define the Sobolev exponents $p_*,p^*,p^\sharp \in (0,\infty]$ by
\[  p_* := \frac{(n+2)p}{n+2+p}, \quad 
    p^* :=\begin{cases}
        \frac{(n+2)p}{n+2-p} & \text{ if } p<n+2, \\
        \infty & \text{ if } n+2 \le p \le \infty,
    \end{cases} \quad 
    p^\sharp := \begin{cases}
        \frac{np}{n+2-p} & \text{ if } p<n+2, \\
        \infty & \text{ if } n+2 \le p \le \infty.
    \end{cases}
\]
Note that $(n+2)[p_*,p]=1$ for all $p \in (0,\infty]$ and $(n+2)[p,p^*]=1$ when $p \le n+2$. One has $(p^\sharp)'= (p')^\sharp$ if $\tfrac{n+2}{n+1}\le p \le n+2$, and moreover, $p^\sharp < p_* < 1$ if $p<\tfrac{n+2}{n+1}$, $1 \le p_* \le p^\sharp \le 2$ if $\tfrac{n+2}{n+1}\le p\le 2$, $p\le p^\sharp<p^*$ if $2 \le p< n+2$, and $p^\sharp=p^*=\infty$ if $p \ge n+2$.

Let us recall two notions of solutions. Let $\Omega$ be an open subset of $\R^n$ and $-\infty\le a < b \le \infty$. Let $f$ and $F$ be distributions in $\ms{D}'((a,b) \times \Omega)$. A function $u$ is called a \emph{weak solution} to the equation $\partial_t u - \Div_x(A\nabla_x u) = f + \Div_x F$ if $u, \nabla_x u \in L^2_{\loc}((a,b)\times \Omega)$ and for any $\phi \in \Cc((a,b) \times \Omega)$,
\[ -\int_{(a,b) \times \Omega} u \cdot \partial_t \phi \dd x \dd t + \int_{(a,b) \times \Omega} A \nabla_x u \cdot \nabla_x \phi  \dd x\dd t= (f,\phi) - (F,\nabla_x \phi), \]
where $(\cdot,\cdot)$ denotes the dual pairing between distributions and test functions on $(a,b) \times \Omega$. A \emph{very weak solution} is defined similarly but with only  $u, \nabla_x u\in L^1_{\loc}((a,b)\times \Omega)$. The same notions also apply to the backward equation $-\partial_t u - \Div_x(A\nabla_x u) = f + \Div_x F$.

We state bounds on weak solutions and their gradients. The higher integrability of the gradient was first established in \cite{Giaquinta-Struwe1982}.  In addition, we also want to obtain a kind of reverse H\"older inequality of solutions with a prescribed range of exponents. To this end, we revisit the strategy of \cite[Theorem 8.1]{Auscher-Bortz-Egert-Saari2019-parabolicL2} incorporating our embeddings, in particular to obtain $p$-Caccioppoli inequalities and reverse H\"older inequalities that will simplify our application to boundedness on tent spaces. 

\begin{prop}[$p$-Caccioppoli inequalities and reverse H\"older inequalities]
    \label{prop:rhi} 
    Let $I \times B$ with $|I| \eqsim r(B)^2$ be a parabolic cylinder and $u \in L^2(\gamma^2 I; H^1(\gamma B))$ be a weak solution to $\partial_t u - \Div_x(A(t,x)\nabla_x u)=0$ on $\gamma^2 I \times \gamma B$ for some $\gamma>1$. Let $1<\delta<\gamma$.
    For $0 < p < v_+(A)$, we have
    \begin{equation}
        \label{eq:p-Caccioppoli1}
        \int_{I}\int_{B} |\nabla u(s,y)|^p \dd y \dd s \lesssim r(B)^{-p} \int_{\delta^2 I}\int_{\delta B} |u(s,y)|^p \dd y \dd s.
    \end{equation} 
    For $0< \ovp<\ v_+(A)^\sharp$ if $v_+(A)\le n+2$ and $0< \ovp \le \infty$ otherwise, we have
    \begin{equation}
        \label{eq:RHIu}
        \sup_{s \in I} \bigg(\fint_{B} |u(s,y)|^{\ovp} \dd y \bigg)^{1/\ovp}  \lesssim \bigg(\fint_{\delta^2 I} \fint_{\delta B} |u(s,y)|^2 \dd y \dd s\bigg)^{1/2},
    \end{equation}
    and 
    \begin{equation}
        \label{eq:RHIuq}
        \bigg(\fint_{I}\fint_{B} | u(s,y)|^{\ovp} \dd y\dd s\bigg)^{1/\ovp}  \lesssim  \fint_{\delta^2 I}\fint_{\delta B} |u(s,y)| \dd y \dd s
    \end{equation}
    with the canonical modification if $\ovp=\infty$. The implicit constants do not depend on $I, B$. The Euclidean balls on $\R^n$ can be replaced by cubes. The same statement holds for weak solutions of the backward equation $-\partial_t u - \Div_x(A(t,x)\nabla_x u)=0$.
\end{prop}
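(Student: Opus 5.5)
We localize the global invertibility from Proposition~\ref{prop:sneiberg} and Corollary~\ref{cor:para-op-mixed-Leb} with cut-offs, then iterate over a chain of nested cylinders between $I\times B$ and $\delta^2 I\times\delta B$. Fix intermediate scales $1<\delta_1<\delta_2<\dots<\delta$. For two consecutive cylinders $\mathbf C\subset\mathbf C'$ pick a smooth cut-off $\chi$ equal to $1$ on $\mathbf C$, supported in $\mathbf C'$, with $|\nabla_x\chi|\lesssim r(B)^{-1}$ and $|\partial_t\chi|\lesssim r(B)^{-2}$. We may extend $A$ by $\bfI$ outside $\gamma^2 I\times\gamma B$, since only values on $\mathbf C'$ matter. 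Then $w:=\chi u$ solves, on all of $\R^{1+n}$,
\[
\partial_t w-\Div_x(A\nabla_x w)=(\partial_t\chi)u-A\nabla_x u\cdot\nabla_x\chi-\Div_x(uA\nabla_x\chi).
\]
Uniqueness in the energy class, for which $u\in L^2H^1$ suffices locally, identifies $w$ with the inverse operator applied to the right-hand side. The four bounds of Corollary~\ref{cor:para-op-mixed-Leb} then give, for $q,p\in(v_-(A),v_+(A))$,
\[
\|\nabla_x w\|_{L^q_tL^p_x}+\|w\|_{L^{\ovq}_tL^{\ovp}_x}\lesssim r(B)^{-1}\|u\|_{L^q_tL^p_x(\mathbf C')}+\|\nabla_x u\|_{L^{\ulq}_tL^{\ulp}_x(\mathbf C')}+r(B)^{-2}\|u\|_{L^{\ulq}_tL^{\ulp}_x(\mathbf C')},
\]
where the gradient term on the right sits at the lower exponents $(\ulq,\ulp)$.

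\textbf{Caccioppoli bound \eqref{eq:p-Caccioppoli1}.} We use the case $q=p$, rescaled to the unit cylinder. Starting from the classical $L^2$ Caccioppoli inequality, the display above upgrades gradient integrability from exponent $s$ to $s^\*$ in one step, at the cost of $u$-norms. After finitely many steps, because $(n+2)[s_*,s]=1$ is a fixed gain, we reach any $p<v_+(A)$. Each $u$-norm at a Sobolev-lower exponent is bounded, via H\"older's inequality on the bounded cylinder, by the $L^p$ norm of $u$ on the next cylinder. For $p<2$ we argue in reverse: we first bound $\|\nabla u\|_{L^2}$ by $\|u\|_{L^2}$ on a slightly larger cylinder, then reduce $\|u\|_{L^2}$ to $\|u\|_{L^p}$ using the reverse H\"older inequality \eqref{eq:RHIuq}. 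To avoid circularity we establish \eqref{eq:RHIuq} first with exponent $\ovp=2$, as below.

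\textbf{Reverse H\"older bounds \eqref{eq:RHIu} and \eqref{eq:RHIuq}.} Here we take $q\neq p$. To obtain \eqref{eq:RHIu} we need the $L^\infty_t$ endpoint, which comes from the inhomogeneous embedding \eqref{eq:parabolicembeddinginfty} with $q>2$. The condition on $\ovp$ comes out as $2[q,\infty]+n[p,\infty]>1$, which gives the limiting value $\ovp<n/(2/q+n/p-1)$. Letting $q\uparrow v_+(A)$ and $p\uparrow v_+(A)$ yields exactly $\ovp<v_+(A)^\sharp=nv_+/(n+2-v_+)$, and any $\ovp$ at all when $v_+(A)>n+2$. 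So the plan is to work with the inhomogeneous operator, treating the extra $+w$ term as another lower-order source controlled by $\|u\|$. This gives $\sup_t\|w(t)\|_{L^{\ovp}}$ bounded by $L^2$-type norms of $u$ and $\nabla u$ on $\mathbf C'$, and the Caccioppoli bound then removes $\nabla u$.

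Finally, \eqref{eq:RHIuq} with an $L^1$ right-hand side follows from the $L^2$ version by the standard Giaquinta--Modica self-improvement argument. One applies the estimate on all sub-cylinders of an iterated family of nested cylinders, uses H\"older interpolation $\|u\|_2\le\|u\|_{\ovp}^{\theta}\|u\|_1^{1-\theta}$, absorbs with Young's inequality, and concludes with the iteration lemma. Small $p<1$ in the Caccioppoli bound is handled by the same absorption argument. Cubes instead of balls and the backward equation (via $A^\*$ and time reversal) follow identically.

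\textbf{Main obstacle.} The hardest part is the bookkeeping of exponents across the iteration. At each step the source terms $(\partial_t\chi)u$ and $\nabla_x\chi\cdot A\nabla_x u$ must be placed in admissible $L^{\ulq}_tL^{\ulp}_x$ spaces with $\ulq,\ulp>1$ and inside $(v_-(A),v_+(A))$, while the targets must reach $\ovp$ arbitrarily close to $v_+(A)^\sharp$. We must also justify that $\chi u$ lies in the class where the Sneiberg inverse coincides with the energy solution.
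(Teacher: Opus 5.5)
Your architecture is the paper's: cut-offs, identifying $\chi u$ with the Sneiberg-extrapolated inverse, iteration over nested cylinders, the embedding \eqref{eq:parabolicembeddinginfty} to reach $\ovp<v_+(A)^\sharp$, self-improvement to an $L^1$ right-hand side, and $p<2$ via the $L^2$ Caccioppoli inequality plus \eqref{eq:RHIuq}. Your gradient bootstrap does give \eqref{eq:p-Caccioppoli1} for $2\le p<v_+(A)$.

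\textbf{The gap is in the step for \eqref{eq:RHIu}.} With $q,p$ close to $v_+(A)$, the divergence source $\Div_x(uA\nabla_x\chi)$ can only be placed in $(\cV^{q',p'})'$ through $\|u\|_{L^q_tL^p_x(\mathbf C')}$. So one application of the inverse bounds $\sup_t\|w(t)\|_{L^{\ovp}}$ by $L^q_tL^p_x$-norms of $u$ with $p>2$, not by $L^2$-type norms. The Caccioppoli inequality only disposes of the $\nabla_x u$ term. Your Caccioppoli iteration does not help, because there you bound every $u$-term upwards by H\"older, by $\|u\|_{L^p}$ on a larger cylinder. That is the wrong direction for a reverse H\"older inequality.

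What is missing is a bootstrap of the integrability of $u$ itself from $L^2$, run jointly with that of $\nabla_x u$. In the paper this is the intermediate estimate $\|u\chi\|_{\cV^{p,p}}\lesssim\|u\tilde\chi\|_{L^2_{t,x}}$ for $2\le p<v_+(A)$, at unit scale. It is obtained by alternating two bounds, with $v_i=\chi_i u$, $2\le p_1\le p$ and $(n+2)[p_1,p]\le 1$:
\begin{itemize}
\item the solution bound $\|v_i\|_{\cV^{p,p}}\lesssim\|\nabla_x v_{i+1}\|_{L^{p_1}_{t,x}}+\|v_{i+1}\|_{L^p_{t,x}}$;
\item the embedding bound $\|v_{i+1}\|_{L^p_{t,x}}\lesssim\|v_{i+1}\|_{\cV^{p_1,p_1}}$.
\end{itemize}
One iterates until the exponent reaches $2$ and closes with the classical Caccioppoli inequality. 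Your display already contains the needed ingredient, namely its $\|w\|_{L^{\ovq}_tL^{\ovp}_x}$ component with $q=p=s$. You have to iterate it together with the gradient part. Once this is done, \eqref{eq:RHIu} is a single application of \eqref{eq:parabolicembeddinginfty}, and \eqref{eq:p-Caccioppoli1} for $p\ge 2$ follows from the same estimate by H\"older. The same omission affects your route to \eqref{eq:RHIuq}, and hence to \eqref{eq:p-Caccioppoli1} for $p<2$.

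A smaller slip: do not extend $A$ by $\bfI$ outside $\gamma^2 I\times\gamma B$. Here $A$ is already given on $\R^{1+n}$, and $(v_-(A),v_+(A))$ are the critical numbers of that coefficient. Modifying $A$ changes them, so you would obtain the estimates for a different range. Since $\chi u$ is supported in $\mathbf C'$, its equation holds on $\R^{1+n}$ with the original $A$, and that is what the paper uses. The paper first checks $\chi u\in\cV^{2,2}$ via \cite[Proposition 3.1]{Auscher-Bortz-Egert-Saari2019-parabolicL2}, so that uniqueness in $\cV^{2,2}$ identifies $\chi u$ with the inverse applied to the sources.
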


\begin{proof} 
Let us first prove the intermediate estimate
\begin{equation}
    \label{e:para-RH-intermediate}
    \| u\chi\|_{\cV^{p,p}} \lesssim \|u\tilde \chi\|_{L^2_{t,x}}
\end{equation}
for any $2\le p<v_+(A)$, assuming $I$  interval of length 1, $B$ ball of radius $r(B)\eqsim 1$, and $\chi,\tilde \chi$ two smooth functions with $\ind_{I \times B} \le \chi \le \tilde \chi \le \ind_{\gamma^2 I \times \gamma B}$, the implicit constant being independent of $I$ and $B$. To this end, let $\chi_i$, $i=1, 2, \ldots, 2k+2$, for some $k$ to be determined, be smooth functions supported in $\gamma_i^2 I \times \gamma_i B$ with $1<\gamma_i<\gamma$ and $\chi_{i+1}=1$ on this support. Write $v_i \coloneq \chi_i u$. Since $v_i$ is compactly supported, we may use H\"older's inequality freely. Then     
\begin{equation}
    \label{e:para-local-vi}
    \partial_t v_i - \Div_x(A(t,x)\nabla_x v_i) +v_i=  f_i+\Div_x F_{i}
\end{equation}
on $\R^{1+n}$, in the weak sense, with $f_i= (\partial_t\chi_i +\chi_i) v_{i+1} -A \nabla_x v_{i+1}\cdot \nabla_x \chi_i$ and $F_i=-A (v_{i+1}\nabla_x \chi_i)$ using the support assumptions. As $f_i \in L^{2_\ast}_{t,x}$, $F_i \in L^2_{t,x}$, and $v_i \in L^{2}_t H^1_{\vphantom{t}x}$, we invoke \cite[Proposition 3.1]{Auscher-Bortz-Egert-Saari2019-parabolicL2} to get $D_t^{1/2}v_i\in L^2_{t,x}$ and hence $v_i \in \cV^{2,2}$. Furthermore, we infer from existence and uniqueness of weak solutions to \eqref{e:para-local-vi} in $\cV^{2,2}$ (see \cite{AE23}) that
\[ v_i = (\partial_t - \Div_x(A(t,x)\nabla_x)+1)^{-1} (f_i+\Div_x F_{i}). \]
For $2 \le p < v_+(A)$, given $p_1 \in [2,p]$ with $(2+n)[p_1,p] = (2+n)[p',p_1'] \le 1$, the Sobolev embedding (cf. Proposition \ref{prop:Vrho,p}\ref{item:Vrho,p-Sobolev-embed}) yields that $\cV^{p_1,p_1} \hookrightarrow L^p_{t,x}$, $\cV^{p',p'} \hookrightarrow L^{p_1'}_{t,x}$, and hence $L^{p_1}_{t,x} \hookrightarrow (\cV^{p',p'})'$. We also get from Proposition \ref{prop:Vrho,p}\ref{item:Vrho,p-eq-norms} that $L^{\vphantom{-1,p}p}_t \dot{H}^{-1,p}_{\vphantom{t}x} \hookrightarrow (\cV^{p',p'})'$. So we have
\[ \|v_i\|_{\cV^{p,p}} \lesssim \|f_i\|_{L^{p_1}_{t,x}} + \|F_i\|_{L^p_{t,x}} \lesssim  \|\nabla_x v_{i+1}\|_{L^{p_1}_{t,x}} + \|v_{i+1}\|_{L^p_{t,x}}. \]
In the last inequality, we also used H\"older's inequality as $p_1 \le p$. Then as $p_1 \in [2,v_+(A))$, we use the Sobolev embedding $\cV^{p_1,p_1} \hookrightarrow L^p_{t,x}$ and the above inequality with $p=p_1$ to get
\[ \|v_{i+1}\|_{L^p_{t,x}} \lesssim \|v_{i+1}\|_{\cV^{p_1,p_1}} \lesssim  \|\nabla_x v_{i+2}\|_{L^{p_1}_{t,x}}+\|v_{i+2}\|_{L^{p_1}_{t,x}}. \]
Since $v_{i+1} = v_{i+2} \chi_{i+1}$, gathering these estimates gives us 
\[ \|v_i\|_{\cV^{p,p}} \lesssim  \|\nabla_x v_{i+2}\|_{L^{p_1}_{t,x}} + \|v_{i+2}\|_{L^{p_1}_{t,x}} \]
that we can bootstrap by iteration. In a finite number of iterations, one reaches $p_k=2$. So we obtain
\[ \|v_1\|_{\cV^{p,p}} \lesssim  \|\nabla_x v_{2k+1}\|_{L^2_{t,x}}+ \|v_{2k+1}\|_{L^2_{t,x}}  \lesssim \|v_{2k+2}\|_{L^2_{t,x}}. \]
In the last inequality, we use the classical Caccioppoli inequality. This proves the intermediate estimate \eqref{e:para-RH-intermediate}.

Now, we prove \eqref{eq:RHIu}. By parabolic rescaling, we may assume $|I|=1$. First consider the case $2 < v_+(A) \le n+2$. Let $0<\ovp<v_+(A)^\sharp$.  By H\"older's inequality, it suffices to prove \eqref{eq:RHIu} for $\ovp \ge 2$. Pick $p \in [2,v_+(A))$ such that $p \le \ovp < p^\sharp$. Then the estimate \eqref{eq:RHIu} follows from \eqref{e:para-RH-intermediate} and Sobolev's embedding as $u\chi \in \cV^{p,p}$ and $\cV^{p,p} \hookrightarrow \Ltx{\infty}{\ovp}$. The proof for the case $v_+(A)>n+2$ follows similarly, where the only modification is to assume first $\ovp\in (n+2,\infty]$ and  to take $p \in (n+2,v_+(A))$ with $p\le \ovp$  so that $\cV^{p,p} \hookrightarrow \Ltx{\infty}{\ovp}$. This proves \eqref{eq:RHIu}. 

To derive \eqref{eq:RHIuq}, the inequality \eqref{eq:RHIu} gives us an $L^{\ovp}-L^2$ reverse H\"older inequality for $u$. By rescaling we obtain it for all parabolic cylinders contained in $\gamma^2 I \times \gamma B$, and we may use well-known self-improvements to  $L^{\ovp}-L^r$ reverse H\"older inequalities for any $r<\ovp$, see \cite[Theorem~B.1]{Bernicot-Coulhon-Frey2016} for a simple proof.

To finish, let us prove \eqref{eq:p-Caccioppoli1}. For $p=2$, it is the classical Caccioppoli inequality obtained by using Lions' embedding theorem, see e.g. \cite[Proposition 3.6]{AMP19}. For $2<p<v_+(A)$, it follows from the intermediate estimate \eqref{e:para-RH-intermediate} via H\"older's inequality and parabolic rescaling. When $p<2$, we first apply H\"older's inequality on the left-hand side of \eqref{eq:p-Caccioppoli1} to raise $p$ to $2$ and then use 2-Caccioppoli inequality and   $L^2-L^p$ reverse H\"older inequalities for $u$.   This concludes the proof.
\end{proof}

\begin{remark}
    \label{rem:inhomo-Caccioppoli-RH}
    The above argument also extends to the inhomogeneous settings with source terms $f$ and $\Div_x F$, which will be used in Remark \ref{rem:tent-bdd-beta-sharp}. More precisely, let $f,F \in L^r_{\loc}(\gamma^2 I \times \gamma B)$ ($F$ being $\C^n$-valued) and $u \in L^2(\gamma^2 I; H^1(\gamma B))$ be a weak solution to $\pm \partial_t u - \Div_x(A\nabla_x u) = f + \Div_x F$. Then when $2 \le r < v_+(A)$, one has the $r$-inhomogeneous Caccioppoli inequality as
    \[ \left( \fint_I \fint_B |\nabla u|^r \right)^{1/r} \lesssim \frac{1}{r(B)} \left( \fint_{\delta^2 I} \fint_{\delta B} |u|^r \right)^{1/r} + r(B) \left( \fint_{\delta^2 I} \fint_{\delta B} |f|^r \right)^{1/r} + \left( \fint_{\delta^2 I} \fint_{\delta B} |F|^r \right)^{1/r}, \]
    which is also valid for $2_* \le r < v_+(A)$ when $F=0$. When $2 \le r < v_+(A)^\sharp$, one also has the $r$-inhomogeneous reverse H\"older inequality as
    \[ \sup_{s \in I} \left( \fint_B |u(s,y)|^r dy \right)^{1/r} \lesssim \fint_{\delta^2 I} \fint_{\delta B} |u| + r(B)^2 \left( \fint_{\delta^2 I} \fint_{\delta B} |f|^{r_\ast} \right)^{1/r_\ast} + r(B) \left( \fint_{\delta^2 I} \fint_{\delta B} |F|^r \right)^{1/r}. \] 
\end{remark}

\subsection{The fundamental solution}

The existence of fundamental solution of $\partial_t - \Div_x(A(t,x)\nabla_x)$ on the upper half-space, where $A$ does not necessarily have real coefficients, has been established in \cite{AMP19}. A universal full space construction was then developed in \cite{AE23} and  \cite{Auscher-Baadi2025-fundamental-sol} streamlined this construction to which we refer for the verifications of the following facts. See also \cite[Section 3.5]{Hou25} for this specific situation. The fundamental solution is a family of contractive operators on $L^2(\R^n)$, denoted by $\{\Gamma_A(t,s)\}_{-\infty<s \le t<\infty}$, with $\Gamma_A(t,t) = \id$. A particularly important property for us here is that for $f \in L^2_t\dot{H}^{-1}_{\vphantom{t}x}+\Ltx{1}{2}$, one has the representation
\begin{equation}
    \label{eq:representatioforward}
    (\partial_t - \Div_x(A(t,x)\nabla_x))^{-1}(f)(t)= \int_{-\infty}^t \Gamma_A(t,s)f(s)\, \dd s.
\end{equation}
The equality makes sense in $L^2(\R^n)$ for all $t \in \R$, where the integral converges weakly in $L^2(\R^n)$ for $f \in L^2_t\dot{H}^{-1}_{\vphantom{t}x}$ and strongly for $f \in \Ltx{1}{2}$. We also recall that the fundamental solution of $\partial_t - \Div_x(A(t,x)\nabla_x)+1$ is $e^{-(t-s)}\Gamma_A(t,s)$ and that the adjoint of $\Gamma_A(t,s)$ is the fundamental solution of $-\partial_t - \Div_x(A^*(t,x)\nabla_x)$, namely, 
\begin{equation}
    \label{eq:representatiobackwardward}
    (-\partial_t - \Div_x(A^*(t,x)\nabla_x))^{-1}(f)(s)= \int^{\infty}_s (\Gamma_A(t,s))^*f(t)\, \dd t.
\end{equation}

In particular, the representation allows one to see causality: if we assume that $f$ is supported on the upper half-space, then   $u:=(\partial_t - \Div_x(A(t,x)\nabla_x))^{-1}f$, vanishes in $L^2(\R^n)$ if $t\le 0$. Thus, one can consider the restriction of $(\partial_t - \Div_x(A(t,x)\nabla_x))^{-1}$ to functions supported on $\R^{1+n}_+$, and its fundamental solution is the same restricted to the range $0<s\le t<\infty$. One can also consider the restriction $\ind_{\R^{1+n}_+}(-\partial_t - \Div_x(A(t,x)\nabla_x))^{-1}\ind_{\R^{1+n}_+}$ for the backward operator  and observe that this is an anti-causal operator. We shall use this in the upcoming subsection.

Moreover, the fundamental solution identifies with the Green function. Specifically, for $\psi\in L^2(\R^n)$ and $s<t$, $\Gamma_A(t,s)\psi$ agrees with the evaluation at time $t$ of the unique  weak solution $u$ to the equation $\partial_t u - \Div_x(A(t,x)\nabla_x u)=0$ on $(s,\infty)\times \R^n$ with $u(s,\cdot)=\psi$ and $\nabla_x u \in L^2((s,\infty) \times \R^n)$. This identification is a consequence of the boundedness of the operator $(\partial_t - \Div_x(A^*(t,x)\nabla_x))^{-1}$ from $\Ltx{1}{2}$ to $C^{}_0 L^2_{\vphantom{0}x}$ observed above (see \cite[Theorem 6.25]{Auscher-Baadi2025-fundamental-sol}). The fundamental solution also satisfies the Chapman-Kolmogorov relation $\Gamma_A(t,s)=\Gamma_A(t,r)\Gamma_A(r,s)$ as bounded operators on $L^2(\R^n)$ when $s<r<t$. The same applies to the backward case.

The existence of $L^p$ bounds for $p\ne 2$ of $\Gamma_A(t,s)$ for arbitrary non-autonomous complex coefficients is first due to \cite{Z20}. It is also based on Sneiberg's extrapolation, and its proof furnishes also some off-diagonal bounds. There are many ways to recover them, see e.g.~\cite[Proposition 3.9]{Hou25}. Here we make clear the maximal range of exponents. 

\begin{definition}[Critical numbers $p_\pm(A)$]
\label{def:p(A)}
    Let $(p_-(A), p_+(A)) \subset (1,\infty)$ be the maximal open interval for which the family $\{\Gamma_A(t,s)\}_{-\infty<s \le t<\infty}$ is uniformly bounded in $L^p(\R^n)$.
\end{definition}
Note that $p_\pm(A^*)$ are the H\"older conjugates of $ p_\mp(A)$. In the autonomous case with $L=-\Div_x (A(x) \nabla_x)$, they are identical to $p_\pm(L)$ introduced in Section \ref{ssec:ODE-DB}, see Section \ref{ssec:parabolic-auto}.

\begin{lemma}[$t$-pointwise $L^{p_1}$-$L^{p_2}$ off-diagonal estimates]
\label{lem:ODEGammaA}
    Let $\max\{v_-(A)^\sharp, 1\} < p_1 \le p_2 < v_+(A)^\sharp$. There exists a constant $\eta>0$ so that for $-\infty< s<t<\infty$, Borel sets $E,F \subset \R^n$, and $f \in L^2 \cap L^{p_1}(\R^n)$,
    \begin{equation}
        \label{e:GammaA-t-ptwise-ODE}
        \left\| \ind_F \Gamma_A(t,s) (f\ind_E) \right\|_{L^{p_2}_x} \lesssim |t-s|^{-\frac{n}{2}[p_1,p_2]} \exp\left( -\eta \frac{d(E,F)^2}{t-s} \right) \|f\ind_E\|_{L^{p_1}_x}.
    \end{equation} 
    Consequently, $\Gamma_A(t,s)$ is uniformly bounded on $L^p(\R^n)$ for $\max\{v_-(A)^\sharp, 1\} < p < v_+(A)^\sharp$, so
    \[ p_-(A)\le \max\{v_-(A)^\sharp, 1\}, \quad p_+(A)\ge v_+(A)^\sharp. \]
    In particular, if $(\tfrac{n+2}{n+1}, n+2)\subset (v_-(A), v_+(A))$, then $(p_-(A),p_+(A))=(1,\infty)$. In fact, the estimate \eqref{e:GammaA-t-ptwise-ODE} holds for all $p_-(A) < p_1 \le p_2 < p_+(A)$.
\end{lemma}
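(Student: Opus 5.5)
We would work with the identification of $\Gamma_A(t,s)\psi$ with the weak solution $u$ of $\partial_t u-\Div_x(A\nabla_x u)=0$ on $(s,\infty)\times\R^n$ with $u(s)=\psi$, and combine three ingredients: the $L^2$ theory, local reverse H\"older bounds from Proposition~\ref{prop:rhi}, and Davies--Gaffney type exponential decay.

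\emph{Step 1: $L^2$ Gaffney estimates.} First we would prove
\[
\|\ind_F\Gamma_A(t,s)(f\ind_E)\|_{L^2}\lesssim e^{-\eta d(E,F)^2/(t-s)}\|f\ind_E\|_{L^2}.
\]
This uses the exponential perturbation: for a bounded Lipschitz $\phi$ with $\|\nabla\phi\|_\infty\le\alpha$, the conjugated solution $e^{\phi}u$ solves an equation with lower-order terms of size $\alpha$. An energy estimate then gives $\|e^{\phi}u(t)\|_{L^2}\le e^{C\alpha^2(t-s)}\|e^{\phi}\psi\|_{L^2}$. Taking $\phi=\alpha\,d(\cdot,E)$ truncated and optimizing in $\alpha$ yields the bound.

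\emph{Step 2: $L^{p_1}$--$L^2$ and $L^2$--$L^{p_2}$ bounds with Gaussian decay.} For $2\le p_2<v_+(A)^\sharp$ we apply \eqref{eq:RHIu} to $u=\Gamma_A(\cdot,s)(f\ind_E)$. We use a parabolic cylinder $I\times B$ with $t\in I$, $r(B)\eqsim (t-s)^{1/2}$ and $\delta^2 I\subset(s,\infty)$. This gives $\sup_{I}\langle u\rangle_{p_2,B}\lesssim\langle u\rangle_{2,\delta^2I\times\delta B}$. Next we cover $F$ by balls $B$ of radius $(t-s)^{1/2}$ and use Step 1 on each $\delta^2 I\times\delta B$, together with $d(\delta B,E)\ge d(B,E)-c(t-s)^{1/2}$, which only costs a constant in the exponential. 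Summing over the $\ell^{p_2}$-family of balls via $\ell^2\subset\ell^{p_2}$ produces the factor $(t-s)^{-\frac n2[2,p_2]}$ with Gaussian decay; a slightly smaller $\eta$ absorbs the overlap.

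The case $\max\{v_-(A)^\sharp,1\}<p_1\le2$ follows by duality. We use \eqref{eq:representatiobackwardward}: $\Gamma_A(t,s)^*$ is the backward propagator for $A^*$, and $v_+(A^*)=v_-(A)'$. Since $(p^\sharp)'=(p')^\sharp$ in the relevant range, the exponent condition $p_1'<v_+(A^*)^\sharp$ becomes exactly $p_1>v_-(A)^\sharp$.

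\emph{Step 3: composition.} We would use Chapman--Kolmogorov, $\Gamma_A(t,s)=\Gamma_A(t,r)\Gamma_A(r,s)$ with $r=(s+t)/2$, inserting $\ind_G+\ind_{G^c}$ for an intermediate set $G=\{d(\cdot,E)<d(E,F)/2\}$. Each piece involves one factor whose sets are separated by $\gtrsim d(E,F)$, while the other factor has the global bound without decay. Composing the $L^{p_1}\to L^2$ and $L^2\to L^{p_2}$ estimates from Step 2 gives \eqref{e:GammaA-t-ptwise-ODE} with the exponent $\frac n2([p_1,2]+[2,p_2])=\frac n2[p_1,p_2]$.

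\emph{Consequences.} Uniform $L^p$ boundedness is the case $p_1=p_2=p$: we apply \eqref{e:GammaA-t-ptwise-ODE} to $E,F$ cubes of side $(t-s)^{1/2}$ and sum over the off-diagonal grid, the Gaussian decay making the sum converge. This gives the stated inequalities for $p_\pm(A)$. When $(\frac{n+2}{n+1},n+2)\subset(v_-,v_+)$ we have $v_+^\sharp=\infty$ and $v_-^\sharp\le1$, so $(p_-(A),p_+(A))=(1,\infty)$. The last claim, for all $p_-(A)<p_1\le p_2<p_+(A)$, would follow by Riesz--Thorin/Stein interpolation between uniform $L^p$ bounds near the endpoints and the estimates already obtained, in the manner of Lemma~\ref{lem:DBresODEs}\ref{it:DBODE2}. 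The Gaussian factor degrades only in $\eta$, and the $L^{p}\to L^{p}$ endpoints are reduced to $L^{\tilde p_1}\to L^{\tilde p_2}$ via iteration of Chapman--Kolmogorov.

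The main obstacle is Step 2 near the endpoints. We must check that \eqref{eq:RHIu} may be applied with cylinders touching the initial time $s$, since the solution is only defined on $(s,\infty)$. We would handle this by placing $I$ in $((s+t)/2,t]$, or by extending $u$ by zero for times $<s$ when $f\ind_E$ vanishes near $\delta B$. We must also check that the sup-in-time bound indeed controls the slice at time $t$.
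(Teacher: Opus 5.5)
Your route is the paper's: $L^2$ Gaffney bounds (the paper cites \cite{AMP19} instead of redoing Davies' perturbation), the reverse H\"older inequality \eqref{eq:RHIu} of Proposition~\ref{prop:rhi} used as in \cite{Z20}, duality through the backward adjoint propagator, and Chapman--Kolmogorov. The paper interpolates Zhao's decay-free local $L^2\to L^q$ bound against the $L^2$ Gaffney estimate; you instead build the Gaussian factor directly into the right-hand side of \eqref{eq:RHIu}. That is a fine variant, but the summation in Step~2 must be done in the right order. First sum the squares of the space--time $L^2$ averages over the balls, and use bounded overlap to reach $\|\ind_{\widetilde F}u(\tau)\|_{L^2}$ with $\widetilde F=\bigcup_k\delta B_k$. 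Only then apply Step~1, once. Applying Step~1 ball by ball with the global norm $\|f\ind_E\|_{L^2}$ does not sum: for $E,F$ two half-spaces at distance $d$, infinitely many balls lie at distance about $d$ from $E$.

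The genuine gap is the range covered by Step~3. Composing an $L^{p_1}\to L^2$ bound with an $L^2\to L^{p_2}$ bound only yields the estimate when $p_1\le 2\le p_2$. For $p_1>2$ there is nothing to pass through, because an $L^{p_1}\to L^2$ bound (exponent $[p_1,2]<0$) is false on large sets; symmetrically for $p_2<2$. So the cases $2<p_1\le p_2<v_+(A)^\sharp$ and $\max\{v_-(A)^\sharp,1\}<p_1\le p_2<2$ are not proved. In particular, the diagonal case $p_1=p_2=p\neq 2$ is missing, and your ``Consequences'' paragraph uses exactly this case to get uniform $L^p$ bounds, hence $p_+(A)\ge v_+(A)^\sharp$ and $p_-(A)\le\max\{v_-(A)^\sharp,1\}$. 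Your last paragraph cannot fill this: it takes uniform bounds on $(p_-(A),p_+(A))$ as input, and the size of that interval is precisely what is being estimated.

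What is needed is the covering argument the paper alludes to. For $2\le p_1\le p_2<v_+(A)^\sharp$:
\begin{enumerate}
\item Tile $\R^n$ by cubes $C_j$ of side $(t-s)^{1/2}$ and apply your $L^2\to L^{p_2}$ Gaussian bound to each pair $(E\cap C_j,F\cap C_k)$.
\item Use H\"older on the source cube, $\|f\ind_{E\cap C_j}\|_{L^2}\le (t-s)^{\frac n2[2,p_1]}\|f\ind_{E\cap C_j}\|_{L^{p_1}}$, which turns the factor $(t-s)^{-\frac n2[2,p_2]}$ into $(t-s)^{-\frac n2[p_1,p_2]}$.
\item Split $e^{-\eta\, d(E\cap C_j,F\cap C_k)^2/(t-s)}\le e^{-\frac\eta2 d(E,F)^2/(t-s)}\,e^{-\frac\eta2 d(C_j,C_k)^2/(t-s)}$.
\item Sum over $j,k$ using Schur's test and $\ell^{p_1}\subset\ell^{p_2}$.
\end{enumerate}
Duality then gives $p_1\le p_2\le 2$, and your Step~3 handles the mixed case. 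With this in place, the consequences and your interpolation/iteration sketch for $p_-(A)<p_1\le p_2<p_+(A)$ go through at the level of detail the paper itself gives.
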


\begin{proof}  
    The estimate \eqref{e:GammaA-t-ptwise-ODE} is known for $p_1=p_2=2$, again without further assumption on $A$, see e.g. \cite[Proposition 3.19]{AMP19}. Then \cite{Z20} shows that if \eqref{eq:RHIu} holds for some $q \in (2,\infty)$, then one has
    \[ \|\ind_{B(y,\sqrt{t-s})} \Gamma_A(t,s) f\|_{L^q_x} \lesssim |t-s|^{-\frac{n}{2}[2,q]}\|f\|_{L^2_x}, \quad y \in \R^n. \]
    In particular, we have verified \eqref{eq:RHIu} for $2<q<v_+(A)^\sharp$ by Proposition \ref{prop:rhi}, so by interpolation \eqref{e:GammaA-t-ptwise-ODE} with $p_1=p_2=2$ and covering arguments with balls imply \eqref{e:GammaA-t-ptwise-ODE} for $p_1=2<p_2<q$ and general $F$, and next for  $2<p_1\le p_2<q$. As $q$ is arbitrary, we obtain the desired range with $p_1\ge 2$. By duality from the same result for $\Gamma_A(t,s)^*$ associated to the backward adjoint equation, we obtain the estimate \eqref{e:GammaA-t-ptwise-ODE} for $\max\{v_-(A)^\sharp,1\} < p_1 \le p_2\le 2$. Finally, the Chapman-Kolmogorov relation and the composition of exponential off-diagonal estimates allow one to get the full range. The consequences for $p_\pm(A)$ follow from the definition and properties of the  ${}^\sharp$ rule. 
\end{proof}

Let us state a corollary that was not obtainable just using the above embeddings in Proposition \ref{prop:Vrho,p}.

\begin{cor} 
    For $p_-(A)<p<p_+(A)$, $(\pm\partial_t - \Div_x(A(t,x)\nabla_x))^{-1}\colon \Ltx{1}{p} \to C^{}_0L^p_{\vphantom{0}x}$ are bounded.
\end{cor}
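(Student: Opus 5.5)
The plan is to use the representation \eqref{eq:representatioforward}, namely
\[
u(t)=\int_{-\infty}^t \Gamma_A(t,s)f(s)\dd s ,
\]
together with the uniform $L^p$ bounds from Definition~\ref{def:p(A)}. The argument has three steps: boundedness into $L^\infty_tL^p_x$, continuity in $t$, and decay at $\pm\infty$.

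First I work with $f\in \Ltx{1}{2}\cap\Ltx{1}{p}$, which is dense in $\Ltx{1}{p}$. For such $f$ the integral converges strongly in $L^2(\R^n)$, and by \eqref{eq:representatioforward} it equals $u(t)$ for every $t$. Since $p\in(p_-(A),p_+(A))$, we have $\sup_{s\le t}\|\Gamma_A(t,s)\|_{L^p\to L^p}\le C$. Hence the Bochner integral also converges in $L^p$, and it is consistent with the $L^2$ one because both are limits of the same Riemann sums. Minkowski's inequality then gives
\[
\|u(t)\|_{L^p_x}\le C\int_{-\infty}^t\|f(s)\|_{L^p_x}\dd s\le C\|f\|_{\Ltx{1}{p}}
\]
uniformly in $t$.

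Next I check continuity and the limits at $\pm\infty$ for the same class of $f$. The limit at $-\infty$ is immediate, because $\|u(t)\|_{L^p}\le C\int_{-\infty}^t\|f(s)\|_{L^p}\dd s\to0$. For continuity and the limit at $+\infty$ I reduce by density to simple functions $f=\sum_k \ind_{I_k}(s)\phi_k$ with $\phi_k\in L^2\cap L^p$. The uniform bound above makes this reduction legitimate, and it also passes continuity to the closure.

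For one term $\ind_{(a,b)}\phi$, the Chapman--Kolmogorov relation gives $u(t)=\Gamma_A(t,b)u(b)$ for $t>b$. So the key facts needed are the following two, and their verification is the main obstacle.
\begin{itemize}
\item Strong continuity of $t\mapsto\Gamma_A(t,s)\psi$ in $L^p$.
\item Decay of $\Gamma_A(t,s)\psi$ in $L^p$ as $t\to\infty$.
\end{itemize}
Both are known in $L^2$, since the Green function identification gives a weak solution in $C_0L^2_x$.

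To transfer them to $L^p$, I interpolate between $L^2$ and $L^{\tilde p}$ for some $\tilde p$ on the far side of $p$ from $2$, with $\tilde p\in(p_-(A),p_+(A))$. By Riesz--Thorin, for $\psi\in L^2\cap L^{\tilde p}$,
\[
\|g\|_{L^p}\le\|g\|_{L^2}^{1-\theta}\|g\|_{L^{\tilde p}}^{\theta}
\]
with $\theta<1$. Applying this to $g=\Gamma_A(t,s)\psi-\Gamma_A(t',s)\psi$, and to $g=\Gamma_A(t,s)\psi$, combines $L^2$ continuity and decay with the uniform $L^{\tilde p}$ bound. This yields continuity and decay in $L^p$ on a dense class, and the uniform $L^p$ bound extends them to all $\psi\in L^p$.

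Continuity inside the support interval is handled the same way, writing
\[
u(t)=\int_a^t\Gamma_A(t,s)\phi\dd s
\]
and using dominated convergence with the uniform $L^p$ bound. Finally, density extends everything to $\Ltx{1}{p}$, since $C_0L^p_x$ is closed in $L^\infty_tL^p_x$. The backward operator is treated identically via \eqref{eq:representatiobackwardward} and the fact that $p_\pm(A^*)$ are the Hölder conjugates of $p_\mp(A)$.
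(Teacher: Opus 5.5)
Your argument is correct. Its core mechanism matches the paper's: the uniform $L^p$ bound on the propagators gives $\Ltx{1}{p}\to L^\infty_tL^p_x$, and $L^2$ continuity and decay transfer to $L^p$ through $\|g\|_{L^p}\le\|g\|_{L^2}^{1-\theta}\|g\|_{L^{\tilde p}}^{\theta}$. (That inequality is H\"older's inequality, i.e.\ log-convexity of $L^p$ norms, not Riesz--Thorin.) Density and the closedness of $C_0L^p_x$ in $L^\infty_tL^p_x$ then finish.

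The difference is where you interpolate. The paper applies the interpolation to $u$ itself. The $L^2$ theory quoted at the start of the Sneiberg subsection already gives $(\partial_t-\Div_x(A\nabla_x))^{-1}\colon \Ltx{1}{2}\to C_0L^2_x$. So for $f\in\Ltx{1}{2}\cap\Ltx{1}{\tilde p}$ one has $u\in C_0L^2_x\cap L^\infty_tL^{\tilde p}_x$, hence $u\in C_0L^p_x$, in two lines.

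You instead reduce to simple functions, write $u(t)=\Gamma_A(t,b)u(b)$ via Chapman--Kolmogorov, and interpolate at the level of the propagator. This needs extra input: strong $L^2$ continuity of $t\mapsto\Gamma_A(t,s)\psi$ on $[s,\infty)$, including at $t=s$ (for right-continuity at $t=b$), and its decay as $t\to\infty$. Both are standard, but the paper does not state them in that form; it only records the Green function identification. You also need a dominated convergence step inside each support interval.

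Your consistency claim for the $L^p$ Bochner integral via ``the same Riemann sums'' is loose. Testing against $g\in L^2\cap L^{p'}$ is the clean justification.

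Your detour does yield something of independent interest: $\Gamma_A(\cdot,s)\psi\in C_0([s,\infty);L^p)$ for every $\psi\in L^p$, i.e.\ strong continuity and decay of the propagators on $L^p$. For the corollary itself it is unnecessary.
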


\begin{proof} 
    Clearly, using the representation \eqref{eq:representatioforward} and the fundamental solution bound for $p$, we obtain the bound $\Ltx{1}{p} \to \Ltx{\infty}{p}$ by density. To see the bound into $C^{}_0L^p_{\vphantom{0}x}$, choose $q \in (p_-(A),p_+(A))$ so that $p\in [2,q)$ or $p\in (q,2]$. Note that the images of functions in $\Ltx{1}{q} \cap \Ltx{1}{2}$ are contained in $C^{}_0L^2_{\vphantom{0}x}\cap \Ltx{\infty}{q}$, and hence in $C^{}_0L^p_{\vphantom{0}x}$ by interpolation. Since $C^{}_0L^p_{\vphantom{0}x}$ is closed in $\Ltx{\infty}{p}$, the conclusion follows by density of $\Ltx{1}{q} \cap \Ltx{1}{2}$ into $\Ltx{1}{p}$. The argument for the backward operator is the same. 
\end{proof}

\begin{remark}[Sharpness of $p_\pm(A)$]
    A family of examples by Mooney \cite{Moo21} can be used to see that the inequality $p_+(A)>2$ is sharp (at least when $n\ge 2$) among all such operators when $A$ is bounded, elliptic, complex and non-autonomous, in stark contrast with the autonomous case (see Section \ref{ssec:parabolic-auto}). We thank Moritz Egert for explaining to us  this example, designed to disprove local boundedness of weak solutions to forward complex equations (the construction can be adapted to backward equations as well). Inspection of this construction also furnishes the conclusion that for any $p>2$ there is an equation such that $\Gamma_A(0,-1)$ can be unbounded on $L^p(\R^n)$. As $p_-(A)=p_+(A^*)'$, the conclusion $p_-(A)<2$ is also sharp. 
    When $A$ is real though it is known that the fundamental solutions are contractive on $L^p(\R^n)$ spaces for all $p\in [1,\infty]$, hence $p_+(A)=\infty$ and $p_-(A)=1$, see \cite{Ar68}.
\end{remark}

\subsection{Boundedness in tent spaces}
\label{ssec:lions-tent}
We define the following operators on $\R^{1+n}_+$: 
\begin{equation*}
    \begin{split}
        \cL^A_1 &:= \ind_{\R^{1+n}_+} (\partial_t - \Div_x(A(t,x)\nabla_x))^{-1} \ind_{\R^{1+n}_+}, \\
        \cB^A_1 &:= \ind_{\R^{1+n}_+} (-\partial_t - \Div_x(A(t,x)\nabla_x))^{-1} \ind_{\R^{1+n}_+}, \\
        \cR^A_{1/2} &:=\cL^A_1 \Div_x,
    \end{split}
    \qquad \qquad 
    \begin{split}
        \cL^A_{1/2} &:= \nabla_x \cL^A_1, \\
        \cB^A_{1/2} &:= \nabla_x \cB^A_1, \\
        \cR^A_0 &:= \nabla_x \cR^A_{1/2} = \nabla_x \cL^A_1 \Div_x.
    \end{split}
\end{equation*}

We call the first four the \emph{Duhamel operators}. The first two are related to solving Cauchy problems \eqref{e:NaPC} with $F=0$ and for $\cL^A_1$, there is a representation of Duhamel type \eqref{eq:representatioforward}. The next two correspond to backward Cauchy problems with zero final data and $F=0$ and $\cB^A_1$ enjoys the dual representation formula \eqref{eq:representatiobackwardward}. The last two are called the \emph{Lions operators} because they are related to solving Cauchy problems \eqref{e:NaPC} with $f=0$, first solved by Lions in the $L^2$ setting. Being associated to the PDE, these operators map $L^\infty_{\rmc}(\R^{1+n}_+)$ into $L^2_{\loc}(\R^{1+n}_+)$. Observe the duality relations
\begin{equation}
    \label{e:parabolic-op-duality}
    \cB^{A^*}_1 = (\cL^A_1)^*, \quad \cB^{A^*}_{1/2} = -(\cR^A_{1/2})^*
\end{equation}
in the sense that for all $f,g\in L^\infty_c(\R^{1+n}_+)$,
\[\langle \cB^{A^*}_1 f, g\rangle= \langle  f,  \cL^A_1 g\rangle, \quad \langle \cB^{A^*}_{1/2} f, g\rangle=- \langle  f,  \cR^A_{1/2} g\rangle, \]
where $\langle f,g \rangle = \int_{\R^{1+n}_+} f(t,x) \ovg(t,x) \dd t\dd x$. Here, we introduce the backward operators because we use duality arguments. Note that our methods also yield results for the backward Lions operators $\cB^A_1 \Div_x$ and $\cB^A_{1/2}\Div_x$. We leave the precise statements to interested readers.

We use our framework to prove boundedness of the Duhamel operators and the Lions operators in a range of weighted tent spaces. Our first theorem establishes boundedness of the forward Duhamel operators $\cL^A_1$ and $\cL^A_{1/2}$ in tent spaces.

\begin{theorem}[Boundedness of the forward Duhamel operators in tent spaces]
    \label{thm:Duhamel-tent}
    Let $p,q \in (0,\infty]$ and 
    \[ \beta>-[1,q]+\tfrac{n}{2}[\min\{p,q,p_-(A)\},p_-(A)]. \]
    \begin{enumerate}[label=\normalfont(\roman*)]
        \item \label{item:LA1-bd-tent}
        If $p_-(A)<r<p_+(A)$, then $\cL^A_1$ extends to a bounded operator from $T^{p,q,r}_\beta$ to $T^{p,q,r}_{\beta+1}$.

        \item \label{item:LA1/2-bd-tent}
        If $\max\{p_-(A),v_-(A)_\ast\}<r<v_+(A)$, then $\cL^A_{1/2}$ extends to a bounded operator from $T^{p,q,r}_\beta$ to $T^{p,q,r}_{\beta+1/2}$.
    \end{enumerate}
\end{theorem}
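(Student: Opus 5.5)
The plan is to apply Theorem~\ref{thm:mainestimate} and Theorem~\ref{thm:extension} with $m=2$, using causality of $\cL^A_1$ and $\cL^A_{1/2}$ so that only the lower bound \eqref{e:main-beta-lower-bound} on $\beta$ matters. Both operators are causal by the representation \eqref{eq:representatioforward}. For \ref{item:LA1-bd-tent} the operator $\cL^A_1$ has the operator-valued kernel $K(t,s)=\ind_{s<t}\Gamma_A(t,s)$. Fix $p_-(A)<p_1<r<p_2<p_+(A)$. Lemma~\ref{lem:ODEGammaA} gives $L^{p_1}$-$L^{p_2}$ bounds $|t-s|^{-\frac n2[p_1,p_2]}e^{-\eta d(E,F)^2/(t-s)}$. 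This is exactly Definition~\ref{def:kernelODE} of type $(1,2,r,p_1,p_2,M)$ for every $M$: the $t$-separation exponent is $-1-\frac n2[p_1,p_2]+1$, and the exponential absorbs any power of $|t-s|/d^2$ in the $x$-separated case.

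The $L^r$-boundedness item \ref{it:Lr} of Definition~\ref{def:kernelODE} is not immediate, so I would not go through it. Instead I would check Definition~\ref{def:ODE}\ref{it:ODEDef1} directly. On a Whitney region with $\inf E_t\eqsim\ell_t$, Minkowski's inequality and the uniform $L^r$ bound of $\Gamma_A$ give $\|\ind_{\mbs E}\cL^A_1(f\ind_{\mbs E})\|_{L^r}\lesssim |E_t|\,\|f\ind_{\mbs E}\|_{L^r}\lesssim\inf(E_t)\|f\ind_{\mbs E}\|_{L^r}$. Items \ref{it:ODEDef2}--\ref{it:ODEDef4} then follow verbatim from the proof of Proposition~\ref{prop:compareAH}, which did not use item \ref{it:Lr}.

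With $\vec u=(1,p_1)$, $\vec v=(\infty,p_2)$ and $M$ large, the decay condition \eqref{e:main-decay-condition} holds. The lower bound becomes $\beta>-[1,q]+\frac n2[\min\{p,q,p_1\},p_1]$. Letting $p_1\downarrow p_-(A)$ gives the stated range, since the inequality is strict. Theorem~\ref{thm:extension} then supplies the extension, because the operator is linear.

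For \ref{item:LA1/2-bd-tent}, with $\kappa=1/2$, the step I expect to be the main obstacle is that there is no good pointwise-in-$t$ kernel for $\nabla_x\Gamma_A(t,s)$. I would instead obtain estimates of type \eqref{eq:odeprototype} with $u_t=1$ and $v_t=r$, and derive the gradient bounds from Proposition~\ref{prop:rhi}:
\begin{itemize}
\item when $\mbs F$ is separated from $\mbs E$, $u=\cL^A_1(f\ind_{\mbs E})$ is a weak solution of the homogeneous equation near $\mbs F$;
\item the $r$-Caccioppoli inequality \eqref{eq:p-Caccioppoli1}, valid since $r<v_+(A)$, bounds $\|\nabla u\|_{L^r(\mbs F)}$ by $\ell^{-1}\|u\|_{L^r(\mbs F^*)}$ on a slightly enlarged set $\mbs F^*$;
\item covering $\mbs F$ by parabolic cylinders of the right scale (their size is $\eqsim \ell_t^{1/2}$, or $d_x$ in the $x$-separated case) and applying the part \ref{item:LA1-bd-tent} off-diagonal bounds for $u$ then gives the decay with exponent reduced by $1/2$.
\end{itemize}
The exponential decay again gives arbitrary $M$. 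For the local Whitney bound \ref{it:ODEDef1}, I would take $v=\chi\, \cL^A_1(f\ind_{\mbs E})$, which solves \eqref{e:para-local-vi}-type equations. Using the mixed-norm bound $\nabla_x(\partial_t-\Div_x A\nabla_x+1)^{-1}:L^{\ulq}_{t,x}\to L^r_{t,x}$ from Corollary~\ref{cor:para-op-mixed-Leb} with $(2+n)[\underline r,r]\le1$ (this is where $r>v_-(A)_*$ enters), together with the Caccioppoli inequality and part \ref{item:LA1-bd-tent} for the $u$ terms (this is where $r>p_-(A)$ enters), gives the scale factor $\inf(E_t)^{1/2}$. Theorem~\ref{thm:mainestimate} then applies with $\kappa=1/2$. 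Only the lower bound on $\beta$ survives by causality, and its $\vec u$-dependence is again through $u_t=1$ and $u_x=p_1\downarrow p_-(A)$, which yields the same range for $\beta$.
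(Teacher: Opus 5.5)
Your overall architecture is the paper's (Lemma~\ref{lemma:Duhamel-SIO}, then Theorem~\ref{thm:mainestimate} with causality and $p_1\downarrow p_-(A)$, then Theorem~\ref{thm:extension}). Part~\ref{item:LA1-bd-tent} is correct. Your Minkowski bound on Whitney regions is a good replacement for the paper's Hardy-operator proof of $L^r\to L^r_1$ boundedness. Part~\ref{item:LA1/2-bd-tent}, however, has two steps that fail as stated.

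\emph{The local Whitney bound.} Corollary~\ref{cor:para-op-mixed-Leb} gives $\nabla_x(\partial_t-\Div_x A\nabla_x+1)^{-1}\colon L^{\underline r}_{t,x}\to L^r_{t,x}$ only when the \emph{target} exponent $r$ lies in $(v_-(A),v_+(A))$. Your cut-off also creates the divergence term $\Div_x(Au\nabla_x\chi)$, which needs $\nabla_x(\cdots)^{-1}\Div_x$ on $L^r$, again only for $r\in(v_-(A),v_+(A))$.

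So your argument gives nothing for $\max\{p_-(A),v_-(A)_*\}<r\le v_-(A)$. This range is always non-empty, because $p_-(A)\le\max\{v_-(A)^\sharp,1\}<v_-(A)$ and $v_-(A)_*<v_-(A)$. The condition $r>v_-(A)_*$ does not enter where you say it does.

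No localisation is needed: $\cL^A_{1/2}(f\ind_{\mbs E})$ is just the global operator $\nabla_x(\partial_t-\Div_x A\nabla_x)^{-1}$ applied to $f\ind_{\mbs E}$. Choose $q\in(v_-(A),v_+(A))$ with $1<q_*\le r\le q$; this is possible exactly because $\max\{v_-(A)_*,1\}<r<v_+(A)$. Combine the $L^{q_*}\to L^q$ bound with H\"older on $\mbs E$. The resulting factor is $|\mbs E|^{[q_*,q]}=|\mbs E|^{1/(n+2)}\lesssim \inf(E_t)^{1/2}$, which is Proposition~\ref{prop:boundedimplySOwithM=0} with $\kappa=\frac12$. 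The restriction $r>p_-(A)$ enters only through $p_1\le r$ in Theorem~\ref{thm:mainestimate}.

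\emph{The scale in the $x$-separated configuration.} There the cylinders must have size $\eqsim D_t^{1/2}$ with $D_t=\diam(E_t\cup F_t)$. Size $d_x$ does not work, and neither does $\ell_t^{1/2}$, since $D_t$ may be much smaller than $\ell_t$.

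Caccioppoli on cylinders of size $\rho$ bounds $\|\ind_{\mbs F}\nabla_x u\|_{L^r}$ by $\rho^{-1}\|u\|_{L^r}$ on an enlargement of time-length $\eqsim D_t+\rho^2$, where $t-s$ ranges up to $D_t+\rho^2$. For $\rho\eqsim d_x$, the Gaussian factor $e^{-\eta d_x^2/(t-s)}$ of Lemma~\ref{lem:ODEGammaA} is of order one on that set. You then only get $d_x^{-2[1,r]-n[p_1,r]+1}$, with no factor $(D_t/d_x^2)^M$, i.e.\ $M=0$ in \eqref{eq:odeprototype}. The decay condition \eqref{e:main-decay-condition} then requires $[1,r]>\frac12+\dots$, which fails for every $r\le 2$, so the claimed range is not reached.

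Tiling $\mbs F$ by cylinders of size $\gamma D_t^{1/2}$, as the paper does, keeps $t-s\lesssim D_t$. The Caccioppoli loss $(d_x^2/D_t)^{1/2}$ relative to $d_x^{-1}$ is then absorbed by $e^{-\eta d_x^2/D_t}$, which yields arbitrary $M$. Feeding part~\ref{item:LA1-bd-tent} into $L^r_x$ is legitimate, since $r<v_+(A)\le v_+(A)^\sharp\le p_+(A)$.
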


The next theorem is devoted to the backward Duhamel operators $\cB^A_1$ and $\cB^A_{1/2}$.
\begin{theorem}[Boundedness of the backward Duhamel operators in tent spaces]
    \label{thm:back-Duhamel-tent}
    Let $p,q \in (0,\infty]$.
    \begin{enumerate}[label=\normalfont(\roman*)]
        \item \label{item:BA1-bd-tent}
        If $p_-(A)<r<p_+(A)$ and
        \[ \beta < [q,\infty] - \tfrac{n}{2}{\bracb{p_+(A),\max\cbrace{p,q,p_+(A)}} } - 1, \]
        then $\cB^A_1$ extends to a bounded operator from $T^{p,q,r}_\beta$ to $T^{p,q,r}_{\beta+1}$.

        \item \label{item:BA1/2-bd-tent}
        If $\max\{p_-(A),v_-(A)_*\} < r < v_+(A)$ and
        \[ \beta < [q,v_+(A)] -\tfrac{n}{2}{\bracb{v_+(A),\max\cbrace{p,q,v_+(A)}} } - \tfrac{1}{2}, \]
        then $\cB^A_{1/2}$ extends to a bounded operator from $T^{p,q,r}_\beta$ to $T^{p,q,r}_{\beta+1/2}$.
    \end{enumerate}
\end{theorem}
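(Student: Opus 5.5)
The plan is to obtain Theorem~\ref{thm:back-Duhamel-tent} from Theorem~\ref{thm:Duhamel-tent} by the duality of Proposition~\ref{prop:duality}, applied at the level of singular operator types, and then to invoke Theorem~\ref{thm:mainestimate} in its anti-causal form.

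\textbf{Step 1: transfer the singular operator type.} The relations \eqref{e:parabolic-op-duality} give $\cB^A_1=(\cL^{A^*}_1)^*$ and $\cB^A_{1/2}=-(\cR^{A^*}_{1/2})^*$. In the proof of Theorem~\ref{thm:Duhamel-tent} one shows that $\cL^{A^*}_1$ is a causal singular operator of type $(1,2,r',(1,p_1),(\infty,p_2),M)$ for any $p_-(A^*)<p_1\le r'\le p_2<p_+(A^*)$ and any $M$. This comes from Proposition~\ref{prop:compareAH}, since the kernel $\Gamma_{A^*}(t,s)$ has the Gaussian bounds of Lemma~\ref{lem:ODEGammaA}. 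Proposition~\ref{prop:duality} then makes $\cB^A_1$ an anti-causal singular operator of type $(1,2,r,(1,p_2'),(\infty,p_1'),M)$. Here $p_2'\downarrow p_-(A)$ and $p_1'\uparrow p_+(A)$, because $p_\pm(A^*)=p_\mp(A)'$.

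For $\cB^A_{1/2}$ I need the type of $\cR^{A^*}_{1/2}=\cL^{A^*}_1\Div_x$. Off-diagonal estimates for it follow from the local estimates of Proposition~\ref{prop:rhi} and Remark~\ref{rem:inhomo-Caccioppoli-RH}. Local boundedness follows from Corollary~\ref{cor:para-op-mixed-Leb}, which requires exponents in $(v_-(A^*),v_+(A^*))$. I expect the type to have $\vec u=(u,u_x)$ with $u_t=u_x$ near $v_-(A^*)$ and a $v$-side exponent $\infty$ in $t$. After duality, the output exponent $v_t'$ for $\cB^A_{1/2}$ is close to $v_+(A)$, which produces the $[q,v_+(A)]$ in the stated bound.

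\textbf{Step 2: apply Theorem~\ref{thm:mainestimate} in the anti-causal case.} Only the upper bound \eqref{e:main-beta-upper-bound} and the decay condition \eqref{e:main-decay-condition} are needed. For $\cB^A_1$ with $\kappa=1$, $m=2$ and $v_t=\infty$, condition \eqref{e:main-beta-upper-bound} reads
\[
\beta<[q,\infty]-\tfrac n2[v_x,\max\{p,q,v_x\}]-1 .
\]
With $v_x=p_1'$ arbitrarily close to $p_+(A)$, this gives the stated condition, the inequality being strict and the bound continuous in $v_x$. Since $M$ is arbitrary (Gaussian decay), \eqref{e:main-decay-condition} can always be arranged. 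For $\cB^A_{1/2}$ with $\kappa=1/2$, $v_t=v_x$ near $v_+(A)$ gives exactly
\[
\beta<[q,v_+(A)]-\tfrac n2[v_+(A),\max\{p,q,v_+(A)\}]-\tfrac12 ,
\]
again after taking limits of the exponents. Finally, the extension to all of $T^{p,q,r}_\beta$ follows from Theorem~\ref{thm:extension}, since both operators are linear.

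\textbf{Main obstacle.} The delicate point is the precise singular operator type of $\cR^{A^*}_{1/2}$. There are two issues.
\begin{enumerate}[(i)]
\item The local range of $r$ has to be checked: after duality it becomes $\max\{p_-(A),v_-(A)_*\}<r<v_+(A)$.
\item The decay parameter $M$ has to be large enough. Here I would derive exponential $x$-decay from Lemma~\ref{lem:ODEGammaA} combined with Caccioppoli, which makes $M$ arbitrary.
\end{enumerate}
Since these facts are presumably already established in the proof of Theorem~\ref{thm:Duhamel-tent}\ref{item:LA1/2-bd-tent}, I would reuse them and dualize via Proposition~\ref{prop:duality}.
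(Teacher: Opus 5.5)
Your part (i) is the paper's argument. You dualize Lemma~\ref{lemma:Duhamel-SIO}\ref{item:LA1-SIO} for $A^*$ via Proposition~\ref{prop:duality}, using $\cB^A_1=(\cL^{A^*}_1)^*$ and $p_\pm(A)=p_\mp(A^*)'$. You then apply the anti-causal case of Theorem~\ref{thm:mainestimate} with $v_t=\infty$, $v_x\uparrow p_+(A)$ and $M$ arbitrary. That part is fine. Your Step~2 bookkeeping for (ii) is also correct, once a type with $\vec v=(v,v)$ and $v\uparrow v_+(A)$ is available.

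Part (ii), however, has a genuine gap: the singular operator type you need is never obtained.
\begin{enumerate}[(a)]
\item You plan to dualize the type of $\cR^{A^*}_{1/2}=\cL^{A^*}_1\Div_x$, which you say is ``presumably already established'' in the proof of Theorem~\ref{thm:Duhamel-tent}\ref{item:LA1/2-bd-tent}. It is not. That proof concerns $\cL^{A}_{1/2}=\nabla_x\cL^A_1$, with the derivative on the output. Its adjoint is $(\cL^{A^*}_{1/2})^*=-\cB^A_1\Div_x$, the backward Lions operator, not $\cB^A_{1/2}=\nabla_x\cB^A_1$.
\item Proposition~\ref{prop:rhi} and Remark~\ref{rem:inhomo-Caccioppoli-RH} do not give it directly either. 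Caccioppoli and reverse H\"older inequalities act on the output side, where the function solves a homogeneous equation. Bounding $\ind_{\mbs F}\cL^{A^*}_1\Div_x(G\ind_{\mbs E})$ by $G\ind_{\mbs E}$, in particular with $x$-decay, amounts after dualizing to bounding $\ind_{\mbs E}\nabla_x\cB^A_1(g\ind_{\mbs F})$. That is the very off-diagonal estimate for $\cB^A_{1/2}$ you want.
\item This is why the paper runs the implication the other way. The type of $\cR^A_{1/2}$ in Lemma~\ref{lemma:Lions-SIO}\ref{item:RA1/2-SIO} is \emph{deduced} from that of $\cB^{A^*}_{1/2}$, so invoking it here would be circular.
\end{enumerate}

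The missing step is a direct proof that $\cB^A_{1/2}$ is of type $(1/2,2,r,(1,p_1),(p_2,p_2),M)$ for $p_-(A)<p_1\le p_2<v_+(A)$, $\max\{v_-(A)_*,1\}<r<v_+(A)$ and any $M\ge 0$. You get it by rerunning the four configurations of Lemma~\ref{lemma:Duhamel-SIO}\ref{item:LA1/2-SIO} with time reversed.
\begin{enumerate}[(1)]
\item Local boundedness comes from Corollary~\ref{cor:para-op-mixed-Leb}, which covers the backward operator, together with Proposition~\ref{prop:boundedimplySOwithM=0}.
\item In the separated configurations, anti-causality disposes of the case where $\mbs F$ lies above $\mbs E$.
\item Otherwise, $u=\cB^A_1(f\ind_{\mbs E})$ solves the homogeneous backward equation on a parabolic enlargement of $\mbs F$ that is still separated from $\mbs E$. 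The backward versions of \eqref{eq:p-Caccioppoli1} and \eqref{eq:RHIuq} then reduce everything to the type of $\cB^A_1$ from part (i), contracted with Proposition~\ref{prop:ODE-contraction}. This uses $v_+(A)\le v_+(A)^\sharp\le p_+(A)$.
\end{enumerate}
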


Finally, we present the results for boundedness of the Lions operators $\cR^A_{1/2}$ and $\cR^A_{0}$  in tent spaces.

\begin{theorem}[Boundedness of the Lions operators in tent spaces]
    \label{thm:Lions-tent}  
    Let $p,q \in (0,\infty]$ and 
    \begin{equation}
        \label{e:Lions-beta-bound}
        \beta>-[v_-(A),q]+\tfrac{n}{2}\left[ \min\{p,q,v_-(A)\},v_-(A) \right].
    \end{equation}
    \begin{enumerate}[label=\normalfont(\roman*)]
        \item \label{item:RA1/2-bd-tent}
        If $v_-(A)<r<\min\{p_+(A),v_+(A)^\ast\}$, then $\cR^A_{1/2}$ extends to a bounded operator from $T^{p,q,r}_\beta$ to $T^{p,q,r}_{\beta+1/2}$.

        \item \label{item:RA0-bd-tent}
        If $v_-(A) < r < v_+(A)$, then $\cR^A_{0}$ extends to a bounded operator from $T^{p,q,r}_\beta$ to $T^{p,q,r}_{\beta}$.
    \end{enumerate}
\end{theorem}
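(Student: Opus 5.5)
We plan to deduce Theorem~\ref{thm:Lions-tent} from Theorem~\ref{thm:mainestimate} and Theorem~\ref{thm:extension} with $m=2$, by showing that $\cR^A_{1/2}$ and $\cR^A_0$ are causal singular operators. For $\cR^A_{1/2}$ the type should be $(\tfrac12,2,r,\vec u,\vec v,M)$ and for $\cR^A_0$ the type $(0,2,r,\vec u,\vec v,M)$. Here $\vec u$ and $\vec v$ have components close to $v_-(A)$ and to the upper endpoint respectively, and $M$ can be taken arbitrarily large. Causality follows from the representation \eqref{eq:representatioforward}: if $f$ vanishes before time $\inf E_t$, then so does $(\partial_t-\Div_x A\nabla_x)^{-1}\Div_x f$. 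So only the lower bound \eqref{e:main-beta-lower-bound} is needed, with $u_t,u_x\downarrow v_-(A)$. With $M$ large, \eqref{e:main-decay-condition} holds automatically, and \eqref{e:main-beta-lower-bound} becomes exactly \eqref{e:Lions-beta-bound}. The Lipschitz hypothesis \eqref{eq:pointwiseLipschitz} is trivial by linearity. It then remains to check Definition~\ref{def:ODE}, and I would do so via Proposition~\ref{prop:SIO-simple} as far as possible.

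First, local boundedness (i). Corollary~\ref{cor:para-op-mixed-Leb}, restricted to $\R^{1+n}_+$ by causality, gives global $\Ltx{r}{r}$ boundedness of $\cR^A_0$ for $r\in(v_-(A),v_+(A))$. It also gives $\cR^A_{1/2}\colon L^r\to \Ltx{\ovq}{\ovp}$. On a Whitney region, H\"older's inequality turns this into the factor $\inf(E_t)^{1/2}$, as in Proposition~\ref{prop:boundedimplySOwithM=0}. The restriction $r<\min\{p_+(A),v_+(A)^*\}$ for $\cR^A_{1/2}$ should enter as follows. I would first obtain Sobolev-improved integrability up to $v_+(A)^*$, and then use the $L^r$ bounds on the fundamental solution, which need $r<p_+(A)$. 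Second, the decay estimates (ii)--(iv). In the separated configurations, $g:=\cR^A_{1/2}(f\ind_{\mbs E})$ is a weak solution of the homogeneous equation $\partial_t g-\Div_x(A\nabla_x g)=0$ on $\mbs F$ enlarged. For the $t$-separated case (ii), I would represent $g(t)=\Gamma_A(t,\sigma)g(\sigma)$ for $\sigma$ between $E_t$ and $F_t$. I would then combine three bounds:
\begin{itemize}
\item the global mixed-norm bound $\Ltx{u_t}{u_x}\to \Ltx{\ovq}{\ovp}$ from Corollary~\ref{cor:para-op-mixed-Leb}, to control $g$ on an intermediate slab;
\item Proposition~\ref{prop:rhi}, via \eqref{eq:RHIuq} and \eqref{eq:p-Caccioppoli1}, to upgrade integrability to $v_t,v_x$ on $\mbs F$ and to pass to $\nabla_x g$ for $\cR^A_0$;
\item Lemma~\ref{lem:ODEGammaA}, for exponential decay.
\end{itemize}
The exponents are then forced by scaling, and this yields the power $d(E_t,F_t)^{-[u_t,v_t]-\frac n2[u_x,v_x]+\kappa}$.

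For the $x$-separated cases (iii)--(iv), the difficulty is that the source $\Div_x(f\ind_{\mbs E})$ and $\mbs F$ may overlap in time. I would use a spatial cutoff $\chi$ that equals $1$ near $F_x$ and vanishes near $E_x$. Then $\chi g$ solves an equation whose sources are supported on $\supp\nabla\chi$, at distance $\eqsim d(E_x,F_x)$ from both sets. Combining the mixed-norm bounds with Gaussian off-diagonal decay of $\Gamma_A$ should give arbitrarily large $M$. Concretely, I would bound $g$ on the annulus through \eqref{eq:representatioforward} and Lemma~\ref{lem:ODEGammaA}, after writing $\Div_x$ via duality with $\nabla_x\Gamma_A^*$. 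Gradient bounds on $\Gamma_A$ should follow from \eqref{eq:p-Caccioppoli1} applied to the adjoint backward equation. Since the exponential decay dominates any power of $\diam(E_t\cup F_t)/d(E_x,F_x)^2\le 1$, any $M$ is admissible.

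\textbf{Main obstacle.} I expect the hardest part to be verifying Definition~\ref{def:ODE}\ref{it:ODEDef4} in the $L^r_t$ form with exact exponents. This requires off-diagonal bounds for $\nabla_x\Gamma_A(t,s)\Div_x$, which is not pointwise bounded in $s$ near $t$. I would handle it with the Caccioppoli/cutoff argument on $\mbs F$ enlarged, which avoids any kernel representation, rather than with $t$-pointwise estimates. This is precisely why the paper's weaker definition, which requires no kernel representation, is used here.
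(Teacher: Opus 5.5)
Your architecture is the paper's. You show that $\cR^A_{1/2}$ and $\cR^A_0$ are causal singular operators of types $(\tfrac12,2,r,(p_1,p_1),(\infty,p_2),M)$ and $(0,2,r,(p_1,p_1),(p_2,p_2),M)$ with $M$ arbitrary (Lemma~\ref{lemma:Lions-SIO}). Then only \eqref{e:main-beta-lower-bound} with $p_1\downarrow v_-(A)$ survives, and $\cR^A_0$ follows from Caccioppoli and reverse H\"older on $\mbs F$ enlarged, applied on top of the $\cR^A_{1/2}$ bounds (with a cover by cylinders of size $D_t^{1/2}$ in configuration (iv)).

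Where you differ is $\cR^A_{1/2}$. The paper never estimates it directly. It proves the properties of $\cB^{A^*}_{1/2}=\nabla_x\cB^{A^*}_1$ by the Caccioppoli-on-the-target argument used for $\cL^A_{1/2}$, fed by the $t$-pointwise bounds \eqref{e:GammaA-t-ptwise-ODE} for $\Gamma_A^*$. It then transfers everything, including local boundedness up to $v_+(A)^*$, through $\cR^A_{1/2}=-(\cB^{A^*}_{1/2})^*$ and Proposition~\ref{prop:duality}.

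Your mechanism for (iii)--(iv) is that duality unfolded: pair with $h$ supported in $F_x$, move $\nabla_x$ onto $s\mapsto\Gamma_A(t,s)^*h$, and apply \eqref{eq:p-Caccioppoli1} to this backward adjoint solution near $\mbs E$. It also explains why $u_t=p_1$ rather than $1$: Caccioppoli controls that gradient only in $L^{p_1'}$ in $s$, never pointwise.

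Your treatment of (ii) is genuinely different and correct. Average the global bound $L^{p_1}_{t,x}\to L^{\ovq}_tL^{\ovp}_x$ from Corollary~\ref{cor:para-op-mixed-Leb} over the middle third of the time gap, then propagate with $\Gamma_A(t,\sigma)\colon L^{\ovp}\to L^{p_2}$. This gives exactly $d_t^{-1/\ovq-\frac n2[\ovp,p_2]}=d_t^{-\frac1{p_1}-\frac n2[p_1,p_2]+\frac12}$, for instance with $\ovp=p_1<2$ and $\frac1{\ovq}=\frac1{p_1}-\frac12$. It needs no Caccioppoli, and the propagation is what lets $p_2$ reach $p_+(A)$; reverse H\"older alone stops at $v_+(A)^\sharp$. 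The price is that it does not extend to (iii)--(iv).

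Two corrections.

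\emph{Drop the spatial cutoff $\chi$.} Taken literally it is circular: $\ind_{\mbs F}\chi g$ contains $\ind_{\mbs F}\cR^A_{1/2}(gA\nabla\chi)$, which is an $x$-separated estimate for $\cR^A_{1/2}$ from the annulus to $\mbs F$, the very estimate being proved. It also does nothing about time overlap. Run the duality argument directly from $\mbs E$ to $\mbs F$ instead. Overlap in time is harmless there: $s\mapsto\Gamma_A(t,s)^*h$, extended by zero for $s>t$, is still a weak solution on the small cylinders covering $\mbs E$, because they avoid $\supp h$.

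\emph{Where $r<p_+(A)$ comes from.} It does not come from local boundedness, which holds for all $r\in(v_-(A),v_+(A)^*)$. It comes from the requirement $r\le v_x=p_2<p_+(A)$ in Theorem~\ref{thm:mainestimate}.
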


\begin{remark}
    In the special case $p=q=r$, since $T^{p,q,r}_\beta$ identifies with $L^r_\beta(\R^{1+n}_+)$, our theorems imply boundedness of the Duhamel and Lions operators from $L^r_\beta(\R^{1+n}_+)$ to $L^r_{\beta+\kappa}(\R^{1+n}_+)$. Aside from the case of $\cL^A_1$, we do not know a direct proof of such weighted Lebesgue bounds.
\end{remark}

\begin{remark}
    \label{rem:tent-bdd-beta-sharp}
    These are the bounds one can obtain using the method of this article from the lemmas below. However, some other techniques such as local regularity of weak solutions, atomic decomposition of tent spaces, duality and interpolation, may provide further bounds. We just describe one here without proof as this would go beyond the scope of this article.
    
    For instance, the bounds of $\beta$ in Theorems \ref{thm:back-Duhamel-tent}\ref{item:BA1/2-bd-tent} for $\cB^A_{1/2}$ and \ref{thm:Lions-tent} for $\cR^A_{1/2}$ and $\cR^A_0$ may be improved. When $q=r=2$, it has been shown in \cite[Theorem 5.1]{Hou25} that $\cR^A_{1/2}$ (resp. $\cR^A_0$) is also bounded from $T^{p,2,2}_\beta$ to $T^{p,2,2}_{\beta+1/2}$ (resp. $T^{p,2,2}_\beta$) for $\beta>\frac{n}{2}[\min\{p,p_-(A)\},p_-(A)]$. As $p_-(A) \le \max\{v_-(A)^\sharp,1\}$, this range is not fully covered by \eqref{e:Lions-beta-bound}. The argument there involves an inhomogeneous version of Caccioppoli's inequality (for the adjoint operator $\cB^{A^*}_{1/2}$) when $p>1$ and atomic decomposition of the tent spaces $T^{p,2,2}_\beta$ when $p \le 1$. 
    
    Inspired by this result, we conjecture that for any $p,q \in (0,\infty]$, $v_-(A)<r<v_+(A)$, and 
    \begin{equation}
        \label{e:Lions-beta-conj}
        \beta>-[2,q]+\tfrac{n}{2} \left[ \min\{p,q,p_-(A)\},p_-(A) \right],
    \end{equation}
    the Lions operator $\cR^A_{1/2}$ (resp. $\cR^A_0$) is bounded from $T^{p,q,r}_{\beta}$ to $T^{p,q,r}_{\beta+1/2}$ (resp. $T^{p,q,r}_{\beta}$). If the conjecture holds, then by interpolation, the lower bound of $\beta$ in \eqref{e:Lions-beta-bound} can be improved as
    \begin{equation}
        \label{e:Lions-beta-conj-full}
        \begin{cases}
        -[v_-(A),q]+\frac{n}{2} \left[ \min\{q,v_-(A)\},v_-(A) \right]  & \text{ if } p \ge v_-(A) \\
        -[2,q]+\frac{n}{2} \left[ \min\{p,q\},p_-(A) \right]  & \text{ if } p \le p_-(A) \\
        -[(v_-(A),2)_\theta,q] + \frac{n}{2}  \left[ \left(\min\{q,v_-(A)\},\min\{q,p_-(A)\} \right)_\theta, (v_-(A) ,p_-(A))_\theta \right] & \text{ if } p=(v_-(A),p_-(A))_\theta
        \end{cases},
    \end{equation}
    where $\theta \in (0,1)$ and $(p,q)_\theta$ denotes the real number such that $\frac{1}{(p,q)_\theta} = \frac{1-\theta}{p} + \frac{\theta}{q}$. As we shall see in Section \ref{ssec:parabolic-auto}, in the autonomous case, since $v_-(A)=q_+(L^\ast)'$, this new lower bound agrees with that in \cite{AH25b} for $q=r=2$.

    When $p,q \in (1,\infty]$, one can verify the conjecture \eqref{e:Lions-beta-conj} by adapting the argument in \cite[Lemma 5.5]{Hou25} with the inhomogeneous Caccioppoli inequality and reverse H\"older inequality established in Remark \ref{rem:inhomo-Caccioppoli-RH}. But in the other cases, for the moment, we do not know the atomic decomposition for Whitney-averaged tent spaces $T^{p,q,r}_\beta$ when $p \le 1$ except when $q=r \ge p$.

    To illustrate our conjecture on the bounds of $\beta$, we give graphic representations in Figure \ref{fig:Lions-beta-conj-full}. 
    \begin{figure}[ht]
        \centering
        \includegraphics[width=0.5\linewidth]{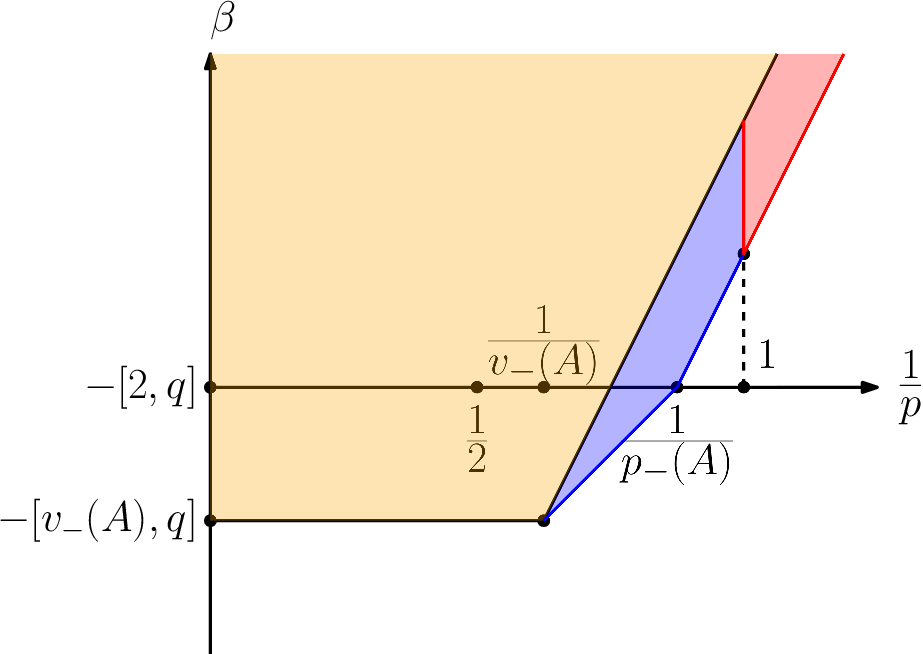}
        \caption{Illustration for the bounds of $\beta$ in \eqref{e:Lions-beta-conj-full}. The orange, blue, and red trapezoids represent, respectively, the range of $\beta$ in \eqref{e:Lions-beta-bound}, the range for which boundedness of the Lions operator is established in this remark, and the range for which such boundedness remains unproved.}
        \label{fig:Lions-beta-conj-full}
    \end{figure}
\end{remark}

Theorems~\ref{thm:Duhamel-tent},~\ref{thm:back-Duhamel-tent}, and~\ref{thm:Lions-tent} are direct consequences of Theorem~\ref{thm:mainestimate} via the following lemmas, which establish the singular operator properties.

\begin{lemma}[Forward Duhamel operators are singular operators]
    \label{lemma:Duhamel-SIO}
    ~
    \begin{enumerate}[label=\normalfont(\arabic*)]
        \item \label{item:LA1-SIO}
        $\cL^A_1$ is a singular operator of type $(1,2,r,\boldu,\boldv,M)$ when $\max\{\min\{p_-(A),v_-(A)_*\}, 1\}<r<\max\{p_+(A),v_+(A)^*\}$, $\boldu=(1,p_1)$, $\boldv = (\infty,p_2)$ with $p_-(A) < p_1 \le p_2 < p_+(A)$, and $M \ge 0$. 
        
        \item \label{item:LA1/2-SIO}
        $\cL^A_{1/2}$ is a singular operator of type $(1/2,2,r,\boldu,\boldv,M)$ when $\max\{v_-(A)_*, 1\}<r<v_+(A)$, $\boldu = (1,p_1)$, $\boldv = (p_2,p_2)$ with $p_-(A) <  p_1 \le p_2 < v_+(A)$, and $M \ge 0$.
    \end{enumerate}
\end{lemma}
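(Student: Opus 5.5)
For \ref{item:LA1-SIO}, $\cL^A_1$ has the operator-valued kernel $K(t,s)=\ind_{s<t}\,\Gamma_A(t,s)$ given by \eqref{eq:representatioforward}. The plan is to show that $\cL^A_1$ is a singular integral operator in the sense of Definition~\ref{def:kernelODE} and then apply Proposition~\ref{prop:compareAH}, which yields $\boldu=(1,p_1)$ and $\boldv=(\infty,p_2)$.

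\emph{Kernel estimates for $\cL^A_1$.} Take $m=2$ and $\kappa=1$. The $t$-separation bound $\|K(t,s)f\|_{L^{p_2}}\lesssim |t-s|^{-\frac n2[p_1,p_2]}\|f\|_{L^{p_1}}$ is Lemma~\ref{lem:ODEGammaA}, and it has exactly the required form $|t-s|^{-1-\frac n2[p_1,p_2]+\kappa}$. The $x$-separation bound follows from the exponential factor $\exp(-\eta d^2/(t-s))$. Under $d\ge c|t-s|^{1/2}$ this factor is bounded by $C_N (|t-s|/d^2)^N$ for every $N$. Hence
\[
|t-s|^{-\frac n2[p_1,p_2]}\exp\!\big(-\eta d^2/(t-s)\big)\lesssim d^{-n[p_1,p_2]}\,(|t-s|/d^2)^{M},
\]
which is the required decay for any $M\ge 0$.

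\emph{Local $L^r$-boundedness for $\cL^A_1$.} Rather than verify the global weighted bound \ref{it:Lr} of Definition~\ref{def:kernelODE}, I will check Definition~\ref{def:ODE}\ref{it:ODEDef1} directly; Remark~\ref{rem:pointwiseimmliesnonpointwise} notes that the proof of Proposition~\ref{prop:compareAH} only uses it there. On a Whitney region $\boldE$ with $|E_t|\eqsim \ell^2$, the range $r\in(p_-,p_+)$ is immediate: $\|\Gamma_A(t,s)\|_{L^r\to L^r}\lesssim1$ and Young's inequality in $t$ give $\|\ind_\boldE\cL^A_1(f\ind_\boldE)\|_{L^r}\lesssim |E_t|\,\|f\|_{L^r}\eqsim\inf(E_t)\|f\|_{L^r}$. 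For the remaining range $\min\{p_-,v_-{}_*\}<r<v_+^*$, I use Corollary~\ref{cor:para-op-mixed-Leb} with the inhomogeneous operator. The case $q=p=r$ gives $(\partial_t-\Div A\nabla+1)^{-1}\colon L^{\ulq}\to L^{\ovq}$, and this localizes to a unit-scale Whitney box after parabolic rescaling. The difference between the operator with and without $+1$ is then controlled through $e^{-(t-s)}\Gamma_A$ on a bounded time interval; alternatively, I would bound $(\partial_t-\Div A\nabla)^{-1}\colon L^{r_*}\to L^r$ and use Hölder on the box. I expect this bookkeeping of the exponent ranges ($r$ reachable via $\ulq,\ovq$ with $2[\cdot]+n[\cdot]=1$) to be the main technical obstacle, but it is routine once the correct corollary is chosen.

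For \ref{item:LA1/2-SIO}, the target exponent in $t$ is $v_t=p_2<\infty$, so I will not use a kernel argument. Instead I verify Definition~\ref{def:ODE} via Proposition~\ref{prop:SIO-simple}-type estimates, combining the already-established bounds for $\cL^A_1$ with the $p$-Caccioppoli inequality \eqref{eq:p-Caccioppoli1} of Proposition~\ref{prop:rhi}.

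\emph{Off-diagonal estimates for $\cL^A_{1/2}$.} Take $\kappa=1/2$. When $\boldF$ is separated from $\boldE$, the function $u=\cL^A_1(f\ind_\boldE)$ is a weak solution on a neighbourhood $\gamma^2 I\times\gamma B$ of $\boldF$ that stays away from $\boldE$. I cover $\boldF$ by parabolic cylinders of size comparable to its Whitney scale. On each cylinder, \eqref{eq:p-Caccioppoli1} with exponent $p_2<v_+(A)$ gives $\|\nabla u\|_{L^{p_2}(I\times B)}\lesssim r(B)^{-1}\|u\|_{L^{p_2}(\delta^2I\times\delta B)}$. The right-hand side is bounded in $L^{p_2}_tL^{p_2}_x$ by $|F_t|^{1/p_2}$ times the $L^\infty_tL^{p_2}_x$ estimate from part~(1). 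Since $r(B)^{-1}\eqsim d^{-1}$ (or $\ell_t^{-1/2}$) and $|F_t|^{1/p_2}=|F_t|^{[p_2,\infty]}$ enter, the powers match $-[1,p_2]-\frac n2[p_1,p_2]+\tfrac12$, while the $M$-decay is inherited. Thus $\boldv=(p_2,p_2)$.

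\emph{Local $L^r$-boundedness for $\cL^A_{1/2}$.} This comes from the first bound of Corollary~\ref{cor:para-op-mixed-Leb} (the third line with $\ulq$, localized) for $v_-{}_*<r<v_+$. Hypothesis \ref{it:ODEDef4}, which uses the time exponent $r$, is handled by Hölder exactly as in the proof of Proposition~\ref{prop:SIO-simple}.
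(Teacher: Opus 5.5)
Your architecture matches the paper's. For (1) you feed the Gaussian bounds of Lemma~\ref{lem:ODEGammaA} into Proposition~\ref{prop:compareAH} and check the Whitney bound directly. For (2) you apply the $p_2$-Caccioppoli inequality \eqref{eq:p-Caccioppoli1} on enlargements of $\boldF$, combined with (1). Configurations~\ref{it:ODEDef2} and~\ref{it:ODEDef3} for $\cL^A_{1/2}$ work as you describe, at scale $\ell_t^{1/2}\eqsim d_t^{1/2}$.

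\emph{Local bound in (1) for $r\notin(p_-(A),p_+(A))$.} This fails as written. Taking ``$q=p=r$'' in Corollary~\ref{cor:para-op-mixed-Leb} needs $r\in(v_-(A),v_+(A))$. The leftover range avoids this interval entirely, since $p_+(A)\ge v_+(A)^\sharp\ge v_+(A)$ and $p_-(A)\le\max\{v_-(A)^\sharp,1\}\le v_-(A)$. The inhomogeneous detour does not help. Controlling the difference through $e^{-(t-s)}\Gamma_A(t,s)$ needs $L^r$-bounds on $\Gamma_A$ exactly where they are missing. An $L^{r_*}\to L^r$ bound is a single parabolic Sobolev step, which via Proposition~\ref{prop:boundedimplySOwithM=0} yields order $\tfrac12$, not $\kappa=1$. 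The correct input is the homogeneous fourth line of the corollary with a middle exponent $q\in(v_-(A),v_+(A))$ taken near $v_+(A)$ (resp.\ $v_-(A)$), so that $q_*\le r\le q^*$. Then $\cL^A_1\colon L^{q_*}_{t,x}\to L^{q^*}_{t,x}$, and Proposition~\ref{prop:boundedimplySOwithM=0} gives $\kappa=\tfrac{n+2}{2}[q_*,q^*]=1$. The existence of such $q$ follows from $(n+2)[\max\{v_-(A)_*,1\},v_+(A)^*]>2$.

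\emph{Configuration~\ref{it:ODEDef4} for $\cL^A_{1/2}$.} This is the more serious gap. There $E_t$ and $F_t$ may overlap, and $D_t=\diam(E_t\cup F_t)$ can be far smaller than both $\ell_t$ and $d_x^2$. Yet the bound must carry $(D_t/d_x^2)^{M+[1,p_2]}$ for every $M\ge 0$. Cylinders of radius $\eqsim d_x$ or $\ell_t^{1/2}$, as your sketch suggests, enlarge $F_t$ in time by $\eqsim d_x^2$ or $\ell_t$. Then neither $|\widetilde F_t|$ nor $\diam(E_t\cup\widetilde F_t)$ is controlled by $D_t$. You get at best $M=0$, or decay in $\ell_t/d_x^2$, which does not imply decay in $D_t/d_x^2$. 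You must tile $\boldF$ by parabolic cylinders of length $\eqsim\gamma^2 D_t$ and side $\eqsim\gamma D_t^{1/2}$, as the paper does.

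With that choice your route works for $r\le p_2$:
\begin{itemize}
\item bounded overlap and Caccioppoli give a bound $\lesssim D_t^{-1/2}\|u\|_{L^{p_2}(\widetilde{\boldF})}$;
\item H\"older on $\widetilde F_t$ with $|\widetilde F_t|\lesssim D_t$, and the Gaussian bound with $|t-s|\lesssim D_t$, give $\|\ind_{\boldF}\cL^A_{1/2}(f\ind_{\boldE})\|_{L^{p_2}_{t,x}}\lesssim D_t^{\frac1{p_2}-\frac12}d_x^{-n[p_1,p_2]}(D_t/d_x^2)^N\|f\ind_{\boldE}\|_{L^1_tL^{p_1}_x}$ for any $N$;
\item the H\"older step of Proposition~\ref{prop:SIO-simple} then concludes.
\end{itemize}
This even avoids the reverse H\"older inequality the paper uses.

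However, that last step needs $r\le p_2$, whereas the lemma allows any $r<v_+(A)$ independently of $p_2$. For $r>p_2$ the paper uses the $r$-Caccioppoli and $L^r$--$L^1$ reverse H\"older inequalities \eqref{eq:RHIuq} on the small cylinders instead. Your plan does not cover this case, although Theorem~\ref{thm:Duhamel-tent} only needs $r\le p_2$.
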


\begin{lemma}[Backward Duhamel operators are singular operators]
    \label{lemma:back-Duhamel-SIO}
    ~
    \begin{enumerate}[label=\normalfont(\arabic*)]
        \item \label{item:BA1-SIO}
        $\cB^A_1$ is a singular operator of type $(1,2,r,\boldu,\boldv,M)$ when $\max\{\min\{p_-(A),v_-(A)_*\}, 1\}<r<\max\{p_+(A),v_+(A)^*\}$, $\boldu = (1,p_1)$, $\boldv = (\infty,p_2)$ with $p_-(A) < p_1 \le p_2 < p_+(A)$, and $M \ge 0$.

        \item \label{item:BA1/2-SIO}
        $\cB^A_{1/2}$ is a singular operator of type $(1/2,2,r,\boldu,\boldv,M)$ when $\max\{v_-(A)_*, 1\}<r<v_+(A)$, $\boldu = (1,p_1)$, $\boldv = (p_2,p_2)$ with $p_-(A) < p_1  \le p_2 < v_+(A)$, and $M\ge 0$.
    \end{enumerate}
\end{lemma}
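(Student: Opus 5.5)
The plan is to treat the two items differently. Item \ref{item:BA1-SIO} follows by duality from Lemma~\ref{lemma:Duhamel-SIO}\ref{item:LA1-SIO}. Item \ref{item:BA1/2-SIO} is proved directly, mirroring the forward argument. The reason is that duality for $\cB^A_{1/2}$ would require the corresponding statement for $\cR^{A^*}_{1/2}$, which has not yet been established.

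\emph{Item \ref{item:BA1-SIO}.} By \eqref{e:parabolic-op-duality} applied with $A^*$ in place of $A$, we have $\cB^{A}_1=(\cL^{A^*}_1)^*$ in the sense of Proposition~\ref{prop:duality}. Lemma~\ref{lemma:Duhamel-SIO}\ref{item:LA1-SIO} for $A^*$ shows that $\cL^{A^*}_1$ is of type $(1,2,\tilde r,(1,\tilde p_1),(\infty,\tilde p_2),M)$, where $p_-(A^*)<\tilde p_1\le\tilde p_2<p_+(A^*)$ and $\tilde r$ lies in the corresponding range for $A^*$. Proposition~\ref{prop:duality} then gives that $\cB^A_1$ is of type $(1,2,\tilde r',(1,\tilde p_2'),(\infty,\tilde p_1'),M)$.

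It remains to translate the exponents. I will use $p_\pm(A^*)=p_\mp(A)'$ and $v_\pm(A^*)=v_\mp(A)'$, together with the identity $(s_*)'=(s')^*$. The latter holds because $(n+2)[s_*,s]=1$ and $[s_*,s]=[s',(s_*)']$. Setting $p_1:=\tilde p_2'$, $p_2:=\tilde p_1'$ and $r:=\tilde r'$, the constraints become exactly $p_-(A)<p_1\le p_2<p_+(A)$ together with the stated range for $r$. The endpoints $1$ and $\infty$ exchange under conjugation, which is harmless here.

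\emph{Item \ref{item:BA1/2-SIO}.} I will verify the four conditions of Definition~\ref{def:ODE} directly, using the representation \eqref{eq:representatiobackwardward}. That representation shows that $\cB^A_1$ is anti-causal, with kernel $\Gamma_{A^*}(t,s)^*$ for $t>s$. I expect the adjoint exponents to come from Lemma~\ref{lem:ODEGammaA} applied to $A^*$.

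\textbf{Local bound (i).} On a Whitney-type region I will use Corollary~\ref{cor:para-op-mixed-Leb} for $r$ in $(v_-(A),v_+(A))$, and the Sobolev-embedding version for $r$ in $(v_-(A)_*,v_+(A))$. Since the region has $|E_x|\eqsim \inf(E_t)^{n/2}$ and $|E_t|\eqsim\inf(E_t)$, Hölder's inequality then produces the factor $\inf(E_t)^{1/2}$.

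\textbf{Separated configurations (ii)--(iv).} Let $f$ be supported in $\mbs E$. In configurations (ii)--(iv), $u:=\cB^A_1(f\ind_{\mbs E})$ is a weak solution of the homogeneous backward equation on a neighbourhood of $\mbs F$. This neighbourhood is obtained by enlarging $F$ by a fixed multiple $c'$ of the separation, in $t$ for (ii) and in $x$ for (iii)--(iv); here $c'$ is chosen small relative to the geometric constant $c$. In (iii) the enlargement may be taken in either variable, but in (iv) it must be in $x$ only, since $E_t$ and $F_t$ need not be separated.

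On $\mbs F$ I will apply the $p$-Caccioppoli inequality \eqref{eq:p-Caccioppoli1} with $p=p_2<v_+(A)$, for the backward equation. This bounds $\|\nabla_x u\|_{L^{p_2}}$ by $\rho^{-1}\|u\|_{L^{p_2}}$ on an enlargement $\widetilde{\mbs F}$, where $\rho$ is the parabolic cylinder radius. For (ii), I cover $\mbs F$ by cylinders of radius $\rho\eqsim d(E_t,F_t)^{1/2}$, and for (iii)--(iv) by cylinders of radius $\rho\eqsim \diam(E_t\cup F_t)^{1/2}$. For the backward equation these cylinders extend towards larger $t$, so they remain inside $\R^{1+n}_+$ even when $F_t$ touches $0$.

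The term $\|u\|_{L^{p_2}(\widetilde{\mbs F})}$ will be estimated as in Proposition~\ref{prop:compareAH}. I will integrate the $t$-pointwise bounds \eqref{e:GammaA-t-ptwise-ODE} for the adjoint, which gives an $L^1_tL^{p_1}_x\to L^\infty_tL^{p_2}_x$ bound with Gaussian decay $e^{-\eta d_x^2/(t-s)}$. Hölder's inequality in $t$ on $\widetilde F_t$ then passes from $L^\infty_t$ to $L^{p_2}_t$. In (iv), the estimate into $L^{r}_t$ is obtained as in Proposition~\ref{prop:compareAH}, by Schur's lemma.

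The factor $\rho^{-1}$ combines with the order-$1$ bound to produce order $\kappa=\tfrac12$, and the power $[u_t,v_t]=[1,p_2]$ is accounted for by the Hölder step. The Gaussian factor dominates every power of $\diam(E_t\cup F_t)/d_x^2$, which yields arbitrary $M\ge 0$.

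\textbf{Main obstacle.} I expect the delicate step to be the bookkeeping in (iii)--(iv) with $\ell_t\ll \ell_x^2$. The number of covering cylinders grows like $(\ell_x^2/\ell_t)^{n/2}$, and the loss from Hölder in $t$ must be absorbed as well. Both losses are polynomial in $\ell_x^2/\ell_t$, so the exponential decay should absorb them. I will also check that the backward Caccioppoli cylinders avoid the support of $f$; in (iv) this is guaranteed by the $x$-separation alone.
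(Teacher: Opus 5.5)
Your route is the paper's. Item (1) goes by duality from Lemma~\ref{lemma:Duhamel-SIO}\ref{item:LA1-SIO} through $\cB^A_1=(\cL^{A^*}_1)^*$, and your translation of the $r$-range is correct, including the endpoint cases absorbed by $\max\{\cdot,1\}$. Item (2) adapts the forward proof: Corollary~\ref{cor:para-op-mixed-Leb} plus H\"older for (i), and backward Caccioppoli on future-directed enlargements of $\mbs F$ for (ii)--(iv). Two of your choices differ harmlessly from the paper, and they close (ii) and (iii) as you describe:
\begin{itemize}
\item you integrate the kernel bounds for $\Gamma_{A^*}(t,s)^*$ directly, instead of quoting the singular-operator property of $\cB^A_1$ on enlarged sets;
\item you sum Caccioppoli in $\ell^{p_2}$ over a bounded-overlap cover, instead of passing through $L^1$ with \eqref{eq:RHIuq}.
\end{itemize}

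The step that does not work as written is configuration (iv). There the required norm is $\|\ind_{\mbs F}\nabla_x u\|_{L^r_tL^{p_2}_x}$. But \eqref{eq:p-Caccioppoli1} with $p=p_2$ only controls $\nabla_x u$ in $L^{p_2}_{t,x}$. Applying Schur's lemma to $f\mapsto u$ improves the time integrability of $u$, not of $\nabla_x u$, and no mixed-norm Caccioppoli inequality is available to bridge the two.

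\emph{Case $r\le p_2$.} The bridge is H\"older on $F_t$ (factor $D_t^{[r,p_2]}$). Combine it with your $L^1_t\to L^\infty_t$ kernel estimate and H\"older on $E_t$ (factor $D_t^{1-1/r}$); Schur is not needed. The powers of $D_t$ then total $D_t^{1/2}=d_x(D_t/d_x^2)^{1/2}$, and the Gaussian supplies any further power of $D_t/d_x^2$.

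\emph{Case $r>p_2$.} The statement allows this, since $r$ and $p_2$ are only required to lie below $v_+(A)$. Here H\"older in $t$ goes the wrong way. You must switch to the $r$-Caccioppoli inequality, which is legitimate because $r<v_+(A)$, and then return from $L^r_x$ to $L^{p_2}_x$. There are two ways to do this.
\begin{itemize}
\item As the paper indicates: use H\"older on each small cube of side $\eqsim D_t^{1/2}$, together with the $L^r$--$L^1$ reverse H\"older inequality \eqref{eq:RHIuq}.
\item Within your framework: use H\"older over $F_x$ (factor $d_x^{n[p_2,r]}$), followed by the kernel bound $L^{p_1}_x\to L^r_x$ for $\Gamma_{A^*}(t,s)^*$. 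This bound is valid because $p_1<r<v_+(A)\le p_+(A)$ (recall $p_+(A)\ge v_+(A)^\sharp$). Its factor $d_x^{-n[p_1,r]}$ combines with $d_x^{n[p_2,r]}$ to give exactly $d_x^{-n[p_1,p_2]}$.
\end{itemize}
This case is never needed downstream, since Theorem~\ref{thm:mainestimate} imposes $r\le p_2$.
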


\begin{lemma}[Lions operators are singular operators]
    \label{lemma:Lions-SIO}
    ~
    \begin{enumerate}[label=\normalfont(\arabic*)]
        \item \label{item:RA1/2-SIO}
        $\cR^A_{1/2}$ is a singular operator of type $(1/2,2,r,\boldu,\boldv,M)$ when $v_-(A)<r<v_+(A)^*$, $\boldu = (p_1,p_1)$ and $\boldv = (\infty,p_2)$ with $v_-(A) < p_1 \le  p_2 < p_+(A)$, and $M\ge 0$.
        
        \item \label{item:RA0-SIO}
        $\cR^A_0$ is a singular operator of type $(0,2,r,\boldu,\boldv,M)$ when $v_-(A)<r<v_+(A)$, $\boldu = (p_1,p_1)$ and $\boldv = (p_2,p_2)$ with $v_-(A) < p_1  \le p_2 < v_+(A)$, and $M\ge 0$.
    \end{enumerate}
\end{lemma}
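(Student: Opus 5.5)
The plan is to verify the hypotheses of Proposition~\ref{prop:SIO-simple} with $m=2$ and $\gamma=0$. Both operators are causal: by the representation \eqref{eq:representatioforward}, $\cL^A_1 g$ vanishes at times before the support of $g$. Global boundedness is then immediate from Corollary~\ref{cor:para-op-mixed-Leb}.
\begin{itemize}
\item For $\cR^A_0$ it gives boundedness $L^r_{t,x}\to L^r_{t,x}$ for $v_-(A)<r<v_+(A)$.
\item For $\cR^A_{1/2}=\cL^A_1\Div_x$ it gives boundedness $L^{r}_{t,x}\to L^{\ovq}_t L^{\ovp}_x$ with $2[r,\ovq]+n[r,\ovp]=1$. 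Combined with H\"older's inequality on Whitney regions, this yields Definition~\ref{def:ODE}\ref{it:ODEDef1} with $\kappa=1/2$, since $r<v_+(A)^*$ allows $\ovq=\ovp=r^*$.
\end{itemize}
Alternatively, the local estimate can be obtained directly from Proposition~\ref{prop:boundedimplySOwithM=0}. Since $M\ge 0$ is arbitrary, it remains to prove the prototype estimate \eqref{eq:odeprototype} with an arbitrarily large power of $(1+d(E_x,F_x)^2/\diam(E_t\cup F_t))^{-1}$. We aim for exponential decay $\exp(-\eta\, d(E_x,F_x)^2/\ell_t)$.

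\emph{Step 1: $t$-separation.} By causality only the case $\sup E_t<\inf F_t$ matters, with $d(E_t,F_t)\eqsim\ell_t\eqsim\ell_x^2$. Set $u:=\cL^A_1\Div_x(f\ind_{\mbs E})$. This is a weak solution of the homogeneous forward equation on $(\sup E_t,\infty)\times\R^n$, hence on a parabolic enlargement $\mbs F^*$ of $\mbs F$ of comparable size that stays away from $E_t$. Proposition~\ref{prop:rhi} then gives the following.
\begin{itemize}
\item For $\cR^A_{1/2}$: \eqref{eq:RHIu} bounds $\sup_{t\in F_t}\|u(t)\|_{L^{p_2}(F_x)}$, normalized, by an $L^1$ (or $L^{p_1}$) average of $u$ over $\mbs F^*$. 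This is permitted since $p_2<p_+(A)\le$ the admissible range (using Lemma~\ref{lem:ODEGammaA}, or the Sobolev exponent $v_+(A)^\sharp$ together with the duality/Chapman--Kolmogorov argument there).
\item For $\cR^A_0$: \eqref{eq:p-Caccioppoli1} with exponent $p_2<v_+(A)$ bounds $\|\nabla u\|_{L^{p_2}(\mbs F)}$ by $\ell_x^{-1}\|u\|_{L^{p_2}(\mbs F^*)}$. Then \eqref{eq:RHIuq} lowers the integrability to an $L^{p_1}$-type average of $u$. Finally the Sobolev embedding $\dot\cV^{p_1,p_1}\hookrightarrow L^{p_1^*}_{t,x}$ together with Corollary~\ref{cor:para-op-mixed-Leb} controls this by $\|f\ind_{\mbs E}\|_{L^{p_1}}$.
\end{itemize}
Counting powers of $\ell_x$ in these H\"older steps produces exactly $d(E_t,F_t)^{-(1+\frac n2)[p_1,p_2]+\kappa}$.

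\emph{Step 2: $x$-separation, which I expect to be the main obstacle.} Here $E_t$ and $F_t$ may overlap, so the local estimates alone give no decay. First I will prove an $L^2$ Gaffney-type estimate
\[
\|\ind_{\mbs F}\,\nabla_x\cL^A_1\Div_x(f\ind_{\mbs E})\|_{L^2_{t,x}}\lesssim e^{-\eta d(E_x,F_x)^2/\ell_t}\|f\ind_{\mbs E}\|_{L^2_{t,x}}
\]
for functions supported in a time strip of length $\ell_t$. This uses the Davies perturbation trick: conjugate the equation by $e^{\pm\lambda\phi(x)}$ with $\phi$ Lipschitz, and use the energy identity for weak solutions on $(0,\ell_t)$ with the $L^2$ theory of \cite{AE23}. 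This is the parabolic analogue of the $L^2$ off-diagonal input used in Lemma~\ref{lem:DBresODEs}. An alternative is to write the operator via $\Gamma_A$ and use Caccioppoli for the adjoint backward solution $\Gamma_A(t,s)^*$ combined with the Gaussian bounds \eqref{e:GammaA-t-ptwise-ODE}.

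Second, I pass from $L^2$ to $L^{p_1}\to L^{p_2}$. Two routes are available:
\begin{itemize}
\item localize on parabolic cylinders of size $\ell_t^{1/2}$ covering $\mbs F$ and apply the reverse H\"older and Caccioppoli inequalities as in Step~1, summing the exponential factors;
\item interpolate (Riesz--Thorin) the Gaussian $L^2$ bound against the global $L^{\tilde p_1}\to L^{\tilde p_2}$ bounds of Step~1 and Corollary~\ref{cor:para-op-mixed-Leb}, exactly as in the proof of Lemma~\ref{lem:DBresODEs}\ref{it:DBODE2}.
\end{itemize}
Either route loses only a constant in $\eta$. Since the exponential absorbs any polynomial loss in $\ell_x^2/\ell_t$, this gives \eqref{eq:odeprototype} for every $M\ge0$.

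For $\cR^A_{1/2}$ the target norm is $L^\infty_tL^{p_2}_x$; here I will use the sup-in-time reverse H\"older inequality \eqref{eq:RHIu} in place of Caccioppoli. Proposition~\ref{prop:SIO-simple} then concludes both parts of Lemma~\ref{lemma:Lions-SIO}.
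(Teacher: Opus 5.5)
Your proposal has a genuine gap in part (1), where the lemma allows the output exponent $p_2$ all the way up to $p_+(A)$. For the $L^\infty_tL^{p_2}_x$ output norm of $\cR^A_{1/2}$ you rely, in both steps, on local regularity of $u=\cR^A_{1/2}(f\ind_{\boldE})$ as a weak solution. Specifically, you use the sup-in-time reverse H\"older inequality \eqref{eq:RHIu}. Proposition~\ref{prop:rhi} only establishes this for exponents below $v_+(A)^\sharp$ (when $v_+(A)\le n+2$). Your parenthetical justification reverses Lemma~\ref{lem:ODEGammaA}, which gives $p_+(A)\ge v_+(A)^\sharp$, not the other way round. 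The discrepancy is real: in the autonomous case $v_+(A)=q_+(L)$ and $p_+(A)=p_+(L)\ge nq_+(L)/(n-q_+(L))>v_+(A)^\sharp$ whenever $q_+(L)<n$. So for $v_+(A)^\sharp\le p_2<p_+(A)$ your argument yields nothing. The reason is that $p_+(A)$ is a global property of the propagator $\Gamma_A$, which local estimates for weak solutions do not see. In configuration (ii) you could probably patch this via $u(t)=\Gamma_A(t,s)u(s)$ for $\sup E_t<s<t$, combined with Corollary~\ref{cor:para-op-mixed-Leb}. In the $x$-separated configurations with overlapping time intervals, however, there is no source-free intermediate time. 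There you would need off-diagonal $L^{p_1}\to L^{p_2}$ bounds for $\Gamma_A(t,s)\Div_x$ itself, which is essentially what duality provides.

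The missing idea is duality, which is how the paper argues. By \eqref{e:parabolic-op-duality}, $\cR^A_{1/2}=-(\cB^{A^*}_{1/2})^*$. Proposition~\ref{prop:duality} applied to Lemma~\ref{lemma:back-Duhamel-SIO}\ref{item:BA1/2-SIO} for $A^*$ then gives statement (1) exactly, using $v_\pm(A^*)=v_\mp(A)'$ and $p_\pm(A^*)=p_\mp(A)'$; the constraint $r>\max\{v_-(A^*)_*,1\}$ dualizes to $r'<v_+(A)^*$. The roles are the reverse of your plan. The input restriction $p_1>v_-(A)$ comes from Caccioppoli for the backward adjoint equation (the output side of $\cB^{A^*}_{1/2}$). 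The output range $p_2<p_+(A)$ comes from the $t$-pointwise off-diagonal bounds for $\Gamma_A(t,s)^*$ (the input side of $\cB^{A^*}_{1/2}$). The full range $v_-(A)<r<v_+(A)^*$ and all $M\ge 0$ come along, with no Davies--Gaffney estimate or interpolation. Statement (2) then follows as in the proof of Lemma~\ref{lemma:Duhamel-SIO}\ref{item:LA1/2-SIO}: apply Caccioppoli and \eqref{eq:RHIuq} to $u=\cR^A_{1/2}(f\ind_{\boldE})$ on enlargements of $\boldF$, and feed in the singular operator property of $\cR^A_{1/2}$ with a larger $M$. Your Step~1 for $\cR^A_0$ is close to this in configuration (ii).

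Two further points in your route would also need repair.
\begin{enumerate}[(a)]
\item Concluding via Proposition~\ref{prop:SIO-simple} requires $u_t\le r\le v_t$, i.e.\ $p_1\le r$ (and $r\le p_2$ for $\cR^A_0$). The lemma does not assume this, and for $r<p_1$ (or $r>p_2$) configuration (iv) cannot be obtained from \eqref{eq:odeprototype} by H\"older.
\item The decay $\exp(-\eta\,d(E_x,F_x)^2/\ell_t)$ you aim for is weaker than what \eqref{eq:odeprototype} requires. In configuration (iv) one can have $\ell_t\eqsim d(E_x,F_x)^2$ while $\diam(E_t\cup F_t)\ll\ell_t$. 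The Davies energy argument must therefore be run on $(\inf E_t,\sup F_t)$ to get decay in $d(E_x,F_x)^2/\diam(E_t\cup F_t)$.
\end{enumerate}
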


We make a comment on the above statements. Note that the backward Duhamel operators $\cB^A_1$ and $\cB^A_{1/2}$ in Theorem~\ref{thm:back-Duhamel-tent} are anti-causal operators. So the upper bound on $\beta$ there relies on the parameter $\boldv$ and $\kappa$. This is why the upper bounds on $\beta$ differ for $\cB^A_1$ and $\cB^A_{1/2}$, in contrast with the lower bound on $\beta$ for the causal operators $\cL^A_1$ and $\cL^A_{1/2}$, which depends only on $\boldu$ and hence is the same for both.

\begin{remark}
\label{rem:conjecture}
    Recall that the conditions on $r$ in these theorems come from the fact that the statement of Theorem~\ref{thm:mainestimate} imposes $p_1 \le r \le p_2$. For example, we conjecture that the lower bound for $r$ in Theorem~\ref{thm:Duhamel-tent}\ref{item:LA1/2-bd-tent} should be $p_-(A)$ but we have not succeeded to prove it. It would amount to improve the lower bound on $r$ in Lemma~\ref{lemma:Duhamel-SIO}\ref{item:LA1/2-SIO} to $\max\{\min\{p_-(A),v_-(A)_*\},1\}$ exactly as for $\cL^A_1$. In the autonomous case, we give an affirmative answer to this conjecture by using the properties of the analytic semigroup generated by $-\Div(A(x)\nabla)$, see Theorem \ref{thm:parabolic-autonomous}.
\end{remark}

We next prove the lemmas in this order. 

\begin{proof}[Proof of Lemma \ref{lemma:Duhamel-SIO}]
Let us start with $\cL^A_1$ in \ref{item:LA1-SIO}. It suffices to show that $\cL^A_1$ is a singular integral operator of type $(1,2,r,p_1,p_2,M)$ for any $M \ge 0$ in the sense of Definition \ref{def:kernelODE}, thanks to Proposition \ref{prop:compareAH} and the requirements on $r, p_1, p_2$. The kernel representation \eqref{eq:representation} has been verified in \eqref{eq:representatioforward}.

Let us first verify condition \ref{it:Lr}. First, assume $p_-(A)<r<p_+(A)$. The condition \ref{it:Lr} with $\gamma=0$ is equivalent to boundedness of $\cL^A_1$ from $L^r(\R^{1+n}_+)$ to $L^r_1(\R^{1+n}_+)$. Let $t>0$ and $f \in L^r(\R^{1+n}_+)$. Since $\Gamma_A(t,s)$ are uniformly bounded in $L^r(\R^n)$ for $p_-(A)<r<p_+(A)$, we get
\[ \|t^{-1} \cL^A_1(f)(t)\|_{\Lx{r}} \lesssim t^{-1} \int_0^t \|f(s)\|_{\Lx{r}} \dd s = H(g)(t), \]
where $H$ is the Hardy operator and $g(s)=\|f(s)\|_{\Lx{r}}$. We conclude from the boundedness of $H$ on $L^r(\R_+)$.

Next, assume $p_+(A)\le r<v_+(A)^*$. This case may only occur when $v_+(A)<n+2$ (i.e., $v_+(A)^*<\infty$), which we assume. Thanks to Remark \ref{rem:pointwiseimmliesnonpointwise}, we may instead  check condition \ref{it:ODEDef1} in Definition \ref{def:ODE} directly. To see this, Corollary \ref{cor:para-op-mixed-Leb} asserts that $\cL^A_1$ is bounded from $L^{q_*}_{t,x}$ to $L^{q^*}_{t,x}$ for any $q \in (v_-(A),v_+(A))$. Such $q$ can be chosen arbitrarily close to $v_+(A)$ so that $1<q_*\le r\le q^*<\infty$, since the interval $(\max\{v_-(A)_*, 1\},v_+(A)^*)$ contains ``double Sobolev intervals" with endpoints $q_*, q^*$. Indeed, one has
\begin{align*}
    (n+2)[\max\{v_-(A)_*, 1\},v_+(A)^*] &
    = (n+2)[\max\{v_-(A)_*, 1\},v_+(A)] +1 \\&
    >(n+2)[v_+(A)_*,v_+(A)] + 1 = 2.
\end{align*}
Therefore, applying Proposition \ref{prop:boundedimplySOwithM=0} with $p_t=p_x=q_*$, $q_t=q_x=q^*$, and $u_t=u_x=v_t=v_x=r$ yields that $\cL^A_1$ satisfies condition \ref{it:ODEDef1}.

In the case where $\max\{v_-(A)_*, 1\}<r \le p_-(A)$, which may occur only when $v_-(A)_*>1$, we have
\begin{align*}
    (n+2)[\max\{v_-(A)_*, 1\},v_+(A)^*] &=1+  (n+2)[v_-(A),v_+(A)^*]  \\&>1+ (n+2)[v_-(A), v_-(A)^*]  = 2,
\end{align*}
and one can choose  $q \in (v_-(A),v_+(A))$ arbitrarily close to $v_-(A)$ such that $1<q_*\le r \le q^*<\infty$. We conclude with the same reasoning and this ends the verifications for condition \ref{it:Lr}.

Next, thanks to Remark \ref{rem:KernelODE}\ref{it:kernelODEAH}, we get the desired estimates in conditions \ref{item:kernelODE-t-sep} and \ref{item:kernelODE-x-sep} for $\kappa=1$, $m=2$, $u_x=p_1$, $v_x=p_2$, and arbitrary $M \ge 0$ from \eqref{e:GammaA-t-ptwise-ODE}. This is independent of the choice of $r$. This proves \ref{item:LA1-SIO}.

\medskip

We continue the proof and consider $\cL^A_{1/2}$ in \ref{item:LA1/2-SIO}. 
We verify the four configurations in Definition \ref{def:ODE} for the sets $\boldE= E_t\times E_x$, $\boldF=F_t\times F_x$ with
\[ E_t = (a_0,b_0), \quad E_x=Q(x_0,r_0), \quad F_t = (a_1,b_1), \quad F_x=Q(x_1,r_1), \]
where $Q(x,r)$ denotes a cube centred at $x$ with sidelength $2r$. Set the geometric variables $d_t := d(E_t,F_t)$, $d_x := d(E_x,F_x)$, $D_t := \diam(E_t\cup F_t)$, and recall that $\ell_x = \diam(E_x\cup F_x)$ and $\ell_t = \sup(E_t\cup F_t)$. Fix $f \in L^\infty_c(\R^{1+n}_+)$.

\medskip

\emph{Configuration \ref{it:ODEDef1}: local boundedness on Whitney-type regions}.
It suffices to show that there exists $q \in (v_-(A), v_+(A))$ so that $1<q_*\le r\le q$. Indeed, thanks to Proposition \ref{prop:boundedimplySOwithM=0}, the desired estimate follows from boundedness of $\cL^A_{1/2}$ from $L^{q_*}_{t,x}$ to $L^{q}_{t,x}$ by Corollary \ref{cor:para-op-mixed-Leb}. To this end, we notice that the interval $(\max\{v_-(A)_*, 1\},v_+(A))$ contains ``Sobolev intervals'' with endpoints $q_*,q$ since
$$(n+2)[\max\{v_-(A)_*, 1\},v_+(A)]>(n+2)[2_*,2]=  1.$$ 
As $\max\{v_-(A)_*, 1\}<r<v_+(A)$, one easily finds the required $q$.

\medskip

\emph{Configuration \ref{it:ODEDef2}: $t$-separation}.
Assume $c\ell_x \le \ell_t^{1/2} \le \ell_x$ and $c^2 \ell_t \le d_t$. Note that if $b_1<a_0$, causality implies $\cL^A_1(f \ind_{\boldE})=0$ on $\boldF$. We assume now   $a_0\le b_1$. Then as $d_t>0$, we have $a_1-b_0>0$, and hence $a_1\ge a_1-b_0=d_t$ and also $\ell_t=b_1$. As $\boldF$ is not necessarily a parabolic cylinder, we first take parabolic cylinder enlargements $\boldF \subset \boldF_1 \subset \boldtilF$ so that $\boldtilF$ and $\boldE$ are also $t$-separated. Note that $E_x \cup F_x \subset Q(x_1,\ell_x)$, so we take
\[  \boldF_1 := (a_1-\gamma^2 d_t,b_1) \times Q(x_1,\ell_x+\gamma d_t^{1/2}), \quad \boldtilF := (a_1-4\gamma^2 d_t,b_1+4\gamma^2 d_t) \times Q(x_1,\ell_x + 2\gamma d_t^{1/2}), \]
where $\gamma\in (0,1/4)$ is a constant to be determined. The inclusions are immediate. Observe that $\boldtilF$ and $\boldE$ satisfy the $t$-separation configuration with the parameter $c/(4\sqrt{n})$ replacing $c$, if $\gamma$ is small enough. Indeed, the new geometric variables satisfy $\tilde\ell_t=\ell_t+4\gamma^2 d_t$, $\tilde d_t=d_t-4\gamma^2 d_t$ and $2\ell_x +4\gamma d_t^{1/2} \le \tilde \ell_x\le \sqrt{n}(2 \ell_x + 4\gamma d_t^{1/2})$ from comparison of Euclidean norm with the sup norm. Since $u=\cL^A_1(f \ind_{\boldE})$ is a weak solution to $\partial_t u - \Div(A\nabla u) = 0$ on $\boldtilF$, as $p_2 < v_+(A)$, we apply $p_2$-Caccioppoli inequality \eqref{eq:p-Caccioppoli1} to $u$ from $\boldF_1$ with $R\eqsim d_t^{1/2}$  to get 
\begin{align*}
    \left\| \ind_{\boldF} \cL^A_{1/2}(f \ind_{\boldE}) \right\|_{L^{p_2}_{t,x}}
    &\le \left\| \ind_{\boldF_1} \cL^A_{1/2}(f \ind_{\boldE}) \right\|_{L^{p_2}_{t,x}} \\&\lesssim d_t^{-\frac{1}{2}} \left\| \ind_{\boldtilF} \cL^A_{\vphantom{1/2}1}(f \ind_{\boldE}) \right\|_{L^{p_2}_{t,x}} 
    \lesssim d_t^{-[1,p_2]-\frac{n}{2}[p_1,p_2]+\frac{1}{2}} \left\| f \ind_{\boldE} \right\|_{ \Ltx{1}{p_1} }.
\end{align*}
In the last inequality, we used the facts that $\tilde d_t \eqsim d_t$ and $\cL^A_1$ is a singular operator of type $(1,2,2,(1,p_1),(p_2,p_2),M)$, which follows from Lemma \ref{lemma:Duhamel-SIO}\ref{item:LA1-SIO} and contraction of parameters in Proposition \ref{prop:ODE-contraction}, as $p_-(A)<p_1 \le p_2<p_+(A)$.

\medskip

\emph{Configuration \ref{it:ODEDef3}: $t$-$x$-separation}.
Assume $\ell_t^{1/2} \le \ell_x \le c^{-1} d_x$ and $c^2 \ell_t \le d_t$. Take the parabolic cylinder enlargements $\boldF \subset \boldF_1 \subset \boldtilF$ with 
\[ \boldF_1 := (a_1-\gamma^2 d_t, b_1) \times Q(x_1,r_1+\gamma d_t^{1/2}), \quad \boldtilF := (a_1-4\gamma^2 d_t, b_1+4\gamma^2 d_t) \times Q(x_1,r_1+2\gamma d_t^{1/2}), \]
where $\gamma \in (0,c/(2\sqrt n))$ is a constant to be chosen small enough so  that $\boldtilF$ and $\boldE$ satisfy the $t$-$x$-separation configuration with the parameter $c/2$ replacing $c$. Indeed, the new geometric variables satisfy $\tilde\ell_t=\ell_t +4\gamma^2 d_t $, $\tilde d_t=d_t-4\gamma^2 d_t$, $\ell_x +2\gamma d_t^{1/2} \le \tilde \ell_x\le \ell_x+2\sqrt n\gamma d_t^{1/2}$ and $d_x-2\sqrt n \gamma d_t^{1/2}\le \tilde d_x \le d_x-2 \gamma d_t^{1/2}$ following from $d_x\ge cd_t^{1/2}$ and comparison of Euclidean norm with the sup norm. The same reasoning as above by using $p_2$-Caccioppoli inequality \eqref{eq:p-Caccioppoli1} on $\boldF_1$ with $R\eqsim d_t^{1/2}$ gives us
\begin{align*}
    \left\| \ind_{\boldF} \cL^A_{1/2} (f \ind_{\boldE}) \right\|_{L^{p_2}_{t,x}} 
    &\lesssim d_t^{ -\frac{1}{2} } \left\| \ind_{\boldtilF} \cL^A_1(f \ind_{\boldE}) \right\|_{L^{p_2}_{t,x}} 
    \\&\lesssim D_t^{ -\frac{1}{2} } d_x^{-2[1,p_2]-n[p_1,p_2]+2} \left( \frac{D_t}{d_x^2} \right)^{M+\frac{1}{2}} \left\| f \ind_{\boldE} \right\|_{ \Ltx{1}{p_1} } 
    \\
    &\lesssim d_x^{-2[1,p_2]-n[p_1,p_2]+1} \left( \frac{D_t}{d_x^2} \right)^{M} \|f \ind_{\boldE}\|_{ \Ltx{1}{p_1} }.
\end{align*}
In the second inequality, we used $d_t \eqsim \ell_t \eqsim D_t$, and the fact that $\cL^A_1$ is a singular operator of type $(1,2,2,(1,p_1),(p_2,p_2),M+\frac{1}{2})$.

\medskip

\emph{Configuration \ref{it:ODEDef4}: $x$-separation.}
Assume $\ell_t^{1/2} \le \ell_x \le c^{-1} d_x$. As $D_t$ may be much smaller than $r_1^2$, we proceed by covering $\boldF$ by a tiling of small parabolic cylinders of spatial size comparable to $D_t^{1/2}$ in order to apply \eqref{eq:p-Caccioppoli1}. First consider the case $r \le p_2$. H\"older's inequality yields that
\[ \left\| \ind_{\boldF} \cL^A_{1/2} (f \ind_{\boldE}) \right\|_{ \Ltx{r}{p_2} } \le |F_t|^{[r,p_2]} \left\| \ind_{\boldF} \cL^A_{1/2} (f \ind_{\boldE}) \right\|_{ L^{p_2}_{t,x} } \le D_t^{[r,p_2]} \left\| \ind_{\boldF} \cL^A_{1/2} (f \ind_{\boldE}) \right\|_{ L^{p_2}_{t,x} }. \]
Cover $\boldF$ by disjoint (up to a null set) parabolic cylinders $\boldC_{i,j} = I_i \times Q_j$ with $\sup(I_i) \le b_1$ for all $i$, $|I_i| = \gamma^2 D_t$, and $\ell(Q_j) = \gamma D_t^{1/2}$, where $\gamma \in (0,c/(2\sqrt n))$ is a constant to be chosen. Observe that the enlarged cylinders $\boldtilC_{i,j} = 4I_i \times 2Q_j$ have bounded overlap and their intersections with $\R^{1+n}_+$ are contained in
\[ \boldtilF = \left( \max\{0,a_1-4\gamma^2 D_t\}, b_1+4\gamma^2 D_t \right) \times Q(x_1,r_1+2\gamma D_t^{1/2}). \]
Observe also that if $\gamma$ is small enough, then $\boldtilF$ and $\boldE$ satisfy the $x$-separation configuration with the parameter $c/2$ replacing $c$. Indeed, the geometric variables satisfy $ \tilde \ell_t \le \ell_t + 4\gamma^2 D_t$, $\ell_x +2\gamma D_t^{1/2} \le \tilde \ell_x \le \ell_x+2\sqrt n\gamma D_t^{1/2}$ and $d_x-2\sqrt n \gamma D_t^{1/2}\le \tilde d_x \le d_x$. 
Moreover, we have $D_t \le \tilde D_t \le D_t+4\gamma^2 D_t$. Now we apply Minkowski inequality, the $p_2$-Caccioppoli inequalities \eqref{eq:p-Caccioppoli1} and the $L^{p_2}-L^1$ reverse H\"older inequalities \eqref{eq:RHIuq} on these small parabolic cylinders $\boldC_{i,j}$ to get
\begin{align*}
    \left\| \ind_{\boldF} \cL^A_{1/2} (f \ind_{\boldE}) \right\|_{ L^{p_2}_{t,x} } 
    &\lesssim \sum_{i,j} D_t^{(1+\frac{n}{2})\frac{1}{p_2}} \left( \fint_{\boldC_{i,j}} |\nabla u|^{p_2} \right)^{1/p_2} \lesssim D_t^{(1+\frac{n}{2})\frac{1}{p_2}-\frac12} \sum_{i,j} \fint_{\boldtilC_{i,j}} |u| \\
    &\lesssim D_t^{(1+\frac{n}{2})[p_2,1]-\frac12} \|\ind_{\boldtilF} \cL^A_1(f\ind_{\boldE}) \|_{L^1_{t,x}} \\
    &\lesssim D_t^{[p_2,r]+\frac{n}{2}[p_2,1]-\frac12} d_x^{n[1,p_1]} \|\ind_{\boldtilF} \cL^A_1(f\ind_{\boldE}) \|_{ \Ltx{r}{p_1} } \\
    &\lesssim D_t^{[p_2,r]+\frac{n}{2}[p_2,1]-\frac12} d_x^{n[1,p_1]} d_x^2 \left( \frac{D_t}{d_x^2} \right)^{M+[1,p_2]+\frac{n}{2}[1,p_2]+\frac{1}{2}} \left\|f \ind_{\boldE} \right\|_{ \Ltx{r}{p_1} } \\
    &\lesssim D_t^{[p_2,r]} d_x^{-n[p_1,p_2]+1} \left( \frac{D_t}{d_x^2} \right)^{M+[1,p_2]} \left\| f \ind_{\boldE} \right\|_{ \Ltx{r}{p_1} }.
\end{align*}
In the third-to-last and second-to-last inequalities, we used $\tilde D_t \eqsim D_t$, and
the fact that $\cL^A_1$ is a singular operator of type $(1, 2, r, (r,p_1), (r,p_1), M+[1,p_2]+\frac{n}{2}[1,p_2]+\frac{1}{2})$. Gathering these inequalities gives us the required estimate
\[ \left\| \ind_{\boldF} \cL^A_{1/2} (f \ind_{\boldE}) \right\|_{ \Ltx{r}{p_2} } \lesssim d_x^{-n[p_1,p_2]+1} \left( \frac{D_t}{d_x^2} \right)^{M+[1,p_2]} \left\| f \ind_{\boldE} \right\|_{ \Ltx{r}{p_1} }. \]

The case for $r>p_2$ follows similarly by using the $r$-Caccioppoli inequalities \eqref{eq:p-Caccioppoli1} and the $L^{r}-L^1$ reverse H\"older inequalities \eqref{eq:RHIuq} instead. Details are left to the reader. This completes the proof of \ref{item:LA1/2-SIO}.
\end{proof}

\begin{proof}[Proof of Lemma \ref{lemma:back-Duhamel-SIO}]
    Statement~\ref{item:BA1-SIO} follows from  Proposition~\ref{prop:duality} and Lemma~\ref{lemma:Duhamel-SIO}\ref{item:LA1-SIO}, thanks to the duality that $\cB^A_1 = (\cL^{A^\ast}_1)^\ast$ in \eqref{e:parabolic-op-duality} and $p_\pm(A) = p_\mp(A^\ast)'$. Also, the proof of Statement~\ref{item:BA1/2-SIO} is an adaptation of that of Lemma~\ref{lemma:Duhamel-SIO}\ref{item:LA1/2-SIO}, by using Corollary~\ref{cor:para-op-mixed-Leb}, Caccioppoli inequality~\eqref{eq:p-Caccioppoli1} and reverse H\"older inequality~\eqref{eq:RHIuq} for backward solutions on appropriately extended $\boldF$ regions. Details are left to the reader. 
\end{proof}

\begin{proof}[Proof of Lemma \ref{lemma:Lions-SIO}]
Statement \ref{item:RA1/2-SIO} for $\cR^A_{1/2}$ follows from the singular operator properties of $\cB^A_{1/2}$ in Lemma~\ref{lemma:back-Duhamel-SIO}\ref{item:BA1/2-SIO} by duality (see Proposition \ref{prop:duality}) and $\cR^A_{1/2} = -(\cB^{A^\ast}_{1/2})^\ast$
from \eqref{e:parabolic-op-duality}.

Next, we consider $\cR^A_0$ in \ref{item:RA0-SIO}. We also verify Definition~\ref{def:ODE} for the sets $\boldE = (a_0,b_0) \times B_0$ and $\boldF = (a_1,b_1) \times B_1$, with $B_0$ and $B_1$ cubes in $\R^n$ as before. Condition~\ref{it:ODEDef1} on local Whitney-type regions directly follows from the boundedness of $\cR^A_0$ in $L^r(\R^{1+n}_+)$ in Corollary \ref{cor:para-op-mixed-Leb}. The verifications of conditions \ref{it:ODEDef2}, \ref{it:ODEDef3}, and \ref{it:ODEDef4} are verbatim adaptations of those for $\cL^A_{1/2}$ in Lemma~\ref{lemma:Duhamel-SIO}\ref{item:LA1/2-SIO}, by using Caccioppoli and reverse H\"older inequalities together with the singular operator properties of $\cR^A_{1/2}$ just established. Details are left to the reader.
\end{proof}

\subsection{Applications to the Cauchy problem \texorpdfstring{\eqref{e:NaPC}}{(NaPC)}}
We apply the boundedness of these operators in tent spaces to the non-autonomous parabolic Cauchy problem \eqref{e:NaPC} with initial data in the homogeneous Triebel-Lizorkin spaces $\DotF^s_{p,q}$. Here, we also define homogeneous Triebel-Lizorkin spaces as (semi-)normed spaces in tempered distributions $\cS'(\R^n)$. The construction is similar to that of homogeneous Hardy-Sobolev spaces $\DotH^{s,p} \eqsim \DotF^s_{p,2}$  in \cite[Section 2.1]{AH25b} to which we refer. As a further preparation, we need the heat characterisation of the spaces $\DotF^s_{p,q}$.
\begin{prop}[Heat characterisation of $\DotF^s_{p,q}$]
    \label{prop:heat-char-Fspq} 
    Let $0<p,q,r \le \infty$. Let $s<0$. Then a tempered distribution $u_0 \in \cS'(\R^n)$ belongs to $\DotF^s_{p,q}$ if and only if its heat extension $(t,x) \mapsto (e^{t\Delta} u_0)(x)$ belongs to $T^{p,q,r}_{s/2+1/q}$. The equivalence of norms holds as
    \[ \|u_0\|_{\DotF^s_{p,q}} \eqsim \|e^{t\Delta} u_0\|_{T^{p,q,r}_{s/2+1/q}}. \]
\end{prop}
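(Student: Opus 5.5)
The plan is to reduce the claim to the known discrete (frequency side) characterization of $\DotF^s_{p,q}$ and to compare it with the dyadic description of $T^{p,q,r}_\beta$ from Proposition~\ref{prop:dyadic}, with $m=2$ and $\beta=s/2+1/q$. Write $u(t,x):=(e^{t\Delta}u_0)(x)$. The first step is to discard the Whitney parameter $r$. Because $u$ is caloric, a local reverse H\"older (mean value) estimate holds: for any $0<r_1,r_2\le\infty$ one has $W_{r_1}u(t,x)\lesssim W^{a,b,c}_{r_2}u(t,x)$ for the slightly enlarged Whitney averages of Proposition~\ref{prop:equivnorms}. This is the classical self-improvement for the heat equation; it is the special case $A=\bfI$ of \eqref{eq:RHIuq}, which is valid for every exponent since $v_\pm(\bfI)=1,\infty$. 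Proposition~\ref{prop:equivnorms} therefore shows that all the $T^{p,q,r}_{\beta}$ norms of $u$ are comparable, and it suffices to treat one convenient value of $r$, say $r=\infty$, or $r=q$.

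Next I identify the resulting norm with a known continuous characterization. Take $r=q$ and $p<\infty$. By the norm formula following the definition of $T^{p,q}_\beta$, and substituting $t=\tau^2$ so that $t^{-\beta q}\,dt\eqsim \tau^{-sq}\,d\tau/\tau$,
\[
\|u\|_{T^{p,q}_{s/2+1/q}}\eqsim\Bigl\|\Bigl(\int_0^\infty\fint_{B(x,\tau)}|\tau^{-s}e^{\tau^2\Delta}u_0(y)|^q\,dy\,\frac{d\tau}{\tau}\Bigr)^{1/q}\Bigr\|_{L^p(\R^n)}.
\]
This is a conical (Lusin-type) version of the heat-kernel characterization of $\DotF^s_{p,q}$ for $s<0$. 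I would prove the two inequalities by direct comparison with the Littlewood--Paley norm $\|(\sum_j 2^{jsq}|\psi_j*u_0|^q)^{1/q}\|_{L^p}$.

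For the upper bound $\|u\|_{T}\lesssim\|u_0\|_{\DotF}$, decompose $u_0=\sum_k\psi_k*u_0$ and estimate $\sup_{y\in B(x,\tau)}|e^{\tau^2\Delta}\psi_k*u_0(y)|$ with $\tau\eqsim2^{-j}$. For this I use Peetre maximal functions $\psi_k^{*,a}u_0(x)=\sup_z|\psi_k*u_0(z)|(1+2^k|x-z|)^{-a}$ with $a>n/\min\{p,q\}$. For $k\le j$ the kernel estimate is an $O(1)$ bound; for $k>j$ the Gaussian acting on a frequency-$2^k$ piece gives decay $e^{-c4^{k-j}}$. Since $s<0$, the factor $2^{-(j-k)s}$ for $k\le j$ becomes the summable weight $2^{(j-k)|s|}$ needed on the low-frequency side... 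This is the main obstacle of the whole argument: I must check that the weight lands on the convergent side, and also that $u_0\in\cS'$ gives $\sum_k$ convergence, using the realization conventions of \cite{AH25b}. Once the sums are under control, Peetre's inequality $\|(\sum 2^{ksq}(\psi_k^{*,a}u_0)^q)^{1/q}\|_{L^p}\lesssim\|u_0\|_{\DotF^s_{p,q}}$ (Fefferman--Stein with exponent $<\min\{p,q\}$) closes the upper bound.

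For the converse, write $\psi_j*u_0=\tilde\psi_j(2^{-2j}\Delta)\,e^{2^{-2j}\Delta}u_0$, where $\tilde\psi_j$ is a Schwartz multiplier obtained by dividing $\hat\psi$ by the Gaussian on its annular support. Then $|\psi_j*u_0(x)|$ is controlled by a Peetre-weighted average of $u(4^{-j},\cdot)$, and hence of the Whitney averages of $u$, by the same mean value step as in the first paragraph. A discrete conical-to-vertical comparison then gives $\|u_0\|_{\DotF}\lesssim\|u\|_{T}$; equivalently, one can work with the sequence space $t^{p,q}_\beta$ of Proposition~\ref{prop:dyadic}. I also need $u_0=\sum_j\psi_j*u_0$ in the realization. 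Here $s<0$ is used once more: it forces $e^{t\Delta}u_0\to0$ as $t\to\infty$, so no polynomial ambiguity arises.

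The case $p=\infty$ is handled the same way, using the Carleson-type dyadic norm $t^{\infty,q}_\beta$ and the standard Carleson description of $\DotF^s_{\infty,q}$. The $q=\infty$ and $r=\infty$ variants need only the usual modifications.
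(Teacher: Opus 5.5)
Your argument is sound in outline, but the route differs from the paper's, which gives no direct proof. The paper quotes the heat characterisations \cite[Proposition~2.6]{ABH26} (for $p<\infty$ and for $p=q=\infty$) and \cite[Proposition~2.7]{Haa26} (for $p=\infty$, $q<\infty$). These are stated in elliptic homogeneity $m=1$ with the measure $\dd x\dd t/t$, and the paper transfers them by the substitution $t\mapsto t^2$ and the induced shift of the weight.

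You instead prove the equivalence from scratch:
\begin{itemize}
\item the caloric mean value inequality makes the choice of $r$ irrelevant;
\item Peetre maximal functions and Fefferman--Stein give $\|e^{t\Delta}u_0\|_{T}\lesssim\|u_0\|_{\DotF}$;
\item the factorisation $\psi_j*u_0=\phi_j*e^{4^{-j}\Delta}u_0$, with $\phi_j$ Schwartz, gives the converse.
\end{itemize}

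Your route is self-contained and shows exactly where $s<0$ enters. The citation is shorter and, in particular, covers the Carleson endpoint $p=\infty$, $q<\infty$, which is the thinnest part of your sketch. There the vector-valued Fefferman--Stein inequality is unavailable, so you must localise to a dyadic cube $P$. Frequencies $2^k<\ell(P)^{-1}$ are then handled through $\DotF^s_{\infty,q}\hookrightarrow\dot B^s_{\infty,\infty}$, which is summable again because $s<0$. The contribution to the Peetre bound from outside $3P$ must be handled via its integral form with $a\theta>n$; it decays like $(2^k\ell(P))^{(n-a\theta)/\theta}$.

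Two corrections are needed.

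First, with $\tau\eqsim2^{-j}$ you have $\tau^{-s}\eqsim2^{js}=2^{(j-k)s}\,2^{ks}$. So for $k\le j$ the weight is $2^{(j-k)s}=2^{-(j-k)|s|}$, not $2^{-(j-k)s}=2^{(j-k)|s|}$ as you wrote, which would diverge. Your conclusion is right; the displayed factor is not.

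Second, $s<0$ alone does not force $e^{t\Delta}u_0\to0$; take $u_0=1$. What excludes a polynomial component in the converse is the following. Membership in $T^{p,q,r}_{s/2+1/q}$ together with the mean value inequality yields
\[
|e^{t\Delta}u_0(x)|\lesssim t^{s/2-n/(2p)}\,\|e^{t\Delta}u_0\|_{T^{p,q,r}_{s/2+1/q}},
\]
and the right-hand side tends to $0$ as $t\to\infty$ because $s<0$. Combined with the convergence of $\sum_j\psi_j*u_0$ in $\cS'$ (a consequence of the finite Littlewood--Paley seminorm and $s<0$), this gives $u_0=\sum_j\psi_j*u_0$.
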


\begin{proof}
    It is a direct consequence of \cite[Proposition~2.6]{ABH26} (for $p<\infty$ and $p=q=\infty$) and \cite[Proposition~2.7]{Haa26} (for $p=\infty$ and $q<\infty$), after a change of variable as we deal with parabolic homogeneity $m=2$ and Lebesgue measure $\dd x \dd t$, whereas these references use  elliptic homogeneity $m=1$ and  invariant measure $\dd x \dd t/t$.
\end{proof}

\begin{cor}[Heat extension is an isomorphism]
    Let $-1<s<1$ and $\frac{n}{n+s+1} \le p \le \infty$. Let $0 < q,r \le \infty$. The heat extension $u_0 \mapsto (e^{t\Delta} u_0)(x)$ is an \emph{isomorphism} from $\DotF^s_{p,q}$ to the space of distributional heat solutions $u$ with $\nabla_x u \in T^{p,q,r}_{s/2-1/2+1/q}$. In particular, the equivalence holds that
    \[ \|u_0\|_{\DotF^{s}_{p,q}} \eqsim \|\nabla_x e^{t\Delta} u_0\|_{T^{p,q,r}_{s/2-1/2+1/q}}. \]
\end{cor}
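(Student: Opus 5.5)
The plan is to reduce the statement to the heat characterisation of Proposition~\ref{prop:heat-char-Fspq}, applied at regularity $s-1<0$, together with the boundedness of Riesz transforms and of the operator $\nabla_x$ on $\DotF$-spaces.

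The first step is the forward bound. For $u_0\in\DotF^s_{p,q}$ we set $v_0:=\nabla_x u_0\in\DotF^{s-1}_{p,q}$; this is a $\C^n$-valued tempered distribution and $\|\nabla u_0\|_{\DotF^{s-1}_{p,q}}\lesssim\|u_0\|_{\DotF^s_{p,q}}$, since $\nabla$ is a multiplier of order one on homogeneous Triebel--Lizorkin spaces. Because $s-1\in(-2,0)$, Proposition~\ref{prop:heat-char-Fspq} gives
\[
\|\nabla_x e^{t\Delta}u_0\|_{T^{p,q,r}_{(s-1)/2+1/q}}=\|e^{t\Delta}\nabla u_0\|_{T^{p,q,r}_{(s-1)/2+1/q}}\eqsim\|\nabla u_0\|_{\DotF^{s-1}_{p,q}},
\]
using that $\nabla$ commutes with the heat semigroup. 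For the reverse inequality we write $u_0=-\sum_j R_j(-\Delta)^{-1/2}\partial_j u_0$, with $R_j$ the Riesz transforms. This reduces the reverse bound to $\|u_0\|_{\DotF^s_{p,q}}\lesssim\|\nabla u_0\|_{\DotF^{s-1}_{p,q}}$, that is, to the boundedness of the Riesz transforms on $\DotF^{s-1}_{p,q}$. That boundedness holds for all $0<p<\infty$ and $0<q\le\infty$, and also for $p=\infty$ in the appropriate realization. We must also check that $u_0$ is uniquely recovered as a tempered distribution, and not merely modulo polynomials. This is where the restriction $s<1$ enters, through the choice of $\cS'$-realization following \cite{AH25b}; the restriction $p\ge\frac{n}{n+s+1}$ should play the same role for small $p$, guaranteeing that the realization is in $\cS'$ rather than in $\cS'$ modulo constants.

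The second step is surjectivity, which I expect to be the main obstacle. Let $u$ be a distributional heat solution on $\R^{1+n}_+$ with $\nabla_x u\in T^{p,q,r}_{s/2-1/2+1/q}$. The exponent satisfies $\beta:=s/2-1/2+1/q<1/q$, so the weight forces decay of $\nabla_x u$ as $t\to\infty$, and at $t\to0$ we have local control. I would proceed in three stages:
\begin{itemize}
\item establish that $u(t)$ has a trace $u_0$ as $t\to 0$ in $\cS'$ modulo constants, via the bound $\|\nabla u(t)\|$ in the space $\DotF^{-?}$ obtained through Proposition~\ref{prop:heat-char-Fspq} applied to $\nabla u(\tau+\cdot)$;
\item prove uniqueness of heat solutions in this class, namely $u(t)=e^{t\Delta}u_0$ up to a constant, by a Liouville-type argument: a heat solution whose gradient lies in a tent space with $\beta<1/q$ and vanishes at the boundary in the distributional sense is constant;
\item use $s>-1$, together with the lower bound $p\ge\frac{n}{n+s+1}$, to fix the constant by requiring membership in $\DotF^s_{p,q}$ realized in $\cS'$, which makes the map injective.
\end{itemize}

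Finally, we apply the equivalence from the first step to $u_0$, which gives $u_0\in\DotF^s_{p,q}$ with the stated norm equivalence. This completes the isomorphism claim.
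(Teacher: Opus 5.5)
The norm equivalence is handled as in the paper: Proposition~\ref{prop:heat-char-Fspq} at regularity $s-1<0$, the commutation of $\nabla_x$ with $e^{t\Delta}$, and $\|u_0\|_{\DotF^s_{p,q}}\eqsim\|\nabla u_0\|_{\DotF^{s-1}_{p,q}}$.

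The gap is in surjectivity, which is the only non-routine part. In your first stage you apply Proposition~\ref{prop:heat-char-Fspq} to $\nabla u(\tau+\cdot)$. That proposition concerns heat extensions of tempered distributions. Using it presupposes that $\nabla u(\tau)\in\cS'$ and that $\nabla u(\tau+t)=e^{t\Delta}\nabla u(\tau)$. This is precisely the representation property to be proved. Without a growth condition it is false for general distributional heat solutions (Tychonoff's example). You also need, and do not prove, a bound on the tent norm of the translate that is uniform in $\tau$. Your second stage, the ``Liouville-type argument'', only restates the missing uniqueness theorem.

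The paper closes exactly this point. It reduces surjectivity to showing $u(t)=e^{t\Delta}u_0$ for some $u_0\in\cS'$. It obtains this from the representation theorem for distributional heat solutions \cite[Theorem 1]{Auscher-Hou2024-Repheat}, adapting \cite[Proposition 3.5]{AH25b}. The norm equivalence then places $u_0$ in $\DotF^s_{p,q}$.

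Your reading of the hypotheses is also off.
\begin{itemize}
\item For $p<1$ one has $s<1<n/p$, so $\DotF^s_{p,q}$ is realized in $\cS'$ regardless of the lower bound on $p$. Realization modulo constants only occurs for large $p$, when $s\ge n/p$.
\item The bound $p\ge\frac{n}{n+s+1}$, together with $s>-1$, is (up to the endpoint) the condition for $T^{p,q,r}_{s/2-1/2+1/q}$ to embed into $L^1$ locally near $t=0$. That is the kind of boundary control a representation theorem needs.
\item No constant can be ``fixed'' in your third stage. The function $u+c$ has the same gradient as $u$, so the isomorphism is onto solutions modulo constants. Injectivity comes from the norm equivalence, not from stage three.
\end{itemize}

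If you want to keep your own route, it can be completed as follows, with $\beta:=s/2-1/2+1/q$.
\begin{itemize}
\item Each $\partial_j u$ is caloric. Interior estimates combined with the pointwise tent bound $W_r(\nabla u)(t,x)\lesssim t^{\beta-1/q-n/(2p)}\|\nabla u\|_{T^{p,q,r}_\beta}$ show that $\nabla u$ is bounded on $[\tau,T]\times\R^n$.
\item Uniqueness of bounded solutions then gives $\nabla u(\tau+t)=e^{t\Delta}\nabla u(\tau)$.
\item Since $\beta<1/q$ (this is $s<1$), interior estimates again bound the $T^{p,q,r}_\beta$-norm of the translate uniformly in $\tau$.
\item Proposition~\ref{prop:heat-char-Fspq} then bounds $\nabla u(\tau)$ uniformly in $\DotF^{s-1}_{p,q}\subset\cS'$.
\item A weak-$*$ limit $v_0$ as $\tau\to0$ satisfies $\nabla u(t)=e^{t\Delta}v_0$ and is curl-free. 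Hence $v_0=\nabla u_0$ with $u_0\in\cS'$, and $u=e^{t\Delta}u_0+c$.
\end{itemize}
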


\begin{proof}
    The equivalence follows from Proposition \ref{prop:heat-char-Fspq} as $\nabla_x$ commutes with the heat semi-group and $\|u_0\|_{\DotF^{s}_{p,q}} \eqsim \|\nabla_x u_0\|_{\DotF^{s-1}_{p,q}}$. We also infer that when $s<1$, the heat extension is injective from $\DotF^s_{p,q}$ to the space of distributional heat solutions $u$ with $\nabla_x u \in T^{p,q,r}_{s/2-1/2+1/q}$. 
    
    To prove the surjectivity, it suffices to show that any distributional heat solution $u$ with $\nabla_x u \in T^{p,q,r}_{s/2-1/2+1/q}$ can be represented as $u(t)=e^{t\Delta} u_0$ for some $u_0 \in \scrS'(\R^n)$. The proof is a verbatim adaptation of that in \cite[Proposition 3.5]{AH25b} for $q=r=2$, by using \cite[Theorem 1]{Auscher-Hou2024-Repheat}. Details are left to the reader.
\end{proof}

This will allow us to extend the fundamental solution $\Gamma_A(\cdot,0)$ to the Triebel-Lizorkin spaces $\DotF^s_{p,q}$ by its representation as a perturbation of the one from the Laplacian. Together with the Duhamel operators and the Lions operators, this extension gives a very weak solution to \eqref{e:NaPC}. For convenience, we parametrize our results by the parameter
\[ \beta = \tfrac{s}{2}+[q,2], \quad \text{i.e.,} \quad s=2(\beta-\tfrac{1}{q})+1. \]
The following theorem extends \cite[Theorem~1.7]{Hou25} to $q,r\ne 2$.

\begin{theorem}[Existence and uniqueness of solutions to \eqref{e:NaPC}]
    \label{thm:NAPC}
    Let $0<p,q \le \infty$ but $\max\{p,q\} \ne \infty$ when $\min\{p,q\} \le 1$. Let $v_-(A) < r < v_+(A)$. Let
    \[ \beta>-[2,q]+\frac{n}{2}\left[ \min\{p,q,v_-(A)\},v_-(A) \right]. \]
    \begin{enumerate}[label=\normalfont(\alph*)]
        \item \label{item:NAPC-ext-propagator}
        Let $\beta<1/q$. Then the fundamental solution map $u_0 \mapsto \Gamma_A(\cdot,0)u_0$ extends to a bounded operator from $\DotF^{2(\beta-1/q)+1}_{p,q}$ to $L^1_{\loc}(\R_+;W^{1,1}_{\loc}(\R^n))$ with
        \begin{equation}
            \label{e:NAPC-ext-propagator-est}
            \|\nabla_x \Gamma_A(t,0) u_0\|_{T^{p,q,r}_{\beta}} \lesssim \|u_0\|_{\DotF^{2(\beta-1/q)+1}_{p,q}}.
        \end{equation}
        
        \item \label{item:NAPC-sol}
        Let $F \in T^{p,q,r}_\beta$ and $f \in T^{p,q,r}_{\beta-1/2}$.
        \begin{enumerate}[label=\normalfont(\arabic*)]
            \item If $\beta<1/q$, then for any $u_0 \in \DotF^{2(\beta-1/q)+1}_{p,q}$, there exists a global very weak solution $u$ to \eqref{e:NaPC} on $(0,\infty) \times \R^n$, given by the explicit formula
            \[ u= \Gamma_A(\cdot, 0)u_0+ \cL^A_1(f) +\cR^A_{1/2}(F), \]
            such that $\nabla_x u \in T^{p,q,r}_\beta$ with the estimate
            \[ \|\nabla_x u\|_{T^{p,q,r}_\beta} \lesssim \|u_0\|_{\DotF^{2(\beta-1/q)+1}_{p,q}} + \|f\|_{T^{p,q,r}_{\beta-1/2}} + \|F\|_{T^{p,q,r}_\beta}. \]
            
            \item If $\beta \ge 1/q$, then the same statements also hold when $u_0$ is a constant.
        \end{enumerate}
        In both cases, we have $u \in C([0,\infty);\cS'(\R^n))$ with the initial condition $u(0)=u_0$.       
        
        \item \label{item:NAPC-unique}
        When $r\ge 2$, they are weak solutions and they are unique in the solution class $\nabla_x u \in T^{p,q,r}_\beta$.
    \end{enumerate}   
\end{theorem}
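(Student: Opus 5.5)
We split the proof into three parts. First we extend the propagator, then we build the solution by Duhamel's formula, and finally we prove uniqueness.

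\emph{(a) Extension of the propagator.} We write the fundamental solution as a perturbation of the heat semigroup. For $u_0$ in a dense class (say $u_0\in\cS$ with compactly supported Fourier transform away from $0$), set $h:=e^{t\Delta}u_0$ on $\R^{1+n}_+$. The difference $w:=\Gamma_A(\cdot,0)u_0-h$ solves $\partial_t w-\Div_x(A\nabla_x w)=\Div_x((A-\bfI)\nabla_x h)$ with zero initial data. By uniqueness in the Lions class and the representation \eqref{eq:representatioforward}, this gives
\[
\Gamma_A(\cdot,0)u_0=e^{t\Delta}u_0+\cR^A_{1/2}\brb{(A-\bfI)\nabla_x e^{t\Delta}u_0}.
\]
Hence
\[
\nabla_x\Gamma_A(\cdot,0)u_0=\nabla_xe^{t\Delta}u_0+\cR^A_0\brb{(A-\bfI)\nabla_xe^{t\Delta}u_0}.
\]
The heat corollary gives $\|\nabla_xe^{t\Delta}u_0\|_{T^{p,q,r}_\beta}\eqsim\|u_0\|_{\DotF^{s}_{p,q}}$ with $s=2(\beta-1/q)+1$. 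It requires $-1<s<1$, which follows from $\beta<1/q$ and the lower bound on $\beta$, and it requires the condition on $p$. Multiplication by the bounded matrix $A-\bfI$ preserves $T^{p,q,r}_\beta$. Theorem~\ref{thm:Lions-tent}\ref{item:RA0-bd-tent} gives $\cR^A_0$ bounded on $T^{p,q,r}_\beta$, since its lower bound \eqref{e:Lions-beta-bound} is implied by the assumed one: $-[2,q]\ge-[v_-(A),q]$ since $v_-(A)<2$. This proves \eqref{e:NAPC-ext-propagator-est} on the dense class. We then extend by density; when $p$ or $q$ is infinite we instead define the extension directly by the formula, using the tent-space extension of Theorem~\ref{thm:extension}. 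The $L^1_{\loc}W^{1,1}_{\loc}$ bound follows from local Whitney control of tent norms plus a Poincaré-type argument.

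\emph{(b) Existence.} We define $u=\Gamma_A(\cdot,0)u_0+\cL^A_1 f+\cR^A_{1/2}F$. Then
\[
\nabla_xu=\nabla_x\Gamma_A(\cdot,0)u_0+\cL^A_{1/2}f+\cR^A_0F.
\]
The first term is handled by (a). The term $\cL^A_{1/2}\colon T^{p,q,r}_{\beta-1/2}\to T^{p,q,r}_\beta$ follows from Theorem~\ref{thm:Duhamel-tent}\ref{item:LA1/2-bd-tent}. For this we must check that $\beta-1/2>-[1,q]+\frac n2[\min\{p,q,p_-(A)\},p_-(A)]$ follows from the hypothesis, using $p_-(A)\le\max\{v_-(A)^\sharp,1\}$ from Lemma~\ref{lem:ODEGammaA}. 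The requirement $r>\max\{p_-,v_{-*}\}$ is implied by $r>v_-(A)$. The term $\cR^A_0F$ is again handled by Theorem~\ref{thm:Lions-tent}.

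That $u$ is a very weak solution is verified on dense data, where it is the Lions/$L^2$ solution, and passed to the limit using the local $L^1$ bounds. Continuity in $\cS'$ and $u(0)=u_0$ follow by testing against Schwartz functions and using the tent bounds near $t=0$. When $\beta\ge1/q$, a constant $u_0$ is propagated to itself by conservation ($\Gamma_A(t,0)1=1$), so only the source terms remain.

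\emph{(c) Uniqueness.} For $r\ge2$, the $L^r$ Whitney averages give $\nabla_xu\in L^2_{\loc}$, so these are weak solutions. Uniqueness reduces to showing that a weak solution with zero data and $\nabla_xu\in T^{p,q,r}_\beta$ vanishes. We would adapt \cite[Theorem~1.7]{Hou25}: use $T^{p,q,r}_\beta\hookrightarrow T^{p,q,2}_{\beta}$ (Hölder in the Whitney average, valid for $r\ge2$), so the case $q=2$ applies with a modification of $q$. Alternatively, we would pair with the backward adjoint solutions $\cB^{A^*}_{1/2}g$, $g\in L^\infty_c$, using the tent duality and Theorem~\ref{thm:back-Duhamel-tent}, together with a cut-off near $t=0$ and at infinity.

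\textbf{Main obstacle.} We expect the main obstacle to be this uniqueness step, specifically controlling the boundary terms at $t\to0$ in the duality argument for general $q$. We also expect (a) to require checking carefully that the exponent ranges match the stated hypothesis.
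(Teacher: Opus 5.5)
Your parts (a) and (b) follow the paper's proof. The ingredients are the same: the perturbation identity $\Gamma_A(t,0)u_0=e^{t\Delta}u_0+\cR^A_{1/2}((A-\bfI)\nabla_x e^{t\Delta}u_0)$ (the paper quotes it from \cite{AH25b,Hou25} for $u_0\in L^2$; your derivation from uniqueness in the Lions class is fine), boundedness of $\cR^A_0$ via Theorem~\ref{thm:Lions-tent}, the heat characterisation, (weak-star) density, and weak limits for the very weak solution property. Your exponent checks are correct and make explicit what the paper leaves implicit. First, $-[2,q]>-[v_-(A),q]$. Second, $p_-(A)\le\max\{v_-(A)^\sharp,1\}<v_-(A)$, combined with the monotonicity of $a\mapsto[\min\{p,q,a\},a]$, gives the hypotheses of Theorem~\ref{thm:Duhamel-tent}\ref{item:LA1/2-bd-tent}. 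Two small corrections:
\begin{enumerate}
\item For \eqref{e:NAPC-ext-propagator-est} you only need the upper bound. Proposition~\ref{prop:heat-char-Fspq} gives it for $\nabla_x u_0\in\dot F^{s-1}_{p,q}$, since $s-1<0$, with no condition on $p$.
\item A Poincar\'e argument controls $u$ only modulo local means. Local integrability of $u$ itself should come from the formula, using $\cR^A_{1/2}\colon T^{p,q,r}_\beta\to T^{p,q,r}_{\beta+1/2}\subseteq L^r_{\loc}$ from Theorem~\ref{thm:Lions-tent}\ref{item:RA1/2-bd-tent}. Its $r$-range contains $(v_-(A),v_+(A))$ because $p_+(A)\ge v_+(A)^\sharp>v_+(A)$.
\end{enumerate}

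The real soft spot is (c). The paper also only follows \cite[Theorem 7.1]{Hou25}. Its mechanism is the interior representation $u(t)=\Gamma_A(t,s)u(s)$ from \cite{AMP19}, followed by $s\to0$, run directly with $\nabla_x u\in T^{p,q,r}_\beta$.

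Your reduction to the case $q=2$ works as stated only for $q\le 2$. There, $T^{p,q,r}_\beta\hookrightarrow T^{p,q,2}_\beta\hookrightarrow T^{p,2,2}_{\beta+[2,q]}$ by $\ell^q\subset\ell^2$ in the dyadic characterisation of Proposition~\ref{prop:dyadic}. The hypothesis then gives $\beta+[2,q]>\frac n2[\min\{p,q,v_-(A)\},v_-(A)]\ge\frac n2[\min\{p,v_-(A)\},v_-(A)]$, which is the $q=r=2$ case of the statement, treated in \cite{Hou25}.

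For $q>2$ the inclusion $\ell^q\subset\ell^2$ goes the wrong way. H\"older only gives $T^{p,q,2}_\beta\hookrightarrow T^{p,2,2}_{\beta'}$ on truncated cones $t<T$, for $\beta'<\beta+[2,q]$, because the price is a positive power of $\ell(Q)$ and that helps only for small cubes. So you must do one of the following:
\begin{enumerate}
\item Verify that the uniqueness argument of \cite{Hou25} uses only such truncated norms. This is plausible, since it is an argument as $s\to0$, but it has to be checked.
\item Rerun the interior-representation argument in $T^{p,q,r}_\beta$ itself, as the paper does.
\end{enumerate}
Your duality alternative is a genuinely different route. However, the boundary term at $t\to0$ that you flag is exactly the content of uniqueness, so as written it is not yet a proof.
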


\begin{proof} 
First consider \ref{item:NAPC-ext-propagator}. We follow the method in \cite{AH25b,Hou25}. It has been shown there that for $u_0 \in L^2(\R^n)$, one has
\begin{equation*}
    \Gamma_A(t,0)u_0 = \Gamma_{\bfI}(t,0)u_0+ \cR^A_{1/2}((A-\bfI)\nabla_x \Gamma_{\bfI}(\cdot,0)u_0)(t), \quad t>0,
\end{equation*}
where $\Gamma_{\bfI}(t,0)$ agrees with the heat semigroup $e^{t\Delta}$. Then using Theorem \ref{thm:Lions-tent}\ref{item:RA0-bd-tent} for boundedness of $\cR^A_0 = \nabla_x \cR^A_{1/2}$ and then Proposition \ref{prop:heat-char-Fspq}, we get that for $u_0 \in \DotF^{2(\beta-1/q)+1}_{p,q} \cap L^2(\R^n)$,
\[ \|\nabla_x \Gamma_A(t,0)u_0\|_{T^{p,q,r}_{\beta}} \lesssim \|\nabla_x \Gamma_\bfI(t,0)(u_0)\|_{T^{p,q,r}_{\beta}} \lesssim \|u_0\|_{\DotF^{2(\beta-1/q)+1}_{p,q}}. \]
In particular, as $T^{p,q,r}_\beta$ embeds into $L^r_{\loc}(\R^{1+n}_+)$, we infer that $u \in L^1_{\loc}(\R_+;W^{1,1}_{\loc}(\R^n))$. Thus, the extension and the estimate \eqref{e:NAPC-ext-propagator-est} follow by a density argument (or weak*-density if either $p=\infty$ or $q=\infty$). This proves \ref{item:NAPC-ext-propagator}.

Next, for \ref{item:NAPC-sol}, that $u$ is a very weak solution is obtained by taking weak limits in the equation from good data (e.g. $u_0 \in \DotF^s_{p,q} \cap L^2(\R^n)$) and sources (e.g. $f,F \in L^\infty_c(\R^{1+n}_+)$), which are dense in the prescribed spaces, respectively (cf. Proposition \ref{prop:tentspacedensityduality}\ref{it:density}). This density argument is ensured by boundedness of the related operators, see Theorems \ref{thm:Duhamel-tent} and \ref{thm:Lions-tent} and \ref{thm:NAPC}\ref{item:NAPC-ext-propagator}. The continuity in $\scrS'(\R^n)$ follows from a verbatim adaptation of the reasoning in \cite{Hou25} for $q=r=2$, using the properties of tent spaces $T^{p,q,r}_\beta$ established in the recent works \cite{ABH26,Haa26}. Details are left to the reader.

Finally, consider \ref{item:NAPC-unique}. The condition $r\ge 2$ implies that $u$ and $\nabla_x u$ belong to $L^2_{\loc}(\R^{1+n}_+)$, so $u$ is a weak solution. For the uniqueness, we follow the reasoning in \cite[Theorem 7.1]{Hou25} by using an interior representation of $u$ established in \cite{AMP19}.
\end{proof}

\begin{remark}
    First, the result we obtain is partially extendable. Indeed, if we look at $f$ and $F$ in a separate fashion, there is a larger range of $\beta$ for which the existence of very weak solutions still holds, but we cannot verify uniqueness for the moment. Statements \ref{item:NAPC-ext-propagator} and \ref{item:NAPC-sol} still hold for $u_0 \in \DotF^{2(\beta-1/q)+1}_{p,q}$, $f \in T^{p_1,q,r}_{\beta_1}$, and $F \in T^{p,q,r}_\beta$, provided that
    \[ \begin{cases}
    0<p \le \infty, & \beta>-[v_-(A),q]+\frac{n}{2}\left[ \min\{p,q,v_-(A)\},v_-(A) \right], \\
    0<p_1 \le p, & \beta_1 > -[1,q]+\frac{n}{2}\left[ \min\{p_1,q,p_-(A)\},p_-(A) \right], \\
    \end{cases} \]
    with
    \[ 2\beta-\frac{n}{p}=2\beta_1+1-\frac{n}{p_1}. \] 
    When $q=r=2$, this formulation improves the existence part of \cite[Theorem 1.7]{Hou25}. For example, when $p=\infty$, we get existence of weak solutions for $-[v_-(A),2]<\beta<0$, which was not obtained there. 

    Moreover, if the bounds conjectured in Remark \ref{rem:tent-bdd-beta-sharp} hold, then one can readily prove the existence of very weak solutions in the corresponding range.
    
    However, we do not know how to prove uniqueness if $\beta \le -[2,q]+\frac{n}{2}\left[ \min\{p,q,v_-(A)\},v_-(A) \right]$ in our framework. When $q=r=2$, \cite[Theorem 1.7]{Hou25} includes a region below $\beta=0$ for which uniqueness holds, see \cite[Figure 1]{Hou25}, as a consequence of a Sneiberg argument implemented directly for the Cauchy problem and only in weighted Lebesgue spaces $L^2_\beta(\R^{1+n}_+)$ with $\beta$ near 0. Setting this Sneiberg argument in our context would require  showing that the operator $u \mapsto (\partial_t u-\Delta u ,\tr(u))$ is an isomorphism, where the solution class consists of functions $u$ such that $\partial_t u, \Delta u$ lie in the divergence of the weighted tent space $T^{p,q,r}_\beta$ and $\tr$ is a trace taking values in the Triebel-Lizorkin space. This is out of the scope of this article.
\end{remark}

\begin{remark}
    The uniqueness also holds in the solution class $u \in T^{p,q,r}_{\beta+1/2}$ under the same conditions as in Theorem~\ref{thm:NAPC}\ref{item:NAPC-unique}.
\end{remark}

\begin{remark}
    There are possible statements for the backward Lions operators $\cB^A_1 \Div$ and $\nabla \cB^A_1 \Div = \cB^A_{1/2} \Div$ and solutions to the backward equations with null final data at $t=\infty$. We leave precise formulations to interested readers.
\end{remark}

\subsection{Autonomous case}
\label{ssec:parabolic-auto}
Recall $L=- \Div_x(A(x)\nabla_x)$ was defined in \eqref{e:L} together with the critical numbers $p_\pm(L)$ and $q_\pm(L)$ in \eqref{eq:p(L)} and \eqref{eq:q(L)}. The numbers $p_\pm(A)$ and $v_\pm(A)$ when $A$ also depend on $t$ are in Definitions~\ref{def:p(A)} and~\ref{def:v(A)}, respectively. Recall that  the inequalities $p_-(L)=q_-(L)<\frac{2n}{n+2}$ and $p_+(L)>\frac{2n}{n-2}$ are sharp when $n\ge 3$, and $q_+(L)>2$ are sharp in all dimensions, see \cite[Remark 6.8(ii)]{Auscher-Egert2023-book}.

\begin{theorem}[Critical numbers in the autonomous case]
    Assume $A(t,x)=A(x)$ is independent of $t$. 
    \begin{enumerate}
        \item $(p_-(A), p_+(A))=(p_-(L), p_+(L))$.
        \item $(v_-(A),v_+(A)) = (q_+(L^*)',q_+(L))$.
    \end{enumerate}
\end{theorem}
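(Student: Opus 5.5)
For (1), the plan is to use that in the autonomous case the fundamental solution is the semigroup: $\Gamma_A(t,s)=e^{-(t-s)L}$. This follows from the identification of $\Gamma_A(t,s)\psi$ with the unique weak solution on $(s,\infty)\times\R^n$ with $\nabla_x u\in L^2$. The function $t\mapsto e^{-(t-s)L}\psi$ is such a solution by the $L^2$ holomorphic functional calculus, since its gradient is square integrable by the Kato square root estimate. Uniform $L^p$ boundedness of the family $\{\Gamma_A(t,s)\}$ is then exactly uniform $L^p$ boundedness of the heat semigroup $\{e^{-tL}\}_{t>0}$. By \cite[Chapters 6 and 12]{Auscher-Egert2023-book}, the maximal open interval for this is $(p_-(L),p_+(L))$, the same as for the resolvents in \eqref{eq:p(L)}. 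Comparing with Definition~\ref{def:p(A)} gives (1).

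For (2), I will first show $(q_+(L^*)',q_+(L))\subseteq(v_-(A),v_+(A))$. By interpolation of the Sneiberg scale and duality ($v_\pm(A^*)=v_\mp(A)'$), it suffices to handle the diagonal $q=p$, and even just to prove the mixed-norm bound for the Lions operator $\nabla_x(\partial_t+L)^{-1}\Div_x$ on $\Ltx{q}{p}$. The inverse of the operator from $\dot\cV^{q,p}$ to its dual is then obtained from this bound, its transpose, and Proposition~\ref{prop:Vrho,p}\ref{item:Vrho,p-eq-norms}. For $p\in(q_+(L^*)',q_+(L))$, I expect the following $L^p(\R^n)$ bounds:
\begin{itemize}
\item $\nabla_x L^{-1/2}$ is bounded (Riesz transform, $q_-(L)<p<q_+(L)$);
\item $L^{-1/2}\Div_x$ is bounded (dual Riesz transform, $p>q_+(L^*)'$);
\item $L$ has a bounded $H^\infty$-calculus, so maximal regularity for $L(\partial_t+L)^{-1}$ holds on $L^q_tL^p_x$ for all $q\in(1,\infty)$ by \cite{KW04}.
\end{itemize}
Factoring $\nabla L^{-1/2}\cdot L(\partial_t+L)^{-1}\cdot L^{-1/2}\Div$ then gives $\Ltx{q}{p}$ bounds for all $q\in(1,\infty)$. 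The inhomogeneous version, with $L+1$, works the same way.

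For the reverse inclusion, I assume \eqref{eq:homSneiberg} holds for some $q=p>2$. Restricting to $t$-independent data is not directly allowed, so I would instead use a time cutoff. Testing with $F(t,x)=\chi(t/T)G(x)$ and letting $T\to\infty$ should recover, by Fourier analysis in $t$ near $\tau=0$, the $L^p$ bound for $\nabla L^{-1}\Div$ on $\R^n$. Alternatively, one can take the Laplace transform at $\tau$ and use $\nabla(i\tau+L)^{-1}\Div$ uniformly in $\tau$. Either way this gives $L^p$-boundedness of $\nabla_x L^{-1}\Div_x$, which is known to be equivalent to $p<q_+(L)$ by \cite[Chapter~7]{Auscher-Egert2023-book}. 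Hence $v_+(A)\le q_+(L)$, and duality gives $v_-(A)\ge q_+(L^*)'$.

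The main obstacle is this reverse inclusion. The passage from the parabolic mixed-norm estimate to the elliptic estimate requires a careful limiting argument, and the endpoint characterization of $q_+(L)$ through $\nabla L^{-1}\Div$ must be justified; I would try the uniform resolvent route first.
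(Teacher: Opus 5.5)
Part (1) is the paper's argument. For (2) your strategy also matches the paper's. The inclusion $(q_+(L^*)',q_+(L))\subseteq(v_-(A),v_+(A))$ comes from time-Fourier multipliers built from Riesz transforms and the $H^\infty$-calculus of $L$. The converse comes from evaluating the time symbol at $\tau=0$ and invoking the Auscher--Egert characterization of $L^p$-boundedness of $\nabla_xL^{-1}\Div_x$. However, two steps are asserted rather than proved.

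\emph{Forward direction.} Proposition~\ref{prop:Vrho,p}\ref{item:Vrho,p-eq-norms} turns the $\dot\cV$ statement into $\Ltx{q}{p}$-bounds for four operators: $D_t^{1/2}(\partial_t+L)^{-1}D_t^{1/2}$, $D_t^{1/2}(\partial_t+L)^{-1}\Div_x$, $\nabla_x(\partial_t+L)^{-1}D_t^{1/2}$ and $\nabla_x(\partial_t+L)^{-1}\Div_x$. The transpose of the Lions operator is just the backward Lions operator for $A^*$. Neither it nor the norm equivalence by itself yields the three terms containing $D_t^{1/2}$. The paper bounds all four symbols $|\tau|(i\tau+L)^{-1}$, $|\tau|^{1/2}\nabla_x(i\tau+L)^{-1}$, $|\tau|^{1/2}(i\tau+L)^{-1}\Div_x$ and $\nabla_x(i\tau+L)^{-1}\Div_x$. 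It does so via $\mathcal R$-boundedness of $\{\varphi(sL)\}_{s>0}$ and the operator-valued Mikhlin theorem. Your maximal-regularity bullet does not cover the half-order symbol $|\tau|^{1/2}L^{1/2}(i\tau+L)^{-1}$.

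Your reduction can be rescued by comparison with the heat operator. Indeed $u=(\partial_t+L)^{-1}\Div_xF$ satisfies $u=(\partial_t-\Delta)^{-1}\Div_x(F+(A-\mathbf{I})\nabla_xu)$. Since $v_-(\mathbf I)=1$ and $v_+(\mathbf I)=\infty$, this gives $\|D_t^{1/2}u\|\lesssim\|F\|+\|\nabla_xu\|$. The same argument for $A^*$, plus duality, handles $\nabla_x(\partial_t+L)^{-1}D_t^{1/2}$; this is where the transpose genuinely enters. But this has to be said. Separately, your claim that interpolation and duality reduce matters to $q=p$ is wrong, since both preserve the diagonal. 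It is harmless, because your factorization treats every $q\in(1,\infty)$ anyway.

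\emph{Converse direction.} The more serious gap is the one you flag yourself. For $\Psi,\widetilde\Psi\in\cS(\R^n;\C^n)$, boundedness of the multiplier on $L^q(\R)$ only gives $|m(\tau)|\lesssim\|\Psi\|_{L^p}\|\widetilde\Psi\|_{L^{p'}}$ for almost every $\tau$, where $m(\tau)=\langle\nabla_x(i\tau+L)^{-1}\Div_x\Psi,\widetilde\Psi\rangle$. At best this extends to every $\tau\neq0$. The quantity you need, $\langle\nabla_xL^{-1}\Div_x\Psi,\widetilde\Psi\rangle$, is the value at the single point $\tau=0$. Uniform bounds for $\tau\neq0$ say nothing about it, and your cutoffs $\chi(t/T)G$ recover it only if $m$ is continuous at $0$. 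That continuity is the missing idea.

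The paper obtains it by writing $-m$ as the Fourier transform of $t\mapsto\ind_{(0,\infty)}(t)\langle e^{-tL}\Div_x\Psi,\Div_x\widetilde\Psi\rangle$ and showing this function is integrable. It is bounded, and for $n\ge3$ it decays like $t^{-\frac n2[p_1,p_2]}$ with $\frac n2[p_1,p_2]>1$, by off-diagonal estimates. For $n=1,2$ the paper uses that $e^{-tL}$ and $e^{-tL^*}$ map $H^p(\R^n)$ into $L^2(\R^n)$ for some $p<1$.

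A more elementary way to close your uniform-resolvent route: set $u_\tau=(i\tau+L)^{-1}\Div_x\Psi$. The $L^2$ energy identity gives $\|\nabla_xu_\tau\|_{2}+|\tau|^{1/2}\|u_\tau\|_{2}\lesssim\|\Psi\|_{2}$. Hence every weak limit of $\nabla_xu_\tau$ as $\tau\to0$ is the gradient of a $\dot H^1$ solution of $Lw=\Div_x\Psi$, i.e.\ $\nabla_xL^{-1}\Div_x\Psi$, and so $m(\tau)\to m(0)$. Only once this is in place does the characterization, for which the paper uses \cite[Theorem~13.12]{Auscher-Egert2023-book}, give $p<q_+(L)$.
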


\begin{proof}
    For the first point,  when $A$ does not depend on $t$, $\Gamma_A(t,s)= e^{-(t-s)L}$, the semi-group  generated by $-L$. Thanks to the  definition of the  exponents  $p_\pm(L)$ and $p_\pm(A)$, we must have $p_\pm(L)=p_\pm(A)$.

    For the second point, in contrast, the definitions of $q_+(L)$ and $v_+(A)$ do not look alike. The result follows from the next proposition and the definition of $v_\pm(A)$.
\end{proof}

\begin{prop}
    Let $p\in (1,\infty)$.
    \begin{enumerate}
        \item If $p\in (q_+(L^*)', q_+(L))$ and $1<q<\infty$, then $(\pm \partial_t - \Div_x(A(x)\nabla_x))^{-1} \colon (\dot\cV^{q',p'})' \to \dot\cV^{q,p}$. The same holds for the inhomogeneous operators and spaces. 
        \item If there exists  $q\in(1,\infty)$ such that $(\partial_t - \Div_x(A(x)\nabla_x))^{-1} \colon (\dot\cV^{q',p'})' \to \dot\cV^{q,p}$, then $p\in (q_+(L^*)', q_+(L))$.
    \end{enumerate}
    \end{prop}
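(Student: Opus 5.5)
Both parts rest on the same reduction: in the autonomous case, Fourier transform in $t$ turns $\pm\partial_t+L$ into the resolvent family $(\pm i\tau+L)^{-1}$, $\tau\in\R$. The operator $(\pm\partial_t+L)^{-1}\colon(\dot\cV^{q',p'})'\to\dot\cV^{q,p}$ is then an operator-valued Fourier multiplier in $t$. By Proposition~\ref{prop:Vrho,p}\ref{item:Vrho,p-eq-norms}, the target norm is $\|\nabla_x u\|_{\Ltx{q}{p}}+\|D_t^{1/2}u\|_{\Ltx{q}{p}}$. Dually, $(\dot\cV^{q',p'})'$ is the sum space $L^q_t\dot H^{-1,p}_x+D_t^{1/2}L^q_tL^p_x$.

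\textbf{Part (1).} Writing $F=\Div_x G+D_t^{1/2}h$, it suffices to bound the four $\Ltx{q}{p}$-valued operators
\[
\nabla_x(\pm\partial_t+L)^{-1}\Div_x,\quad \nabla_x(\pm\partial_t+L)^{-1}D_t^{1/2},\quad D_t^{1/2}(\pm\partial_t+L)^{-1}\Div_x,\quad D_t^{1/2}(\pm\partial_t+L)^{-1}D_t^{1/2}.
\]
Their symbols are
\[
\nabla(\pm i\tau+L)^{-1}\Div,\quad |\tau|^{1/2}\nabla(\pm i\tau+L)^{-1},\quad |\tau|^{1/2}(\pm i\tau+L)^{-1}\Div,\quad |\tau|(\pm i\tau+L)^{-1}.
\]
For $p\in(q_+(L^*)',q_+(L))$, these families are $R$-bounded on $L^p(\R^n)$. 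This follows from $L^p$ off-diagonal estimates for $\sqrt{|\tau|}\nabla(i\tau+L)^{-1}$ in exactly this range: for $p>2$ one uses \cite[Ch.~6]{Auscher-Egert2023-book} and the definition of $q_+(L)$, and for $p<2$ duality with $L^*$. Off-diagonal estimates in an open interval around $p$ imply $R$-boundedness via the standard $\ell^2$-valued extension.

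The Mikhlin condition $\tau\partial_\tau$ also holds, because $\tau\partial_\tau$ of each symbol is a product of resolvent terms of the same type. The operator-valued Mikhlin theorem (Weis) on the UMD space $L^p$ then gives $L^q_t$-boundedness for every $q\in(1,\infty)$. The factor $\nabla(i\tau+L)^{-1}\Div$ is $R$-bounded for $p\in(q_+(L^*)',q_+(L))$ by the same off-diagonal argument, since it is the composition of $\tau^{1/2}\nabla(i\tau+L)^{-1}$ and its dual-type counterpart via the resolvent identity. For the inhomogeneous version one replaces $L$ by $L+1$ and uses the same families.

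\textbf{Part (2).} Assume $(\partial_t+L)^{-1}\colon(\dot\cV^{q',p'})'\to\dot\cV^{q,p}$ for some $q$. Test with separated data $F(t,x)=\phi(t)g(x)$, where $g=\Div G$ and $\hat\phi$ is concentrated near a frequency $\tau_0$. A multiplier bounded on $L^q(\R;X)$ has its symbol bounded in $\mathcal L(X)$ at Lebesgue points, by a standard restriction argument with modulated bumps $e^{i\tau_0 t}\phi(\varepsilon t)$ and $\varepsilon\to0$. This yields
\[
\sup_\tau\|\nabla(i\tau+L)^{-1}\Div\|_{L^p\to L^p}<\infty \quad\text{and}\quad \sup_\tau\||\tau|^{1/2}\nabla(i\tau+L)^{-1}\|_{L^p\to L^p}<\infty.
\]
The second bound gives $p\le q_+(L)$ after passing from the imaginary axis to $t^2$-resolvents via the sectorial functional calculus. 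The first gives boundedness of $\nabla L^{-1}\Div$ on $L^p$, which by duality places $p'$ in the Riesz range of $L^*$, so $p\ge q_+(L^*)'$.

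The main obstacle is the openness of the conclusion: one must exclude the endpoints. I would use openness of the boundedness set (Sneiberg, or self-improvement of off-diagonal bounds in \cite{Auscher-Egert2023-book}). Since $\dot\cV^{q,p}$ forms an interpolation scale, Sneiberg's lemma makes the set of valid $p$ open, so $p$ lies in the open interval.
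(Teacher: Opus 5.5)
\textbf{Part (1) has a genuine gap.} Your reduction to the four operator-valued symbols and your use of the operator-valued Mikhlin theorem on the UMD space $L^p(\R^n)$ match the paper. Off-diagonal bounds also correctly give $R$-boundedness of $|\tau|(i\tau+L)^{-1}$, $|\tau|^{1/2}\nabla(i\tau+L)^{-1}$ and $|\tau|^{1/2}(i\tau+L)^{-1}\Div$.

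The problem is the zero-order family $\{\nabla(i\tau+L)^{-1}\Div\}_{\tau\neq 0}$. It is not a composition of the two square-root families. Their product is $|\tau|\nabla(i\tau+L)^{-2}\Div$, which up to a unimodular factor is $\tau\partial_\tau$ of the symbol. So it settles the Mikhlin derivative condition but not the symbol itself, and recovering the symbol by integrating against $\mathrm{d}\tau/\tau$ diverges logarithmically.

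An off-diagonal argument does not apply either. The family has order zero, so it has no $L^{p_1}\to L^{p_2}$ bounds with $p_1<p_2$ at scale $|\tau|^{-1/2}$. For $L=-\Delta$ its symbol $-\xi\otimes\xi/(i\tau+|\xi|^2)$ tends to the Calder\'on--Zygmund symbol $-\xi\otimes\xi/|\xi|^2$ at high frequencies. Moreover, uniform $L^p$-boundedness of this family already forces boundedness of $\nabla L^{-1}\Div$ on $L^p$ (the limit $\tau\to 0$, which is exactly the mechanism of part (2)). So it cannot come from resolvent off-diagonal estimates alone.

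The missing idea is the paper's factorization
\[
\nabla(i\tau+L)^{-1}\Div=(\nabla L^{-1/2})\circ\bigl(L(i\tau+L)^{-1}\bigr)\circ(L^{-1/2}\Div).
\]
The two $\tau$-independent outer factors are bounded on $L^p$ exactly for $p\in(p_-(L),q_+(L))$ and, since $L^{-1/2}\Div=-(\nabla (L^*)^{-1/2})^*$, for $p\in(q_+(L^*)',p_+(L))$. This is where the interval $(q_+(L^*)',q_+(L))$ genuinely enters. The middle family $\{\varphi(\tau^{-1}L)\}$ is $R$-bounded because $L$ has a bounded $H^\infty$-calculus on $L^p$ for $p\in(p_-(L),p_+(L))$ \cite[Theorem~10.3.4]{HNVW17}.

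\textbf{Part (2) takes a different route from the paper and has a fixable gap.} The paper tests only the zero-order multiplier. It shows the scalar symbol $\tau\mapsto\langle (i\tau+L)^{-1}\Div\Psi,\Div\widetilde\Psi\rangle$ is continuous at $\tau=0$, via integrability of $t\mapsto\langle e^{-tL}\Div\Psi,\Div\widetilde\Psi\rangle$. It then deduces that $\nabla L^{-1}\Div$ is bounded on $L^p$ and concludes by \cite[Theorem~13.12]{Auscher-Egert2023-book}. That theorem characterizes this boundedness by the open interval itself, so no separate openness argument is needed.

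In your version, ``by duality places $p'$ in the Riesz range of $L^*$'' is not a duality fact. Duality only gives boundedness of $\nabla(L^*)^{-1}\Div$ on $L^{p'}$, and linking that to $q_+(L^*)$ is precisely that theorem. You can bypass it with the third multiplier: $|\tau|^{1/2}(i\tau+L)^{-1}\Div$ on $L^p$ is dual to $|\tau|^{1/2}\nabla(-i\tau+L^*)^{-1}$ on $L^{p'}$. This yields $p'\le q_+(L^*)$ exactly as you obtain $p\le q_+(L)$.

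The passage from the imaginary axis to $t\nabla(1+t^2L)^{-1}$, $t>0$, should be a three-lines (Phragm\'en--Lindel\"of) argument for the analytic family $z\mapsto \sqrt z\,\nabla(1+zL)^{-1}$, which is uniformly $L^2$-bounded in the sector. It cannot rest on an $L^p$ functional calculus that is not yet known to exist at this $p$. With these repairs, Sneiberg's lemma on the interpolation scale (as in Proposition~\ref{prop:sneiberg}) does supply the openness you need.
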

    
\begin{proof}
Proof of (i). We only do the homogeneous case as the inhomogeneous case is similar. As usual we only look at the forward operator  $( \partial_t - \Div_x(A(x)\nabla_x))^{-1}$ which we write as $(\partial_t+L)^{-1}$. 
By norm equivalences  and density we need to show for
\(f\in \cS(\R^{1+n})\)  and
\(F\in \cS(\R^{1+n};\C^n)\): 
\[
\begin{aligned}
\|D_t^{1/2}(\partial_t+L)^{-1}(D_t^{1/2}f+\Div_x F)\|_{\Ltx{q}{p}}  +\|\nabla_x&(\partial_t+L)^{-1}(D_t^{1/2}f+\Div_x F)\|_{\Ltx{q}{p}}  \\
&\lesssim
\|f\|_{\Ltx{q}{p}}
+\|F\|_{\Ltx{q}{p}}.
\end{aligned}
\]

Using a Fourier transform in \(t\), we equivalently need to show that the following four operator-valued Fourier multipliers are bounded
on \(\Ltx{q}{p}\):
\begin{align*}
    |\tau|(i\tau+L)^{-1},\\
    |\tau|^{1/2}\nabla_x(i\tau+L)^{-1}&=\brb{\nabla_x L^{-1/2}} \circ \brb{
    |\tau|^{1/2}L^{1/2}(i\tau+L)^{-1}},\\
    |\tau|^{1/2}(i\tau+L)^{-1}\Div_x  &= \brb{|\tau|^{1/2}L^{1/2}(i\tau+L)^{-1}} \circ \brb{L^{-1/2}\Div},\\
    \nabla_x(i\tau+L)^{-1}\Div&=\brb{\nabla_x L^{-1/2} }\circ\brb{L(i\tau+L)^{-1}}\circ \brb{
    L^{-1/2}\Div_x}.
\end{align*}
As we assume $p \in (q_+(L^*)',q_+(L))$, then we can estimate all Riesz transforms $\nabla_x L^{-1/2}$ and $L^{-1/2}\Div_x= -(\nabla_x {L^*}^{-1/2})^*$ away because they are bounded on $L^p$ spaces and independent of $\tau$. That leaves us with Fourier multiplier operators with symbols
\[ m_\alpha(\tau):= {|\tau|(|\tau|^{-1} L)^\alpha} {(i\tau+L)}^{-1}=-i\sgn(\tau){(|\tau|^{-1} L)^\alpha} {(1+(i\tau)^{-1} L)}^{-1}  , \qquad \tau \in \R^*, \]
for $\alpha = 0,\frac12,1$. We can study $\tau>0$ and $\tau<0$ separately by the boundedness of the Fourier multipliers $\ind_{(0,\infty)}$, $\ind_{(-\infty,0)}$ on  $L^q(\R;X)$, where $X =L^p(\R^n)$ is a UMD space (see \cite[Proposition 5.3.10]{HNVW16}). 
Thus we can only look at $\tau>0$. Then both $m_\alpha$ and $\tau \mapsto \tau m'_\alpha(\tau)$ are  functions of the form $\varphi(\tau^{-1}L)$ for some $\varphi \in H^\infty$. Note that $L$ has a bounded $H^\infty$-calculus on $L^p(\R^n)$ as $p\in (p_-(L), p_+(L))$, see \cite[Section 5]{A07}, so by \cite[Theorem 10.3.4(3)]{HNVW17} we know that for any fixed $\varphi \in H^\infty$ on an appropriate sector,  we have that
$$
\cbrace{\varphi(tL), t>0}
$$
is $\mc{R}$-bounded on $L^p(\R^n)$.
Thus the required boundedness follows from the vector-valued Mikhlin multiplier theorem in the UMD space $L^p(\R^n)$ \cite{We01b}, see also \cite[Theorem 5.3.18]{HNVW16}.

Proof of (ii). By the assumed boundedness and the norm equivalences in Proposition~\ref{prop:Vrho,p},
the operator
\(
\nabla_x(\partial_t+L)^{-1}\Div_x
\)
is bounded on $L^q(\R;L^p(\R^n;\C^n))$. Fix $\Psi,\widetilde\Psi\in\cS(\R^n;\C^n)$ and define for $h \in L^q(\R)$
\[
T_{\Psi,\widetilde\Psi}h(t)
=
\ipb{
\nabla_x(\partial_t+L)^{-1}\Div_x(h \otimes \Psi)(t),
\widetilde\Psi
} .
\]
Then $T_{\Psi,\widetilde\Psi}$ is bounded on $L^q(\R)$.
After Fourier transform in $t$, this scalar operator has multiplier
\[
m_{\Psi,\widetilde\Psi}(\tau)
=
-\left\langle
(i\tau+L)^{-1}\Div_x\Psi,
\Div_x\widetilde\Psi
\right\rangle.
\]
 Since any Fourier multiplier on $L^q(\R)$ is bounded pointwise {a.e.}~by the operator norm, we obtain for almost every $\tau$  
\[
|m_{\Psi,\widetilde\Psi}(\tau)| \le C \|\Psi\|_{L^{\vphantom{p_1'}p}_x}\|\widetilde \Psi\|_{L^{{\vphantom{p_1'}}p'}_x}.
\]
Let $\psi=\Div_x \Psi$, $\tilde \psi= \Div_x \widetilde \Psi $. Then they both belong to $\cS(\R^n)$ and $-m_{\Psi,\widetilde\Psi}$ is the Fourier transform of 
$f(t):=\ind_{(0,\infty)}(t)\langle  e^{-tL}  \psi, \tilde \psi\rangle$.
Note that $f$ is bounded, using that both $\psi,\tilde \psi$ are in $L^2(\R^n)$. 
Next,  using the off-diagonal estimates we have for any $p_-(L)<p_1\le p_2<p_+(L)$ and all $t> 0$ 
\[ |f(t)|\lesssim t^{-\tfrac{n}{2}[p_1,p_2]}\|\psi\|_{L^{\vphantom{p_2'}p_1}_x}\|\tilde \psi\|_{L^{p_2'}_x}.\]
As $[p_-(L),p_+(L)]> \tfrac{2}{n}$ when $n\ge 3$, one can choose $p_1,p_2$ with $\tfrac{n}{2}[p_1,p_2]>1$. The argument for $n=2$ requires a stronger property that the semigroups generated by $-L$ and $-L^*$ map the Hardy space $H^p(\R^n)$ to $L^2(\R^n)$ for some $p<1$ with bound $t^{1/2-1/p}$. This follows from the results in \cite[Chapters 6 \& 12]{Auscher-Egert2023-book}. Using $\langle e^{-tL}  \psi, \tilde \psi\rangle= \langle  e^{-(t/2)L}  \psi, e^{-(t/2)L^*}\tilde \psi\rangle$ and that $\psi, \tilde\psi\in H^p(\R^n)$ as the divergence of  Schwartz functions we obtain an integrable decay at $\infty$. For $n=1$, this mapping property is for all $1/2<p<1$ with bound $t^{1/4-1/2p}$, which again gives integrable decay at $\infty$. It follows that in any dimension, $f$ is integrable on $\R$ and thus $m_{\Psi,\widetilde\Psi}$ is continuous. In particular at $\tau=0$, we have obtained
\[|\langle   L^{-1}\Div_x \Psi, \Div_x \widetilde \Psi\rangle|\le C \|\Psi\|_{L^{\vphantom{p_1'}p}_x}\|\widetilde \Psi\|_{L^{{\vphantom{p_1'}}p'}_x}\]
for any $\Psi, \widetilde \Psi\in \cS(\R^{n}; \C^{n})$.
Thus, the operator $\nabla_xL^{-1}\Div_x$ extends to a bounded operator on $L^p(\R^{n}; \C^{n})$ and by \cite[Theorem~13.12]{Auscher-Egert2023-book}, this implies that $p\in (q_+(L^*)', q_+(L))$.
\end{proof}

We now specify boundedness of the operators in the autonomous case and verify the conjecture in Remark~\ref{rem:conjecture} in this case. We also consider the \emph{maximal regularity operator}
\[ \cL^A_0 f(t) := \int_0^t Le^{-(t-s)L}f(s) \dd s,  \quad f \in L^{\infty}_c(\R^{1+n}_+). \]

\begin{theorem}[Boundedness in the autonomous case]
    \label{thm:parabolic-autonomous}
    Assume $A=A(x)$. Let $p,q \in (0,\infty]$. 
    \begin{enumerate}[label=\normalfont(\arabic*)]
        \item \label{item:Duhamel-auto}
        Let $p_-(L)<r<p_+(L)$ and
        \[ \beta>-[1,q]+\tfrac{n}{2}[\min\{p,q,p_-(L)\},p_-(L)]. \]
        Then the Duhamel operator $\cL^A_1$ extends to a bounded operator from $T^{p,q,r}_\beta$ to $T^{p,q,r}_{\beta+1}$, and the maximal regularity operator $\cL^A_0$ extends to a bounded operator on $T^{p,q,r}_\beta$. 
        
        Moreover, if $p_-(L) < r < q_+(L)$, then $\cL^A_{1/2}$ extends to a bounded operator from $T^{p,q,r}_\beta$ to $T^{p,q,r}_{\beta+1/2}$.

        \item \label{item:Lions-auto}
        Let $q_+(L^*)'<r<p_+(L)$ and
        \[ \beta>-[1,q]+\tfrac{n}{2}\left[ \min\{p,q,q_+(L^*)'\},q_+(L^*)' \right]. \]
        Then the Lions operator $\cR^A_{1/2}$ extends to a bounded operator from $T^{p,q,r}_\beta$ to $T^{p,q,r}_{\beta+1/2}$.
        
        Moreover, if $q_+(L^*)' < r < q_+(L)$, then $\cR^A_{0}$ extends to a bounded operator on $T^{p,q,r}_\beta$.
    \end{enumerate}
\end{theorem}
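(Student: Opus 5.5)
The plan is to reduce every assertion to Theorem~\ref{thm:mainestimate} through singular operator properties, as in Lemmas~\ref{lemma:Duhamel-SIO}--\ref{lemma:Lions-SIO}. The new ingredient is that in the autonomous case $\Gamma_A(t,s)=e^{-(t-s)L}$, so the operator-valued kernels are explicit semigroup expressions. We therefore work with Definition~\ref{def:kernelODE} and Proposition~\ref{prop:compareAH}, taking $\vec u=(1,p_1)$ and $\vec v=(\infty,p_2)$.

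\textbf{Duhamel and maximal regularity operators.} For $\cL^A_1$ we simply use Lemma~\ref{lemma:Duhamel-SIO} with $p_\pm(A)=p_\pm(L)$. Theorem~\ref{thm:mainestimate} in the causal case then gives the stated lower bound on $\beta$ after letting $p_1\downarrow p_-(L)$.

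For $\cL^A_0$ the kernel is $K(t,s)=\ind_{s<t}(t-s)^{-1}\psi((t-s)L)$ with $\psi(z)=ze^{-z}$. The local $L^r$ bound comes from $L^r$-maximal regularity of $L$, using the bounded $H^\infty$-calculus on $L^r$ for $r\in(p_-(L),p_+(L))$ as in \cite{KW04}. The $L^{p_1}$--$L^{p_2}$ Gaussian off-diagonal estimates of $(t-s)Le^{-(t-s)L}$ for $p_-(L)<p_1\le p_2<p_+(L)$ are in \cite{Auscher-Egert2023-book}. These make it an SIO of type $(0,2,r,p_1,p_2,M)$ for every $M\ge0$.

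For $\cL^A_{1/2}$ the kernel is $\nabla_x e^{-(t-s)L}$. This has $L^{p_1}$--$L^{p_2}$ off-diagonal bounds with rate $(t-s)^{-1/2-\frac n2[p_1,p_2]}$ and Gaussian decay whenever $p_-(L)<p_1\le p_2<q_+(L)$. It is an SIO of type $(1/2,2,r,p_1,p_2,M)$ once we know $L^r\to L^r_{1/2}$ boundedness of $\cL^A_{1/2}$. That bound follows from the Hardy-type estimate $\|\nabla e^{-\tau L}\|_{L^r\to L^r}\lesssim \tau^{-1/2}$, together with the fact that the Hardy-type operator $t^{-1/2}\int_0^t(t-s)^{-1/2}g(s)\dd s$ is bounded on $L^r(\R_+)$. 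Since $p_1$ may approach $p_-(L)$, this removes the restriction $r>v_-(A)_*$ and verifies the conjecture of Remark~\ref{rem:conjecture}.

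\textbf{Lions operators.} For $\cR^A_{1/2}$ we use $v_-(A)=q_+(L^*)'$ from the previous theorem and Lemma~\ref{lemma:Lions-SIO}. Alternatively, and more directly, the kernel $e^{-(t-s)L}\Div_x$ is the adjoint of $\nabla e^{-(t-s)L^*}$. It therefore has off-diagonal bounds from $L^{p_1}$ to $L^{p_2}$ for $q_+(L^*)'<p_1\le p_2<p_+(L)$, and Theorem~\ref{thm:mainestimate} gives the bound with $u_t=1$. This yields $-[1,q]$ in place of $-[v_-(A),q]$, which is exactly the improvement stated.

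For $\cR^A_0=\nabla_x\cL^A_1\Div_x$ the kernel is $\nabla e^{-(t-s)L}\Div$. Factoring through $e^{-\frac{t-s}2 L}$ gives off-diagonal estimates with rate $(t-s)^{-1-\frac n2[p_1,p_2]}$ for $q_+(L^*)'<p_1\le p_2<q_+(L)$. Local $L^r$ boundedness for $r$ in that range follows from Corollary~\ref{cor:para-op-mixed-Leb} via the previous proposition. Proposition~\ref{prop:compareAH} and Theorem~\ref{thm:mainestimate} then conclude.

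\textbf{Main obstacle.} The hardest step is the composed off-diagonal estimates for $\nabla e^{-\tau L}$ and $\nabla e^{-\tau L}\Div$ across the full exponent ranges up to $q_+(L)$, rather than only $p_2=2$. We plan to obtain them by interpolating $L^p$ bounds on $\nabla e^{-\tau L}$ with $L^2$ Davies--Gaffney estimates, and by composing with hypercontractive $L^{p_1}\to L^{p_2}$ semigroup bounds via the semigroup property.
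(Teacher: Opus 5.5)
Your proposal is correct and follows essentially the same route as the paper. In the autonomous case each operator is realised as a singular integral operator in the sense of Definition~\ref{def:kernelODE} with explicit semigroup kernels and $\vec u=(1,p_1)$, $\vec v=(\infty,p_2)$; the local $L^r$ bounds come from maximal regularity, the Hardy-type estimate, or Corollary~\ref{cor:para-op-mixed-Leb}, and one concludes with Proposition~\ref{prop:compareAH} and Theorem~\ref{thm:mainestimate}.

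For $\cR^A_{1/2}$, it is your ``direct'' kernel route that is needed, and you should state the $L^r\to L^r_{1/2}$ bound explicitly. It follows from the same Hardy argument using $\|e^{-\tau L}\Div_x\|_{L^r\to L^r}\lesssim\tau^{-1/2}$ for $r\in(q_+(L^*)',p_+(L))$. Lemma~\ref{lemma:Lions-SIO} alone would only give the weaker lower bound $-[v_-(A),q]$ and the smaller range $r<v_+(A)^*$.
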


\begin{proof} 
    First consider \ref{item:Duhamel-auto}. For $\cL^A_1$, as $p_\pm(A)=p_\pm(L)$, the statement is the same as in the non-autonomous case (cf. Theorem \ref{thm:Duhamel-tent}\ref{item:LA1-bd-tent}). 
    
    For $\cL^A_0$, the same reasoning as in Lemma~\ref{lemma:Duhamel-SIO}\ref{item:LA1-SIO} for $\cL^A_1$ also shows that $\cL^A_0$ is a singular operator of type $(0,2,r,\boldu,\boldv,M)$ when $p_-(L) < r < p_+(L)$, $\boldu = (1,p_1)$, $\boldv=(\infty,p_2)$ with $p_-(L) < p_1 \le p_2 < p_+(L)$, and $M \ge 0$. To verify condition \ref{it:Lr}, we only need to mention that $\cL^A_0$ is bounded in $L^r(\R^{1+n}_+)$ follows from the $\cR$-boundedness in $L^r(\R^n)$. The conditions \ref{item:kernelODE-t-sep} and \ref{item:kernelODE-x-sep} follow immediately from \cite[Proposition 3.15]{A07}.
    
    For $\cL^A_{1/2}$, when $p_-(L)<r<q_+(L)$, the family $(\sqrt{t-s}\,\nabla_x e^{-(t-s)L})_{0 \le s < t < \infty}$ is uniformly bounded in $L^r(\R^n)$ and enjoys $t$-pointwise $L^{p_1}-L^{p_2}$ off-diagonal estimates (cf. Lemma~\ref{lem:ODEGammaA}) for $p_-(L) < p_1 \le p_2 < q_+(L)$ (which is optimal). Hence, we can directly show that $\cL^A_{1/2}$ is bounded from $L^r(\R^{1+n}_+)$ to $L^r_{1/2}(\R^{1+n}_+)$ and adapt the argument in Lemma~\ref{lemma:Duhamel-SIO}\ref{item:LA1-SIO} to get that $\cL^A_{1/2}$ is a singular operator of type $(1/2,2,r,\boldu,\boldv,M)$ when $p_-(L) < r < q_+(L)$, $\boldu = (1,p_1)$, $\boldv=(\infty,p_2)$ with $p_-(L) < p_1 \le p_2 < q_+(L)$, and $M \ge 0$. This proves \ref{item:Duhamel-auto}.
    
    The statements in \ref{item:Lions-auto} for $\cR^A_{1/2}$ and $\cR^A_{0}$ follow similarly, since $v_-(A)=v_+(A^*)'=q_+(L^*)'$ and $v_+(A)=q_+(L)$. This also refines the singular operator properties of $\cR^A_{1/2}$ and $\cR^A_0$ asserted in Lemma \ref{lemma:Lions-SIO}. Indeed, for $\cR^A_{1/2}$, it yields that $\boldu=(1,p_1)$ and $\boldv=(\infty,p_2)$ with $q_+(L^\ast)'<p_1 \le p_2 < p_+(L)$. For $\cR^A_0$, it gives $\boldu=(1,p_1)$ and $\boldv=(\infty,p_2)$ with $q_+(L^\ast)'<p_1 \le p_2 < q_+(L)$.
    
    This completes the proof.
\end{proof}

\section{Boundedness of singular operators on \texorpdfstring{$Z$}{Z}-spaces}\label{sec:extensions}

We conclude this paper with further estimates of singular operators derived from the form domination
in Section~\ref{sec:domination}.
Namely, we consider the twin $Z$-spaces to the tent spaces.
For $p,q,r\in (0,\infty]$, $\beta\in \R$, 
we define $Z^{p,q,r}_{\beta}$ as the space of all measurable $f \colon \R_+^{1+n} \to \C$ such that
\begin{align*}
\nrm{f}_{Z^{p,q,r}_{\beta}}
  &:=
  \nrm{W_r f}_{L^q_\beta(\R_+\,;\, L^p_{\vphantom{\beta}}(\R^n))}<\infty.
\end{align*}

At $p=q$, $Z$-spaces and tent spaces agree by Fubini's theorem.
When $p=q=r$, this is the weighted space $L^r_\beta(\R_+^{1+n})= L^r_{\vphantom{\beta}}(\R_+^{1+n}; t_{\vphantom{\beta}}^{-\beta r}\dd x \dd t)$.  

We remark that $Z$-spaces are real interpolants of tent spaces; see \cite{Haa26} for a complete description. However, interpolation would not give as good results as  the direct estimates below. 
The reversal of the order in taking the norms compared to tent spaces will show that the spatial parts of the exponents depend on $p$ only, which makes the decay exponent $M$ independent of $q$.  

For a direct proof of the estimates, one starts again from the dominations in the Banach and quasi-Banach ranges in Section~\ref{sec:domination}, and one has to prove 
boundedness results for the model operators as in Section~\ref{sec:modelops}.  In contrast with tent spaces, they come with no logarithmic loss at $p=\infty$ and no loss in the exponent for the  aperture  $\delta$. We shall use the dyadic characterization 
\[
  \nrm{f}_{Z^{p,q,r}_\beta} \eqsim \nrmB{ \nrmB{  \sum_{Q \in \mc{D}, \ell(Q)=2^{-j}} \ell(Q)^{m/q-m\beta }\cdot  W_{r,Q}(f) \ind_Q } _{L^p_{\vphantom{\beta}}(\R^n)}}_{\ell^q_{\vphantom{\beta}}(\Z)}
\]
of the norm corresponding to Proposition~\ref{prop:dyadic}, which is valid for all exponents, see \cite{Haa26}.

\begin{lemma}\label{lemma:WhitneymapZ}
Let $p,q,r \in (0,\infty]$, $\beta,\kappa \in \R$ . Then
$$
  A_{r,\kappa}^{\whit} f:=  \sum_{Q \in \mc{D}} \ell(Q)^{m\kappa} \cdot \ip{f}_{r,\Qwtilde{Q}} \ind_{\Qw{Q}}, \qquad f\in L^0(\R^{1+n}_+),
$$
defines a bounded operator  from $Z^{p,q,r}_\beta$ to $Z^{p,q,r}_{\beta+\kappa}$.
\end{lemma}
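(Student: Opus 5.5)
The plan mirrors the proof of Lemma~\ref{lemma:Whitneymap}, with the dyadic characterization of $Z^{p,q,r}_\beta$ stated just before the lemma playing the role of Proposition~\ref{prop:dyadic}. Since the Whitney regions $\Qw{Q}$ are pairwise disjoint, the function $A^{\whit}_{r,\kappa}f$ is constant on each of them, equal to $\ell(Q)^{m\kappa}\ip{f}_{r,\Qwtilde{Q}}$. Hence
\[
W_{r,Q}\brb{A^{\whit}_{r,\kappa}f}=\ell(Q)^{m\kappa}\,\ip{f}_{r,\Qwtilde{Q}}.
\]
Inserting this into the dyadic norm of $Z^{p,q,r}_{\beta+\kappa}$, the factor $\ell(Q)^{m\kappa}$ cancels against the weight shift $\ell(Q)^{-m\kappa}$, and we are left with
\[
\nrm{A^{\whit}_{r,\kappa}f}_{Z^{p,q,r}_{\beta+\kappa}}\eqsim \nrmB{ \nrmB{ \sum_{\ell(Q)=2^{-j}} \ell(Q)^{m/q-m\beta}\,\ip{f}_{r,\Qwtilde{Q}}\,\ind_Q }_{L^p(\R^n)} }_{\ell^q(\Z)}.
\]

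The next step compares $\ip{f}_{r,\Qwtilde{Q}}$ pointwise with a modified Whitney average. For $x\in Q$ and $t\in(2^{-m}\ell(Q)^m,\ell(Q)^m)$ we have $\Qwtilde{Q}\subseteq (at,bt)\times B(x,(ct)^{1/m})$ for suitable fixed $a,b,c$, and both regions have comparable measure. This gives
\[
\ip{f}_{r,\Qwtilde{Q}}\lesssim W^{a,b,c}_r f(t,x),
\]
uniformly in $Q$, for all $x\in Q$ and all such $t$. At a fixed scale $j$ the cubes $Q$ are disjoint, so the inner $L^p$ norm at scale $j$ is bounded, for each $t$ in the slab $(2^{-m}2^{-jm},2^{-jm})$, by $2^{-jm/q+jm\beta}\nrm{W^{a,b,c}_rf(t,\cdot)}_{L^p}$. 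Taking the $L^q$ average over $t$ in this slab, whose length is comparable to $2^{-jm}$, and using $t\eqsim 2^{-jm}$, we bound the $j$-th term by
\[
\brB{\int_{2^{-m}2^{-jm}}^{2^{-jm}} \brb{t^{-\beta}\nrm{W^{a,b,c}_rf(t,\cdot)}_{L^p}}^q\dd t}^{1/q}.
\]
Summing the $q$-th powers over $j$ (or taking the supremum when $q=\infty$) reassembles $\nrm{t^{-\beta}W^{a,b,c}_rf}_{L^q(\R_+;L^p(\R^n))}$.

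The main obstacle is the last step: showing that replacing $W_r$ by $W^{a,b,c}_r$ gives an equivalent $Z^{p,q,r}_\beta$ quasi-norm. This is the $Z$-space analogue of Proposition~\ref{prop:equivnorms}. The simplest route is to note that the dyadic characterization itself, being valid for all exponents, already absorbs any fixed enlargement. Concretely, cover $(at,bt)\times B(x,(ct)^{1/m})$ by a bounded number of Whitney regions $\Qw{P}$ with $\ell(P)\eqsim t^{1/m}$ and $P$ a neighbour of $Q$, up to a bounded number of generations. Then $\ip{f}_{r,\Qwtilde{Q}}$ is dominated by a finite sum of $W_{r,P}(f)$ over boundedly many shifted and rescaled neighbours $P$ of $Q$. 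Each such shift or rescaling preserves the inner $L^p$ norm at a fixed scale: translation by $k\ell(Q)$ is an isometry on $L^p$, and changing scale by a bounded number of generations only reindexes $j$ and costs a bounded factor in the weight. Each shift therefore costs only a constant in the $\ell^q L^p$ norm, and the quasi-triangle inequality with finitely many terms concludes the proof.

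The case $p=\infty$ causes no extra difficulty. Since the $L^p$ norm is inside, translations remain isometries, so there is no Carleson-type supremum and no logarithmic loss, in contrast with Proposition~\ref{prop:cabddp>infty}.
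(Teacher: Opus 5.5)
Your proposal is correct and is essentially the paper's proof. The paper likewise uses the dyadic characterization of $Z^{p,q,r}_\beta$, covers $\Qwtilde{Q}$ by boundedly many Whitney regions $\Qw{P}$ with $P$ a neighbour of $Q$ of comparable size, and uses that such shifts and bounded changes of generation cost only constants in the scale-wise $L^p$ norms. The intermediate comparison with $W^{a,b,c}_r f$ is an unnecessary detour: applying the covering directly to $\ip{f}_{r,\Qwtilde{Q}}$ already finishes the proof, without any $Z$-space analogue of Proposition~\ref{prop:equivnorms}.
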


\begin{proof} This is immediate for the dyadic characterization, using that $\Qwtilde{Q}$ is contained in a finite union of Whitney regions $\Qw{P}$, with a bounded number of dyadic cubes $P$ of size comparable to $Q$. 
 \end{proof}

\begin{lemma}\label{lem:sliceZ}
    Let $u \in (0,\infty)$, $p,q,r \in (0,\infty]$ with $u\leq r$, $\beta \in \R$  and $\delta \ge 1$. Set
\[
  \gamma
  :=\tfrac{n}{m}
  \bracb{\min\cbrace{p,u},u}.
\]
Then for $A_{u,\delta}$ given by \eqref{eq:avop} we have for all $f \in Z^{p,q,r}_\beta$ that
\[
  \nrm{A_{u,\delta}f}_{Z^{p,q,r}_\beta}
  \lesssim
  \delta^{\gamma}\,
  \|f\|_{Z^{p,q,r}_\beta}.
\]
The implicit constant is independent of $\delta$.
\end{lemma}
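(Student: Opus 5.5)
We follow the proof of Lemma~\ref{lem:slice}, replacing the tent-space sequence norm by the dyadic characterization of $Z^{p,q,r}_\beta$ stated above. Because the $Z$-norm takes the $L^p(\R^n)$-norm separately at each dyadic scale $j$, no vector-valued maximal inequality is needed, only a scalar estimate at fixed scale. This is why the exponent becomes $\frac nm[\min\{p,u\},u]$ without $\varepsilon$-loss and without dependence on $q$.

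First, we fix $Q\in\mc{D}$ and reuse the covering from \eqref{eq:J}: $B(x,(\delta t)^{1/m})\subseteq\bigcup_{k\in\Lambda_\delta}Q_k$ for $(t,x)\in\Qw{Q}$, with $N=|\Lambda_\delta|\eqsim\delta^{n/m}$. Exactly as in \eqref{eq:whitofav} (Jensen, then Minkowski using $u\le r$), this gives
\[
W_{r,Q}(A_{u,\delta}f)\lesssim \Big(\tfrac1N\sum_{k\in\Lambda_\delta}W_{r,Q_k}(f)^u\Big)^{1/u}.
\]
Since the weight $\ell(Q)^{m/q-m\beta}$ is constant at fixed scale $j$, it suffices to show, for each $j$ and $G_j:=\sum_{\ell(P)=2^{-j}}W_{r,P}(f)\ind_P$,
\[
\Big\|\sum_{\ell(Q)=2^{-j}}\Big(\tfrac1N\sum_{k}W_{r,Q_k}(f)^u\Big)^{1/u}\ind_Q\Big\|_{L^p}\lesssim \delta^{\gamma}\|G_j\|_{L^p}.
\]
Taking the $\ell^q$ norm in $j$ then concludes the proof via the dyadic characterization.

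For this fixed-scale estimate we split into two cases. If $p\ge u$, so that $\gamma=0$, we raise to the power $u$ and write the left side as $\|\frac1N\sum_k \tau_k(G_j^u)\|_{L^{p/u}}^{1/u}$, where $\tau_k$ is translation by $k2^{-j}$. Minkowski's inequality in $L^{p/u}$ (valid since $p/u\ge1$) together with translation invariance gives the bound $\|G_j\|_{L^p}$. If $p<u$, we use $\ell^p\subseteq\ell^u$ to get $(\frac1N\sum_k a_k^u)^{1/u}\le N^{1/p-1/u}(\frac1N\sum_k a_k^p)^{1/p}$. Then $p$-th powers can be integrated directly:
\[
\sum_Q |Q|\tfrac1N\sum_k W_{r,Q_k}(f)^p=\tfrac1N\sum_k\|\tau_kG_j\|_p^p=\|G_j\|_p^p,
\]
and $N^{1/p-1/u}\eqsim\delta^{\frac nm[p,u]}=\delta^\gamma$. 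The case $p=\infty$ is handled as $p\ge u$, with a supremum in place of the integral, and $r=\infty$ needs only the usual modifications.

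The main point to check is that the dyadic characterization of $Z^{p,q,r}_\beta$ is indeed valid for all exponents, which we take from \cite{Haa26} as stated. It is also worth verifying that the shifted cubes $Q_k$ are again dyadic cubes of the same generation, so that the translation argument above is legitimate; this is exactly the reindexing used in Remark~\ref{remark:noepsilonloss}. Beyond these checks, the argument is elementary.
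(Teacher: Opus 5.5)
Your proposal is correct and follows essentially the same route as the paper. Both start from the pointwise bound on $W_{r,Q}(A_{u,\delta}f)$ obtained in the proof of Lemma~\ref{lem:slice}. At each fixed scale, you then use convexity when $p\ge u$ (your Minkowski step in $L^{p/u}$, the paper's Jensen step) and $\ell^p\subseteq\ell^u$ when $p<u$ (the paper's subadditivity of $s\mapsto s^{p/u}$). You then reindex the translated cubes, which yields the same factor $N^{\max\{0,1/p-1/u\}}\eqsim\delta^{\gamma}$, and take the $\ell^q(\Z)$-norm.
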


\begin{proof} We start from the upper bound 
\[ W_{r,Q}(A_{u,\delta} f)\lesssim\brB{ \frac{1}{N} \sum_{k \in \Lambda_\delta} W_{r,Q_k}(f)^u }^{\frac{1}{u}},
  \]
  where $N\eqsim \delta^{n/m}$
as in the proof of Lemma~\ref{lem:slice}. Then take the $L^p(\R^n)$-norm. By Jensen's inequality if $p\ge u$ and subadditivity of $s\mapsto s^{p/u}$ if $p<u$, we have
\[ \bigg( \sum_{ \ell(Q)=2^{-j}} |Q|\cdot  W_{r,Q}(A_{u,\delta} f)^p\bigg)^{1/p} \lesssim 
 N^{\max\{0, 1/p-1/u\}} \bigg( \sum_{ \ell(Q)=2^{-j}} |Q|\cdot  W_{r,Q}(f)^p\bigg)^{1/p},
 \]
for each $j\in \Z$. 
This gives the result when $p<\infty$ by taking the $\ell^q(\Z)$ norm. The modification when $p=\infty$ is straightforward. 
\end{proof}

With the above lemma, the following proposition for the model Hardy operators is a straightforward adaptation of the corresponding results established in Section~\ref{sec:modelops} on tent spaces. We leave details to the interested reader.

\begin{prop}
\label{prop:modelops-whitneymixedZ}
Let  $p, q,r \in (0,\infty]$, \(\beta\in\R\), \(\delta\in[1,\infty)\). Let
\(\vec u\in(0,\infty)^2\) satisfy \(u_t,u_x\le r\) and set
\begin{align*}
  \gamma
  :=
  \tfrac{n}{m}
  {\bracb{\min\cbrace{p,u_x},u_x}.}
\end{align*}
For all $f \in Z^{p,q,r}_{\beta}$ we have
  \begin{align*}
    \nrm{H^{\whit}_{\vec u,\delta}f}_{Z^{p,q,r}_{\beta}}
    \lesssim  \delta^{\gamma}\,\nrm{f}_{Z^{p,q,r}_{\beta}},
  \end{align*}
 If $
    \beta>\gamma-[u_t,q]$, then
    we also have
  \begin{align*}
    \nrm{H^{\ca}_{\vec u,\delta}f}_{Z^{p,q,r}_{\beta}}
    \lesssim
    \delta^{\gamma}\,
    \nrm{f}_{Z^{p,q,r}_{\beta}}.
  \end{align*}
 The implicit constants are independent of $\delta$.
\end{prop}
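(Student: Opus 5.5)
The plan is to follow the proof of Proposition~\ref{prop:modelops-whitneymixed}, replacing Lemma~\ref{lem:slice} by Lemma~\ref{lem:sliceZ}. The main gain is that Lemma~\ref{lem:sliceZ} has no $\varepsilon$-loss and no logarithmic loss at $p=\infty$, so the argument can treat $p<\infty$ and $p=\infty$ together.

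For $H^{\whit}_{\vec u,\delta}$, I use Jensen's inequality in $s$ (since $u_t\le r$) to get the pointwise bound
\[
H^{\whit}_{\vec u,\delta}f(t,x)\lesssim \Bigl(\fint_{c^mt}^t A_{u_x,c^{-m}\delta}f(s,x)^r\dd s\Bigr)^{1/r}.
\]
This yields \eqref{eq:AudomHw}: $W_r(H^{\whit}_{\vec u,\delta}f)$ is controlled by an $r$-average of $A_{u_x,c^{-m}\delta}f$ over a $t$-enlarged Whitney region. The $Z$-analogue of Proposition~\ref{prop:equivnorms}, that different Whitney averages give equivalent $Z^{p,q,r}_\beta$ norms, should follow from the dyadic characterization exactly as in Lemma~\ref{lemma:WhitneymapZ}. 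The enlarged region is covered by boundedly many Whitney boxes $\Qw{P}$ of comparable size, and shifting scales by a bounded amount costs only a constant in $\ell^q(\Z)$. Taking the $L^q_\beta(\R_+;L^p)$ norm and applying Lemma~\ref{lem:sliceZ} then gives the bound $\delta^\gamma\nrm{f}_{Z^{p,q,r}_\beta}$.

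For $H^{\ca}_{\vec u,\delta}$, I decompose $(0,t)=\bigcup_k I_k(t)$ as in \eqref{eq:pointdomHw} with $\nu=\min\{p,q,u_t\}$. This gives
\[
H^{\ca}_{\vec u,\delta}f(t,x)\lesssim \Bigl(\sum_k c^{\nu mk/u_t}\Bigl(\fint_{I_k(t)}A_{u_x,c^{-m(k+1)}\delta}f(s,x)^{u_t}\dd s\Bigr)^{\nu/u_t}\Bigr)^{1/\nu}.
\]
Since $Z^{p,q,r}_\beta$ is $\nu$-convex by the power rule, as in \eqref{eq:convex}, it suffices to bound each $k$-term.

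For fixed $k$, the Whitney average in $t$ over $(2^{-m}\tau,\tau)$ of the inner average is handled as in the proof of Proposition~\ref{prop:modelops-whitneymixed}. I use the supremum over $x\in B(z,\tau^{1/m})$, \eqref{eq:martingale}, the change of variables $\sigma=c^{mk}\tau$ and \eqref{eq:martingaleback}. Together with the dilation behaviour of the $L^q_\beta$ norm in $t$ only, which produces the factor $c^{mk(\beta-1/q)}$, this bounds the $k$-th norm by
\[
c^{mk(\beta-\frac1q)}\,c^{mk/u_t}\nrm{A_{u_x,c^{-m(k+3)}\delta}f}_{Z^{p,q,r}_\beta}\lesssim c^{mk(\beta+[u_t,q]-\gamma)}\delta^\gamma\nrm{f}_{Z^{p,q,r}_\beta},
\]
where the last step is Lemma~\ref{lem:sliceZ}. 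The resulting geometric series converges exactly when $\beta>\gamma-[u_t,q]$. There is no $\varepsilon$ to absorb, and no $\ln$ factor at $p=\infty$ because Lemma~\ref{lem:sliceZ} has none.

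I expect the only delicate point to be justifying the change of Whitney parameters and the reintroduction of the spatial average through \eqref{eq:martingaleback} within the $Z$ norm. Here the spatial $L^p$ norm is taken inside at fixed $t$, so the spatial step is just an $L^p$ bound for a local averaging operator at scale $\sigma^{1/m}$. That bound follows from Minkowski's inequality or, for $p<1$, from the dyadic characterization combined with a bounded-overlap covering.
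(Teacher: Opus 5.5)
Your proposal is correct and is the adaptation the paper intends: it too reruns Proposition~\ref{prop:modelops-whitneymixed} with Lemma~\ref{lem:sliceZ} in place of Lemma~\ref{lem:slice}. The pointwise steps \eqref{eq:AudomHw}, \eqref{eq:pointdomHw}, \eqref{eq:martingale}, \eqref{eq:martingaleback} and the $t$-only dilation carry over unchanged, $\nu$-convexity of $Z^{p,q,r}_\beta$ follows from the power rule, and the loss-free Lemma~\ref{lem:sliceZ} removes both the $\varepsilon$ and the logarithm. Only the easy direction of the Whitney-parameter comparison is actually needed, namely enlarged averages bounded by the dyadic norm and then by the $Z$-norm via the dyadic characterization, so your covering argument suffices.
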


Finally, the main result for $Z$-spaces corresponding to the tent space estimates in Theorem~\ref{thm:mainestimate} and the extension result in Theorem~\ref{thm:extension} is the following.

\begin{theorem}\label{thm:mainestimateZ}
Let $r \in (1,\infty)$, $\vec{u} \in [1,\infty)^2$ and $\vec{v} \in (1,\infty]^2$ be such that
$$\max\cbrace{u_t, u_x} \leq r\leq \min\cbrace{v_t, v_x}.$$ 
Let $\kappa, M\in\R$ and suppose that  $T$ is a singular operator of type $(\kappa,m,r,\vec u,\vec v, M)$.
Let $p,q\in (0,\infty]$ and $\beta\in\R$, and assume the decay condition
\begin{align}
    \tag{dZ}
    \label{e:main-decay-conditionZ}
    M &> -[u_t,v_t] +\tfrac{n}{m}  {\bracb{\min\cbrace{p,u_x},u_x}} + \tfrac{n}{m}{\bracb{v_x,\max\cbrace{p,v_x}} } + \kappa \\
\intertext{
and the lower and upper bounds
}
    \tag{lZ}
    \label{e:main-beta-lower-boundZ}
    \beta &> -[u_t,q]+ \tfrac{n}{m}{\bracb{\min\cbrace{p,u_x},u_x}}, \\
    \tag{uZ}
    \label{e:main-beta-upper-boundZ}
    \beta&< \phantom{-}[q,v_t] -\tfrac{n}{m}{\bracb{v_x,\max\cbrace{p,v_x}} }-\kappa.
\end{align}
Then we have 
\begin{equation*}
  \nrm{Tf}_{Z^{p,q,r}_{\beta+\kappa}} \lesssim \nrm{f}_{Z^{p,q,r}_{\beta}},
\end{equation*}
for all $f \in L^\infty_c(\R^{1+n}_+)$.
If $T$ is causal,  the upper bound \eqref{e:main-beta-upper-boundZ} is omitted. If $T$ is anti-causal,  the lower bound \eqref{e:main-beta-lower-boundZ} is omitted.
Moreover, if $T$ satisfies the Lipschitz pointwise estimate \eqref{eq:pointwiseLipschitz}, $T$ has an extension to all $Z^{p,q,r}_{\beta}$ as in Theorem~\ref{thm:extension}. 
\end{theorem}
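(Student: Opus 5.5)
We follow the proof of Theorem~\ref{thm:mainestimate} verbatim, replacing the tent space inputs by their $Z$-space analogues. The structural facts needed are a duality/norming statement and a power rule for $Z$-spaces. The power rule $\nrm{f}^\nu_{Z^{p,q,r}_\beta}=\nrm{|f|^\nu}_{Z^{p/\nu,q/\nu,r/\nu}_{\beta\nu}}$ is immediate from the definition. For $\min\{p,q,r\}\ge1$, the K\"othe dual of $Z^{p,q,r}_\beta$ is $Z^{p',q',r'}_{-\beta}$. This follows from the dyadic characterization, since it is an $\ell^q_\beta(L^p)$-norm of Whitney averages, and the standard duality of mixed $\ell^q(L^p)$ sequence spaces; see also \cite{Haa26}. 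Lorentz--Luxemburg then gives the analogue of Proposition~\ref{prop:tentspacedensityduality}\ref{it:duality}--\ref{it:quasiduality}.

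\textbf{Banach case} $\min\{p,q\}\ge1$. We test $Tf$ against $g\in L^\infty_c$ with $\nrm{g}_{Z^{p',q',r'}_{-\beta-\kappa}}=1$ and apply Theorem~\ref{thm:maindominationform}.

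The local term is $\int A^{\whit}_{r,\kappa}f\cdot A^{\whit}_{r',0}g$. By H\"older's inequality in the $Z$-duality and Lemma~\ref{lemma:WhitneymapZ}, it is bounded by $\nrm{f}_{Z^{p,q,r}_\beta}$.

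For (w-w), Proposition~\ref{prop:modelops-whitneymixedZ} with $\delta=2^{jm}$ gives the factor $2^{jn[\min\{p,u_x\},u_x]}$ for $f$. For $g$ it gives $2^{jn[\min\{p',v_x'\},v_x']}=2^{jn[v_x,\max\{p,v_x\}]}$. There is no $\varepsilon$ or logarithmic loss here, since Lemma~\ref{lem:sliceZ} has none. The series is therefore $\sum_j 2^{-jm\alpha}$ with
\[
\alpha=M+[u_t,v_t]-\kappa-\tfrac nm[\min\{p,u_x\},u_x]-\tfrac nm[v_x,\max\{p,v_x\}],
\]
which is positive exactly by \eqref{e:main-decay-conditionZ}.

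The (ca-w) term is treated the same way. It additionally requires $\beta>\gamma-[u_t,q]$ for $H^{\ca}_{\vec u,2^{jm}}$, which is \eqref{e:main-beta-lower-boundZ}.

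The (w-ca) term needs the $H^{\ca}$ bound for $g$ in $Z^{p',q',r'}_{-\beta-\kappa}$ with exponent $\vec v'$, i.e.
\[
-\beta-\kappa>\tfrac nm[v_x,\max\{p,v_x\}]-[v_t',q'].
\]
Since $[v_t',q']=[q,v_t]$, this is \eqref{e:main-beta-upper-boundZ}. Causality or anti-causality removes the corresponding term, and hence the corresponding bound on $\beta$.

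\textbf{Quasi-Banach case} $\nu=\min\{p,q\}<1$. We use Theorem~\ref{thm:quasimaindominationform} together with the $\nu$-power norming. The $g$-exponents become $(\cdot/\nu)'$, and the required identities are the same exponent bookkeeping as in the tent case:
\[
\bigl[\min\{(p/\nu)',(v_x/\nu)'\},(v_x/\nu)'\bigr]=\nu\,[v_x,\max\{p,v_x\}],
\]
and the analogous identity for the $t$-exponents. With these, the decay and $\beta$-conditions again reduce to \eqref{e:main-decay-conditionZ}, \eqref{e:main-beta-lower-boundZ} and \eqref{e:main-beta-upper-boundZ}.

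\textbf{Extension.} This repeats the proof of Theorem~\ref{thm:extension} with $Z$-norms. The analogue of Lemma~\ref{lem:extension} uses density of $L^\infty_c$ when $\max\{p,q\}<\infty$, and compactly supported $L^r$ functions otherwise. The analogue of Lemma~\ref{lem:annulitobox} uses the same support analysis of the three decay terms. It also needs the off-diagonal improvement of Remark~\ref{rem:HwODE} for $H^{\ca}$ in $Z$-norms, which follows from the pointwise bound \eqref{eq:pointdomHw} exactly as before.

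\textbf{Main obstacle.} The main point needing care is the duality/norming statement for $Z^{p,q,r}_\beta$ at the endpoints $p$ or $q=\infty$, together with the corresponding density needed for the extension. I would obtain both from the dyadic sequence-space description in \cite{Haa26}, where the associate space of $\ell^q_\beta(L^p)$-type spaces is standard.
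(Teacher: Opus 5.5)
Your proposal is correct and follows essentially the same route as the paper. The paper also combines the form dominations of Theorems~\ref{thm:maindominationform} and~\ref{thm:quasimaindominationform} with the $Z$-space bounds of Lemmas~\ref{lemma:WhitneymapZ}, \ref{lem:sliceZ} and Proposition~\ref{prop:modelops-whitneymixedZ}, together with $Z$-duality and the power rule, and then repeats the extension argument. Your exponent bookkeeping checks out, including $[\min\{(p/\nu)',(v_x/\nu)'\},(v_x/\nu)']=\nu[v_x,\max\{p,v_x\}]$ and $[(v_t/\nu)',(q/\nu)']=\nu[q,v_t]$. Because there is no $\varepsilon$- or logarithmic loss on $Z$-spaces, the condition $\alpha>0$ is exactly \eqref{e:main-decay-conditionZ}. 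The paper leaves these details to the reader, and you fill them in consistently.
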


\begin{remark}
 If $u_{x}\le p\le v_{x}$, the result only requires
 \[ M > -[u_t,v_t] + \kappa, \quad \beta> -[u_t,q], \quad \beta< [q,v_t] -\kappa.
 \]
\end{remark}

\subsection*{AI disclosure}
During the preparation of this paper, ChatGPT was used for several minor tasks. More precisely, draft versions of Figures~\ref{fig:EF-picturesdef} and~\ref{fig:EF-pictures} were prepared using ChatGPT 5.2. Moreover, ChatGPT 5.6 Sol Pro assisted in the $q<p,u$ case in Remark \ref{remark:noepsilonloss}, developing the sharpness discussion following the main tent-space estimate in Subsection~\ref{sec:apriori} and working out the routine details of the \(Z\)-space extension in Section~\ref{sec:extensions}. Furthermore, we found a slight simplification,
based on the observation in Remark~\ref{rem:HwODE},
of the extension procedure in Subsection~\ref{sec:extension}. Finally, during the final stages of preparation, GPT-5.6 Sol, accessed through Codex, was used to identify typos, mathematical errors and missing literature.

\bibliographystyle{alpha}
\bibliography{bibliography}

\end{document}